\newcommand\myurl[1]{\url{#1}}
\theoremstyle{plain}
\title{Patching over Berkovich Curves and  Quadratic Forms}
\author{Vler\"e Mehmeti} \thanks{The author was supported by the ERC Starting Grant ``TOSSIBERG": 637027.}
\thanks{2010 Mathematics Subject Classification 14G22, 11E08}
\thanks{Keywords: Berkovich analytic curves, local-global principle, quadratic forms, $u$-invariant}
\begin{document}
\maketitle
\newtheorem{thm}{Theorem}[section]
\newtheorem{lm}[thm]{Lemma}
\newtheorem{prop}[thm]{Proposition}
\newtheorem{cor}[thm]{Corollary}
\newtheorem*{cor*}{Corollary}
\newtheorem*{thm*}{Theorem}
\theoremstyle{definition}
\newtheorem{defn}[thm]{Definition}
\newtheorem*{defn*}{Definition}
\newtheorem{rem}[thm]{Remark}
\newtheorem{ex}[thm]{Example}
\newtheorem{set}{Setting}
\newtheorem{nota}{Notation}

\begin{abstract}
We extend field patching to the setting of Berkovich analytic geometry and use it to prove a local-global principle over function fields of analytic curves with respect to completions. In the context of quadratic forms, we combine it with sufficient conditions for local isotropy over a Berkovich curve to obtain applications on the $u$-invariant. The patching method we adapt was introduced by Harbater and Hartmann in~\cite{HH}, and further developed by these two authors and Krashen in \cite{HHK}. The results presented in this paper generalize those of  \cite{HHK} on the local-global principle and quadratic forms. 
\end{abstract}

\renewcommand{\abstractname}{R\'esum\'e}

\begin{abstract}\textbf{Recollement sur les courbes de Berkovich et formes quadratiques.} 
Nous \'etendons la technique de recollement sur les 
corps au cadre de la g\'eom\'etrie analytique de 
Berkovich pour d\'emontrer un principe local-global 
sur les corps de fonctions de courbes analytiques par rapport \`a certains de leurs compl\'et\'es.
Dans le contexte des formes quadratiques, nous le 
combinons avec des conditions suffisantes 
d'isotropie locale sur une courbe de Berkovich pour 
obtenir des applications au $u$-invariant. La m\'ethode de recollement que nous adaptons a \'et\'e 
introduite par Harbater et Hartmann dans \cite{HH}, 
 puis developp\'ee par ces deux auteurs et 
Krashen dans \cite{HHK}. Dans ce texte, nous pr\'esentons des r\'esultats sur le principe local-global et les formes quadratiques qui g\'en\'eralisent ceux de \cite{HHK}.
\end{abstract}

\section*{Introduction} 

Patching techniques were introduced as one of the main approaches to inverse Galois theory. Originally of purely geometric nature, this method provided a way to obtain a global Galois cover from local ones, see for example \cite{H}. Another example is \cite{poi1}, where Poineau used patching on analytic curves in the Berkovich sense, and consequently generalized results shown by Harbater in \cite{har2} and \cite{har}. In \cite{HH}, Harbater and Hartmann extended the technique to structures over fields, while constructing a setup of heavily algebraic flavor. Patching over fields has recently seen many applications to local-global principles and quadratic forms, see for example \cite{HHK} and \cite{ctps}. In particular, in \cite{HHK}, Harbater, Hartmann, and Krashen (from now on referred to as HHK) obtained results on the \mbox{$u$-invariant}, generalizing those of Parimala and Suresh \cite{parsur}, which were proven through different methods. Another source for results on the $u$-invariant is Leep's article \cite{leep}.

In this paper, we use field patching in the setting of Berkovich analytic geometry. A convenience of this point of view is the clarity it provides into the overall strategy. By patching over analytic curves, we prove a local-global principle and provide applications to quadratic forms and the $u$-invariant.  The results we obtain generalize those of \cite{HHK}. Because of the  geometric nature of this approach, we believe it to be a nice framework for potential generalizations in different directions, and in particular to higher dimensions. 

Before presenting the main results of this paper, let us introduce some terminology.
\begin{defn*} [HHK] Let $K$ be a field. Let $X$ be a $K$-variety, and $G$ a linear algebraic group over $K.$ We say that $G$ acts \emph{strongly transitively} on $X$ if $G$ acts on $X,$ and for any field extension $L/K,$ either $X(L)=\emptyset$ or $G(L)$ acts transitively on $X(L).$
\end{defn*}

Our main results, the local-global principles we show, are: 
\begin{thm*}
\begin{sloppypar}
Let $k$ be a complete non-trivially valued ultrametric field. Let $C$ be a normal irreducible projective $k$-algebraic curve. Denote by ~$F$ the function field of $C.$ Let $X$ be an~$F$-variety, and $G$ a connected rational linear algebraic group over $F$ acting strongly transitively on $X.$ 

Let $V(F)$ be the set of all non-trivial rank 1 valuations on ~$F$ which either extend the valuation of $k$ or are trivial when restricted to $k.$

Denote by $C^{{\mathrm{an}}}$ the Berkovich analytification of $C,$ so that $F=\mathscr{M}(C^{{\mathrm{an}}}),$ where $\mathscr{M}$ denotes the sheaf of meromorphic functions on $C^{\mathrm{an}}.$ 
Then, the following local-global principles hold:
\end{sloppypar}
\begin{itemize}
\item(Theorem \ref{ohlala}) $X(F)\neq \emptyset \iff X(\mathscr{M}_{x}) \neq \emptyset  \ \textrm{for all}\ x \in C^{\mathrm{an}}.$
\item(Corollary \ref{uhlala}) If $F$ is a perfect field or $X$ is a smooth variety, then: 
\begin{center}
$X(F)\neq \emptyset \iff X(F_v) \neq \emptyset$  for all $v \in V(F),$ 
\end{center}
where $F_v$ denotes the completion of $F$ with respect to ~$v.$
\end{itemize}
\end{thm*}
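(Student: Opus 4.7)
The implications $X(F)\neq\emptyset\Rightarrow X(\mathscr{M}_x)\neq\emptyset$ and $X(F)\neq\emptyset\Rightarrow X(F_v)\neq\emptyset$ are immediate from the inclusions $F\hookrightarrow\mathscr{M}_x$ and $F\hookrightarrow F_v$. My plan is to prove the Berkovich statement (Theorem \ref{ohlala}) by adapting the Harbater--Hartmann--Krashen patching framework to analytic curves, and then to deduce Corollary \ref{uhlala} from it by comparing stalks with completions at rank-1 valuations.

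For the patching step, I would first use compactness of $C^{\mathrm{an}}$ to extract from the stalk-level hypothesis a finite covering by analytic domains $U_1,\dots,U_n$ whose associated rings of meromorphic functions $F_i$ already carry $X$-points (refining the cover if necessary so that only pairwise intersections $U_{ij}=U_i\cap U_j$ intervene, with overlap rings $F_{ij}$). The central algebraic input, to be established independently, is a \emph{factorization lemma}: for the connected rational $F$-group $G$, every element of $G(F_{ij})$ decomposes as a product $g_i\cdot g_j$ with $g_i\in G(F_i)$ and $g_j\in G(F_j)$. Given local points $x_i\in X(F_i)$, strong transitivity provides $h_{ij}\in G(F_{ij})$ sending $x_i|_{F_{ij}}$ to $x_j|_{F_{ij}}$; factoring $h_{ij}=g_i^{-1}g_j$ and replacing each $x_i$ by $g_i\cdot x_i$ makes the local sections agree on overlaps, so they descend to a global element of $X(F)$.

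For the corollary, I would associate to each $x\in C^{\mathrm{an}}$ the rank-1 valuation $v_x$ on $F$ induced by the seminorm $|\cdot|_x$; by construction $v_x$ either extends $|\cdot|_k$ or is trivial on $k$, so $v_x\in V(F)$. Given an $F_{v_x}$-rational point of $X$, the task reduces to transferring it to an $\mathscr{M}_x$-point, after which Theorem \ref{ohlala} concludes. This transfer is where the extra hypotheses intervene: $\mathscr{M}_x$ and $F_{v_x}$ share a common henselization, and for smooth $X$ a Hensel/implicit-function argument suffices to lift an $F_{v_x}$-point to an $\mathscr{M}_x$-point, while in the perfect case Greenberg--Artin approximation provides the needed bridge between completion and henselization.

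The main obstacle I anticipate is the Berkovich factorization lemma for $G$. One must single out a class of analytic domains and overlap rings $F_i,F_{ij}$ that is simultaneously (i) rich enough to cover $C^{\mathrm{an}}$ with only pairwise overlaps, (ii) algebraically tractable (suitable completions, Noetherian/excellence-type behavior) so that an HHK-style matrix decomposition in the connected rational group $G$ goes through, and (iii) compatible with the stalks in the sense that the hypothesis $X(\mathscr{M}_x)\neq\emptyset$ for all $x$ does propagate to $X(F_i)\neq\emptyset$. Once this lemma and the relevant approximation input are in hand, the gluing in Theorem \ref{ohlala} and the reduction in Corollary \ref{uhlala} are both essentially formal.
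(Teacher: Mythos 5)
Your overall outline tracks the paper's strategy: reduce the stalk hypothesis to a finite cover by compactness, establish an HHK-style factorization lemma for connected rational groups over suitably normed rings, glue via strong transitivity, and then relate $\mathscr{M}_x$ to its completion $F_{v_x}$ using henselianity to obtain the corollary. But two genuine gaps remain.

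First, the obstruction you defer in your last paragraph --- singling out the right class of domains and overlap rings --- is precisely where the paper's construction does the real work, and a vague choice would fail. The paper's matrix decomposition (Theorem \ref{2}, Corollary \ref{3}) requires the overlap ring $F_0 = \mathscr{M}(U_i \cap U_j)$ to be a \emph{complete non-trivially valued ultrametric field} in which the images of the $F_i$ are dense, with admissibility coming from Tate acyclicity and $H^1$-vanishing. This forces the intersections $U_i \cap U_j$ to be \emph{single type 3 points} (whose local ring is the complete field $\mathcal{H}(\eta)$); with larger or type-2 intersections the overlap ring is not a complete field and the iterative Newton-type argument in the proof of the factorization lemma breaks down. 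Producing such ``nice covers'' is a nontrivial piece of Berkovich geometry (density of type 3 points, the two-branch property at such points, first on $\mathbb{P}^{1,\mathrm{an}}_k$ and then via finite morphisms, together with a parity function to organize the factorizations over many patches). Your proposal acknowledges but does not resolve this.

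Second, and more fundamentally, nice covers require $\sqrt{|k^{\times}|}\neq\mathbb{R}_{>0}$, since type 3 points must exist. The theorem makes no such assumption on $k$. The paper closes this gap with a model-theoretic descent: one isolates a countable elementary subfield $k_2\preceq k$ over which $C$, $X$, $G$, the rationality data, and finitely many local points are all defined, passes to its completion $k'$ (for which $\sqrt{|k'^\times|}\neq\mathbb{R}_{>0}$ automatically), proves the local-global principle there, and transfers rational points and strong transitivity back via ultrapowers and elementary embeddings. Nothing in your sketch handles a dense value group, so as written your argument would only prove the theorem under the extra hypothesis $\sqrt{|k^{\times}|}\neq\mathbb{R}_{>0}$.

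For the corollary your approach is in the right spirit. The paper's Lemma \ref{toto} is a touch more direct than ``Greenberg--Artin'': $\mathscr{M}_x$ is henselian (fraction field of a henselian DVR or a field that is itself henselian) and dense in $F_{v_x}=\widehat{\mathscr{M}_x}$; for smooth $X$ one takes an \'etale chart, finds a nearby $K_0$-rational target by density, and uses Berkovich's characterization of finite extensions of quasicomplete fields to descend, while the perfect case reduces to the smooth locus of the Zariski closure of the given point. Your route via Artin approximation would also work; it is just a heavier tool than the paper needs here.
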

The statement above remains true for affinoid curves if $\sqrt{|k^\times|} \neq \mathbb{R}_{>0}$, where $\sqrt{|k^{\times}|}$ denotes the divisible closure of the value group $|k^{\times}|$ of $k$. Being a local-global principle with respect to completions, the second equivalence evokes some resemblance to more classical versions of local-global principles. The statement can be made to include trivially valued base fields, even though in this case we obtain no new information (since one of the overfields will be equal to $F$). 

We recall that for any finitely generated field extension $F/k$ of transcendence degree 1, there exists a unique normal projective $k$-algebraic curve with function field~$F.$ Thus, the result of the theorem above is applicable to any such field $F.$ 

While HHK work over models of an algebraic curve, we work directly over analytic curves. 
Remark that we put no restrictions on the complete valued base field $k.$ Apart from the framework, this is one of the fundamental differences with Theorem 3.7 of \cite{HHK}, where the base field needs to be complete with respect to a discrete valuation. Another difference lies in the nature of the overfields, which here are completions or fields of  meromorphic functions. Section 4 shows that the latter contain the ones appearing in HHK's  article, and thus that \cite[Theorem 3.7]{HHK} is a direct consequence of the local-global principle stated in Theorem \ref{ohlala}. Moreover, we show the converse is true as well provided we choose a ``fine" enough model. The proof of the theorem above is based on the patching method, but used in a different setting from the one of \cite{HHK}. 

As a consequence, in the context of quadratic forms we obtain the following theorem, which is a generalization of \cite[Theorem 4.2]{HHK}.
 
\begin{thm*}
\begin{sloppypar}
Let $k$ be a complete non-trivially valued ultrametric field. 
Let $C$ be a normal irreducible projective $k$-algebraic curve. Denote by $F$ the function field of $C.$ Suppose $char(F) \neq 2.$ Let $q$ be a quadratic form over $F$ of dimension different from $2$. 

Let $V(F)$ be the set of all non-trivial rank 1 valuations on ~$F$ which either extend the valuation of $k$ or are trivial when restricted to $k.$

Let $C^{{\mathrm{an}}}$ be the Berkovich analytification of $C,$ so that $F=\mathscr{M}(C^{{\mathrm{an}}}),$ where $\mathscr{M}$ is the sheaf of meromorphic functions on $C^{{\mathrm{an}}}.$
\begin{enumerate}
\item{(Theorem \ref{17}) The quadratic form $q$ is isotropic over $F$ if and only if it is isotropic over $\mathscr{M}_x$ for all $x \in C^{{\mathrm{an}}}.$}
\item{(Corollary \ref{quad}) The quadratic form $q$ is isotropic over $F$ if and only if it is isotropic over $F_v$ for all $v \in V(F),$ where $F_v$ is the completion of $F$ with respect to $v.$}
\end{enumerate} 
\end{sloppypar}
\end{thm*}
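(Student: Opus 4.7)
The plan is to deduce both parts from the local-global principles of Theorem~\ref{ohlala} and Corollary~\ref{uhlala}, applied to the projective quadric cut out by $q$ together with its special orthogonal group.

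First I will reduce to the non-degenerate case. Writing $q \cong q' \perp 0_r$ with $q'$ non-degenerate, either $r \geq 1$, in which case $q$ is isotropic over every field containing $F$ and the statements hold trivially, or $r = 0$, in which case isotropy of $q$ and of $q'$ agree over every extension. The dimension~$1$ case is then immediate since a non-degenerate unary form is never isotropic. Hence I may assume $\dim q \geq 3$ and $q$ non-degenerate.

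Let $X \subset \mathbb{P}^{n-1}_F$ denote the projective quadric defined by $q$, where $n = \dim q$. Since $\mathrm{char}(F) \neq 2$ and $q$ is non-degenerate, $X$ is smooth, and for every field extension $L/F$ one has $X(L) \neq \emptyset$ if and only if $q_L$ is isotropic. Take $G = \mathrm{SO}(q)$; this is a connected rational linear algebraic group over $F$. By Witt's extension theorem, combined with the observation that for $\dim q \geq 3$ the stabilizer in $\mathrm{O}(q)$ of an isotropic projective point contains elements of determinant $-1$, the group $G(L)$ acts transitively on $X(L)$ whenever this set is non-empty. Thus $G$ acts strongly transitively on $X$. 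Part~(1) then follows from Theorem~\ref{ohlala}, and part~(2) follows from Corollary~\ref{uhlala} applied to the smooth variety~$X$.

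The main technical point is the verification of strong transitivity, that is, the rationality of $\mathrm{SO}(q)$ and the Witt-theoretic transitivity on the projective quadric for $\dim q \geq 3$; both are classical over fields of characteristic different from~$2$. The hypothesis $\dim q \neq 2$ is genuinely necessary: for a binary form $\langle a, b \rangle$, isotropy is equivalent to $-ab$ being a square, and being a square is not in general a local-global property over the fields considered here, so the statement would fail in that case.
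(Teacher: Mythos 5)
Your proof is correct and takes essentially the same approach as the paper: reduce via the Witt (radical) decomposition to the regular case, then apply Theorem~\ref{ohlala} and Corollary~\ref{uhlala} to the projective quadric $X$ with $G=\mathrm{SO}(q)$, using that $\mathrm{SO}(q)$ is connected, rational, and acts strongly transitively on $X$ when $\dim q\neq 2$ and $\mathrm{char}(F)\neq 2$. The only difference is that you sketch the Witt/determinant argument for strong transitivity, while the paper cites this directly from HHK's Theorem~4.2.
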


As mentioned in the introduction of \cite{HHK}, it is expected that for a ``nice enough" field $K$ the $u$-invariant remains the same after taking finite field extensions, and that it becomes $2^d u(K)$ after taking a finitely generated field extension of transcendence degree $d.$ Since we work only in dimension one, this explains the motivation behind the following: 

\begin{defn*}
Let $K$ be a field. 
\begin{enumerate}
\begin{sloppypar}
\item {[Kaplansky] The $u$-\textit{invariant} of $K,$ denoted by $u(K),$ is the maximal dimension of anisotropic quadratic forms over $K.$ We say that $u(K)=\infty$ if there exist anisotropic quadratic forms over $K$ of arbitrarily large dimension.}
\item {[HHK] The strong $u$-invariant of $K,$ denoted by $u_s(K),$ is the smallest real number~$m$ such that:
\begin{itemize}
\item $u(E) \leqslant m$ for all finite field extensions $E/K;$ 
\item $\frac{1}{2}u(E) \leqslant m$ for all finitely generated field extensions $E/K$ of transcendence degree 1.  
\end{itemize}
We say that $u_s(K)=\infty$ if there exist such field extensions $E$ of arbitrarily large~\mbox{$u$-invariant.}}
\end{sloppypar}
\end{enumerate}
\end{defn*}

The theorem above leads to applications on the $u$-invariant. Let $k$ be a complete non-archimedean valued field with residue field $\widetilde{k}$, such that $\mathrm{char}(\widetilde{k}) \neq 2.$ Suppose that either ${|k^{\times}|}$ is a free $\mathbb{Z}$-module with $\mathrm{rank}_{\mathbb{Z}}{|k^{\times}|}=n,$ or more generally that $\dim_{\mathbb{Q}} \sqrt{|k^{\times}|}=n,$ where $n$ is a non-negative integer.
This is yet another difference with the corresponding results of HHK in \cite{HHK}, where the requirement on the base field is that it be complete discretely valued, \textit{i.e.} that its value group be a free $\mathbb{Z}$-module of rank $1.$ We obtain an upper bound on the $u$-invariant of a finitely generated field extension of $k$ with transcendence degree at most $1$, which depends only on $u_s(\widetilde{k})$ and $n.$ More precisely, in terms of the strong $u$-invariant:

\begin{cor*}[Corollary \ref{31}] Let $k$ be a complete valued non-archimedean field.
Suppose $char(\widetilde{k}) \neq 2.$ Let $n \in \mathbb{N}.$
\begin{enumerate}
\item If $\dim_{\mathbb{Q}}\sqrt{|k^{\times}|}=n,$ then $u_s(k) \leqslant 2^{n+1}u_s(\widetilde{k}).$
\item If $|k^{\times}|$ is a free $\mathbb{Z}$-module with $\mathrm{rank}_{\mathbb{Z}}|k^{\times}|=n$, then $u_s(k) \leqslant 2^{n}u_s(\widetilde{k}).$
\end{enumerate}
\end{cor*}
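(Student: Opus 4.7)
By the very definition of $u_s(k)$, it suffices to produce two kinds of bound: one on $u(E)$ for finite extensions $E/k$, and one on $\frac{1}{2}u(E)$ for finitely generated extensions $E/k$ of transcendence degree~$1$. Both will rest on a Springer-type estimate of the form $u(K)\leqslant 2^{\dim_{\mathbb{Q}}\sqrt{|K^\times|}+1}\,u(\widetilde K)$ for a complete non-archimedean valued field $K$ with $\operatorname{char}(\widetilde K)\neq 2$, sharpening to $u(K)\leqslant 2^{\operatorname{rank}|K^\times|}\,u(\widetilde K)$ when $|K^\times|$ is a finitely generated free $\mathbb{Z}$-module. Such estimates follow by iterating the classical Springer theorem along a maximal coarsening of the valuation, and this is precisely the step where the two parts of the statement begin to diverge.

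A finite extension $E/k$ inherits the value-group invariant of $k$ and has residue field $\widetilde E$ finite over $\widetilde k$, whence $u(\widetilde E)\leqslant u_s(\widetilde k)$; the Springer-type bound then gives the required inequality directly. For a transcendence degree one extension, I would realise $E$ as the function field of a normal projective irreducible $k$-curve and invoke Corollary~\ref{quad}: a quadratic form over $E$ is isotropic as soon as it is isotropic over every completion $E_v$, $v\in V(E)$, so $u(E)\leqslant\sup_{v\in V(E)} u(E_v)$. Since each $E_v$ is itself a complete non-archimedean valued field, the problem reduces to controlling $\dim_{\mathbb{Q}}\sqrt{|E_v^\times|}$ and $u(\widetilde{E_v})$. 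If $v$ is trivial on $k$, then $v$ corresponds to a closed point of a normal model of $E$, so $E_v$ is complete discretely valued with residue field $\kappa(v)$ finite over~$k$; applying the Springer-type bound first to $E_v/\kappa(v)$ and then to $\kappa(v)/\widetilde{\kappa(v)}$ matches the target. If $v$ extends the valuation of $k$, Abhyankar's inequality forces
\[
\dim_{\mathbb{Q}}\sqrt{|E_v^\times|/|k^\times|}+\operatorname{trdeg}(\widetilde{E_v}/\widetilde k)\leqslant 1,
\]
splitting the analysis into the subcase $\widetilde{E_v}$ algebraic over $\widetilde k$ (where $u(\widetilde{E_v})\leqslant u_s(\widetilde k)$ by approximating any anisotropic form by one defined over a finite subextension) and the subcase $\widetilde{E_v}$ a finitely generated extension of transcendence degree~$1$ of $\widetilde k$ (where $u(\widetilde{E_v})\leqslant 2u_s(\widetilde k)$ is exactly the definition of $u_s(\widetilde k)$). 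In either subcase the factors of $2$ obtained from Abhyankar and from the Springer-type bound assemble to the claimed exponent.

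The main obstacle I anticipate is the Springer-type bound itself in the range of value groups beyond the discretely valued case, and in particular the extra factor of~$2$ that distinguishes part~(1) from part~(2): when $|K^\times|$ is merely of finite $\mathbb{Q}$-dimension, the coarsening procedure passes through a divisible-hull step that costs one factor of~$2$ absent in the free case. A secondary technical point is to verify that the residue field $\widetilde{E_v}$ in the mixed Abhyankar subcase is genuinely finitely generated over $\widetilde k$, so that the strong $u$-invariant hypothesis can be invoked as stated.
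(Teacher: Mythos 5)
Your proposal is structurally correct and reproduces the paper's own strategy. The paper also reduces Corollary~\ref{31} to (i) a decomposition bound for diagonal forms over Henselian fields, realised in Propositions~\ref{19} and~\ref{18} (your ``Springer-type estimate''), (ii) the local--global principle, and (iii) a case analysis over the local overfields governed by Abhyankar's inequality, carried out through Proposition~\ref{50}, Theorem~\ref{52} and Theorem~\ref{21}. Your reroute through the completions $E_v$ via Corollary~\ref{quad}, rather than through the Henselian fields $\mathscr{M}_x$ via Theorem~\ref{17}, is cosmetic and licensed by Corollary~\ref{e fundit}; your two-step estimate at a closed point (first $E_v/\kappa(v)$, then $\kappa(v)/\widetilde{\kappa(v)}$) is Lemma~\ref{55} followed by Proposition~\ref{50}, your type~$2$/type~$3$ dichotomy is Theorem~\ref{52}, and Lemma~\ref{24} supplies the finite-extension part of $u_s(k)$. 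The residual-field finiteness you flag in the $\operatorname{trdeg}=1$ subcase does hold, since the corresponding point is of type~$2$ and the residue field of an Abhyankar point of a curve is finitely generated over~$\widetilde{k}$.

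The one place where your stated mechanism would not actually go through is the Springer-type bound itself. A non-discrete rank-one value group sitting inside $\mathbb{R}_{>0}$ is archimedean and hence has no nontrivial convex subgroup, so there is no ``maximal coarsening'' along which classical Springer could be iterated; the bound $u(K)\leqslant 2^{\dim_\mathbb{Q}\sqrt{|K^\times|}+1}u(\widetilde K)$ cannot be obtained by that device. The paper instead partitions the coefficients of a diagonal form according to their classes in $K^\times/(K^\times)^2(K^\circ)^\times$ and bounds the number of classes by $2^{n+1}$ (respectively $2^n$ in the free case) via a fractional-exponent argument (Lemmas~\ref{70} and~\ref{71}); each class contributes a residual form over $\widetilde K$ to which Henselianity applies. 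Your intuition that the extra factor of $2$ in part~(1) is a ``divisible-hull'' cost is nevertheless the right heuristic: it arises precisely from coefficients whose order in $\sqrt{|k^\times|}/|k^\times|$ is a power of $2$, a phenomenon absent when $|k^\times|$ is already a free $\mathbb{Z}$-module.
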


It is unknown to the author whether there is equality in the corollary above. This is true in the particular case of $n=1$ by using \cite[Lemma 4.9]{HHK}, whose proof is independent of patching. This way we recover \cite[Theorem 4.10]{HHK}, which is the main result of \cite{HHK} on quadratic forms. It also provides one more proof that $u(\mathbb{Q}_p(T))=8,$ where $p$ is a prime number different from $2$, originally proven in \cite{parsur}.

\begin{cor*}[Corollary \ref{32}]
Let $k$ be a complete discretely valued field such that $char(\widetilde{k}) \neq~2.$ Then, $u_s(k)=2u_s(\widetilde{k})$.
\end{cor*}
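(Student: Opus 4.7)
The plan is to prove the equality by establishing the two inequalities separately. The upper bound $u_s(k)\leq 2u_s(\widetilde{k})$ is an immediate consequence of Corollary~\ref{31}(2) in the case $n=1$: the value group of any complete discretely valued field is isomorphic to $\mathbb{Z}$, a free $\mathbb{Z}$-module of rank one, so the hypothesis of the corollary is satisfied.

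For the lower bound $u_s(k)\geq 2u_s(\widetilde{k})$, I would argue that it suffices, by the very definition of the strong $u$-invariant, to lift each witness of a large $u$-invariant over $\widetilde{k}$ to a corresponding witness over $k$ whose $u$-invariant is at least doubled. Concretely, given a finite extension $\widetilde{E}/\widetilde{k}$, I would take $E$ to be the unique unramified extension of $k$ with residue field $\widetilde{E}$; since $E$ is then itself complete discretely valued with residue characteristic different from $2$, the classical Springer theorem gives $u(E)=2u(\widetilde{E})$, and hence $2u(\widetilde{E})\leq u(E)\leq u_s(k)$. Given a finitely generated transcendence-degree-one extension $\widetilde{E}/\widetilde{k}$, I would invoke \cite[Lemma~4.9]{HHK} to produce a finitely generated transcendence-degree-one extension $E/k$ with $u(E)\geq 2u(\widetilde{E})$; as the introduction points out, the proof of that lemma is independent of patching and proceeds by a Springer-style argument on the generic fiber of an arithmetic lift of a curve with function field $\widetilde{E}$.

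The only potential obstacle lies in the transcendence-degree-one case: unlike the finite case there is no canonical unramified lift, so one must work carefully with a model over the valuation ring $\mathcal{O}_k$ and control the anisotropy of a form of shape $q\perp \pi q$ (with $\pi$ a uniformizer of $k$) on its generic fiber. Combining the two bounds then yields the equality $u_s(k)=2u_s(\widetilde{k})$.
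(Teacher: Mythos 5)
Your proposal is correct and takes essentially the same route as the paper: the upper bound is Corollary~\ref{31}(2) with $n=1$, and the lower bound is \cite[Lemma~4.9]{HHK}, which the paper cites as a black box. The extra detail you supply on the lower bound (unramified lifts and Springer's theorem for finite extensions, a Springer-style argument on an arithmetic lift of a curve for transcendence-degree-one extensions) merely reconstructs what is inside HHK's Lemma~4.9 rather than offering a different argument.
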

The first section of this paper is devoted to proving that patching can be applied to an analytic curve. To do this, we 
follow along the lines of the proof of
 \mbox{\cite[Theorem 2.5]{HHK}},  making adjustments to render it suitable to our more general setup.
Recall that an analytic curve is a graph (see \cite[Th\'eor\`eme 3.5.1]{Duc}). We work over any complete valued base field ~$k$ such that $\sqrt{|k^\times|} \neq \mathbb{R}_{>0}.$ This condition is equivalent to asking the existence of type 3 points on $k$-analytic curves, which are characterized by simple topological and algebraic properties. More precisely, a point of type 3 has arity 2 in the graph associated to the curve, and its local ring with respect to the sheaf of analytic functions is a field if the curve is reduced. Type 3 points are crucial to the constructions we make.
Let $C$ be an integral $k$-analytic curve. Let $U,V$ be connected affinoid domains in $C$, such that $W=U \cap V$ is a single type 3 point. We show that given two ``good" algebraic structures over $\mathscr{M}(U),$ $\mathscr{M}(V),$ and a suitable group action on them, they can be patched to give the same type of algebraic structure over $\mathscr{M}(U \cup V).$ The key step in proving this is a ``matrix decomposition" result for certain linear algebraic groups.

In the second part, our aim is to show that any open cover of a projective $k$-analytic curve can be refined into a finite cover that satisfies conditions similar to those of the first section, \textit{i.e.} one over which we can apply patching. The refinement $\mathcal{U}$ we construct is a finite cover of the curve, such that for any $U \in \mathcal{U},$ $U$ is a connected affinoid domain with only type 3 points in its boundary. Furthermore, for any distinct $U,V \in \mathcal{U},$ the intersection~${U \cap V}$ is a finite set of type 3 points. A cover with these properties will be called \textit{nice} (\textit{cf.} Definition \ref{nice}). The existence of a refinement that is a nice cover will first be shown for the projective line~$\mathbb{P}_k^{1,\mathrm{an}},$ and will then be generalized to a broader class of $k$-analytic curves. We work over a complete ultrametric field $k$ such that~${\sqrt{|k^\times|}\neq \mathbb{R}_{>0}}.$

The third section contains the main results of the paper. We show two
local-global principles (Theorem \ref{ohlala}, Corollary \ref{uhlala}) over fields of meromorphic functions of normal projective $k$-analytic curves, and an application to quadratic forms (Theorem~\ref{17}, Corollary \ref{quad}). In the simplest cases, the proofs use patching on nice covers and induction on the number of elements of said covers. We first prove these results over a complete ultrametric base field $k$ such that $\sqrt{|k^\times|} \neq \mathbb{R}_{>0}.$ This is then generalized for projective curves to any complete ultrametric field using a descent argument that is based on results of model theory. We also prove similar results for affinoid curves.   

In the fourth section we interpret the overfields of HHK's \cite{HHK} in the Berkovich setting, and show that \cite[Theorem 3.7]{HHK} is a consequence of Theorem \ref{ohlala}. We show that the converse is true as well provided one works over a ``fine enough" model. 

The purpose of the fifth part is to find  conditions under which there is local isotropy of a quadratic form $q$ over analytic curves. The setup will be somewhat more general, which is partly why it is the most technical section of the paper. The idea  is to find a nice enough representative of the isometry class of $q$ to work with and then use Henselianity conditions. The hypotheses on the base field become stronger here. Namely, we require our complete valued non-archimedean base field $k$ to be such that  the dimension of the ~\mbox{$\mathbb{Q}$-vector space}~$\sqrt{|k^{\times}|}$ be finite (a special case beeing when $|k^{\times}|$ is a free module of finite rank over $\mathbb{Z}$), and the residue characteristic unequal to $2.$ The restriction on the value group is not very strong: when working over a complete ultrametric field $k$ satisfying this property, for every $k$-analytic space $X$ and every point $x \in X,$ the completed residue field $\mathcal{H}(x)$ of $x$ satisfies it as well.  

In the last part, we put together the local-global principle for quadratic forms and the local isotropy conditions of the previous section to give a condition for global isotropy of a quadratic form over an analytic curve. From there we deduce applications to the (strong)~$u$-invariant of a complete valued field $k$ with residue characteristic different from~2, and such that the dimension of the $\mathbb{Q}$-vector space $\sqrt{|k^{\times}|}$ is finite. 
 
\vspace{0.3cm}
\noindent \textbf{Conventions.} Throughout this paper, we use the Berkovich approach to non-archimedean analytic geometry. A Berkovich analytic curve will be a separated analytic space of pure dimension $1.$

 A valued field is a field endowed with a non-archimedean absolute value. For any valued field $l,$ we denote by $\widetilde{l}$ its residue field. 

We call \textit{boundary}, and denote it by $\partial(\cdot),$ the topological boundary. We call \textit{Berkovich relative boundary}  (resp. \textit{Berkovich boundary}), and denote it by $\partial_B(\cdot/\cdot)$ (resp. $\partial_B(\cdot)$), the relative boundary (resp. boundary) introduced in \cite[Definition ~2.5.7]{Ber90} and \cite[Definition ~1.5.4]{ber93}. 

A Berkovich analytic space which is reduced and irreducible is called \textit{integral} in this text. Thus, an \textit{integral affinoid space} is an affinoid space whose corresponding affinoid algebra is a domain.

Throughout the entire paper, we work over a complete valued base field $k.$

\subsection*{Acknowledgements} I am most grateful to J\'er\^ome Poineau for the numerous fruitful discussions and helpful remarks. Many thanks to Philippe Satg\'e for the chaotic yet beneficial mathematical exchanges we had. My gratitude also goes to Antoine Ducros for pointing out an argument that made it possible to get rid of the hypothesis $\sqrt{|k^\times|} \neq \mathbb{R}_{>0}.$ Finally, many thanks to the referee for a very careful reading, leading to a great improvement of this paper.

\section{Patching over Berkovich Curves}
The purpose of this section is to prove a matrix decomposition result under conditions which generalize those of HHK's article \cite[Section 3, Theorem 3.2]{HHK}. As a consequence, we obtain a generalization of vector space patching on analytic curves. Let us start by fixing a somewhat more extensive framework, in which our proof works. 

\begin{set}
Let $R_i, i=1,2,$ be an integral domain endowed with a non-archimedean sub-multiplicative norm $|\cdot|_{R_i},$ with respect to which it is complete. Set $F_i=\text{Frac} \ R_i, i=1,2.$ Let $F$ be an infinite field embedded in both $F_1$ and $F_2.$
Let $F_0$ be a complete ultrametric field with non-trivial valuation, such that there exist bounded morphisms~${R_i \hookrightarrow F_0},$  $i=1,2.$ Suppose the image of~$F_1$ is dense in~$F_0.$ Let~$A_i $ be an~$R_i$-module, such that~$A_i \subseteq F_i.$ Suppose~$A_i$ is finitely generated as an~$R_i$-module, \textit{i.e.} that there exists a surjective~$R_i$-linear morphism~${\varphi_i:R_i^{n_i} \twoheadrightarrow A_i}$ for some positive integer~$n_i, i=1,2.$ Let us endow~$A_i$ with the quotient semi-norm induced from~$\varphi_i.$ Assume that~$A_i$ is complete and the morphism~$A_i \hookrightarrow F_0$ is bounded  for~$i=1,2.$ Remark that this implies that the semi-norm on $A_i$ is a norm. Suppose the induced map~$\pi: A_1 \oplus A_2 \rightarrow F_0$ is surjective. Finally, suppose the norm of~$F_0$ is equivalent to the quotient norm induced by the surjective morphism~$\pi : A_1 \oplus A_2 \twoheadrightarrow F_0,$ where~$A_1 \oplus A_2$ is endowed with the usual max semi-norm~$|\cdot|_{\max}.$ 
\end{set}

As in \cite{Ber90}, a morphism $f : A \rightarrow B$ of semi-normed rings is said to be \textit{admissible} if the quotient semi-norm on $A/\ker(f)$ is equivalent to the restriction to~$f(A)$ of the semi-norm on $B.$ Thus, in the setting above, we suppose that the morphism~$\pi$ is admissible.

Before giving the motivating example for Setting~1, we need to recall Berkovich meromorphic functions. 

\begin{defn}
Let $X$ be a good $k$-analytic space (in the sense of Berkovich). Let $\mathcal{S}$ be the presheaf of functions on $X,$ which associates to any analytic domain $U$ the set of analytic functions on $U$ whose restriction to any affinoid domain in it is not a zero-divisor.  
Let $\mathscr{M}_{-}$ be the presheaf on $X$ that associates to any analytic domain $U$ the ring $\mathcal{S}(U)^{-1}\mathcal{O}(U).$ The sheafification $\mathscr{M}$ of the presheaf $\mathscr{M}_{-}$ is said to be the sheaf of meromorphic functions on~$X.$
\end{defn}

We notice that for any $x \in X,$ $\mathscr{M}_x$ is the total ring of fractions of $\mathcal{O}_{X,x}.$ In particular, if $\mathcal{O}_{X,x}$ is a domain, then $\mathscr{M}_x=\text{Frac} \ \mathcal{O}_{X,x}.$ We make note of the following, well known, fact:
\begin{lm} \label{1.2}
Let $X$ be an integral $k$-affinoid space. Then, $\mathscr{M}(X)=\mathrm{Frac} \ \mathcal{O}(X).$  
\end{lm}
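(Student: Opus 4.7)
The plan is to establish the two inclusions separately, starting with the easier direction $\mathrm{Frac}\,\mathcal{O}(X) \hookrightarrow \mathscr{M}(X)$. Since $X$ is integral, $\mathcal{A} := \mathcal{O}(X)$ is a domain, so every nonzero element is a non-zero-divisor. For any affinoid subdomain $U \subseteq X$, the restriction map $\mathcal{A} \to \mathcal{O}(U)$ is flat—a classical property of affinoid morphisms—and flat base change preserves the non-zero-divisor property. Hence $\mathcal{A}\setminus\{0\} \subseteq \mathcal{S}(X)$, which yields a canonical map
$$\mathrm{Frac}\,\mathcal{A} \;\longrightarrow\; \mathcal{S}(X)^{-1}\mathcal{A} \;=\; \mathscr{M}_-(X) \;\longrightarrow\; \mathscr{M}(X).$$
This composition is injective: a nonzero $f \in \mathcal{A}$ does not vanish on any nonempty open (by the sheaf property of $\mathcal{O}$ together with irreducibility), so if $f/g$ is sent to zero in $\mathscr{M}(X)$ then $f$ is zero as a section of $\mathcal{O}$, forcing $f=0$.

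For the reverse inclusion, the plan is as follows. Let $s \in \mathscr{M}(X)$. By the construction of the sheafification and the quasi-compactness of $X$, there exist a finite affinoid cover $U_1,\dots,U_n$ of $X$ and sections $s_i = a_i/b_i \in \mathscr{M}_-(U_i)$ with $b_i \in \mathcal{S}(U_i)$ such that $s_i|_{U_i \cap U_j} = s_j|_{U_i \cap U_j}$ for all $i,j$. The goal is to produce a single $b \in \mathcal{A}\setminus\{0\}$ such that $bs$ is a regular function on $X$; then $s = (bs)/b \in \mathrm{Frac}\,\mathcal{A}$. To achieve this, I would first refine $\{U_i\}$ by the Gerritzen--Grauert theorem to a cover by rational subdomains of $X$, on which the denominators can be represented by restrictions of elements $\tilde{b}_i \in \mathcal{A}$ (possibly modifying $a_i$ accordingly). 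Setting $b := \prod_i \tilde{b}_i \in \mathcal{A}\setminus\{0\}$, each $b\cdot s$ is now represented on $U_i$ by a regular function. The compatibilities $a_i \tilde{b}_j = a_j \tilde{b}_i$ on $U_i \cap U_j$ then allow these local regular representatives to be glued into a single section of $\mathcal{O}$ on $X$ via Tate's acyclicity theorem, finishing the argument.

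The step I expect to be the main obstacle is the construction of the global common denominator: the $b_i$ are \emph{a priori} only local functions, and ensuring that they can be lifted to (or replaced by) global elements of $\mathcal{A}$ without losing the relation $s|_{U_i} = a_i/b_i$ is the delicate point. All the other pieces—injectivity of the map, flatness of restriction, and gluing of compatible regular functions—are standard once the denominator has been globalized.
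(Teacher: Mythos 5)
Your first inclusion is fine, and recognizing flatness of restriction as the reason nonzero global sections stay non-zero-divisors is the right observation. The problem is in the reverse inclusion, precisely at the step you flag as the ``main obstacle'' --- and it is not merely delicate, it is a genuine gap, because it presupposes essentially what the lemma asserts. Passing to rational subdomains via Gerritzen--Grauert does not make the local denominators $b_i$ restrictions of elements of $\mathcal{A}$: for a rational subdomain $U \subseteq X$, the algebra $\mathcal{O}(U)$ is a \emph{completed} localization of $\mathcal{A}$, not an ordinary localization, and its elements are convergent series that in general do not even lie in $\mathrm{Frac}\,\mathcal{A}$ (think of a strictly smaller closed subdisc of $\mathbb{D}$: $k\langle T/r\rangle$ contains power series that are not quotients of elements of $k\langle T\rangle$). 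So there is no reason the compatibilities $a_i\tilde b_j = a_j\tilde b_i$ ever become relations among restrictions of global functions, and the subsequent gluing step has nothing to glue.

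The paper closes this gap with coherent sheaf theory rather than by manipulating explicit denominators. For $f \in \mathscr{M}(X)$ it forms the ideal sheaf of denominators, written there as $f\mathcal{O}\cap\mathcal{O}\subseteq\mathscr{M}$: a nonzero coherent $\mathcal{O}$-submodule of $\mathcal{O}$. By Kiehl's theorem (coherent sheaves on an affinoid space are associated to their global sections, with vanishing higher cohomology), a nonzero coherent ideal sheaf has a nonzero global section; this global section $x$ together with the corresponding global $y$ (with $x=fy$) immediately gives $f = x/y \in \mathrm{Frac}\,\mathcal{O}(X)$. The Tate acyclicity you invoke at the end is a special case of the input you are actually missing: the point is not to glue sections once you have a global denominator, but to prove a global denominator exists at all, and for that you need Kiehl-type finiteness, not Gerritzen--Grauert.
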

By replacing the fraction field of $\mathcal{O}(X)$ with its total ring of fractions, the statement remains true when removing the condition of integrality on $X.$
\begin{proof}
\begin{sloppypar}
Since $\mathcal{O}(X)$ is an integral domain,  $\text{Frac} \ \mathcal{O}(X) \subseteq \mathscr{M}(X)$ by the definition of~$\mathscr{M}.$ Let~$f \in \mathscr{M}(X).$ The sheaf  $f \mathcal{O} \cap \mathcal{O} \subseteq \mathscr{M}$ is  non-zero and coherent, so by Kiehl's Theorem, it has a non-zero global section $x.$
Then, there exists $y \in \mathcal{O}(X) \backslash \{0\},$ for which~${f=\frac{x}{y} \in \text{Frac} \ \mathcal{O}(X)}.$
\end{sloppypar}
\end{proof}
Another result that will be needed throughout this paper is the following:
\begin{lm} \label{1.3}
Let $C$ be a normal irreducible $k$-analytic  curve. Let $U, V$ be affinoid domains of $C$, such that $U \cap V=\{\eta\},$ where $\eta$ is a point of type 3. Then, the images of $\mathscr{M}(U)$ and $\mathscr{M}(V)$ in~$\mathscr{M}(\{\eta\})$ are dense.   
\end{lm}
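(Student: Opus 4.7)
The plan is to show the image of $\mathscr{M}(U) \to \mathscr{M}(\{\eta\})$ is dense in three steps: a reduction to the integral affinoid case, a model calculation on a disk (where density is essentially built into the definition of the completed residue field), and a finite-extension argument. The situation for $V$ is symmetric, so I will only treat $U$.

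First I would replace $U$ by its connected component containing $\eta$, which still meets $V$ only in $\{\eta\}$. Since $C$ is a normal curve, any such connected affinoid domain is integral, so Lemma \ref{1.2} gives $\mathscr{M}(U) = \mathrm{Frac}\,\mathcal{O}(U)$. Moreover, $\eta$ being a type $3$ point of the reduced curve $C$, the singleton $\{\eta\}$ is an affinoid domain whose affinoid algebra is the field $\mathcal{H}(\eta)$, so $\mathscr{M}(\{\eta\}) = \mathcal{H}(\eta)$. The problem thus reduces to showing that the restriction map $\mathrm{Frac}\,\mathcal{O}(U) \to \mathcal{H}(\eta)$ has dense image.

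I would then treat the model case in which $U = M(k\{R^{-1}T\})$ is a closed disk and $\eta$ is a type $3$ point of it. By the very definition of the completed residue field at a type $3$ point of $\mathbb{A}^{1,\mathrm{an}}_k$, the kernel of the seminorm $\eta$ on $k[T]$ is zero and $\mathcal{H}(\eta)$ is the completion of the valued field $(k(T),\eta)$. Since $k(T) \subset \mathrm{Frac}\,k\{R^{-1}T\} = \mathscr{M}(U)$, this immediately produces a dense subset of $\mathcal{H}(\eta)$ lying in $\mathscr{M}(U)$.

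For the general case, Noether normalization for $k$-affinoid algebras provides, for some $R > 0$, a finite injection $k\{R^{-1}T\} \hookrightarrow \mathcal{O}(U)$, corresponding to a finite surjective morphism $f\colon U \to D := M(k\{R^{-1}T\})$. Set $\xi := f(\eta)$; since $\mathcal{H}(\eta)/\mathcal{H}(\xi)$ is a finite extension of complete valued fields, it preserves both the transcendence degree of the residue field over $\widetilde{k}$ and the $\mathbb{Q}$-dimension of the value group modulo $|k^{\times}|$, so $\xi$ is again of type $3$ and the model case gives that $\mathscr{M}(D)$ is dense in $\mathcal{H}(\xi)$. Because $f$ is finite, the fibre $f^{-1}(\xi) = \{\eta = y_1, \ldots, y_s\}$ is a finite set and its affinoid algebra $\mathcal{O}(U) \otimes_{\mathcal{O}(D)} \mathcal{H}(\xi)$ (the ordinary tensor product coincides here with the completed one, being a finite-dimensional $\mathcal{H}(\xi)$-algebra) is isomorphic to $\prod_{i=1}^s \mathcal{H}(y_i)$. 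Projecting onto the $\mathcal{H}(\eta)$-factor yields elements $b_1, \ldots, b_n \in \mathcal{O}(U)$ whose images span $\mathcal{H}(\eta)$ as an $\mathcal{H}(\xi)$-vector space. Given any $x = \sum_j \alpha_j b_j \in \mathcal{H}(\eta)$ with $\alpha_j \in \mathcal{H}(\xi)$, approximating each $\alpha_j$ by some $\alpha_j' \in \mathscr{M}(D) \subset \mathscr{M}(U)$ produces $\sum_j \alpha_j' b_j \in \mathscr{M}(U)$, and the non-archimedean triangle inequality gives $|x - \sum_j \alpha_j' b_j|_{\mathcal{H}(\eta)} \leq \max_j |\alpha_j - \alpha_j'|_{\mathcal{H}(\xi)} \cdot |b_j|_{\mathcal{H}(\eta)}$, which can be made arbitrarily small. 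The main subtleties I anticipate are verifying the preservation of the type $3$ property under the finite morphism $f$ and the clean identification of the fibre algebra with $\prod_i \mathcal{H}(y_i)$; both are standard features of finite morphisms of affinoid curves but warrant precise citation.
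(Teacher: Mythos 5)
Your proof is correct, but it takes a genuinely different route from the paper. After the same reduction to an integral affinoid $U$ with $\mathscr{M}(U) = \mathrm{Frac}\,\mathcal{O}(U)$ and $\mathscr{M}(\{\eta\}) = \mathcal{H}(\eta)$, the paper observes that $\{\eta\}$ is an affinoid subdomain of $U$, invokes the Gerritzen--Grauert theorem to recognize it as a rational domain, and then cites the density statement built into Berkovich's description of affinoid algebras of rational domains (\cite[Corollary~2.2.10]{Ber90}). You instead reduce to the disk via Noether normalization, observe that on the disk $k(T) \subset \mathscr{M}(D)$ is already dense in $\mathcal{H}(\xi)$ essentially by definition of the type~3 seminorm, and then push density through the finite morphism $f\colon U \to D$ using the identification of the fibre algebra and an ultrametric estimate. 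Both arguments are sound; yours is longer but more self-contained and constructive (it avoids Gerritzen--Grauert entirely), whereas the paper's is shorter at the cost of relying on that structural theorem. Two small cosmetic points: first, the isomorphism $\mathcal{O}(U)\otimes_{\mathcal{O}(D)}\mathcal{H}(\xi)\cong\prod_i\mathcal{H}(y_i)$ could in principle have nilpotents in bad (inseparable) situations, but all your argument actually needs is that the composite $\mathcal{O}(U) \to \mathcal{O}(U)\otimes_{\mathcal{O}(D)}\mathcal{H}(\xi) \twoheadrightarrow \mathcal{H}(\eta)$ has image spanning $\mathcal{H}(\eta)$ over $\mathcal{H}(\xi)$, which holds regardless; second, the preservation of type~3 under finite morphisms that you verify by hand via Abhyankar-type bookkeeping is a standard fact that the paper itself cites repeatedly (e.g.\ in the proof of Proposition~\ref{39}), so you could simply refer to it.
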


\begin{proof}
Let us start by remarking that the set of poles of a meromorphic function is a divisor and as such consists of only rigid points. This implies that a meromorphic function cannot have a pole on any non-rigid point (including $\eta$), which is why it makes sense to evaluate it at $\eta$. 

That $\{\eta\}$ is an affinoid domain of $U$ (resp. $V$) can be checked directly from the definition of an affinoid domain. By the Gerritzen-Grauert theorem (see \cite{tem}), we obtain that it is a rational domain in $U$ (resp. $V$). Then, by the easy implication of 
Corollary~2.2.10 in~\cite{Ber90}, the meromorphic functions on $U$ (resp. $V$) with no poles in $\{\eta\}$ are dense in $\mathcal{O}(\{\eta\}).$ Seeing as $\eta$ is a type 3 point, $\mathcal{O}(\{\eta\})=\mathscr{M}(\{\eta\})=\mathcal{H}(\eta)$ - the completed residue field of~$\eta.$ Finally, this implies that the image with respect to the restriction morphism of $\mathscr{M}(U)$ (resp. $\mathscr{M}(V)$) in $\mathscr{M}(\{\eta\})$  is dense. 
\end{proof}
The example of Setting 1 we will be working with is the following: 
\begin{prop}\label{1}
Let $C$ be a normal irreducible $k$-analytic curve. Set $F_C=\mathscr{M}(C).$ Let~$D$ be an effective divisor of degree $n$ on $C.$ Take two connected affinoid domains $U, V$ in $C,$ such that $W:=U \cap V=:\{\eta\}$, where $\eta$ is a type 3 point.  Set $R_{U}=\mathcal{O}(U), F_U=\mathrm{Frac} \ R_U,$ ${R_V=\mathcal{O}(V)}, {F_V=\mathrm{Frac} \ R_V},$ and $F_W=\mathcal{O}(W).$ Set $A_U=\mathcal{O}(D)(U),$ $A_V=\mathcal{O}(D)(V).$

For large enough $n$ such that $H^1(C, \mathcal{O}(D))=0$, the conditions of Setting 1 are satisfied with $R_1=R_U, R_2= R_V,$ $A_1=A_U, A_2=A_V,$ $F=F_C,$ and $F_0=F_W.$  
\end{prop}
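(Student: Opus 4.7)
The plan is to check each of the seven conditions of Setting~1 for the identifications given, drawing on: coherence of $\mathcal O(D)$ together with Kiehl's theorem; the fact that $\eta$ is of type~3 (so $\mathcal O(\{\eta\})=\mathcal H(\eta)$ is a complete, non-trivially valued field, and since $\eta$ lies outside the rigid support of $D$ one has $\mathcal O(D)(W)=\mathcal H(\eta)=F_W$); Lemma~\ref{1.2} to identify $F_U, F_V$ with $\mathscr M(U), \mathscr M(V)$; Lemma~\ref{1.3} for the required density; Mayer--Vietoris together with $H^1(C,\mathcal O(D))=0$ for surjectivity of $\pi$; and the non-archimedean open mapping theorem for admissibility.

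The routine structural checks come first. Normality and irreducibility of $C$, together with connectedness of $U$ and $V$, imply that $R_U,R_V$ are integral affinoid algebras, complete in their Banach norms. The function field $F_C=\mathscr M(C)$ is a transcendental, hence infinite, extension of $k$, and embeds into $\mathscr M(U)$ and $\mathscr M(V)$ by restriction. The morphisms $R_U, R_V \to F_W$ are the standard restriction maps between affinoid algebras, hence bounded, and density of their images (hence of $F_U,F_V$) in $F_W$ is exactly Lemma~\ref{1.3}. Since $\mathcal O(D)$ is a line bundle, in particular coherent, and $U,V$ are affinoid, Kiehl's theorem yields that $A_U=\mathcal O(D)(U)$ and $A_V=\mathcal O(D)(V)$ are finitely generated over $R_U,R_V$ respectively; viewing $\mathcal O(D)$ as a fractional subsheaf of $\mathscr M$ places $A_i\subseteq F_i$. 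The quotient norm induced from any surjection $\varphi_i:R_i^{n_i}\twoheadrightarrow A_i$ coincides, up to equivalence, with the canonical Banach $R_i$-module structure, so $A_U,A_V$ are complete, and the restriction $A_i\to F_W$ is bounded by functoriality.

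The substantive step is surjectivity and admissibility of $\pi:A_U\oplus A_V\to F_W$. The plan for surjectivity is to extend $\{U,V\}$ to a locally finite affinoid G-cover of $C$ whose additional members avoid $\eta$ and whose pairwise intersections with $U$ or $V$ are disjoint from $\eta$. By Kiehl together with Tate's acyclicity, the \v{C}ech cohomology computed from this cover agrees with the sheaf cohomology of $\mathcal O(D)$ on $C$. A direct diagram chase then identifies the cokernel of $A_U\oplus A_V\to F_W$ with a subquotient of $H^1(C,\mathcal O(D))$, which vanishes by the hypothesis on $n$; this gives surjectivity. Admissibility then follows from the non-archimedean Banach open mapping theorem, applied over the complete non-trivially valued field $F_W$ (or over $k$ when $k$ itself is non-trivially valued), since $\pi$ is a continuous surjection between Banach modules.

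I expect the Mayer--Vietoris step for surjectivity to be the principal obstacle: the hypothesis $H^1(C,\mathcal O(D))=0$ is global on $C$, whereas $\pi$ only sees $U$ and $V$, so the argument must cohomologically compare the local and global situations and choose the enlargement of $\{U,V\}$ carefully enough for the \v{C}ech-to-sheaf comparison to yield the desired cokernel vanishing. The remaining conditions of Setting~1 reduce to standard features of coherent sheaves on affinoid spaces, of the Banach-module structure on finitely generated modules over affinoid algebras, and of type~3 points.
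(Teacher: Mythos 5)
Your structural checks and the appeal to Kiehl's theorem, Lemma~\ref{1.2}, and Lemma~\ref{1.3} line up with the paper, and the surjectivity plan is workable; but you diverge from the paper on the key Mayer--Vietoris step. Rather than extending $\{U,V\}$ to a locally finite affinoid G-cover of $C$ away from $\eta$ and chasing the \v{C}ech-to-sheaf comparison, the paper invokes Ducros's structure theorem (\cite[Th\'eor\`eme 6.1.3]{Duc}) to split into two clean cases: if $U\cup V\neq C$ then $U\cup V$ is itself an affinoid domain, so Tate acyclicity for the two-element cover $\{U,V\}$ of $U\cup V$ gives the exact admissible sequence directly, \emph{without} invoking $H^1(C,\mathcal O(D))=0$ at all; and if $U\cup V=C$ then $C$ is compact, hence affinoid (first case) or projective, in which case the two-element Mayer--Vietoris for $\{U,V\}$ together with $H^1(C,\mathcal O(D))=0$ gives surjectivity. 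Your auxiliary-cover construction would also succeed (the type~3 point $\eta$ has exactly two branches, lying in $U$ and $V$, so $C\setminus(U\cup V)$ can indeed be covered away from $\eta$, and the one-cocycle supported on $(U,V)$ is then a genuine cocycle), but it is considerably heavier than needed and uses the global $H^1$ hypothesis even in the affinoid case where it is automatic.

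One genuine imprecision: for admissibility, you propose to apply the open mapping theorem ``over the complete non-trivially valued field $F_W$''. But $A_U$ and $A_V$ are Banach modules over $k$ (or over the affinoid algebras $R_U,R_V$), not over $F_W$, so the theorem cannot be applied over $F_W$ as stated. Your fallback ``over $k$ when $k$ is non-trivially valued'' is exactly what the paper does in that case; when $k$ is trivially valued the paper instead cites a change-of-basis argument from \cite[Chapter 2, Proposition~2.1.2(ii)]{Ber90}. Your intuition that a non-trivially valued auxiliary field should save the day is on the right track, but the precise mechanism is the change of base field in Berkovich's proposition, not applying the open mapping theorem over $F_W$ directly.
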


\begin{proof}
\begin{sloppypar}
As $U,V,W$ are connected affinoid domains of a normal analytic curve, they are integral, so $R_U, R_V, \mathcal{O}(W)$ are integral domains that are all complete with respect to non-Archimedean norms. As $W$ is a single type 3 point, $\mathcal{O}(W)$ is a field and so $\mathcal{O}(W)=F_W.$ Moreover, since $\mathcal{O}(W)=\mathcal{H}(\eta),$ the normed ring $F_W$ is a complete ultrametric non-trivially valued field. As $U,V$ and $W$ are integral, by Lemma~\ref{1.2},~$\mathscr{M}(U)=F_U$,~$\mathscr{M}(V)=F_V,$ and $\mathscr{M}(W)=F_W.$ This shows the existence of embeddings of $F_C$ into $F_U, F_V,$ and $F_W.$ The restriction morphisms $R_U, R_V \rightarrow F_W$ are bounded by construction. From Lemma \ref{1.3}, $F_U, F_V$ have dense images in $F_W.$
\end{sloppypar}
\begin{sloppypar}
Notice that for $Z \in \{U,V,W\},$ $\mathcal{O}(Z) \hookrightarrow \mathcal{O}(D)(Z) \hookrightarrow \mathscr{M}(Z).$ In particular, this means that $\mathcal{O}(D)(W)=\mathcal{O}(W)=\mathscr{M}(W).$ Since $\mathcal{O}(D)$ is a coherent sheaf, $A_U$  (resp.~$A_V$) is a finite  $R_U$-module (resp. $R_V$-module). The completness of~$A_U$ (resp. $A_V$) follows from the fact that ideals of affinoid algebras are closed. The morphism~${\mathcal{O}(D)(U)=A_U \hookrightarrow F_W=\mathcal{O}(D)(W)}$ is the restriction morphism of the sheaf $\mathcal{O}(D),$ so it is bounded. The same is true for $A_V \hookrightarrow F_W.$ 
\end{sloppypar}
If $U \cup V$ is not the entire $C,$ it is an affinoid domain thereof (see \cite[Th\'eor\`eme 6.1.3]{Duc}). By Tate's Acyclicity Theorem \cite[Chapter 2, Proposition 2.2.5]{Ber90}, 
$$0 \rightarrow H^0(U \cup V, \mathcal{O}(D)) \rightarrow H^0(U, \mathcal{O}(D)) \oplus H^0(V, \mathcal{O}(D)) \rightarrow H^0(U \cap V, \mathcal{O}(D)) \rightarrow 0$$
is an exact admissible sequence, from which we obtain the surjective admissible morphism~$A_U \oplus A_V \twoheadrightarrow \mathcal{O}(D)(W)=F_W.$ 

Suppose $U \cup V = C.$ Since $C$ is then compact and integral, by \cite[Th\'eor\`eme 6.1.3]{Duc}, it is either an affinoid domain (a case we dealt with in the paragraph above) or a projective curve.
If $C$ is projective, by \cite[Section~7.5, Proposition~5.5]{liu} for large enough ~$n,$  $H^1(U \cup V, \mathcal{O}(D))=0.$ The Mayer-Vietoris exact sequence now produces a bounded surjective morphism~${A_U \oplus A_V \twoheadrightarrow \mathcal{O}(D)(W)=F_W}.$ 
Admissibility follows from Banach's Open Mapping Theorem if $k$ is not trivially valued (for a proof see \cite{bou}), and by a change of basis followed by the Open Mapping Theorem if it is (see \cite[Chapter 2, Proposition ~2.1.2(ii)]{Ber90}).
\end{proof}

We make note of the fact that Proposition \ref{1} assumes the existence of a point of type~3, which is equivalent to $\sqrt{|k^{\times}|}\neq \mathbb{R}_{>0}.$

\begin{rem}
Other examples of  Setting 1 can be obtained by taking instead of $\mathcal{O}(D)$ any coherent sheaf $\mathcal{F}$ of $\mathcal{O}$-algebras that is a subsheaf of $\mathscr{M},$ for which $H^1(C, \mathcal{F})=0.$ 
\end{rem}

\begin{defn}
Let $K$ be a field. A \emph{rational} variety over $K$ is a $K$-variety that has a Zariski open isomorphic to an open of some $\mathbb{A}_K^n.$ 
\end{defn}

Using the same notation as in Setting 1, the main goal of this section is to prove the following matrix decomposition result:

\begin{thm}\label{2}
\begin{sloppypar}
Let $G$ be a connected linear algebraic group over $F$ that is a rational variety over $F$. For any~${g \in G(F_0)},$ there exist $g_1 \in G(F_1),$ $g_2 \in G(A_2),$ such that $g=g_1 \cdot g_2.$
\end{sloppypar}
\end{thm}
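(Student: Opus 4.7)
The plan is to adapt the strategy of \cite[Theorem 3.2]{HHK} to the more general Setting~1 via a two-step argument: first reduce to the case where $g \in G(F_0)$ lies in an arbitrarily small metric neighborhood of the identity $1_G$, and then construct the factorization in this near-identity regime by a Newton-style iterative approximation that exploits the admissible surjection $\pi : A_1 \oplus A_2 \twoheadrightarrow F_0$.

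For the reduction, I would use that since $G$ is a connected rational $F$-variety and $F$ is infinite, $G(F)$ is Zariski dense in $G$. Fix a Zariski open $U \subseteq G$ admitting an $F$-isomorphism $\phi : U \xrightarrow{\sim} V$ onto a Zariski open $V \subseteq \mathbb{A}^n_F$, and, translating $U$ by an $F$-point if needed, assume that $1_G \in U$ and $\phi(1_G) = 0$. The translates $\gamma U$ for $\gamma \in G(F)$ cover $G$, hence also $G(F_0)$: so for any $g \in G(F_0)$ there exists $\gamma \in G(F)$ with $\gamma^{-1} g \in U(F_0)$. The density of the image of $F_1$ in $F_0$ then provides $\delta \in U(F_1)$ whose coordinates $\phi(\delta) \in V(F_1) \subseteq F_1^n$ are arbitrarily close in $F_0^n$ to $\phi(\gamma^{-1} g)$; by continuity of the group law in coordinates, $\delta^{-1}\gamma^{-1} g$ is then arbitrarily close to $1_G$ in $G(F_0)$. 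Absorbing $\gamma\delta \in G(F_1)$ into the eventual $g_1$, the problem reduces to the case where $g$ already lies in a prescribed small neighborhood of $1_G$.

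In the chart $\phi$, group multiplication near $1_G$ is a convergent power series in the coordinates equal to their sum modulo quadratic error. Setting $x := \phi(g) \in F_0^n$ small, decompose $x = y_1^{(0)} + y_2^{(0)}$ componentwise via the admissible surjection $\pi$, with $y_i^{(0)} \in A_i^n$ and $\|y_i^{(0)}\|_{\max} \leq C\,\|x\|_{\max}$ for a fixed constant $C > 0$ depending only on $\pi$. Defining $h_i^{(0)} := \phi^{-1}(y_i^{(0)})$ yields $h_1^{(0)} \in G(F_1)$ and an element $h_2^{(0)}$ whose entries lie in $A_2$; the residual $r^{(1)} := (h_1^{(0)})^{-1}\, g\, (h_2^{(0)})^{-1}$ satisfies $\|\phi(r^{(1)})\|_{\max} = O(\|x\|_{\max}^2)$ by a Baker--Campbell--Hausdorff-type expansion of the multiplication. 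Iterating this procedure on $r^{(1)}, r^{(2)}, \dots$ produces sequences of coordinate-level increments, in $A_1$ and $A_2$ respectively, that decay quadratically.

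The main obstacle is to ensure that this formal iteration yields a genuine factorization $g = g_1 g_2$ with $g_1 \in G(F_1)$ and $g_2 \in G(A_2)$, especially since $A_2$ is only an $R_2$-module (not a ring) and $F_1 = \mathrm{Frac}\, R_1$ need not be complete for the norm inherited from $F_0$. Completeness of $A_2$ from Setting~1, combined with the quadratic decay, controls the additive increments and produces an element $g_2$ whose entries converge in $A_2$; checking that the limit satisfies the polynomial equations cutting out $G$ yields $g_2 \in G(A_2)$. For $g_1 := g\, g_2^{-1} \in G(F_0)$, the key is to arrange the bookkeeping so that, for $k$ sufficiently large, the $y_1^{(k)}$'s have coordinates — expressed in a fixed $R_1$-generating set of the finitely generated $R_1$-module $A_1$ — tending to zero in the complete ring $R_1$. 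The tail of the corresponding coordinate-level infinite series then converges within a completely normed $R_1$-subalgebra of $F_1$ (via completeness of $R_1$), forcing $g_1 \in G(F_1)$.
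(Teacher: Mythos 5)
Your overall strategy --- reduce to a neighborhood of the identity by density of $F_1$ in $F_0$, then iterate using the admissible surjection $\pi : A_1 \oplus A_2 \twoheadrightarrow F_0$ --- matches the paper's approach (and HHK's), and the reduction step you sketch is essentially correct. The gap is in the iteration. You set $r^{(j+1)} := (h_1^{(j)})^{-1}\, r^{(j)}\, (h_2^{(j)})^{-1}$, so the factorization is built from infinite products $g_1 = h_1^{(0)} h_1^{(1)} \cdots$ and $g_2 = \cdots h_2^{(1)} h_2^{(0)}$. The coordinates of the partial products $\phi\bigl(h_1^{(0)} \cdots h_1^{(m)}\bigr)$ are rational expressions in the increments $y_1^{(j)}$, obtained by iterated application of the multiplication chart $f$; since $f$ has nonlinear terms and denominators, and $A_1$ is only an $R_1$-module (not a ring), these coordinates lie in $F_1^n$ but not in $A_1^n$. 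As you yourself note, $F_1 = \mathrm{Frac}\, R_1$ is not complete for the norm inherited from $F_0$, so Cauchyness of the partial products in $F_0^n$ does not put the limit in $F_1^n$. Your proposed fix --- expand each $y_1^{(k)}$ in a fixed $R_1$-generating set of $A_1$ and invoke completeness of $R_1$ --- would control \emph{sums} of the $y_1^{(k)}$, but what must be controlled are the nested rational expressions $f(f(\cdots), y_1^{(k)})$, which do not stay in $A_1$ or $R_1^{n_1}$. The same obstruction applies to the claim that ``the entries of $g_2$ converge in $A_2$'': the coordinates of $h_2^{(m)} \cdots h_2^{(0)}$ again leave $A_2$ after a single application of $f$.

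The paper (Lemma~\ref{biggy}, following \cite[Theorem 2.5]{HHK}) sidesteps all of this by iterating \emph{additively at the coordinate level} rather than multiplicatively at the group level. One keeps a single pair $(u_j, v_j) \in A_1^n \times A_2^n$, computes the scalar residual $d_j = a - f(u_j, v_j) \in F_0^n$, splits it via $\pi$ as $d_j = u_j' + v_j'$ with controlled norms, and updates $u_{j+1} = u_j + u_j'$, $v_{j+1} = v_j + v_j'$. The accumulated states are plain sums of increments and therefore stay in the \emph{complete} modules $A_1^n$ and $A_2^n$ throughout; $f$ is evaluated once per step to measure the residual, never composed with itself. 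The limits $u \in A_1^n$, $v \in A_2^n$ satisfy $f(u,v) = a$, and a single application of $\phi^{-1}$ yields $g_1 = \phi^{-1}(u) \in G(F_1)$ and $g_2 = \phi^{-1}(v) \in G(A_2)$. Replacing your multiplicative scheme by this additive one (the geometric rather than quadratic convergence rate is immaterial) closes the gap.
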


This was proven in a slightly different setting by HHK in \cite{HHK}. We follow along the lines of their proof, making adjustements to render it suitable for the hypotheses we want to work with.

Let $K$ be an infinite field. Since a connected rational linear algebraic group $G$ over some infinite field $K$ has a non-empty open subset $U'$ isomorphic to an open subset $U$ of an affine space $\mathbb{A}_K^n$, by translation (since $K$ is infinite) we may assume that the identity element of $G$ is contained in $U',$ that $0 \in U,$ and that the identity is sent to $0.$ Let us denote the isomorphism $U' \rightarrow U$ by $\phi.$

Let $m$ be the multiplication in $G,$ and set $\widetilde{U'}=m^{-1}(U') \cap (U' \times U'),$ which is an open subset of $G \times G.$ It is isomorphic to an open subset $\widetilde{U}$ of $\mathbb{A}_K^{2n},$ and $m_{|\widetilde{U'}}$ gives rise to a map $\widetilde{U} \rightarrow U,$ \textit{i.e.} to a rational function $f: \mathbb{A}_K^{2n}\dashrightarrow \mathbb{A}_K^n.$ Remark that for any $(x,0), (0,x) \in \widetilde{U},$  this function sends them both to $x.$

\begin{center}
\begin{tikzpicture}
  \matrix (m) [matrix of math nodes,row sep=3em,column sep=4em,minimum width=2em]
  {
     \widetilde{U'} & U' \\
     \widetilde{U} & U \\};
  \path[-stealth]
    (m-1-1) edge node [left] {$(\phi \times \phi)_{|\widetilde{U'}}$} 
    (m-2-1) edge  node [above] {$m_{|\widetilde{U'}}$} 
    (m-1-2)
    (m-2-1.east|-m-2-2) edge node [below] {$f$}
      (m-2-2)
    (m-1-2) edge node [right] {$\phi$} 
    (m-2-2) 
    (m-2-1);
\end{tikzpicture}
\end{center}

The theorem we want to prove can be interpreted in terms of the map $f$. Lemma \ref{biggy} below, formulated to fit a more general setup, shows that said theorem is true on some neighborhood of the origin of an affine space. It is the analogue of \cite[Theorem 2.5]{HHK}.

\begin{sloppypar}We proceed first with an auxiliary result. Since the morphisms $A_i \hookrightarrow F_0, i=1,2,$ are bounded, there exists $C>0$, such that for any $x_i \in A_i, |x_i|_{F_0} \leqslant C \cdot |x_i|_{A_i}.$ By changing to an equivalent norm on $A_i$ if necessary, we may assume that $C=1.$ Let us fix the quotient norm $|\cdot|_{F_0}$ on $F_0,$ induced from the surjective morphism ~${\pi : A_1\oplus A_2 \twoheadrightarrow F_0}.$
\end{sloppypar}
\begin{lm} \label{1.7}
\begin{enumerate}
\item{For any $x_i \in A_i, i=1,2,$ $|x_i|_{F_0} \leqslant |x_i|_{A_i}.$}  
\item{
There exists a constant $d \in (0,1),$ such that for any $c \in F_0,$ there exist~${a \in 
A_1, b \in A_2},$ for which $\pi(a+b)=c$ and $d \cdot \max (|a|_{A_1},|b|_{A_2}) \leqslant |c|_{F_0}.$}
\end{enumerate}
\end{lm}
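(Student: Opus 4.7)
The plan is to observe that both parts are essentially restatements of the hypotheses of Setting~1 (together with the normalization carried out in the paragraph immediately preceding the lemma), and to unpack them into the quantitative form stated.

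For part (1), I would simply invoke the preceding paragraph. Boundedness of $A_i\hookrightarrow F_0$ supplies a constant $C>0$ with $|x_i|_{F_0}\leqslant C|x_i|_{A_i}$, and the author has already replaced $|\cdot|_{A_i}$ by the equivalent norm $C|\cdot|_{A_i}$ to make $C=1$. The only thing to double-check is that this renormalization preserves the other hypotheses of Setting~1 (completeness of $A_i$, finite generation as an $R_i$-module, and admissibility of $\pi$); each of these depends only on the equivalence class of the norms, so the verification is immediate.

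For part (2), I would unwind the admissibility hypothesis on $\pi$ directly. Explicitly, the quotient seminorm on $F_0\simeq (A_1\oplus A_2)/\ker\pi$ induced by the max seminorm on $A_1\oplus A_2$ is
\[
|c|_\pi \;=\; \inf\bigl\{\max(|a|_{A_1},|b|_{A_2}) : (a,b)\in A_1\oplus A_2,\ \pi(a+b)=c\bigr\},
\]
and equivalence of $|\cdot|_{F_0}$ with $|\cdot|_\pi$ (a hypothesis of Setting~1) provides a constant $d_1>0$ with $d_1|c|_\pi\leqslant |c|_{F_0}$ for every $c\in F_0$. Given $c\in F_0$ with $c\neq 0$, I would approximate the infimum: pick $(a,b)$ with $\pi(a+b)=c$ and $\max(|a|_{A_1},|b|_{A_2})\leqslant 2|c|_\pi$; for $c=0$, take $a=b=0$. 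In both cases one obtains $\tfrac{d_1}{2}\max(|a|_{A_1},|b|_{A_2})\leqslant |c|_{F_0}$, and setting $d=\min(d_1/2,\,1/2)$ yields the required constant lying in $(0,1)$.

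There is no serious obstacle here; the lemma is pure bookkeeping. What is worth noting is why it is stated in this shape rather than as the abstract equivalence of norms: the forthcoming proof of Theorem~\ref{2} will almost certainly require, for each target $c\in F_0$, an \emph{effective} lift $(a,b)\in A_1\oplus A_2$ with controlled norms, and the condition $d<1$ (as opposed to merely $d>0$) is what makes such lifts usable in a geometric-series/contraction style iteration — exactly the sort of argument that drives Gauss-type decompositions on which the matrix factorization $g=g_1\cdot g_2$ is built.
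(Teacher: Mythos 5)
Your proposal is correct and essentially mirrors the paper's argument: part (1) is handled by the preceding normalization, and part (2) unwinds the admissibility of $\pi$ by approximating the infimum defining the quotient norm. The only cosmetic difference is that the paper has already \emph{fixed} $|\cdot|_{F_0}$ to be the quotient norm itself (in the sentence immediately preceding the lemma), so your constant $d_1$ is simply $1$, and the paper's short proof-by-contradiction is just your ``approximate the infimum'' step written out explicitly.
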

\begin{proof}
\begin{enumerate}
\begin{sloppypar}
\item{See the paragraph above the statement.}
\item{Suppose $c \neq 0$. Let $D$ be any real number, such that $D>1$. For any $c \in F_0,$ there exist ${a \in A_1, b \in A_2}$ (depending on $D$), such that $\pi(a+b)=c$ and ${\max(|a|_{A_1}, |b|_{A_2}) \leqslant D \cdot |c|_{F_0}}.$
Otherwise, for any $x \in A_1, y \in A_2,$ for which $\pi(x+y)=c,$ one would have~$|x+y|_{\max}=\max(|x|_{A_1}, |y|_{A_2}) > D \cdot |c|_{F_0}.$ Then, $$|c|_{F_0}=\inf_{\substack{x \in A_1, y \in A_2\\ \pi(x+y)=c}}|x+y|_{\max} \geqslant D \cdot |c|_{F_0},$$
which is impossible if $c \neq 0.$ Thus, there exist $a$ and $b$ as above, and for~${d=D^{-1} \in (0,1)},$ one obtains $d \cdot \max (|a|_{A_1},|b|_{A_2}) \leqslant |c|_{F_0}.$
}
\end{sloppypar}
If $c=0,$ the statement is true regardless of the choice of $d.$
\end{enumerate}
\end{proof}

From now on, instead of writing $\pi(x+y)=c$ for $x\in A_1, y\in A_2, c \in F_0,$ we will just put $x+y=c$ without risk of ambiguity.

In what follows, for any positive integer $n,$ let us endow $F_0^n$ with the max norm induced from the norm on $F_0,$ and let us also denote it by $|\cdot|_{F_0}.$ 

\begin{lm}\label{biggy}
Let $f: \mathbb{A}_{F_0}^n \times \mathbb{A}^n_{F_0} \dashrightarrow \mathbb{A}_{F_0}^n$ be a rational map defined on a Zariski open $\widetilde{S}$ such that $(0,0) \in \widetilde{S},$ and $f(x,0)=f(0,x)=x$ whenever $(x,0),(0,x) \in \widetilde{S}.$ Then, there exists $\varepsilon >0,$ such that for any $a \in \mathbb{A}^n(F_0)$ with $|a|_{F_0} \leqslant \varepsilon,$ there exist $u \in A_1^n$ and $v \in A_2^n,$ for which $(u,v) \in \widetilde{S}(F_0)$ and $f(u,v)=a.$
\end{lm}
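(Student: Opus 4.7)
The plan is a non-archimedean Newton iteration. Since $f$ is rational and regular at $(0,0)$ with $f(0,0)=0$ (setting $x=0$ in $f(x,0)=x$), I would write
$$f(x,y)=(x+y)+g(x,y).$$
The remainder $g$ vanishes identically on both ``axes'' $\{x=0\}$ and $\{y=0\}$, so after clearing the common denominator $Q(x,y)$ with $Q(0,0)\neq 0$, every monomial in the numerator of each component of $g$ contains at least one $x_i$ and at least one $y_j$; thus $g$ admits the form $\sum_{i,j} x_i y_j\,h_{ij}(x,y)$ with each $h_{ij}$ a rational function regular at $(0,0)$. Using boundedness and local Lipschitz continuity of the $h_{ij}$ on a small polydisk, there exist $r_0>0$ and $C>0$ such that $B_{r_0}:=\{(x,y)\in F_0^{2n}:|x|_{F_0},|y|_{F_0}\leqslant r_0\}$ is contained in $\widetilde{S}(F_0)$, $f$ is regular on it, and
$$|g(x',y')-g(x,y)|_{F_0}\leqslant C\cdot \max(|x|_{F_0},|y|_{F_0},|x'|_{F_0},|y'|_{F_0})\cdot \max(|x'-x|_{F_0},|y'-y|_{F_0})$$
for all $(x,y),(x',y')\in B_{r_0}$. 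The small prefactor on the right is the crucial structural consequence of $g$ vanishing on both axes.

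\textbf{Iteration.} Fix $\varepsilon\in(0,r_0]$ small enough that $\lambda:=C\varepsilon/d^2<1$, where $d\in(0,1)$ is the constant from Lemma \ref{1.7}(2). Given $a\in F_0^n$ with $|a|_{F_0}\leqslant \varepsilon$, set $(u^{(0)},v^{(0)}):=(0,0)$, so $e_0:=a-f(0,0)=a$. Inductively, given $(u^{(n)},v^{(n)})\in A_1^n\times A_2^n$ and $e_n:=a-f(u^{(n)},v^{(n)})\in F_0^n$, apply Lemma \ref{1.7}(2) to each coordinate of $e_n$ to produce $\delta u_n\in A_1^n$, $\delta v_n\in A_2^n$ with $\delta u_n+\delta v_n=e_n$ and $\max(|\delta u_n|_{A_1},|\delta v_n|_{A_2})\leqslant |e_n|_{F_0}/d$. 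Put $(u^{(n+1)},v^{(n+1)}):=(u^{(n)}+\delta u_n,\,v^{(n)}+\delta v_n)$.

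\textbf{Convergence.} The defining relation $\delta u_n+\delta v_n = a-f(u^{(n)},v^{(n)})$ combined with $f=(x+y)+g$ simplifies the error telescopically to
$$e_{n+1}=g(u^{(n)},v^{(n)})-g(u^{(n+1)},v^{(n+1)}).$$
Using Lemma \ref{1.7}(1) to pass from $A_i$-norms to $F_0$-norms and the ultrametric inequality, one sees by induction that $|u^{(n)}|_{F_0},|v^{(n)}|_{F_0}\leqslant \varepsilon/d\leqslant r_0$ for all $n$, keeping the iterates inside $B_{r_0}$. The Lipschitz estimate then gives $|e_{n+1}|_{F_0}\leqslant C(\varepsilon/d)(|e_n|_{F_0}/d)=\lambda |e_n|_{F_0}$, hence $|e_n|_{F_0}\leqslant \lambda^n\varepsilon\to 0$. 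Therefore $|\delta u_n|_{A_1},|\delta v_n|_{A_2}\to 0$; completeness of $A_1$ and $A_2$ then produces limits $u=\sum_n\delta u_n\in A_1^n$ and $v=\sum_n\delta v_n\in A_2^n$ with $|u|_{F_0},|v|_{F_0}\leqslant \varepsilon/d$, so $(u,v)\in B_{r_0}\subseteq \widetilde{S}(F_0)$. Continuity of $f$ on $B_{r_0}$ yields $f(u,v)=\lim f(u^{(n)},v^{(n)})=\lim (a-e_n)=a$.

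\textbf{Main obstacle.} The hard part is the Lipschitz-type estimate for $g$ with the small prefactor $\max(|x|,|y|,|x'|,|y'|)$ rather than a constant: without it, the bound on $|e_{n+1}|_{F_0}$ would only match $|e_n|_{F_0}$ in order of magnitude, giving a bounded but not contracting iteration. The structural vanishing of $g$ on both axes, plus ultrametric control of the growth of the iterates, is precisely what turns the scheme into a genuine contraction.
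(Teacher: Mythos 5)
Your argument is correct in substance and reaches the same conclusion, but it takes a genuinely different route from the paper's. The paper, following HHK Lemmas 2.1 and 2.3, expands each component of $f$ as a formal power series $f_i = T_i + S_i + \sum_{|(l,m)|\ge 2}c^i_{l,m}\underline T^l\underline S^m$ with explicit coefficient bounds $|c^i_{l,m}|\le M^{|(l,m)|}$, and then tracks the decay of the error through a three-part induction that carefully estimates $|u_{j+1}^l v_{j+1}^m - u_j^l v_j^m|$ term by term. You instead stay at the level of the rational function: you write $f=(x+y)+g$, use the fact that $g$ vanishes on both axes to write $g=\sum_{i,j}x_iy_j\,h_{ij}$ with $h_{ij}$ regular at the origin, and deduce a second-order Lipschitz bound $|g(x',y')-g(x,y)|\le C\,\max(|x|,|y|,|x'|,|y'|)\,\max(|x'-x|,|y'-y|)$ on a small polydisk. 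The identity $e_{n+1}=g(u^{(n)},v^{(n)})-g(u^{(n+1)},v^{(n+1)})$ then makes the iteration a genuine contraction. This is cleaner and more self-contained — it avoids the formal-power-series machinery entirely — while still isolating the same structural point the paper exploits (vanishing of the remainder on both coordinate subspaces). One small bookkeeping slip: you write ``$\varepsilon\in(0,r_0]$'' but then need $|u^{(n)}|_{F_0},|v^{(n)}|_{F_0}\le\varepsilon/d\le r_0$, so you should require $\varepsilon\le d\,r_0$ (and also $r_0\le 1$ so that the quadratic prefactor $\max(\cdots)^2$ coming from the $x_iy_j[h_{ij}(x',y')-h_{ij}(x,y)]$ term is dominated by $\max(\cdots)$). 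These are trivial to fix and do not affect the validity of the approach.
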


\begin{proof}
The rational function $f$ can be written as $(f_1, \dots, f_n),$ where the $f_i$ are elements of $F_0[T_1, \dots, T_n, S_1, \dots, S_n]_{(T_1,\dots, T_n, S_1, \dots, S_n)}.$ Furthermore, since $f_i(0,0)=0,$ they belong to the maximal ideal of this ring. Lemmas 2.1 and 2.3 of \cite{HHK} remain true in our setting without any significant changes to their proofs (this is where the condition $f(x,0)=f(0,x)=x$ is crucial). They tell us that: 
\begin{enumerate}
\item{we can see these rational functions as elements of $F_0[[T_1, \dots, T_n, S_1, \dots, S_n]];$}
\item{there exists $M \geqslant 1,$ such that 
$$f_i=S_i+T_i+ \sum_{|(l,m)|\geqslant 2}c_{l,m}^i \underline{T}^l \underline{S}^m \in F_0[[T_1, \dots, T_n, S_1, \dots, S_n]],$$ with $|c_{l,m}^i|_{F_0} \leqslant M^{|(l,m)|},$ for  $i=1,2,\dots, n$ and $(l,m) \in \mathbb{N}^{2n},$ where $|(l,m)|$ is the sum of the coordinates of $(l,m).$ (Remark that since $f_i(x,0)=f_i(0,x)=x$ for any $x$ for which $(0,x),(x,0) \in \widetilde{S}$, we can even assume that $l,m$ are both non-zero.)}
\end{enumerate}
Since $\widetilde{S}$ is open, $\widetilde{S}(F_0)$ is a Zariski open in $\mathbb{A}^{2n}(F_0),$ and so it is open in $F_0^{2n}$ in the topology induced by the max norm (which is finer than the Zariski one). Seeing as $0 \in \widetilde{S}(F_0),$ there exists $\delta>0,$ such that for any $(x,y) \in F_0^{2n}$ with $|(x,y)|_{F_0}< \delta,$ one has $(x,y) \in \widetilde{S}(F_0)$ and~ $f(x,y)$ is defined. 

Let us fix the constant $d$ given by Lemma \ref{1.7}. Let $0 <\varepsilon' \leqslant \min \{1/2M, d^2/M^4, \delta/2\}.$ Set $\varepsilon=d\varepsilon'.$ Since ${\varepsilon< \varepsilon' < \min(1/M, \delta/2),}$ by Lemma 2.1 in \cite{HHK}, for any $(x,y) \in \widetilde{S}(F_0)$ with $|(x,y)|_{F_0} \leqslant \varepsilon',$ $f(x,y)$ is well defined, and the series by which $f_i(x,y)$ is given is convergent in $F_0,$ ${i=1,2,\dots, n}.$

Let $a =(a_1, a_2, \dots, a_n) \in \mathbb{A}^n(F_0)$ be such that $|a|_{F_0} \leqslant \varepsilon.$ Let $u_0=0 \in A_1^n,$ and $v_0=0 \in A_2^n.$ Using induction, one constructs sequences $(u_s)_s$ in $A_1^n,$ and $(v_s)_s$ in $A_2^n,$ such that the following conditions are satisfied: 
\begin{enumerate}
\item{$|u_s|_{A_1}, |v_s|_{A_2} \leqslant \varepsilon'$ for all $s \geqslant 0;$}
\item{$|u_s-u_{s-1}|_{A_1}, |v_s-v_{s-1}|_{A_2} \leqslant \varepsilon'^{\frac{s+1}{2}} $ for all $s \geqslant 1$;}
\item{$|f(u_s,v_s)-a|_{F_0} \leqslant d\varepsilon'^{\frac{s+2}{2}} $ for all $s \geqslant 0.$}
\end{enumerate} 

The first terms $u_0$ and $v_0$ satisfy conditions 1 and 3. We notice that the first condition implies $|(u_s,v_s)|_{F_0} \leqslant \varepsilon',$ so $f(u_s,v_s)$ is well defined, and $f_i(u_s,v_s)$ is convergent for $s \in \mathbb{N}$ and $i=1,2,\dots, n.$ Suppose that for $j\geqslant 0,$ we have constructed $u_j$ and $v_j$ satisfying all three conditions above. Then, $d_j:=a-f(u_j,v_j)\in F_0^n$ is well defined, and ${|d_j|_{F_0} \leqslant d\varepsilon'^{\frac{j+2}{2}}}.$ From Lemma \ref{1.7}, there  exist $u_{j}' \in A_1^n$ and $v_j' \in A_2^n,$ such that $d_j=u_j'+v_j',$ and  ${d \cdot \max(|u_j'|_{A_1}, |v_j'|_{A_2}) \leqslant |d_j|_{F_0} \leqslant d \varepsilon'^{\frac{j+2}{2}}}.$ 

Set $u_{j+1}=u_j+u_j'$ and $v_{j+1}=v_j+v_j'.$ Then, $|u_{j+1}|_{A_1} \leqslant \max \left({\varepsilon'}, \varepsilon'^{\frac{j+2}{2}} \right)=\varepsilon',$ and the same is true for $v_{j+1}.$  
Also, $|u_{j+1}-u_j|_{A_1}=|u_j'|_{A_1} \leqslant \varepsilon'^{\frac{j+2}{2}},$ and similarly, ${|v_{j+1}-v_j|_{A_2} \leqslant \varepsilon'^{\frac{j+2}{2}}}.$ 

\begin{sloppypar}
For $r \in \mathbb{N},$ $i \in \{1,2,\dots, r\}$ and $\alpha  \in F_0^r,$ let $\alpha_i$ be the $i$-th coordinate of $\alpha$. For $p=(p_1, p_2, \dots, p_r) \in \mathbb{N}^{r},$ set  $\alpha^p:=\prod_{i=1}^r \alpha_i^{p_i}.$ Then, for the third condition, 
\begin{align*}
|f_i(u_{j+1}, v_{j+1})-a_i|_{F_0}
= & \left|u_{j+1,i}+v_{j+1,i} -a_i +\sum_{|(l,m)|\geqslant 2} c_{l,m}^{i}u_{j+1}^l v_{j+1}^m\right|_{F_0}\\
= & \left| u_{j,i}+v_{j,i} + u'_{j,i}
+v'_{j,i}-a_i+ \sum_{|(l,m)|\geqslant 2} 
c_{l,m}^{i}u_{j+1}^l v_{j+1}^m  \right|
_{F_0} \\
= & \left|f_i(u_j,v_j) -a_i +u'_{j,i} +v'_{j,i}+ \sum_{|(l,m)|\geqslant 2} 
c_{l,m}^{i}(u_{j+1}^l v_{j+1}^m-u_{j}^l v_{j}^m)  \right|_{F_0}\\
= & \left| -d_{j,i}+ u'_{j,i} +v'_{j,i}+\sum_{|(l,m)|\geqslant 2} c_{l,m}^{i}(u_{j+1}^l v_{j+1}^m-u_j^lv_j^m)  \right|_{F_0}\\
= & \left| \sum_{|(l,m)|\geqslant 2} c_{l,m}^{i}(u_{j+1}^l v_{j+1}^m-u_j^lv_j^m)  \right|_{F_0} \\
\leqslant & \max_{|(l,m)|\geqslant 2} |c_{l,m}^{i}|_{F_0} \cdot |u_{j+1}^l v_{j+1}^m-u_j^lv_j^m|_{F_0}.\\
\end{align*}
On the other hand,
\begin{align*}
u_{j+1}^lv_{j+1}^m-u_j^lv_j^m= & (u_j+u_j')^l(v_j+v_j')^m-u_j^lv_j^m\\
= & \sum_{\substack{0 \leqslant \beta \leqslant l\\ 0 \leqslant \gamma \leqslant m}} A_\beta B_{\gamma} u_j^\beta u_j'^{l-\beta} v_j^\gamma v_j'^{m-\gamma}-u_j^lv_j^m\\
=&\sum_{0\leqslant\alpha \leqslant (l,m)}\sum_{\substack{\beta+\gamma=\alpha \\ 0 \leqslant \beta \leqslant l\\ 0 \leqslant \gamma \leqslant m}}A_\beta B_{\gamma} u_j^\beta u_j'^{l-\beta} v_j^\gamma v_j'^{m-\gamma}-u_j^lv_j^m,\\
\end{align*}  where $A_{\beta}, B_{\gamma}$ are integers (implying they are of norm at most one on $F_0$). 
Thus, ${u_{j+1}^lv_{j+1}^m-u_j^mv_j^l}$ is a finite sum of monomials of degree $|(l,m)|$ in the variables $u_{j,i}, u_{j,i}', v_{j,i}, v_{j,i}', i=1,2,\dots,n,$ where the degree in $u_{j,i}, v_{j,i}$ is 
strictly smaller than $|(l,m)|.$ Finally, since the norm is multiplicative and non-archimedean: 
\begin{align*}
|u_{j+1}^l v_{j+1}^m-u_j^lv_j^m|_{F_0} \leqslant & \max_{\substack{0 \leqslant \beta+\gamma < (l,m)\\ 0 \leqslant \beta \leqslant l, 0  \leqslant \gamma \leqslant m}} |u_j^\beta|_{F_0} |v_j^\gamma|_{F_0} |u_{j}'^{l-\beta}|_{F_0} |v_j'^{m-\gamma}|_{F_0} \\
 \leqslant & \max_{\substack{0 \leqslant \beta+\gamma < (l,m)\\ 0 \leqslant \beta \leqslant l, 0  \leqslant \gamma \leqslant m}} \varepsilon'^{|(\beta,\gamma)|} (\varepsilon'^{\frac{j+2}{2}})^{|(l,m)|-|(\beta,\gamma)|},\\
\end{align*}
so $|u_{j+1}^l v_{j+1}^m-u_j^lv_j^m|_{F_0} \leqslant \max_{0 \leqslant \theta <|(l,m)|} \varepsilon'^{\theta} \cdot (\varepsilon'^{\frac{j+2}{2}})^{|(l,m)|-\theta}.$
This, combined with ${|c_{l,m}^i|_{F_0} \leqslant M^{|(l,m)|}},$ implies that:
\begin{align*}
|f_i(u_{j+1},v_{j+1})-a_i|_{F_0} \leqslant & 
\max_{\substack{|(l,m)|\geqslant 2\\ 0 \leqslant \theta < |(l,m)|}} M^{|(l,m)|}\varepsilon'^{\theta} \cdot (\varepsilon'^{\frac{j+2}{2}})^{|(l,m)|-\theta}\\
 =& \max_{\substack{|(l,m)|\geqslant 2\\ 0 \leqslant \theta < |(l,m)|}} (M\varepsilon')^{\theta} \cdot (M\varepsilon'^{\frac{j+2}{2}})^{|(l,m)|-\theta}.\\
\end{align*}
Since $\varepsilon' \geqslant \varepsilon'^{\frac{j+2}{2}}$ and $M\varepsilon' < 1,$ one obtains:
$$|f_i(u_{j+1},v_{j+1})-a_i|_{F_0} \leqslant \max_{|(l,m)|\geqslant 2} (M\varepsilon')^{|(l,m)|-1} \cdot (M\varepsilon'^{\frac{j+2}{2}}) \leqslant M\varepsilon' \cdot M\varepsilon'^{\frac{j+2}{2}}.$$
At the same time, $M^2 \cdot \varepsilon'^{1+\frac{j+2}{2}}=(\frac{M^2}{d} \varepsilon'^{1/2}) d\varepsilon'^{\frac{j+3}{2}} \leqslant d \varepsilon'^{\frac{j+3}{2}},$  which concludes the induction argument.
\end{sloppypar}

The second property of the sequences $(u_s)_s, (v_s)_s$ tells us that they are Cauchy (hence convergent) in the complete spaces $A_1^n, A_2^n,$ respectively. Let $u \in A_1^n$ and $v \in A_2^n$ be the corresponding limits. The first property implies that $|(u,v)|_{F_0} \leqslant \varepsilon'<\delta,$ so $(u,v) \in \widetilde{S}(F_0),$ and $f(u,v)$ is well defined. Lastly, the third property implies that $f(u,v)=a.$
\end{proof}

From this point on, Theorem \ref{2} can be proven the same way as in~{\cite[Theorem 3.2]{HHK}}. Since $ A_i \hookrightarrow F_i, i=1,2,$ we immediately obtain:
\begin{sloppypar}
\begin{cor}\label{3}
Let $G$ be a connected linear algebraic group over $F$ that is a rational variety over $F.$ For any~${g \in G(F_0)},$ there exist~$g_i \in G(F_i),$ $i=1,2,$ such that $g=g_1 \cdot g_2$ in ~$G(F_0).$
\end{cor}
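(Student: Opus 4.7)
The plan is to deduce this corollary directly from Theorem \ref{2} by relaxing the membership condition on the second factor. Theorem \ref{2} already yields a decomposition $g = g_1 \cdot g_2$ with the stronger information $g_1 \in G(F_1)$ and $g_2 \in G(A_2)$, so the only work is to recognize that $G(A_2)$ sits naturally inside $G(F_2)$.

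First, I would apply Theorem \ref{2} to the given element $g \in G(F_0)$, producing $g_1 \in G(F_1)$ and $g_2 \in G(A_2)$ with $g = g_1 \cdot g_2$ inside $G(F_0)$. Next, I would invoke Setting 1 to recall that $A_2$ is, by hypothesis, an $R_2$-submodule of $F_2 = \mathrm{Frac}\, R_2$. Fixing an affine embedding $G \hookrightarrow \mathbb{A}^N_F$, an element of $G(A_2)$ is represented by an $N$-tuple of elements of $A_2$ satisfying the defining equations of $G$; since $A_2 \subseteq F_2$, this same tuple represents a point of $G(F_2)$. In other words, the inclusion $A_2 \hookrightarrow F_2$ induces the inclusion $G(A_2) \hookrightarrow G(F_2)$, and this inclusion is compatible with the maps into $G(F_0)$ coming from the bounded morphisms $A_2 \hookrightarrow F_0$ and $F_2 \hookrightarrow F_0$ (the latter obtained by localizing $R_2 \hookrightarrow F_0$, which is injective because $F_0$ is a field).

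Thus $g_2 \in G(F_2)$, the identity $g = g_1 \cdot g_2$ continues to hold in $G(F_0)$, and the corollary follows. I do not anticipate any obstacle here: the content of the statement is entirely in Theorem \ref{2}, and the passage from $A_2$ to $F_2$ is a formality once one has fixed an affine embedding of the linear algebraic group $G$.
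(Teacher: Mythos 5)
Your proof is correct and follows the same route as the paper: apply Theorem \ref{2} to obtain $g_1 \in G(F_1)$, $g_2 \in G(A_2)$, and then use the inclusion $A_2 \hookrightarrow F_2$ (part of Setting 1) to conclude $g_2 \in G(F_2)$. The paper states this in a single line ("Since $A_i \hookrightarrow F_i$, $i=1,2$, we immediately obtain..."), and your elaboration via an affine embedding of $G$ is simply spelling out that formality.
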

\end{sloppypar}
\section{Retracting Covers}

In the first section we mentioned that the most important example of Setting 1  was the one given by Proposition \ref{1}. This should serve as motivation for the following:

\begin{defn}\label{nice}
A finite cover $\mathcal{U}$ of a $k$-analytic curve will be called \textit{nice} if:
\begin{enumerate}
\item the elements of $\mathcal{U}$ are connected affinoid domains with only type 3 points in their topological boundaries;
\item for any different $U, V \in \mathcal{U},$ $U \cap V=\partial{U} \cap \partial{V};$ 
\item for any two different elements of $\mathcal{U},$ neither is contained in the other. 
\end{enumerate}
\end{defn}

We recall once again that we will use the term \textit{boundary} for the topological boundary. 

The purpose of this section is to prove that, under certain conditions, for any open cover of a $k$-analytic curve, there exists a \textit{nice refinement}, \textit{i.e.} a refinement that is a nice cover of the curve. The main goal is to be able to apply Corollary \ref{3} to any open cover. 

\begin{defn}
Let $P \in k[T]$ be any irreducible polynomial. We will denote by $\eta_{P,0}$ the only (type 1) point of $\mathbb{A}_k^{1,\mathrm{an}}$ for which $|P|=0.$ For $s \in \mathbb{R}_{>0},$ we will denote by $\eta_{P,s}$ the point of $\mathbb{A}_k^{1,\mathrm{an}}$ that is the Shilov boundary of the affinoid domain $\{|P| \leqslant s\} \subseteq \mathbb{A}_k^{1,\mathrm{an}}.$
\end{defn}

\begin{prop}\label{39}
For any point ${\eta \in \mathbb{A}_k^{1,\mathrm{an}}}$ of type 2 or 3, there exist an irreducible polynomial $P \in k[T]$ and $r \in \mathbb{R}_{> 0},$ such that $\eta=\eta_{P,r}.$ Then, $|P|_{\eta}=r$ and:
\begin{enumerate}
\item $r \in \sqrt{|k^{\times}|}$ if and only if $\eta$ is a type 2 point;
\item $r \not \in \sqrt{|k^{\times}|}$ if and only if $\eta$ is a type 3 point, in which case $\eta$ is the only element of $\mathbb{A}_k^{1,\mathrm{an}}$ for which $|P|=r.$
\end{enumerate}
\end{prop}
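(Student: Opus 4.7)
The plan is to base-change to the completion $K = \widehat{\bar k}$ of an algebraic closure, where every type $2$ (resp.\ $3$) point is the Gauss point of a closed disk of radius $s \in |K^\times|$ (resp.\ $s \notin |K^\times|$); since $|K^\times| = \sqrt{|k^\times|}$, the type dichotomy is already controlled by this radius. I would then pick a preimage of $\eta$ under the projection $\pi : \mathbb{A}_K^{1,\mathrm{an}} \to \mathbb{A}_k^{1,\mathrm{an}}$, move its center into $\bar k$ using density, and take $P$ to be the minimal polynomial of the new center over $k$.

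Concretely, lift $\eta$ to $\eta' = \eta_{D(a,s)}$ with $a \in K$ and $s>0$. By density of $\bar k$ in $K$, choose $a' \in \bar k$ with $|a-a'|<s$, so that $D(a,s)=D(a',s)$ and $\eta'=\eta_{D(a',s)}$. Let $P\in k[T]$ be the minimal polynomial of $a'$ over $k$, and let $a'=a_1,a_2,\dots,a_d\in\bar k$ be its roots (with multiplicity), so that $r:=|P|_\eta = \prod_{j=1}^d \max(|a'-a_j|,s)$. Since each $|a'-a_j|$ lies in $\sqrt{|k^\times|}$ and at least one factor (for $j=1$) equals $s$, one reads off $r\in \sqrt{|k^\times|} \iff s\in \sqrt{|k^\times|} \iff \eta$ is of type $2$, giving one direction of (1) and (2).

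To identify $\eta$ with $\eta_{P,r}$, observe that the function $\rho\mapsto |P|_{\eta_{a_i,\rho}}=\prod_j\max(|a_i-a_j|,\rho)$ is piecewise log-linear and strictly increasing in $\rho$, so $\eta_{a_i,s}$ lies on the topological boundary of $\{|P|\leq r\}_K$ and is in fact a Shilov boundary point. Since $\mathrm{Gal}(\bar k/k)$ acts transitively on $\{a_1,\dots,a_d\}$, the entire Galois orbit $\{\eta_{a_i,s}\}_i$ is contained in the Shilov boundary of $\{|P|\leq r\}_K$, and its image under $\pi$ is the single point $\eta$, which is therefore the Shilov boundary of $\{|P|\leq r\}_k$. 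Hence $\eta = \eta_{P,r}$ and $|P|_\eta = r$.

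For the uniqueness clause of (2) and the converse implications, suppose $\xi \in \mathbb{A}_k^{1,\mathrm{an}}$ satisfies $|P|_\xi = r \notin \sqrt{|k^\times|}$, and lift $\xi$ to $\xi' \in \mathbb{A}_K^{1,\mathrm{an}}$. A direct computation of $|P|_{\xi'}$ shows that at type $1$, type $2$ and type $4$ points of $\mathbb{A}_K^{1,\mathrm{an}}$ this value lies in $\sqrt{|k^\times|}\cup\{0\}$, so $\xi'$ must be of type $3$, say $\xi'=\eta_{D(z,\rho)}$ with $\rho\notin \sqrt{|k^\times|}$. The identity $\prod_j\max(|z-a_j|,\rho)=r$, combined with $|z-a_j|\in \sqrt{|k^\times|}$ and $r\notin \sqrt{|k^\times|}$, forces at least one $a_i$ to lie in $D(z,\rho)$, whence $D(z,\rho)=D(a_i,\rho)$; the strict monotonicity of $\rho\mapsto |P|_{\eta_{a_i,\rho}}$ then pins $\rho=s$, giving $\xi'\in \pi^{-1}(\eta)$ and $\xi=\eta$. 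The main technical obstacle will be the rigorous verification that the Shilov boundary of $\{|P|\leq r\}_K$ coincides with the Galois orbit of $\eta'$ and descends cleanly under $\pi$ to a single point of $\mathbb{A}_k^{1,\mathrm{an}}$; this ultimately rests on the piecewise-log-linear behavior of $|P|$ along the branches of the Berkovich tree and on the compatibility of Shilov boundaries with the base change $\pi$.
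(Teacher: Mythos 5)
Your proof is correct and takes a genuinely different route from the paper's. The paper works intrinsically over $k$: it fixes a rigid point $\eta_{P,0}$ in the connected component of $\mathbb{P}_k^{1,\mathrm{an}}\setminus\{\eta\}$ not containing $\infty$, and then pushes $\eta$ forward along the finite morphism $\varphi:\mathbb{P}_k^{1,\mathrm{an}}\to\mathbb{P}_k^{1,\mathrm{an}}$ given by $T\mapsto P(T)$, which carries $[\eta_{P,0},\infty]$ isomorphically onto the standard ray $[\eta_{T,0},\infty]$. This reduces everything to the transparent case $P=T$, and the type dichotomy and uniqueness drop out of the facts that finite morphisms preserve type and that $|P|$ is strictly increasing along $[\eta_{P,0},\infty)$ and locally constant elsewhere. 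Your version instead base-changes to $K=\widehat{\bar k}$, writes the lift as a Gauss point $\eta_{D(a',s)}$ with $a'\in\bar k$, and analyses the explicit formula $|P|_{\eta_{D(b,\rho)}}=\prod_j\max(|b-a_j|,\rho)$, together with the $\mathrm{Gal}(\bar k/k)$-action and compatibility of Shilov boundaries under $\pi$. The key dichotomy $r\in\sqrt{|k^\times|}\iff s\in\sqrt{|k^\times|}$ uses divisibility of $\sqrt{|k^\times|}$ since $r=s^{m}\cdot c$ with $c\in|K^\times|$, and that step is fine; the exclusion of type $4$ at the end likewise works because $|P|$ is eventually constant on a cofinal chain of disks with no root in their intersection. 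The paper's argument buys you shorter bookkeeping and avoids extension of scalars entirely, since all the disk computations are absorbed into a single application of a finite morphism and a citation of Ducros' monotonicity result for $|P|$; your approach is more hands-on and makes the role of $\sqrt{|k^\times|}=|\widehat{\bar k}^\times|$ and of Galois descent visible, at the cost of needing the base-change compatibility of Shilov boundaries, which you correctly flag as the technical point to nail down.
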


\begin{proof} 
We recall that the projective line $\mathbb{P}_k^{1,\mathrm{an}}$ is uniquely path-connected and can be obtained by adding a rational point $\infty$ to $\mathbb{A}_k^{1,\mathrm{an}}.$ For any two points $a,b \in \mathbb{P}_k^{1,\mathrm{an}},$ we denote by $[a,b]$ the unique path connecting them.

Let $A$ be a connected component of $\mathbb{P}_k^{1,\mathrm{an}} \backslash \{\eta\}$ that doesn't contain $\infty.$ In particular, $A \subseteq \mathbb{A}_k^{1,\mathrm{an}}.$ Let $\eta_0$ be any rigid point of $A$. There exists a unique irreducible polynomial $P \in k[T],$ such that $\eta_0=\eta_{P,0}.$ Then, $\eta \in [\eta_{P,0}, \infty].$

Let $\varphi$ be the finite morphism $\mathbb{P}_k^{1,\mathrm{an}} \rightarrow \mathbb{P}_k^{1,\mathrm{an}}$ determined by the map ${k[T] \rightarrow k[T]},$ ${T \mapsto P(T).}$ Seeing as $\varphi(\eta_{P,0})=\eta_{T,0}$ and $\varphi(\infty)=\infty,$ $[\eta_{P,0}, \infty]$ is mapped by $\varphi$ to $[\eta_{T,0}, \infty].$ Set $\eta'=\varphi(\eta).$ The path connecting $\eta_{T,0}$ to $\infty$ in $\mathbb{P}_k^{1,\mathrm{an}}$ is  $\{\eta_{T,s}: s \in \mathbb{R}_{\geqslant 0}\} \cup \{\infty\}.$  For any $s \geqslant 0, |T|_{\eta_{T,s}}=s,$ and if $\eta_{T,s}$ is a type 3 point, then it is the only one in ~$\mathbb{P}_{{k}}^{1,\mathrm{an}}$ for which $|T|=s.$ Furthermore, $\eta_{T,s}$ is a type 2 (resp. type 3) point if and only if $s \in \sqrt{|k^\times|}$ (resp. $s \not \in \sqrt{|k^\times|}$). 

Thus, there exists $r>0,$ such that $\eta'=\eta_{T,r}.$ Since $\varphi(\eta)=\eta_{T,r},$ by construction, $\eta=\eta_{P,r}$ and $|P|_{\eta_{P,r}}=r.$ Seeing as a finite morphism preserves the type of the point (\textit{i.e.} $\eta_{T,r}$ is a type 2 (resp. 3) point if and only if $\eta_{P,r}$ is so), we obtain (1) and the first part of ~(2).

To prove the second part of (2), we need to show that if $r \not \in \sqrt{|k^\times|},$ $\eta_{P,r}$ is the only point in $\mathbb{A}_k^{1,\mathrm{an}}$ for which $|P|=r.$ Since $P$ is irreducible, by \cite[3.4.24.3]{Duc}, $|P|$ is strictly increasing in $[\eta_{P,0}, \infty),$ and locally constant elsewhere. Hence, $\eta_{P,r}$ is the only point in $[\eta_{P,0}, \infty)$ for which $|P|=r,$ and since it is a type 3 point (\textit{i.e.} $\mathbb{A}_k^{1,\mathrm{an}}$ has exactly two connected components), it is the only such point in $\mathbb{A}_k^{1,\mathrm{an}}.$   
\end{proof}

Let us recall that we denote by $\partial_B(\cdot/\cdot)$ the Berkovich relative boundary, and by $\partial_B(\cdot)$ the boundary relative to the base field $k,$ \textit{i.e.} $\partial_B(\cdot/\mathcal{M}(k))$ (see \cite[Definition 2.5.7]{Ber90} and \mbox{\cite[Definition ~1.5.4]{ber93})}.   

\begin{lm} \label{rita}
Let $V$ be a $k$-affinoid curve. The following sets are equal:
\begin{enumerate}
\item{the Berkovich boundary $\partial_B(V)$ of $V;$}
\item{the Shilov boundary $\Gamma(V)$ of $V.$}
\end{enumerate}
\end{lm}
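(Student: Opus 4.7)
My plan is to establish the equality by proving the two inclusions $\Gamma(V) \subseteq \partial_B(V)$ and $\partial_B(V) \subseteq \Gamma(V)$ separately.

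For $\Gamma(V) \subseteq \partial_B(V)$, I would invoke the general fact, valid for every strictly $k$-affinoid space, that the Shilov boundary is always contained in the Berkovich relative boundary (this is a classical result proved in Berkovich's foundational work \cite{Ber90}). The underlying reason is that at a Shilov point $x$ some function in the affinoid algebra of $V$ attains its spectral seminorm at $x$, and this extremality precludes $x$ from lying in the relative interior of $V$ over $\mathcal{M}(k)$.

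For the reverse inclusion, I would take $x \in V \setminus \Gamma(V)$ and aim to show $x \in \mathrm{Int}(V/\mathcal{M}(k))$. I would split the argument according to the Berkovich type of $x$. If $x$ is of type $1$ or $4$, then by the local structure of analytic curves, $x$ admits a fundamental system of neighborhoods in $V$ isomorphic to open disks, which places it in the relative interior. If $x$ is of type $2$ or $3$, I would use the graph description of $V$ provided by \cite[Théorème 3.5.1]{Duc}: the Shilov boundary of an affinoid curve consists of precisely those type $2$ or $3$ points that appear as extremal vertices of the graph (equivalently, the generic points of the irreducible components of the canonical reduction in the strictly affinoid case). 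Since $x \notin \Gamma(V)$, the graph of $V$ extends in all directions around $x$, which yields an affinoid neighborhood of $x$ in $V$ that is also open in any ambient analytic curve containing $V$. This gives $x \in \mathrm{Int}(V/\mathcal{M}(k))$.

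The main obstacle I anticipate is making precise the identification of $\Gamma(V)$ with the set of extremal graph vertices when $V$ is not strictly $k$-affinoid, since the canonical reduction is directly available only in the strict case. I would handle this either by reducing to the strictly affinoid situation via a suitable extension of the base field (using that both $\Gamma$ and $\partial_B$ behave well under such extensions), or by invoking Ducros' general treatment of analytic curves in \cite{Duc} where the graph structure is intrinsic. Once the identification is in place, the equality of the two sets reduces to a combinatorial statement about the graph of $V$.
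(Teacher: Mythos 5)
The paper's proof is a pure citation: the strictly affinoid case is read off from V\'azquez \cite[Lemma 2.3]{rita}, and the general case is obtained by replacing classical reduction with Temkin's graded reduction \cite[Props.\ 3.3, 3.4]{tem1}. Your proposal attempts a direct two-inclusion argument, which is a genuinely different presentation, but both of your inclusions are asserted rather than proved, and each assertion conceals precisely the content that makes the lemma nontrivial.

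For $\Gamma(V) \subseteq \partial_B(V)$: you call this ``a classical result proved in Berkovich's foundational work,'' but it does not appear there as a ready-made statement, and it is in fact \emph{false} for zero-dimensional affinoid spaces (take $V = \mathcal{M}(k)$: then $\Gamma(V) = V$ while $\partial_B(V) = \emptyset$), so any proof must genuinely use $\dim V = 1$. The heuristic that ``attaining the spectral seminorm precludes lying in the relative interior'' is not a proof; the relative interior is defined via a reduction-theoretic condition on characters (\cite[\S 2.5]{Ber90}), and linking that to Shilov points requires an argument with the canonical (or graded) reduction --- which is exactly what the cited \cite[Lemma 2.3]{rita} supplies.

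For $\partial_B(V) \subseteq \Gamma(V)$: the open-disc neighborhood argument for type $1$ and $4$ points is fine in spirit, but for type $2$ and $3$ points the phrase ``the graph of $V$ extends in all directions around $x$, which yields an affinoid neighborhood of $x$ that is also open in any ambient analytic curve'' confuses the intrinsic boundary $\partial_B(V) = \partial_B\bigl(V/\mathcal{M}(k)\bigr)$ with a relative boundary inside an ambient curve. The intrinsic interior criterion is, once again, the reduction-theoretic one, and verifying it for non-Shilov type $2$/$3$ points is the technical heart of the lemma, not a byproduct of graph combinatorics. Finally, your two proposed remedies for the non-strict case are both reasonable in spirit; the second (Ducros' intrinsic treatment) is close to what the paper does, except the paper specifically invokes Temkin's \emph{graded} reduction, which is the cleanest substitute for classical reduction when $V$ is not strict. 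The base-change route requires checking that both $\Gamma$ and $\partial_B$ behave correctly under the ground field extension $k \hookrightarrow k_r$ used to strictify $V$; this is plausible but not automatic and would itself require an argument. In short: your outline names the right moving parts, but the two inclusions are exactly where the work lies, and neither of your sketches supplies it.
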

\begin{sloppypar}
\begin{proof}
If $V$ is strictly affinoid, this is \cite[Lemma 2.3]{rita}. The proof can be extended to the general case by replacing classical reduction with Temkin's graded reduction (see Propositions 3.3 and 3.4 of \cite{tem1}). 
\end{proof}
\end{sloppypar} 
\begin{prop}\label{nc}
Let $C$ be a $k$-analytic curve such that $\partial_B(C)=\emptyset$. Let $V$ be an affinoid domain of ~$C.$ The three following sets coincide:
\begin{enumerate}
\item{the topological boundary $\partial{V}$} of $V$ in $C;$
\item{the Berkovich relative boundary $\partial_B(V/C)$ of $V$ in $C$;}
\item{the Shilov boundary $\Gamma(V)$ of $V.$}
\end{enumerate}
\end{prop}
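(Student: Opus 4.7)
The plan is to prove the three equalities $\partial V = \partial_B(V/C) = \Gamma(V)$ by splitting into two independent arguments: a topological/neighborhood argument identifying the topological boundary with the relative Berkovich boundary, and a transitivity argument identifying the relative Berkovich boundary with the Shilov boundary via Lemma \ref{rita}.

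First I would establish $\partial V = \partial_B(V/C)$. Here the key input is the standard characterization (see \cite[Proposition 2.5.8]{Ber90} or \cite[Proposition 1.5.5]{ber93}) that for an analytic domain $V$ in an analytic space $C$, one has $x \notin \partial_B(V/C)$ if and only if $V$ is a neighborhood of $x$ in $C$. Since $V$ is affinoid, it is in particular compact and closed in $C$, and since $C$ is a good space (being a curve, and more concretely an analytic space where local models are available), this characterization applies directly. If $x \in V \setminus \partial_B(V/C)$, then $V$ is a neighborhood of $x$ in $C$, so $x$ lies in the topological interior of $V$ and thus $x \notin \partial V$. Conversely, if $x \in V \setminus \partial V$, the fact that $V$ is closed in $C$ implies $x$ lies in the topological interior of $V$, so $V$ is a neighborhood of $x$ in $C$, hence $x \notin \partial_B(V/C)$. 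Both inclusions give the equality.

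Next I would show $\partial_B(V/C) = \Gamma(V)$. The key ingredient is the transitivity of the Berkovich boundary for an analytic domain $V$ of $C$:
\[
\partial_B(V) = \partial_B(V/C) \cup \bigl(V \cap \partial_B(C)\bigr),
\]
see \cite[Proposition 2.5.8(iii)]{Ber90}. The hypothesis $\partial_B(C) = \emptyset$ immediately kills the second term, leaving $\partial_B(V) = \partial_B(V/C)$. Since $V$ is an affinoid curve, Lemma \ref{rita} then yields $\partial_B(V) = \Gamma(V)$, and combining the two gives $\partial_B(V/C) = \Gamma(V)$.

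The main obstacle, such as it is, is simply a bookkeeping one: invoking the correct version of the neighborhood criterion and of the transitivity of the relative boundary from Berkovich's framework, and verifying that $V$ being closed (as an affinoid domain) is indeed enough to match the topological interior with the complement of $\partial_B(V/C)$. No genuinely new geometric input beyond Lemma \ref{rita} and the hypothesis $\partial_B(C)=\emptyset$ is required, and no separate treatment of type~2 versus type~3 boundary points is needed, since both characterizations are insensitive to the type.
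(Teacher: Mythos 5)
Your proof is correct and follows essentially the same route as the paper: identify $\partial V$ with $\partial_B(V/C)$ via Berkovich's neighborhood/closedness criterion (the paper cites \cite[Corollary 2.5.13(ii)]{Ber90} directly, while you unpack the same criterion from Proposition 2.5.8), then identify $\partial_B(V/C)$ with $\partial_B(V)$ using $\partial_B(C)=\emptyset$ via the transitivity formula (the paper cites \cite[Proposition 1.5.5(ii)]{ber93}, you cite the affinoid-space version \cite[Proposition 2.5.8(iii)]{Ber90}), and finish with Lemma \ref{rita}. The citation choices differ slightly but the logical structure is identical.
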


\begin{proof}
By \cite[Corollary 2.5.13 (ii)]{Ber90}, $\partial_B(V/C)=\partial{V}.$ By \cite[Proposition 1.5.5 (ii)]{ber93}, since $C$ is boundaryless, $\partial_B(V/C)=\partial_B(V).$ Finally, in view of Lemma \ref{rita}, ${\partial{V}=\partial_B(V/C)=\Gamma(V)}.$
\end{proof}
Let us recall that analytification of an algebraic variety is boundaryless. In particular, projective $k$-analytic curves are boundaryless.

Until the end of this section, suppose that  $\sqrt{|k^{\times}|} \neq \mathbb{R}_{>0},$ so that there exist type 3 points in $\mathbb{P}_k^{1,\mathrm{an}}.$ 
\begin{thm}\label{4}
Let $C$ be a  $k$-analytic curve. The family of connected affinoid domains with only type 3 points in their topological boundaries forms a basis of neighborhoods of the Berkovich topology on $C.$
\end{thm}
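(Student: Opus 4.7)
The plan is to work locally at a given point $x \in C$ and reduce to the analytic affine line, where Proposition \ref{39} provides the needed control over the type of boundary points. Given any open neighborhood $W \ni x$ in $C$, the goal is to produce a connected affinoid $V$ with $x \in V \subseteq W$ and $\partial V$ consisting only of type 3 points.

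For the first step, I would invoke the local structure theorem for $k$-analytic curves (e.g.\ \cite[Théorème 3.2.1, 3.6.1]{Duc}): after shrinking $W$, there is a connected affinoid neighborhood $V_0 \subseteq W$ of $x$ equipped with a finite morphism $\varphi : V_0 \to D$, where $D$ is an affinoid subdomain of $\mathbb{A}_k^{1,\mathrm{an}}$. Since $\varphi$ is finite (hence proper and closed), for any affinoid $A \subseteq D$ one has $\partial(\varphi^{-1}(A)) \subseteq \varphi^{-1}(\partial A)$; moreover, finite morphisms between analytic curves preserve the type of points (both the rank of the value group modulo $|k^\times|$ and the transcendence degree of the residue extension are preserved by finite field extensions). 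Thus it suffices to produce such an affinoid $A \subseteq D$ around $y := \varphi(x)$ whose boundary consists of type 3 points, and then take for $V$ the connected component of $\varphi^{-1}(A)$ containing $x$.

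For the second step, I would construct $A$ explicitly in $\mathbb{A}_k^{1,\mathrm{an}}$. A basis of connected affinoid neighborhoods of $y$ is given by sets of the form
\[
A = \{z \in \mathbb{A}_k^{1,\mathrm{an}} : |P_0(z)| \leq r_0,\ |P_j(z)| \geq r_j \text{ for } 1 \leq j \leq n\},
\]
with $P_0, \dots, P_n \in k[T]$ irreducible and $r_j > 0$, whose topological boundary in $\mathbb{A}_k^{1,\mathrm{an}}$ is the finite set $\{\eta_{P_j, r_j}\}_{j=0}^n$. The hypothesis $\sqrt{|k^\times|} \neq \mathbb{R}_{>0}$ means that $\sqrt{|k^\times|}$ is a proper $\mathbb{Q}$-subspace of $(\mathbb{R}_{>0}, \cdot)$, so its complement is dense (translating $\sqrt{|k^\times|}$ by any element of $\mathbb{R}_{>0} \setminus \sqrt{|k^\times|}$ produces a dense set disjoint from it). Each $r_j$ may therefore be perturbed slightly to lie outside $\sqrt{|k^\times|}$ while $A$ remains a neighborhood of $y$; by Proposition \ref{39}, each $\eta_{P_j, r_j}$ is then a type 3 point. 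Type 4 points are handled analogously, by a nested sequence of closed disks whose radii are chosen outside $\sqrt{|k^\times|}$.

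The main technical obstacle is verifying the neighborhood basis claim in $\mathbb{A}_k^{1,\mathrm{an}}$, especially at type 2 points $y$, which have infinitely many branches: one must check that the ``finite'' shape above produces arbitrarily small neighborhoods. This follows from the standard description of the topology on the Berkovich affine line: any open neighborhood of $y$ contains whole branches of $y$ except at finitely many of them, so only finitely many ``hole'' constraints $|P_j(z)| \geq r_j$ are ever needed. Once this basis statement is in place, the pullback argument of the first step completes the proof.
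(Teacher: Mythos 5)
Your proposal is correct, but it takes a genuinely more roundabout path than the paper. The paper reduces to an affinoid domain $C$ (since curves are good spaces) and works with the basis of the weak topology directly: an open neighborhood of $x$ of the form $\{|f_i| < r_i,\ |g_j| > s_j\}$, with $f_i, g_j$ analytic on $C$, is shrunk to the affinoid $V = \{|f_i| \leqslant r_i',\ |g_j| \geqslant s_j'\}$ after perturbing the radii into $\mathbb{R}_{>0} \setminus \sqrt{|k^\times|}$. Then $\partial V$ lies inside $\bigcup_i \{|f_i| = r_i'\} \cup \bigcup_j \{|g_j| = s_j'\}$, and at any such point some analytic function has norm outside $\sqrt{|k^\times|}$; Abhyankar's inequality then forces that point to be of type 3. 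This needs neither a finite morphism to the affine line nor Proposition \ref{39}. By contrast, your route relies on (i) the local structure theorem to produce a finite morphism $\varphi : V_0 \to D$ with $D$ an affinoid of $\mathbb{A}_k^{1,\mathrm{an}}$, (ii) an explicit Laurent-shape description of a neighborhood basis at each point of $\mathbb{A}_k^{1,\mathrm{an}}$, (iii) Proposition \ref{39}, and (iv) a pullback and connected-component step. These moves are valid (and your observations that finite maps preserve type and that $\partial(\varphi^{-1}(A)) \subseteq \varphi^{-1}(\partial A)$ by continuity are correct), but they introduce extra bookkeeping: you must verify the basis claim at type 2 points (infinitely many branches) and type 4 points, and you should carry out the construction inside $D$ rather than in all of $\mathbb{A}_k^{1,\mathrm{an}}$ so that the resulting domain actually lands in $D$. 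The paper's direct argument sidesteps all of this. One small point, shared by both arguments: to guarantee the final affinoid is connected, one should pass to the connected component through $x$ at the end (connected components of affinoids are affinoid and have no larger boundary), which your write-up does explicitly and the paper leaves implicit.
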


\begin{proof}
Let $x \in C.$ Seeing as any curve is a good Berkovich space (\textit{i.e.} all points have a neighborhood that is an affinoid domain), we may assume that $C$ is an affinoid domain. Let $U$ be an open neighborhood of $x$ in $C.$ 
There exists an open neighborhood 
of $x$ in $U$ given by $\{|f_i|<r_i, |g_j|
>s_j : i=1,2,\dots, n, j=1,2,\dots, m\},$ where 
$f_i, g_j$ are analytic functions on $C$ and 
${r_i, s_j \in \mathbb{R}_{>0}}.$

Let $r_i', s_j' \in \mathbb{R}_{>0} \backslash \sqrt{|k^\times|},$ such that $r_i'<r_i$ and $s_j'>s_j$, and $|f_i(x)|<r_i', |g_j(x)|>s_j',$ for all $i$ and $j.$ Set $V=\{|f_i| \leqslant r_i', |g_j| \geqslant s_j'\}.$ It is an affinoid domain of $C$ and a neighborhood of $x$ contained in $U.$ 

As $\{|f_i| < r_i', |g_j'|>s_j\}$ is open, it is contained in $\text{Int}(V),$ so $\partial{V} \subseteq \bigcup_{i=1}^n \{|f_i|=r_i'\} \cup \bigcup_{j=1}^m \{|g_j|=s_j'\}.$  Let $y \in \bigcup_{i=1}^n \{|f_i|=r_i'\} \cup \bigcup_{j=1}^m \{|g_j|=s_j'\}.$ Since there exists an analytic function $f$ on $C$ such that $|f(y)| \not \in \sqrt{|k^\times|},$ the point $y$ is of type 3, implying that the boundary of $V$ contains only type 3 points. 
\end{proof}

\subsection{The Case of $\mathbb{P}_k^{1,\mathrm{an}}$}

Recall that $\mathbb{P}_k^{1,\mathrm{an}}$ is uniquely path-connected. For any ${x,y \in \mathbb{P}_k^{1,\mathrm{an}}},$ let us denote by $[x,y]$ the unique injective path connecting them. The next few properties of the projective line will be essential to the remainder of this section.
 
\begin{lm}\label{61}
Let $A \subseteq \mathbb{P}_{k}^{1,\mathrm{an}}.$ Then, $A$ is connected if and only if for any $x, y \in A,$~${[x,y] \subseteq A}.$ Furthermore, the intersection of any two connected subsets of $\mathbb{P}_k^{1,\mathrm{an}}$ is connected. 
\end{lm}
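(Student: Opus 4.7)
The plan rests on the structural fact recalled in the text: $\mathbb{P}_k^{1,\mathrm{an}}$ is uniquely path-connected, so topologically it is a real tree. In such a space I expect connected subsets to coincide with the \emph{convex} ones (subsets closed under taking the unique arc between any two of their points), and intersections of two convex subsets to be automatically convex; this second observation will immediately bootstrap the second assertion from the first.

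For the first equivalence, the direction $(\Leftarrow)$ is the easy one. Assuming that $[x,y] \subseteq A$ for every $x,y \in A$, I would fix some $x_0 \in A$ and note that every $y \in A$ is joined to $x_0$ by $[x_0,y] \subseteq A$, so $A$ is path-connected, hence connected.

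For $(\Rightarrow)$ I would argue by contradiction. Suppose $A$ is connected, $x,y \in A$, but some $z \in [x,y]$ fails to lie in $A$. Then $z \neq x,y$, and by unique path-connectedness every path from $x$ to $y$ in $\mathbb{P}_k^{1,\mathrm{an}}$ must pass through $z$; consequently $x$ and $y$ lie in distinct connected components of $\mathbb{P}_k^{1,\mathrm{an}} \setminus \{z\}$. Since Berkovich analytic curves are locally path-connected, the connected components of the open subset $\mathbb{P}_k^{1,\mathrm{an}} \setminus \{z\}$ are themselves open in $\mathbb{P}_k^{1,\mathrm{an}}$. Letting $U$ be the component containing $x$ and $V$ the union of the remaining ones, both are open, disjoint, and satisfy $A \subseteq U \cup V$ since $z \notin A$. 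Then $A \cap U$ and $A \cap V$ are non-empty disjoint opens covering $A$, contradicting its connectedness.

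For the second assertion, let $A, B$ be connected subsets; if $A \cap B = \emptyset$ there is nothing to prove (by convention), so assume $A \cap B \neq \emptyset$. Given any $x,y \in A \cap B$, applying the first part to $A$ and to $B$ yields $[x,y] \subseteq A$ and $[x,y] \subseteq B$, hence $[x,y] \subseteq A \cap B$. Reapplying the first part, this time to $A \cap B$ itself, gives that $A \cap B$ is connected. The main technical point in the whole argument is the justification that removing an interior point $z$ of an arc actually separates the space topologically; the cleanest route, which I would use, combines the unique path-connectedness of $\mathbb{P}_k^{1,\mathrm{an}}$ (to isolate branches at $z$) with its local connectedness (to upgrade those branches to genuinely open pieces of a separation).
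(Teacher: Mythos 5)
The paper states Lemma \ref{61} without proof, treating it as a known consequence of the tree structure of $\mathbb{P}_k^{1,\mathrm{an}}$ recalled in the surrounding text; so there is no ``paper proof'' to compare against, and your argument must stand on its own. It does, modulo one step that is glossed over.

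Your strategy is sound and is the natural one: the $(\Leftarrow)$ direction via path-connectedness is immediate; for $(\Rightarrow)$ you want to show that an interior point $z$ of the arc $[x,y]$ topologically separates $x$ from $y$, and then local connectedness of Berkovich spaces lets you upgrade this to a genuine open partition of $A$; the intersection statement then bootstraps cleanly from the arc-convexity characterization. The one step that deserves more care is the claim ``by unique path-connectedness every path from $x$ to $y$ must pass through $z$.'' The uniqueness asserted in the paper (and what is actually true) is uniqueness of the \emph{injective} path, i.e.\ the arc $[x,y]$; an arbitrary path $f:[0,1]\to\mathbb{P}_k^{1,\mathrm{an}}$ from $x$ to $y$ is not injective a priori. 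To conclude that $z$ lies in the image of $f$, you need to invoke the fact that in a Hausdorff space the image of a path from $x$ to $y$ always contains an arc from $x$ to $y$ (the image $f([0,1])$ is a compact Hausdorff quotient of $[0,1]$, hence a Peano continuum by the Hahn--Mazurkiewicz circle of results, hence arc-connected), and that arc is forced to be $[x,y]$ by uniqueness. Alternatively, one can sidestep paths entirely by citing the structural description of analytic curves as graphs (Ducros, Th\'eor\`eme 3.5.1, referenced in the paper), from which the separating property of interior arc points is immediate. With either of these patches your proof is complete and correct.
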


\begin{lm}\label{5}
Let $U,V$ be two non-disjoint connected affinoid domains of $\mathbb{P}_k^{1,\mathrm{an}},$ such that they have disjoint interiors. Then, $U \cap V$ is a single point.
\end{lm}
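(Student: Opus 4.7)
The plan is to argue by contradiction using the tree structure of $\mathbb{P}_k^{1,\mathrm{an}}$ together with finiteness of the Shilov boundary. Assume $U \cap V$ contains two distinct points $x$ and $y$. By Lemma \ref{61}, the intersection $U \cap V$ is connected, and the unique path $[x,y] \subseteq \mathbb{P}_k^{1,\mathrm{an}}$ is entirely contained in $U \cap V$. In particular, every point of $[x,y]$ belongs to both $U$ and $V$.

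The key input is that $\partial U$ and $\partial V$ are both \emph{finite} sets. Indeed, $\mathbb{P}_k^{1,\mathrm{an}}$ is the analytification of a projective variety and therefore has empty Berkovich boundary, so Proposition \ref{nc} applies: the topological boundaries $\partial U$ and $\partial V$ inside $\mathbb{P}_k^{1,\mathrm{an}}$ coincide with the Shilov boundaries $\Gamma(U)$ and $\Gamma(V)$, which are finite because $U$ and $V$ are affinoid.

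Since $[x,y]$ is a non-degenerate path (as $x \neq y$), it contains infinitely many points, so we may pick $z \in [x,y]$ with $z \notin \partial U \cup \partial V$. Because $z \in U \setminus \partial U$ and $U$ is closed, this means $z \in \mathrm{Int}(U)$; similarly $z \in \mathrm{Int}(V)$. This contradicts the hypothesis that $U$ and $V$ have disjoint interiors. Hence $U \cap V$ has at most one point, and since $U \cap V \neq \emptyset$ by assumption, it is a single point.

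The argument is essentially topological once the right facts are assembled; the only subtle point is the identification of the topological boundary with the (finite) Shilov boundary, which is exactly the content of Proposition \ref{nc} and justifies why $\partial U \cup \partial V$ cannot cover an entire non-degenerate path in the tree $\mathbb{P}_k^{1,\mathrm{an}}$.
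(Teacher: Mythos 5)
Your proof is correct and uses the same two key ingredients as the paper's: finiteness of the topological boundary of an affinoid domain (via the identification with the Shilov boundary, Proposition \ref{nc}) and connectedness of $U\cap V$ (Lemma \ref{61}). The paper's proof simply asserts $U\cap V = \partial U\cap\partial V$ and concludes finiteness directly, whereas you run the argument in contrapositive form, picking a point of the infinite segment $[x,y]$ that avoids the finite set $\partial U\cup\partial V$ — this is essentially the same approach, with the finiteness step spelled out more explicitly.
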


\begin{proof}
Since $U \cap V=\partial U \cap \partial V,$ it is a finite set of points. At the same time, by Lemma ~\ref{61}, $U\cap V$ is connected, so it must be a single point. 
\end{proof}

\begin{lm}\label{62}
Let $U$ be an affinoid domain of $\mathbb{P}_k^{1,\mathrm{an}}$ with only type 3 points in its boundary. If $\mathrm{Int} (U) \neq \emptyset,$ then $(\mathrm{Int} \ U)^c$ is an affinoid domain of $\mathbb{P}_k^{1,\mathrm{an}}$ with only type 3 points in its boundary.
\end{lm}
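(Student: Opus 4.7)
The plan is to show that $(\mathrm{Int}(U))^c$ decomposes as a finite union of connected affinoid domains of $\mathbb{P}_k^{1,\mathrm{an}}$ whose pairwise intersections lie in the finite type 3 set $\partial U$, and then invoke a Ducros-type gluing result to conclude that the union is again affinoid. First I note that since $U$ is closed, $(\mathrm{Int}(U))^c = U^c \cup \partial U = \overline{U^c}$, where $U^c := \mathbb{P}_k^{1,\mathrm{an}} \setminus U$. By Proposition \ref{nc}, $\partial U$ is the Shilov boundary of $U$, hence finite, and by hypothesis consists solely of type 3 points; the assumption $\mathrm{Int}(U) \neq \emptyset$ ensures $(\mathrm{Int}(U))^c \neq \mathbb{P}_k^{1,\mathrm{an}}$.

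Set $\partial U = \{\eta_1, \ldots, \eta_n\}$. Since each $\eta_i$ has arity $2$ in the uniquely path-connected space $\mathbb{P}_k^{1,\mathrm{an}}$ (Lemma \ref{61}), removing all the $\eta_i$ disconnects $\mathbb{P}_k^{1,\mathrm{an}}$ into exactly $n+1$ connected open components. Each such component is disjoint from $\partial U$ and connected, hence lies entirely in $\mathrm{Int}(U)$ or entirely in $U^c$. Let $C_1, \ldots, C_m$ denote the components contained in $U^c$. Every $\eta_i \in \partial U = \overline{U} \cap \overline{U^c}$ lies in $\overline{U^c}$, hence in the closure of at least one $C_j$, so $\bigcup_{j=1}^m \overline{C_j} = U^c \cup \partial U = (\mathrm{Int}(U))^c$.

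The core step is to show each $\overline{C_j}$ is a connected affinoid domain. It is closed and connected, with boundary contained in $\partial U$ and so consisting of type 3 points. By the tree property (Lemma \ref{61}), each connected component $W$ of $\mathbb{P}_k^{1,\mathrm{an}} \setminus \overline{C_j}$ meets $\overline{C_j}$ in a single boundary point: otherwise two such points would yield two distinct paths between them, one through $W$ and one through $\overline{C_j}$, contradicting unique path-connectedness. By Proposition \ref{39}, this unique point equals $\eta_{P_{i_k}, r_{i_k}}$ with $r_{i_k} \notin \sqrt{|k^\times|}$, and $W$ is one of the two branches at it, namely $\{|P_{i_k}| < r_{i_k}\}$ or $\{|P_{i_k}| > r_{i_k}\} \cup \{\infty\}$. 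Consequently $\overline{C_j}$ is a finite intersection of closed sets of the form $\{|P_{i_k}| \leq r_{i_k}\}$ or $\{|P_{i_k}| \geq r_{i_k}\} \cup \{\infty\}$, each a connected affinoid domain of $\mathbb{P}_k^{1,\mathrm{an}}$; a finite intersection of affinoid domains being affinoid, $\overline{C_j}$ is affinoid.

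To finish, $(\mathrm{Int}(U))^c = \bigcup_{j=1}^m \overline{C_j}$ is a finite union of connected affinoid domains of $\mathbb{P}_k^{1,\mathrm{an}}$ whose pairwise intersections lie in the finite type 3 set $\partial U$; by the Ducros gluing result already invoked in the proof of Proposition \ref{1} (\cite[Th\'eor\`eme 6.1.3]{Duc}), the union is itself an affinoid domain, and its boundary, being contained in $\partial U$, consists solely of type 3 points. The main obstacle is the tree-theoretic argument of the third paragraph, where both Lemma \ref{61} and Proposition \ref{39} must be combined to turn the purely topological description of $\overline{C_j}$ into an explicit algebraic one; the final union step piggybacks on the curve-theoretic gluing theorem already established in the paper.
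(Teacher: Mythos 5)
Your argument is correct, but it proceeds quite differently from the paper's. The paper's proof works entirely on the side of $\mathrm{Int}(U)$: it reduces to a connected component $U_i$ of $U$, invokes Ducros' local annulus structure theorem (\cite[Th\'eor\`eme 4.5.4]{Duc}) at each boundary point to show that no $z\in\partial U_i$ can lie on a path between two interior points, deduces that $\mathrm{Int}(U_i)$ is connected and hence that $\mathrm{Int}(U)$ has finitely many components, and then cites \cite[Proposition 4.2.14]{Duc} and \cite[Th\'eor\`eme 6.1.3]{Duc} directly. You instead work on the complement: you partition $\mathbb{P}_k^{1,\mathrm{an}}\setminus\partial U$ into branches, collect the ones lying in $U^c$, and exhibit each closure $\overline{C_j}$ explicitly as a finite intersection of closed discs and anti-discs $\{|P|\leq r\}$, $\{|P|\geq r\}\cup\{\infty\}$ via Proposition \ref{39}, obtaining the affinoid structure concretely rather than abstractly. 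The trade-off is that your route is more explicit and avoids the annulus theorem, but leans more heavily on the tree combinatorics of $\mathbb{P}_k^{1,\mathrm{an}}$; in particular, the ``arity 2'' claim you attribute to Lemma \ref{61} is not actually its content (that lemma only records unique path-connectedness and stability of connectedness under intersection) — the fact that a type 3 point disconnects $\mathbb{P}_k^{1,\mathrm{an}}$ into exactly two branches is a standard property the paper mentions only in its introduction and uses via Proposition \ref{39} and Ducros' \cite[4.2.11.2]{Duc}, so you should cite one of those instead. Your informal ``two distinct paths'' justification that $\overline{W}\cap\overline{C_j}$ is a singleton can be tightened to: both closures contain $[x,y]$ by Lemma \ref{61}, yet $\overline{W}\cap\overline{C_j}\subseteq\partial U$ is finite, which is impossible for $x\neq y$. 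With those touch-ups your proof is a valid alternative to the paper's.
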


\begin{proof}
Let us show that $\text{Int} \ U$ has only finitely many connected components.

Since $U$ is an affinoid domain, it has a finite number of connected components $U_i$, and by \cite[Corollary 2.2.7]{Ber90}, they are all affinoid domains. Furthermore, $U_i$ has only type 3 points in its boundary for all $i.$ 

Let $x,y \in \text{Int} \ U_i.$ Since $U_i$ is connected, $[x, y] \subseteq U_i.$ Let $z \in \partial{U_i}.$ We aim to show that $z \not \in [x,y],$ implying $[x,y] \subseteq \text{Int}\ {U_i},$ and thus the connectedness of $\text{Int} \ U_i.$ 

By \cite[Th\'eor\`eme 4.5.4]{Duc}, there exists a neighborhood $V$ of $z$ in $U_i$ such that it is a closed virtual annulus, and its Berkovich boundary is $\partial_B(V)=\{z,u\}$ for some $u \in U_i.$ By shrinking this annulus if necessary, we may assume that $x,y \not \in V.$ Since $V$ is an affinoid domain in $U_i,$ by \cite[Corollary 2.5.13 (ii)]{Ber90}, the topological boundary $\partial_U{V}$ of $V$ in $U$ is a subset of $\partial_B(V)=\{z,u\}.$ Since $V$ is a neighborhood of $z,$ $\partial_U{V}=\{u\}.$

Suppose  $z \in [x,y].$ Then  we could decompose $[x,y]=[x,z] \cup [z,y].$ Since $x,y \not \in V$, and $z \in V,$ the sets $[x,z] \cap \partial_U{V},$ $[z,y] \cap \partial_U{V}$ are non-empty, thus implying $u$ is contained in both $[x,z]$ and $[z,y],$ which contradicts the injectivity of $[x,y].$

We have shown that $\text{Int} \ U$ has finitely many connected components, so by \cite[Proposition 4.2.14]{Duc}, $(\text{Int} \ U)^c$ is a closed proper analytic domain of $\mathbb{P}_k^{1,\mathrm{an}}.$ By \cite[Th\'eor\`eme 6.1.3]{Duc}, it is an affinoid domain. 
\end{proof}

We can now show a special case of the result we prove in this section. 

\begin{lm}\label{63}
Let $C,D$ be connected affinoid domains of $\mathbb{P}_k^{1,\mathrm{an}}$ with only type 3 points in their boundaries. There exists a nice refinement $\{ C_1, \dots, C_n, D\}$ of the cover $\{C,D\}$ of $C \cup D$, such that $C_i \cap C_j = \emptyset$ for any $i \neq j.$
\end{lm}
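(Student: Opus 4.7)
The plan is to carve $C$ along the boundary of $D$ and then decompose what remains into its connected components. Concretely, I set $C' := C \cap (\mathrm{Int}\, D)^c$ and propose to take the $C_i$'s to be the connected components of $C'$, after discarding any that end up sitting entirely inside $D$.

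To set this up, I first apply Lemma \ref{62} to $D$ (handling the degenerate case $\mathrm{Int}\, D = \emptyset$ separately, where $D$ is at worst a single type 3 point) to get that $(\mathrm{Int}\, D)^c$ is an affinoid domain of $\mathbb{P}_k^{1,\mathrm{an}}$ with only type 3 points in its boundary. A finite intersection of affinoid domains is affinoid, so $C'$ is an affinoid domain, and $\partial C' \subseteq \partial C \cup \partial((\mathrm{Int}\, D)^c) = \partial C \cup \partial(\mathrm{Int}\, D) \subseteq \partial C \cup \partial D$ consists only of type 3 points. Its finitely many connected components $C_1,\dots,C_m$ are therefore themselves connected affinoid domains with only type 3 points in their boundaries.

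Next I verify the nice-cover conditions between the $C_i$'s and $D$. Distinct components are disjoint, which simultaneously gives the extra requirement $C_i \cap C_j = \emptyset$ and the condition $C_i \cap C_j = \partial C_i \cap \partial C_j$. For a pair $(C_i, D)$: since $C_i \subseteq (\mathrm{Int}\, D)^c$ and $D$ is closed, any $x \in C_i \cap D$ must lie in $\partial D$; and any neighborhood of such $x$ meets $\mathrm{Int}\, D$, which is disjoint from $C_i$, forcing $x \in \partial C_i$. This gives $C_i \cap D = \partial C_i \cap \partial D$. Finally $D \not\subseteq C_i$ since $\mathrm{Int}\, D \subseteq D$ while $\mathrm{Int}\, D \cap C_i = \emptyset$.

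There remains the possibility that some $C_i$ is contained in $D$. Such a component would lie in $\partial D$, which is a finite discrete set of type 3 points, so $C_i$ would reduce to a single type 3 point. I propose simply to discard all such components and relabel the survivors as $C_1,\dots,C_n$. The discarded points lie in $D$, so $\{C_1,\dots,C_n,D\}$ still covers $C \cup D$ and refines $\{C, D\}$, and by construction no $C_i$ is contained in $D$ anymore. The main obstacle I anticipate is the topological claim that every neighborhood of a point $x \in \partial D$ meets $\mathrm{Int}\, D$, i.e.\ that $D = \overline{\mathrm{Int}\, D}$ for a connected affinoid $D$ with type 3 boundary. I would prove this by using the tree structure of $\mathbb{P}_k^{1,\mathrm{an}}$ together with the fact that type 3 points have arity $2$: near $x \in \partial D$, the curve splits into two branches, and since $D$ is connected with $x$ on its boundary, $D$ must locally be $\{x\}$ together with (part of) exactly one of these branches, making $x$ a limit of interior points.
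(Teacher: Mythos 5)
Your approach is the same as the paper's: set $C' = C \setminus \mathrm{Int}\, D$, note via Lemma~\ref{62} that it is an affinoid domain with type~3 boundary, take its connected components, and discard the singleton components contained in $D$. You go further than the paper by explicitly checking condition~(2) of Definition~\ref{nice}, and you correctly identify the needed topological fact $D = \overline{\mathrm{Int}\, D}$; for the latter, rather than unwinding the arity-2 picture (whose ``part of exactly one branch'' step still needs finiteness of $\partial D$ to rule out thin pieces), note simply that $D \setminus \overline{\mathrm{Int}\, D}$ is a relatively open subset of $D$ contained in the finite set $\partial D$, so each of its points would be isolated in $D$, which is impossible since $D$ is connected with more than one point.
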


\begin{proof}
If $C=D,$ it is straightforward. Otherwise, suppose $C \not \subseteq D.$ By Lemma ~\ref{62}, ~$C \backslash \text{Int} \ D$ is an affinoid domain. Let $C_1', C_2', \dots, C_m'$ be its connected components. They are mutually disjoint connected affinoid domains with only type 3 points in their boundaries. By construction, for any $i$ the intersection $C_i' \cap D$ is either empty or a single type 3 point, $C_i' \cap C_j' =\emptyset$ for all $i \neq j,$ and $\{C_1', C_2', \dots, C_m', D\}$ is a refinement of $\{C,D\}.$ 
For any $i,$ if $C_i'$ is a single point, \textit{i.e.} $C_i' \subseteq D,$ we remove it from $\{C_1', C_2', \dots, C_m'\},$ and if not, we keep it there. Let $C_1, C_2, \dots, C_n$ be the remaining connected components of $C \backslash \text{Int} \ D.$ Then, $\{C_1, C_2, \dots, C_n, D\}$ is a nice refinement of the cover $\{C,D\}$ of $C \cup D.$
\end{proof}

The main result of this section in the case of the projective line is the following generalization:

\begin{prop}\label{6}
For any $n \in \mathbb{N},$ let $\{U_i\}_{i=1}^n$ be a set of affinoid domains of $\mathbb{P}_k^{1,\mathrm{an}}$ with only type 3 points in their boundaries. Set $V_n=\bigcup_{i=1}^n U_i.$ Then, there exists a nice cover of $V_n$ that refines~$\{U_i\}_{i=1}^n,$ satisfying the following properties:
\begin{enumerate}
\item{the intersection of any two of its elements is either empty or a single type 3 point;}
\item{if two domains of the refinement intersect, there is no third one that intersects them both.}
\end{enumerate}
\end{prop}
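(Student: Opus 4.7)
I would proceed by induction on $n$. For $n=1$, replacing $U_1$ by its connected components yields a pairwise disjoint nice cover, using \cite[Corollary 2.2.7]{Ber90} to ensure each component is affinoid with type 3 boundary. The case $n=2$ is essentially Lemma~\ref{63}, and property (2) holds trivially there, since the pieces $C_1,\dots,C_m$ it produces are pairwise disjoint and each meets $D$ in at most one point.

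For the inductive step, assume the statement for $n-1$, and let $\mathcal{W}=\{W_1,\dots,W_m\}$ be a nice refinement of $V_{n-1}$ satisfying (1) and (2). Splitting $U_n$ into connected components and inserting them one at a time, we may suppose $U_n$ is connected. Iterating Lemma~\ref{62} (and using that intersections of affinoid domains in $\mathbb{P}_k^{1,\mathrm{an}}$ are affinoid), the set
\[
\widetilde{U} := U_n \cap \bigcap_{j=1}^{m} \bigl(\mathrm{Int}\,W_j\bigr)^c
\]
is an affinoid domain of $\mathbb{P}_k^{1,\mathrm{an}}$ whose topological boundary consists solely of type 3 points. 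Its connected components $\widetilde{U}^{(1)},\dots,\widetilde{U}^{(p)}$ are therefore connected affinoid domains that meet each $W_j$ only inside $\partial W_j$, hence in a finite set of type 3 points. After discarding components reduced to a single point (such a point already belongs to some $W_j$), the family $\mathcal{W}\cup\{\widetilde{U}^{(l)}\}_l$ refines $\{U_1,\dots,U_n\}$ and covers $V_n$.

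It remains to restore (1) and (2). A violation can only take one of two forms: some intersection $\widetilde{U}^{(l)}\cap W_j$ contains more than one point (failure of (1)), or three sets $\widetilde{U}^{(l)},W_i,W_j$ with $W_i\cap W_j\neq\emptyset$ form a triangle in the intersection graph (failure of (2), since $\mathcal{W}$ itself already satisfies both properties and the $\widetilde{U}^{(l)}$ are mutually disjoint). To repair each offending boundary point $p\in\partial\widetilde{U}^{(l)}$, I would apply Theorem~\ref{4} together with the tree-like topology of $\mathbb{P}_k^{1,\mathrm{an}}$ (Lemma~\ref{61}) to select a type 3 point $p'$ near $p$ on the branch of the tree leading from $\widetilde{U}^{(l)}$ toward the offending $W_j$, then replace $\widetilde{U}^{(l)}$ by its affinoid subdomain cut off at $p'$, and introduce the small arc between $p$ and $p'$ as a new affinoid piece contained entirely in $W_j$.

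The main obstacle is the combinatorial bookkeeping: one local surgery must not resurrect a violation elsewhere, and several surgeries on the same $\widetilde{U}^{(l)}$ must be compatible. Since the union of all boundaries involved is finite and type 3 points are dense, every cut $p'$ can be chosen close enough to its $p$ to lie on a branch that avoids all previously recorded boundary points, making the modifications pairwise independent. A finite well-ordered sequence of such surgeries thus produces the desired nice refinement of $\{U_1,\dots,U_n\}$ satisfying both (1) and (2), completing the induction.
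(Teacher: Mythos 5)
Your plan reverses the order of the paper's argument: the paper first intersects $U_1,\dots,U_{n-1}$ with $(\mathrm{Int}\,U_n)^c$ and only then applies the inductive hypothesis to the resulting $n-1$ affinoid domains, whereas you first take a nice refinement $\mathcal{W}$ of $\{U_1,\dots,U_{n-1}\}$ and only afterwards cut $U_n$ against the $W_j$. This is a genuinely different route, and the construction of $\widetilde{U}$ and its connected components is sound.

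However, the ``local surgery'' step is both unnecessary and, as described, not well-posed. Neither of the two failures you anticipate actually occurs, and the paper's own lemmas already close the gap. If a piece $\widetilde{U}^{(l)}$ meets some $W_j$ (both connected affinoid domains with only type 3 boundary points), their interiors are disjoint because $\widetilde{U}^{(l)}\cap\mathrm{Int}\,W_j=\emptyset$ by construction; Lemma~\ref{5} then forces the intersection to be a \emph{single} type 3 point, so property (1) holds automatically. Likewise, once single-point components are discarded, the three sets in any prospective triangle $\widetilde{U}^{(l)},W_i,W_j$ have nonempty interiors that are mutually disjoint (the $W$'s by niceness of $\mathcal{W}$; $\widetilde{U}^{(l)}$ versus each $W_j$ by construction), so Lemma~\ref{jepiiiii} says at least one pairwise intersection is empty, giving property (2) automatically. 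There is nothing to repair.

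Even setting that aside, the surgery as written would not go through: ``the small arc between $p$ and $p'$'' is a one-dimensional segment of the tree and is not an affinoid domain of $\mathbb{P}_k^{1,\mathrm{an}}$; you would need a closed annulus around it and then a fresh verification that the modified family is still nice, reintroducing precisely the kind of bookkeeping you wanted to avoid. The clean fix is to delete the surgery paragraph and invoke Lemmas~\ref{5} and~\ref{jepiiiii} directly, which is exactly how the paper concludes. The paper's ordering (subtract $\mathrm{Int}\,U_n$ first, then induct) also has the structural advantage that the nice refinement produced by induction is already adapted to $U_n$, so $\{U_n\}\cup\{W_l\}$ is nice with no further checking; your ordering requires the extra (but correct) observation that disjointness of interiors forces $\widetilde{U}^{(l)}\cap W_j\subseteq\partial\widetilde{U}^{(l)}\cap\partial W_j$.
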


\begin{proof}
We will use induction on the number of affinoids domains $n.$ For $n=1,$ the statement is trivial. Suppose the proposition is true for any positive integer smaller or equal to some $n-1.$ Let $\{U_i\}_{i=1}^n$ be affinoid domains of $\mathbb{P}_k^{1,\mathrm{an}}$ with only type 3 points in their boundaries. If they are all of empty interior, \textit{i.e.} unions of points, then the statement is trivially true. Otherwise, let $i_0 \in \{1, 2, \dots, n\}$ be any index for which $U_{i_0}$ has  non-empty interior. To simplify the notation, suppose $i_0=n.$ By removing the $U_i$'s contained in $U_n$ if necessary, we may assume that for all $i,$ $U_{i} \not \subseteq U_n.$  

From Lemmas \ref{62} and ~\ref{63}, ${\mathcal{U}=\{U_n\} \cup \{U_i \cap (\text{Int}\ U_n)^c\}_{i=1}^{n-1}}$ is a refinement of $\{U_i\}_{i=1}^n$ containing affinoid domains with only type 3 points in their boundaries.  Let $\{W_l\}_{l=1}^s$ be a nice refinement of ${\{U_i\cap (\text{Int}\ U_n)^c\}_{i=1}^{n-1}}.$ Then, for any $l,$ $U_n \cap W_l \subseteq \partial{U_n}.$ By removing those $W_l$ for which $W_l \subseteq U_n$ if necessary, we obtain that $\{U_n\} \cup \{W_l\}_{l=1}^s$ is a nice refinement of $\{U_i\}_{i=1}^n.$ The first condition of the statement is a direct consequence of Lemma ~\ref{5}.

We have proven that for any positive integer $n,$ there exists a nice refinement of $\{U_i\}_{i=1}^n,$ which satisfies the first property of the statement. Property 2  is immediate from the following:
\begin{lm} \label{jepiiiii}
Let $W_1, W_2, W_3$ be three connected affinoid domains of $\mathbb{P}_k^{1,\mathrm{an}}$ with non-empty interiors and only type 3 points in their boundaries. Suppose their interiors are mutually disjoint. Then, at least one of $W_1 \cap W_2, W_2 \cap W_3, W_3 \cap W_1$ is empty.
\end{lm}
\begin{proof}
Suppose that $W_1 \cap W_2, W_2 \cap W_3,$ and $W_3 \cap W_1$ are all non-empty. If $W_1 \cap W_2 \cap W_3 \neq \emptyset,$ then by Lemma \ref{61} it is a single type 3 point $\{z\}$. Since $\mathbb{P}_k^{1,an} \backslash \{z\}$ has exactly two connected components, and the interiors of $W_1, W_2, W_3$ are non-empty and mutually disjoint, this is impossible. Hence, $W_1 \cap W_2 \cap W_3= \emptyset,$ and so $W_1 \cap W_2, W_2 \cap W_3 $ and $W_3 \cap W_1$ are all non-empty and different. Since $W_1 \cap W_2 \neq \emptyset$, $W_1 \cup W_2$ is a connected affinoid domain with only type 3 points in its boundary. Furthermore, $\text{Int}(W_3) \cap \text{Int}(W_1 \cup W_2) \subseteq (W_3 \cap W_1) \cup (W_3 \cap W_2),$ 
and since this is a finite set of type 3 points, $\text{Int}(W_3) \cap \text{Int}(W_1 \cup W_2)=\emptyset.$

Thus, the interior of $W_1 \cup W_2$ is disjoint to the interior of $W_3.$ By Lemma \ref{5}, $(W_1 \cup W_2) \cap W_3$ is a single type 3 point. But, $W_1 \cap W_3$ and $W_2 \cap W_3$ were both assumed to be non-empty and shown to be different, implying $(W_1 \cap W_3) \cup (W_2 \cap W_3)=(W_1 \cup W_2) \cap W_3$ contains at least two different points, contradiction.

Thus, at least one of $W_1 \cap W_2, W_2 \cap W_3,$ $W_3 \cap W_1$ must be empty. 
\end{proof}
This completes the proof of the proposition.
\end{proof}

In view of Theorem \ref{4}, we obtain:

\begin{thm}\label{7}
Any open cover of a compact subset of $\mathbb{P}_k^{1,\mathrm{an}}$ has a nice refinement.
\end{thm}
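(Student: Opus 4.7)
The plan is to combine Theorem \ref{4} (which provides a neighborhood basis of connected affinoid domains with only type 3 points in their boundaries) with Proposition \ref{6} (which produces a nice refinement of any finite family of such affinoids) via a standard compactness argument.

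First, let $K$ be a compact subset of $\mathbb{P}_k^{1,\mathrm{an}}$ and let $\{U_\alpha\}_{\alpha \in I}$ be an open cover of $K$. For each $x \in K$, pick an index $\alpha(x) \in I$ with $x \in U_{\alpha(x)}$. By Theorem \ref{4}, there exists a connected affinoid neighborhood $V_x$ of $x$ in $\mathbb{P}_k^{1,\mathrm{an}}$ with only type 3 points in $\partial V_x$ such that $V_x \subseteq U_{\alpha(x)}$. The family $\{\mathrm{Int}\, V_x\}_{x \in K}$ is an open cover of the compact set $K$, so it admits a finite subcover, which in turn yields finitely many $V_{x_1}, \dots, V_{x_n}$ whose union contains $K$.

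Next, I would apply Proposition \ref{6} to the finite family $\{V_{x_i}\}_{i=1}^n$ to obtain a nice cover $\mathcal{W}$ of $\bigcup_{i=1}^n V_{x_i}$ that refines $\{V_{x_i}\}_{i=1}^n$. In particular, every $W \in \mathcal{W}$ is a connected affinoid domain with only type 3 points in its boundary, and the nice-cover conditions on pairwise intersections and non-containment are satisfied. Since $K \subseteq \bigcup_{i=1}^n V_{x_i}$, the family $\mathcal{W}$ covers $K$; and since each $W \in \mathcal{W}$ is contained in some $V_{x_i} \subseteq U_{\alpha(x_i)}$, the family $\mathcal{W}$ is a refinement of the original open cover $\{U_\alpha\}_{\alpha \in I}$.

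There is no real obstacle here: the statement is a routine consequence of the two main results of this section. The only point that requires minor care is that Proposition \ref{6} produces a nice cover of $\bigcup V_{x_i}$ rather than of $K$ itself, but this is harmless since any cover of a superset of $K$ whose elements refine $\{U_\alpha\}$ is automatically a nice refinement of the given cover of $K$.
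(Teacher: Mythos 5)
Your proof is correct and matches the paper's intended argument: the paper simply says ``In view of Theorem~\ref{4}, we obtain'' before stating the theorem, meaning exactly the compactness-plus-Proposition~\ref{6} combination you spell out. The clarifying remark at the end about the nice cover being a cover of $\bigcup_i V_{x_i}$ rather than of $K$ itself is a reasonable reading of the statement and is harmless, as you note.
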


The following will be needed later:

\begin{lm}\label{30}
Let $A$ be a connected affinoid domain of $\mathbb{P}_k^{1,\mathrm{an}}.$ Let $S$ be a finite subset of $\mathrm{Int}(A)$ containing only type 3 points. There exists a nice cover $\mathcal{A}$ of $A,$ such that the set of points of intersection of different elements of $\mathcal{A}$ is $S.$
\end{lm}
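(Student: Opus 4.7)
The plan is to take $\mathcal{A}$ to be the family of closures in $A$ of the connected components of $A \setminus S$; if $S = \emptyset$ we simply take $\mathcal{A} = \{A\}$, so assume $S \neq \emptyset$.

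First I would show that $A \setminus S$ has exactly $|S|+1$ connected components. Each $s \in S$ is a type~$3$ point of $\mathbb{P}_k^{1,\mathrm{an}}$, hence of arity~$2$; since $s \in \mathrm{Int}(A)$, $A$ contains a full neighborhood of $s$, so both branches of $\mathbb{P}_k^{1,\mathrm{an}}$ at $s$ lie in $A$ locally. Because $\mathbb{P}_k^{1,\mathrm{an}}$ is a tree (Lemma~\ref{61}), the intersections of any connected subset of $A$ with the two branches at $s$ are themselves connected (again by Lemma~\ref{61}), so removing $s$ from such a connected subset produces exactly two connected pieces. An induction on $|S|$ yields the claim.

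Next, writing $S = \{x_1, \ldots, x_m\}$ and denoting the components of $A \setminus S$ by $\Omega_1, \ldots, \Omega_{m+1}$, let $A_j = \overline{\Omega_j}$ (closure in $A$). To show each $A_j$ is an affinoid subdomain of $A$ with only type~$3$ points in its topological boundary, I would apply Proposition~\ref{39}: each $x_i \in S$, being of type~$3$ and therefore distinct from $\infty$, can be written as $\eta_{P_i, r_i}$ with $P_i \in k[T]$ irreducible and $r_i \in \mathbb{R}_{>0}\setminus\sqrt{|k^{\times}|}$. The two components of $\mathbb{P}_k^{1,\mathrm{an}} \setminus \{x_i\}$ have closures equal to the affinoid domains $\{|P_i|\leqslant r_i\}$ and $\{|P_i|\geqslant r_i\} \cup \{\infty\}$, meeting only at $x_i$. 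Since each $\Omega_j$ is connected and disjoint from $x_i$, it lies entirely on one side of $x_i$, and hence
\[
A_j \;=\; A \;\cap\; \bigcap_{i=1}^{m} H_i^{(j)},
\]
where $H_i^{(j)}$ is the appropriate closed affinoid half at $x_i$. This exhibits $A_j$ as an intersection of affinoid subdomains of $\mathbb{P}_k^{1,\mathrm{an}}$ with $A$, hence as an affinoid subdomain of $A$; its topological boundary in $A$ lies in $\{x_1, \ldots, x_m\} = S$ and therefore consists only of type~$3$ points.

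Finally I would verify the three conditions of Definition~\ref{nice} for $\mathcal{A} = \{A_1, \ldots, A_{m+1}\}$: each $A_j$ is connected as the closure of a connected set; every $x \in S$ is a limit point of exactly two of the pieces $\Omega_j$, so it lies in exactly two elements of $\mathcal{A}$ and always as a boundary point of each, which yields simultaneously property~(2) of Definition~\ref{nice} and the identification of the set of intersection points with $S$; distinct $A_j$'s contain the disjoint non-empty open sets $\Omega_j$, precluding containment. The main obstacle is showing that each closure $A_j$ is an affinoid subdomain of $A$; this is where Proposition~\ref{39} is crucial, enabling a description of each branch of a type~$3$ point by a polynomial inequality.
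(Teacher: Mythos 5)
Your proof is correct, but it takes a genuinely different route from the paper's. The paper argues by a one-point-at-a-time induction on $|S|$: given a nice cover $\mathcal{U}$ of $A$ realizing $S'=S\setminus\{\eta_{P,r}\}$, the unique $U\in\mathcal{U}$ with $\eta_{P,r}\in\mathrm{Int}(U)$ is replaced by the two pieces $U\cap\{|P|\leqslant r\}$ and $U\cap\{|P|\geqslant r\}$, so at each step exactly one element of the cover is split, and the verification that the result is still nice is immediate. You instead give a global, explicit description: $\mathcal{A}$ is the set of closures of the $|S|+1$ connected components of $A\setminus S$. This is more conceptual and makes the shape of the resulting cover transparent (indeed it is the same cover the paper's induction produces), but it shifts the burden of proof: you must count the components (fine, via the arity-$2$ structure of type $3$ points and the tree property from Lemma~\ref{61}), and you must show each closure $\overline{\Omega_j}$ is an affinoid domain of $A$, which you do by writing it as $A\cap\bigcap_i H_i^{(j)}$ with the $H_i^{(j)}$ the closed halves $\{|P_i|\leqslant r_i\}$ or $\{|P_i|\geqslant r_i\}\cup\{\infty\}$ supplied by Proposition~\ref{39}. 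One step you slide past is the inclusion $A\cap\bigcap_i H_i^{(j)}\subseteq\overline{\Omega_j}$: for $y\notin S$ this follows because a path inside $A$ from $y$ to $\Omega_j$ that crossed some $x_i$ would put $y$ on the wrong side of $x_i$, forcing $y\in\Omega_j$; for $y=x_{i_0}\in S$ one must show $\Omega_j$ is actually adjacent to $x_{i_0}$, and the same path argument (now crossing some $x_{i_1}\neq x_{i_0}$) gives the contradiction. With that filled in, the argument is complete; the paper's inductive version avoids this extra tree bookkeeping entirely.
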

\begin{sloppypar}
\begin{proof}
Seeing as $S$ consists of type 3 points, they are all contained in a copy of $\mathbb{A}_k^{1,\mathrm{an}}$ in ~$\mathbb{P}_k^{1,\mathrm{an}}.$ Thus, for any element $\eta \in S,$ there exists an irreducible polynomial $P$ over $k$ and a real number $r \not \in \sqrt{|k^\times|}$ such that $\eta=\eta_{P,r}.$ 

Let us prove the statement using induction on the cardinality of $S.$ If $S$ is empty, then the statement is trivially true. 
Suppose we know the statement is true if the cardinality of $S$ is equal to some $n-1.$ 

Let us assume $S$ contains $n$ points. Fix some element $\eta_{P,r} \in S.$ Let $\mathcal{U}$ be a nice cover of $A$ that satisfies the properties of the statement for $S':=S \backslash \{\eta_{P,r}\}.$ There exists a unique $U \in \mathcal{U},$ such that $\eta_{P,r} \in U,$ in which case $\eta_{P,r} \in \text{Int}(U).$ Then, ${\{U \cap \{|P| \leqslant r\}, U \cap \{|P| \geqslant r\}\} \cup \{V \in \mathcal{U} : V \neq U\}}$ is a nice cover that fulfills our requirements.
\end{proof}
\end{sloppypar}

\subsection{Nice Covers of a Berkovich Curve}

\begin{lm}\label{8}
Let $C$ be an irreducible projective generically smooth $k$-analytic curve.  There exists a type 3 point $\eta$ in $C$ such that $C \backslash \{\eta\}$ has exactly two connected components $E_1, E_2$. Furthermore, $E_1 \cup \{\eta\},  E_2 \cup \{\eta\}$ are affinoid domains of $C.$   
\end{lm}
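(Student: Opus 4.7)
The strategy is to position $\eta$ as a type 3 point inside an open disk embedded in $C$, so that the closed disk on one side and its complement in $C$ give the two affinoid halves. Since $C$ is generically smooth, I begin by picking a smooth point $x_0 \in C$ admitting an open neighborhood $D \subset C$ isomorphic to the open unit disk $\{|T|<1\}$ via a local coordinate $T$; this is a standard consequence of the local structure of smooth Berkovich curves (the complement of a skeleton of the smooth locus is a disjoint union of open disks). Using the assumption $\sqrt{|k^{\times}|}\neq \mathbb{R}_{>0},$ I choose $r \in (0,1) \setminus \sqrt{|k^{\times}|}$ and set $\eta := \eta_{T,r},$ which is a type 3 point of $C$ by Proposition \ref{39}.

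I next define $W_1 := \{|T| \leq r\} \subset D \subset C$ and $W_2 := C \setminus \{|T|<r\}.$ The set $W_1$ is a connected affinoid subdomain of $C$ whose topological boundary in $C$ is $\{\eta\}$ (since $\eta$ is the unique point of $D$ with $|T|=r,$ and $W_1$ is contained in the open set $D$). To see that $W_2$ is an affinoid domain of $C,$ note that $\{|T|<r\}$ is an open subset of $C$ with a single connected component; by \cite[Proposition 4.2.14]{Duc} its complement $W_2$ is therefore a closed analytic domain of $C,$ and since $C$ is compact and $W_2 \neq C,$ \cite[Th\'eor\`eme 6.1.3]{Duc} yields that $W_2$ is affinoid. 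By construction $W_1 \cup W_2 = C$ and $W_1 \cap W_2 = \{\eta\}.$

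Finally, I verify that $E_1 := W_1 \setminus \{\eta\} = \{|T|<r\}$ and $E_2 := W_2 \setminus \{\eta\}$ are both connected, so that $C \setminus \{\eta\} = E_1 \sqcup E_2$ has exactly two connected components with $E_i \cup \{\eta\} = W_i$ affinoid. The set $E_1$ is an open disk. For $E_2,$ I first argue that $W_2$ is connected: a clopen decomposition $W_2 = A \sqcup B$ with $\eta \in A$ would give a decomposition $C = (W_1 \cup A) \sqcup B$ into two non-empty closed subsets of $C$ (using $W_1 \cap W_2 = \{\eta\} \subset A$), contradicting the connectedness of $C.$ Then, if $W_2 \setminus \{\eta\} = A \sqcup B$ were disconnected, $\eta$ would necessarily lie in the closure in $W_2$ of both $A$ and $B$ (otherwise one of $A, B$ alone would already disconnect the connected space $W_2$); however, any sufficiently small neighborhood of $\eta$ in $W_2$ is of the form $\{r \leq |T| < r+\varepsilon\},$ whose puncture $\{r < |T| < r+\varepsilon\}$ is a connected open annulus (using $r \notin \sqrt{|k^{\times}|}$), and hence must lie entirely in either $A$ or $B,$ contradicting $\eta \in \overline{A} \cap \overline{B}.$ I expect the main technical step to be the use of Ducros' results (\cite[Proposition 4.2.14]{Duc} and \cite[Th\'eor\`eme 6.1.3]{Duc}) to upgrade $W_2$ from a closed subset to an affinoid subdomain of $C;$ the connectedness claims are then purely topological.
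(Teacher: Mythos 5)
Your strategy is essentially the same as the paper's: place a type 3 point $\eta$ inside a disk-like neighborhood of a smooth point and cut along the circle through $\eta$. There is, however, a genuine gap in the opening move. You assert that some smooth point of $C$ has an open neighborhood isomorphic to $\{|T|<1\}$ via a coordinate $T$. Over a general base field $k$ this is false: \cite[Th\'eor\`eme 4.5.4]{Duc} only gives that a smooth rigid point has a neighborhood that is a \emph{virtual} open disk, i.e.\ a connected $k$-analytic curve that becomes a disjoint union of honest disks only after base change to a completed algebraic closure of $k$. An honest disk $\{|T|<1\}$ contains the $k$-rational point $T=0$, and $C$ may well have no $k$-rational points at all (take for instance the analytification of a smooth conic over $\mathbb{Q}_p$ without $\mathbb{Q}_p$-points). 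In that situation no open of $C$ is an honest disk, so the sets $\{|T|\leqslant r\}$, $\{|T|<r\}$, the point $\eta_{T,r}$, and the annulus $\{r<|T|<r+\varepsilon\}$ on which your argument rests are simply not defined. Your parenthetical justification (``the complement of a skeleton of the smooth locus is a disjoint union of open disks'') is the statement over an algebraically closed field; over general $k$ the pieces are only virtual disks.

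The paper works directly with the virtual disk $D'$: it takes an arbitrary type 3 point $\eta$ in its interior and invokes Ducros' branch theory (\cite[4.2.11.2]{Duc}, \cite[1.3.12]{Duc}) to conclude that $\eta$ has exactly two branches (because it is type 3 and lies in the Berkovich interior of $\overline{D'}$), so removing $\eta$ from the uniquely path-connected $\overline{D'}$ leaves exactly two pieces; it then shows $C\setminus D'$ is connected and glues it to the outward piece. Your argument that a disconnection of $W_2\setminus\{\eta\}$ would force $\eta$ into the closure of both pieces, followed by connectedness of a small punctured neighborhood of $\eta$ in $W_2$, is exactly the ``two branches at $\eta$'' fact in disguise, so the reliance on the coordinate $T$ at that step is also circumventable via the branch count; but as written the proposal stands or falls with the honest-disk claim. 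A secondary, smaller point: when you invoke \cite[Th\'eor\`eme 6.1.3]{Duc} to conclude $W_2$ is affinoid, the paper additionally verifies that every irreducible component of the domain has nonempty Berkovich boundary (to exclude a proper component); that check is omitted in your argument.
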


\begin{proof}
By \cite[Th\'eor\`eme 3.7.2]{Duc}, there exists an algebraic projective curve $C^{\mathrm{alg}}/k$ such that $(C^{\mathrm{alg}})^{\mathrm{an}}=C.$ By \cite[Theorem 3.4.1]{Ber90}, there is a bijection between the closed points of $C^{\mathrm{alg}}$ and the rigid points of $C,$ meaning the latter are Zariski dense in $C.$ As $C$ is generically smooth, by \cite[Th\'eor\`eme 3.4]{dex}, the smooth locus of $C$ is a non-empty Zariski open of $C$. Consequently, there exists $\eta_0$ - a rigid smooth point in $C$.

By \cite[Th\'eor\`eme 4.5.4]{Duc}, there exists a  neighborhood $D'$ of $\eta_{0}$ in $C$ which is a  virtual disc. By density of type 3 points in $C$ (Theorem \ref{4}), there exists a type 3 point $\eta \in D'.$ By \cite[3.6.34]{Duc}, $D'$ is uniquely path-connected with a single boundary point $x$. 
By \cite[1.4.21]{Duc}, $D:=\overline{D'}$ - the closure of $D$ in $C$, is uniquely path-connected. Remark that $\partial{D}=\{x\}$, and $D=D' \cup \{x\}.$ 

As it is of type 3, by \cite[4.2.11.2]{Duc}, there exist at most two \textit{branches} coming out of ~$\eta$, and there are exactly two if and only if $\eta \in \text{Int}_B(D).$ As $\eta \in \text{Int}(D)=\text{Int}_B(D)$ (Proposition 3.1.3(i) of \cite{Ber90}), there are two branches coming out of $\eta.$ As $D$ is uniquely path-connected, by \cite[1.3.12]{Duc}, this means that $D \backslash \{\eta\}$ has exactly two connected components. Let us denote them by $A$ and $B,$ and assume, without loss of generality, that $x \in B.$ Remark that $A \subseteq D'.$

Set $E:=(C \backslash D) \cup \{x\} = C \backslash D'$. Let us show that $E$ is connected. Let $a, b \in E.$ Since $C$ is connected, by \cite[Theorem 3.2.1]{Ber90}, there exists an injective path  $[a,b]$ in $C$ connecting $a$ and $b.$ Suppose $[a, b] \cap D'\neq \emptyset.$ Let $d \in [a, b] \cap D'.$ Then, $[a,b]$ induces injective paths $[a,d]$ and $[d,b]$ in $C$ connecting $a$ and $d$, resp. $d$ and $b.$
As $a,b \not \in  D'$ and $d \in D',$ we obtain that $[a,d] \cap \partial{D}, [d,b] \cap \partial{D} \neq \emptyset,$ so $x \in [a,d]$ and $x \in [d,b].$ This contradicts the injectivity of $[a,b]$ unless $x=d,$ which is impossible seeing as $x \not \in D'.$ Thus, $[a, b] \cap D'=\emptyset,$ \textit{i.e.} $[a, b] \subseteq E,$ implying $E$ is connected. 

As $B,E$ are connected, and $B \cap E=\{x\},$ $G:=B \cup E$ is a connected subset of $C.$ Remark that $A \cap G=(A \cap B) \cup (A \cap E) \subseteq D' \cap E=\emptyset.$ Also, $A \cup G \cup \{\eta\}=A \cup B \cup E \cup \{\eta\}=D \cup E=C.$

It only remains to show that $A':=A \cup \{\eta\}$ and $G':=G \cup \{\eta\}$ are affinoid domains in $C.$ By \cite[Proposition 4.2.14]{Duc}, they are both closed analytic domains in $C.$ As $C$ is projective, it is boundaryless, so $\partial_B(A')=\partial{A'}=\{\eta\}$, and the same is true for $G'$ (Proposition \ref{nc}). Let $I$ be an irreducible component of $A'$ (resp. $G'$). By \cite[3.2.3]{Duc}, if $\partial_B(I)=\emptyset,$ then $I=C$, implying $A'$ (resp. $G'$) is $C,$ which is false. Hence, $\partial_B(I) \neq \emptyset.$

As $I$ is a Zariski closed subset of $A'$ (resp. $G'$), there exists a closed immersion (hence, a finite morphism) $I \rightarrow A'$ (resp. $I \rightarrow G'$). By \cite[Corollary 2.5.13(i)]{Ber90} and \cite[Proposition 3.1.3(ii)]{Ber90}, $\partial_B(I)$ is a subset of $I \backslash \text{Int}_B(A')$ (resp. $I \backslash \text{Int}_B(G')$). Hence, $\partial_B(I)$ is a non-empty subset of $\partial_B(A')$ (resp. $\partial_B(G')$). We conclude by \cite[Th\'eor\`eme 6.1.3]{Duc}.
\end{proof}

\begin{rem}
In general, $C \backslash \{\eta\}$ has at most two connected components ``around"~$\eta$, and it might happen that it has exactly one (for example in a Tate curve), see also \cite[4.2.11.2]{Duc} and the remarks made after Lemma ~\ref{40}.
\end{rem}

\begin{prop}\label{9}
Let $C$ be a normal connected projective $k$-algebraic curve. Then, there exists a nice cover $\{U_1, U_2\}$ of $C^{\mathrm{an}}$ - the Berkovich analytification of $C,$ such that $U_1\cap U_2$ is a single type 3 point.
\end{prop}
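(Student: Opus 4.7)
The plan is to apply Lemma~\ref{8} directly to $C^{\mathrm{an}}$ and then verify that the two resulting affinoid domains assemble into a nice cover in the sense of Definition~\ref{nice}. So my first task is to check that $C^{\mathrm{an}}$ satisfies the hypotheses of Lemma~\ref{8}. Since $C$ is normal and connected, it is irreducible (a normal connected noetherian scheme is irreducible), and analytification preserves irreducibility, projectivity, and dimension, so $C^{\mathrm{an}}$ is an irreducible projective $k$-analytic curve. Generic smoothness of $C^{\mathrm{an}}$ follows from the fact that a normal $1$-dimensional scheme is regular and the smooth locus of an integral algebraic curve is a non-empty Zariski open (equivalently, the function field $k(C)$ is separable over $k$).

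Applying Lemma~\ref{8}, I obtain a type $3$ point $\eta \in C^{\mathrm{an}}$ such that $C^{\mathrm{an}}\setminus\{\eta\}$ has exactly two connected components $E_1, E_2$, and such that $U_i := E_i \cup \{\eta\}$ is an affinoid domain in $C^{\mathrm{an}}$ for $i=1,2$. My candidate is $\{U_1, U_2\}$, and $U_1 \cup U_2 = E_1 \cup E_2 \cup \{\eta\} = C^{\mathrm{an}}$ by construction, so this is indeed a cover.

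It remains to verify the three conditions of Definition~\ref{nice}. For connectedness of each $U_i$: the set $E_i$ is connected and $\eta \in \overline{E_i}$, for otherwise $E_i$ would be clopen in the connected space $C^{\mathrm{an}}$, which is impossible; hence $U_i = \overline{E_i}$ is connected. For the boundary condition, note that $C^{\mathrm{an}}$, being the analytification of a projective variety, is boundaryless, so Proposition~\ref{nc} applies and the topological boundary of each $U_i$ in $C^{\mathrm{an}}$ coincides with its Shilov boundary. Since $U_i$ is closed (being the complement of the open set $E_{3-i}$) and $C^{\mathrm{an}} = U_1 \cup U_2$, one computes
\[
\partial U_i \;=\; U_i \cap \overline{C^{\mathrm{an}}\setminus U_i} \;=\; U_i \cap U_{3-i} \;=\; \{\eta\},
\]
a single type $3$ point, which gives condition (1). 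Condition (2) then follows immediately from $U_1 \cap U_2 = \{\eta\} = \partial U_1 \cap \partial U_2$. Condition (3) holds because both $E_1$ and $E_2$ are non-empty (Lemma~\ref{8} gives exactly two components), so neither $U_i$ is all of $C^{\mathrm{an}}$ and hence neither is contained in the other.

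The only genuine content is already packaged inside Lemma~\ref{8}, so the only step I expect to require care is the transition from the algebraic normality hypothesis on $C$ to the analytic generic-smoothness hypothesis needed to invoke the lemma; once that is observed, the remainder of the argument is a routine unwinding of the definitions of nice cover and of topological versus Berkovich boundary.
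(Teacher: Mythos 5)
Your plan breaks down at the very first step: you claim that normality of $C$ implies generic smoothness of $C^{\mathrm{an}}$, invoking the fact that a normal one-dimensional scheme is regular, and then asserting that the smooth locus of an integral algebraic curve is a non-empty Zariski open, ``equivalently, the function field $k(C)$ is separable over~$k$.'' That equivalence is correct, but the assertion itself is false over an imperfect base field: for a curve over $k$, regular does not imply smooth, and a normal projective curve over $k$ need not be geometrically reduced (equivalently, $k(C)$ need not be separably generated over~$k$). The paper explicitly works over an arbitrary complete ultrametric field, which may well be imperfect, so this cannot be brushed aside. This is precisely the issue Lemma~\ref{8} does \emph{not} resolve on its own, since its hypothesis includes generic smoothness.

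The paper's proof of Proposition~\ref{9} exists precisely to close this gap. It chooses a finite morphism $C \to \mathbb{P}^1_k$, takes the separable closure $K$ of $k(\mathbb{P}^1_k)$ inside $k(C)$, and lets $Y$ be the normal projective curve with $k(Y)=K$. Because $K/k(\mathbb{P}^1_k)$ is separable, $Y \to \mathbb{P}^1_k$ is generically \'etale, so $Y$ is generically smooth and Lemma~\ref{8} applies to $Y^{\mathrm{an}}$. Because $k(C)/K$ is purely inseparable, the induced finite morphism $C^{\mathrm{an}} \to Y^{\mathrm{an}}$ is a homeomorphism, and the nice cover of $Y^{\mathrm{an}}$ pulls back to the desired nice cover of $C^{\mathrm{an}}$. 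Your verification that the output of Lemma~\ref{8} forms a nice cover is fine (and is essentially already contained in the lemma's statement), but you need this intermediate reduction to a generically smooth curve before you can invoke the lemma; as written, the argument only works when $k$ is perfect.
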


\begin{proof}
Let $C \rightarrow \mathbb{P}_k^{1}$ be a finite morphism. It induces an embedding of function fields $k(\mathbb{P}_k^1) \hookrightarrow k(C).$ Let $K$ be the separable closure of $k(\mathbb{P}_k^1)$ in $k(C).$ There exists a connected normal projective algebraic curve $Y$ over $k,$ such that $k(Y)=K.$ Since the field extension $K/k(\mathbb{P}_k^1)$ is separable, the induced morphism $Y \rightarrow \mathbb{P}_k^1$ is generically \'etale, so $Y$ is a generically smooth curve. In particular, this implies that the $k$-analytic curve $Y^{\mathrm{an}}$ is generically smooth (\cite[Th\'eor\`eme 3.4]{dex}). At the same time, since the finite extension $k(C)/K$ is purely inseparable, the induced finite type morphism $C \rightarrow Y$ is a homeomorphism. Consequently, by  \cite[Proposition ~3.4.6]{Ber90}, its analytification $f : C^{\mathrm{an}} \rightarrow Y^{\mathrm{an}}$ is a finite morphism that is a homeomorphism. 

By Proposition \ref{8}, there exists a nice cover $\{U_1', U_2'\}$ of $Y^{\mathrm{an}},$ such that $U_1' \cap U_2'$ is a single type 3 point. Seeing as $f$ is finite and a homeomorphism, $U_i:=f^{-1}(U_i'), i=1,2,$ is a connected affinoid domain, and $U_1 \cap U_2$ is a single type 3 point. 
\end{proof}

\begin{nota}
For a nice cover $\mathcal{U}$ of a $k$-analytic curve, let us denote by $S_{\mathcal{U}}$ the finite set of type 3 points that are in the intersections of different elements of $\mathcal{U}.$
\end{nota}

Remark that for a nice cover $\mathcal{U}$ of a $k$-analytic curve $C$, if $s \in S_{\mathcal{U}},$ the set $\{s\}$ is an affinoid domain of $C.$ This is because $\{s\}$ is a connected component of the intersection of two affinoid domains.

The following notion will be needed in the section to come.
\begin{defn}
Let $C$ be a $k$-analytic curve. Let $\mathcal{U}$ be a nice cover of $C.$ A function $T_{\mathcal{U}} : \mathcal{U} \rightarrow \{0,1\}$ will be called \textit{a parity function for} $\mathcal{U}$ if for any different $U', U'' \in \mathcal{U}$ that intersect, $T_{\mathcal{U}}(U') \neq T_{\mathcal{U}}(U'').$ 
\end{defn}

\begin{lm}\label{14}
For any $n \in \mathbb{N},$ let $U_1, U_2, \dots, U_n$ be affinoid domains in $\mathbb{P}_k^{1, an}$ such that $\mathcal{U}_n:=\{U_i\}_{i=1}^n$ is a nice cover of $K_n:=\bigcup_{i=1}^n U_i.$  Then, there exists a parity function $T_{\mathcal{U}_n}$ for $\mathcal{U}_n.$
\end{lm}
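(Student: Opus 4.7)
The plan is to reinterpret the question as a graph $2$-coloring problem. Form the intersection graph $G(\mathcal{U}_n)$ whose vertices are the $U_i$ and whose edges join distinct pairs with $U_i \cap U_j \neq \emptyset$. A parity function is exactly a proper $2$-coloring of $G(\mathcal{U}_n)$, so such a function exists if and only if the graph is bipartite. Since every forest is bipartite, the whole problem reduces to showing that $G(\mathcal{U}_n)$ is a forest, a statement that will follow from the tree-like structure of $\mathbb{P}_k^{1,\mathrm{an}}$ (unique path-connectedness combined with Lemma \ref{61}).

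Suppose for contradiction that $G(\mathcal{U}_n)$ contains a cycle, and pick one of minimal length, $U_{i_1}, U_{i_2}, \dots, U_{i_m}, U_{i_1}$. Every $U_{i_j}$ in the cycle must have non-empty interior: otherwise $U_{i_j}$ would reduce to a single type $3$ point, which is then contained in the adjacent $U_{i_{j+1}}$, violating condition (3) of a nice cover. Since niceness forces distinct elements of $\mathcal{U}_n$ to have disjoint interiors, Lemma \ref{5} implies that each intersection $U_{i_j} \cap U_{i_{j+1 \bmod m}}$ is a single type $3$ point $p_j$. Lemma \ref{jepiiiii} rules out triangles, so $m \geq 4$, and minimality of the cycle forces $U_{i_j} \cap U_{i_l} = \emptyset$ for non-adjacent indices, making the $p_j$ pairwise distinct.

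By Lemma \ref{61}, the unique injective path $\gamma_j := [p_{j-1}, p_j]$ of $\mathbb{P}_k^{1,\mathrm{an}}$ lies inside the connected set $U_{i_j}$. Concatenating produces a closed path $\Gamma := \gamma_1 \cdot \gamma_2 \cdots \gamma_m$ returning to $p_m$. For non-adjacent indices $j, l$ we have $\gamma_j \cap \gamma_l \subseteq U_{i_j} \cap U_{i_l} = \emptyset$, while for adjacent $j, j+1$ we have $\gamma_j \cap \gamma_{j+1} \subseteq U_{i_j} \cap U_{i_{j+1}} = \{p_j\}$, merely the common endpoint. Hence $\Gamma$ is an injective loop, which yields two distinct injective paths in $\mathbb{P}_k^{1,\mathrm{an}}$ between any pair of non-consecutive $p_j$, contradicting the unique path-connectedness of $\mathbb{P}_k^{1,\mathrm{an}}$. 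Therefore $G(\mathcal{U}_n)$ is a forest. A $2$-coloring is now obtained by picking one vertex per connected component as ``$0$'' and propagating along each tree so that adjacent vertices receive opposite values; this coloring is the sought parity function $T_{\mathcal{U}_n}$.

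The main obstacle is the injectivity of $\Gamma$: one needs both the minimality of the cycle, in order to exclude shortcut intersections between non-adjacent $U_{i_j}$'s, and the single-point nature of each $U_{i_j} \cap U_{i_{j+1}}$, to ensure consecutive $\gamma_j$'s meet only at their shared endpoint. Everything else is an essentially combinatorial argument on a tree, packaged using the specific properties of nice covers in $\mathbb{P}_k^{1,\mathrm{an}}$.
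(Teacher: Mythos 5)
Your proof is correct and takes a genuinely different route from the paper. You reformulate the existence of a parity function as a graph $2$-coloring problem, and show that the intersection graph of a nice cover of $\mathbb{P}_k^{1,\mathrm{an}}$ is a forest by ruling out cycles via the $\mathbb{R}$-tree structure of $\mathbb{P}_k^{1,\mathrm{an}}$: a minimal cycle in the intersection graph would force an embedded circle obtained by concatenating the arcs $[p_{j-1},p_j]\subseteq U_{i_j}$, contradicting unique path-connectedness. The paper instead argues by induction on $|\mathcal{U}_n|$: using Lemma \ref{10} it peels off one domain $U_n$ whose removal keeps the union connected, observes via Lemma \ref{5} and Lemma \ref{jepiiiii} that $U_n$ meets exactly one remaining element (so $U_n$ is a leaf), colours the smaller cover by induction, and assigns $U_n$ the opposite parity. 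Both arguments rely on Lemmas \ref{5} and \ref{jepiiiii} as the key technical inputs; the paper additionally uses Lemma \ref{10} and the reduction to the connected case, which you avoid (a forest in the disconnected case is handled transparently by colouring each tree separately). Your approach makes the underlying combinatorial fact explicit — the intersection graph of a nice cover of $\mathbb{P}_k^{1,\mathrm{an}}$ is acyclic — and shifts the ``tree-like'' input from the topological Lemma \ref{10} to the absence of embedded circles in a uniquely path-connected space; the paper's induction is slightly more constructive but conceals the global structure. One small gap in your write-up: you justify pairwise distinctness of the $p_j$ only via minimality (for non-adjacent indices), but the adjacent case $p_j = p_{j+1}$ also needs handling, and it follows from Lemma \ref{jepiiiii} applied to $U_{i_j}, U_{i_{j+1}}, U_{i_{j+2}}$; this is implicit in your triangle exclusion but should be stated.
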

\begin{proof}
It suffices to prove the result under the assumption that $K_n$ is connected. We will use induction on the cardinality $n$ of $\mathcal{U}_n.$ If $n=1,$ the statement is trivially true. Suppose it to be true for some $n-1.$ 
\begin{lm}\label{10}
Let $Z$ be a topological space. For any positive integer $m,$ let $\{W_i\}_{i=1}^m$ be a set of closed connected subsets of $Z.$ Suppose $\bigcup_{i=1}^m W_i$ is connected. Then, there exists $i_0 \in \{1,2,\dots, m\},$ such that $\bigcup_{i \neq i_0} W_i$ is connected. 
\end{lm}

\begin{proof}
\begin{sloppypar}
Let $l$ be the largest integer such that $l<m$ and there exist $W_{i_1}, W_{i_2}, \dots, W_{i_l},$ with $\bigcup_{j=1}^l W_{i_j}$ connected. As all the $W_i$ are connected, $l>0.$ Set $J=\{1,2,\dots, m\} \backslash \{i_1, i_2, \dots, i_l\}.$ If $l<m-1,$ then for any $p \in J,$ we obtain $W_p \cap \bigcup_{j=1}^l W_{i_j}=\emptyset$.  This implies that $\left( \bigcup_{p \in J} W_p \right) \cap \left( \bigcup_{j=1}^l W_{i_j}  \right)=\emptyset,$ which contradicts the connectedness of $\bigcup_{i=1}^m W_i.$ Thus, ~${l=m-1}.$
\end{sloppypar}
\end{proof}

Seeing as 
$\bigcup_{i=1}^n U_i$ is connected, from Lemma \ref{10}, there exist $n-1$ elements of $\mathcal{U}_n$ whose union remains connected. For simplicity of notation, assume them to be the elements of  $\mathcal{U}_{n-1}:=\{U_1, U_2, \dots, U_{n-1}\}.$ Then, $\mathcal{U}_{n-1}$ is a nice cover of $K_{n-1}:=\bigcup_{i=1}^{n-1}U_i.$ Let $T_{\mathcal{U}_{n-1}}$ be a parity function for $\mathcal{U}_{n-1}.$ By Lemma \ref{5}, $U_n \cap \bigcup_{i=1}^{n-1} U_i$ is a single type 3 point, so by Lemma \ref{jepiiiii}, $U_n$ intersects exactly one of the elements of $\mathcal{U}_{n-1}.$ Suppose it to be $U_{n-1}.$ Define $T_{\mathcal{U}_n}$ as follows: 
\begin{enumerate}
\item for any $U \in \mathcal{U}_{n-1},$ $T_{\mathcal{U}_n}(U):=T_{\mathcal{U}_{n-1}}(U);$
\item $T_{\mathcal{U}_n}(U_n):=1-T_{\mathcal{U}_{n-1}}(U_{n-1}).$
\end{enumerate}
The function $T_{\mathcal{U}_n}$ is a parity function for $\mathcal{U}_n.$
\end{proof}

\begin{prop}\label{11}
Let $Y, Z$ be $k$-analytic curves with $Y$ normal and $Z$ compact. Let ${f: Z \rightarrow Y}$ be a finite surjective morphism. Suppose $\mathcal{V}$ is a nice cover of $Y.$ Then, the connected components of $f^{-1}(V), {V \in \mathcal{V}},$ form a nice cover $\mathcal{U}$ of $Z,$ such that ${f^{-1}(S_{\mathcal{V}})=S_{\mathcal{U}}}.$

Furthermore, if $T_{\mathcal{V}}$ is a parity function for $\mathcal{V},$ then the function $T_{\mathcal{U}}$ that to an element $U \in \mathcal{U}$ associates $T_{\mathcal{V}}(f(U)),$ is a parity function for $\mathcal{U}.$
\end{prop}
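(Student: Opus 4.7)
The plan is to verify, one by one, the three conditions of Definition \ref{nice} for $\mathcal{U}$, then the equality $f^{-1}(S_{\mathcal{V}}) = S_{\mathcal{U}}$, and finally the parity statement. The key inputs throughout are the good behaviour of the finite morphism $f$: preimages of affinoid domains are affinoid, $f$ is closed (being proper), and, since $Y$ is normal, $f$ is finite flat, hence open, and each connected component of $f^{-1}(V)$ surjects onto the connected affinoid $V$.

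First I would observe that for each $V \in \mathcal{V}$, the set $f^{-1}(V)$ is a $k$-affinoid domain of $Z$ with finitely many connected components, each a connected affinoid domain by \cite[Corollary 2.2.7]{Ber90}. Since $\mathcal{V}$ is finite and $f$ is surjective, $\mathcal{U}$ is a finite cover of $Z$. For property (1), the inclusion $f^{-1}(\mathrm{Int}(V)) \subseteq \mathrm{Int}(f^{-1}(V))$ (a consequence of continuity alone) yields $\partial f^{-1}(V) \subseteq f^{-1}(\partial V)$, so any boundary point of a component $U$ of $f^{-1}(V)$ is sent by $f$ into $\partial V$. Since a finite morphism of analytic curves preserves the type of points (finite field extensions preserve both the residue-field transcendence degree over $\widetilde{k}$ and the $\mathbb{Q}$-rank of the value group modulo $|k^{\times}|$), the topological boundary $\partial U$ contains only type 3 points.

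Next I would treat properties (2) and (3) and the equality $f^{-1}(S_{\mathcal{V}}) = S_{\mathcal{U}}$ together, by case analysis on two distinct elements $U_1, U_2 \in \mathcal{U}$, with $U_i$ a component of $f^{-1}(V_i)$. If $V_1 = V_2$, then $U_1, U_2$ are distinct connected components of one affinoid, hence disjoint and separated, so both (2) and (3) hold trivially. If $V_1 \neq V_2$, then any $x \in U_1 \cap U_2$ satisfies $f(x) \in V_1 \cap V_2 = \partial V_1 \cap \partial V_2$; using the openness of $f$, if $x$ were in $\mathrm{Int}(U_1)$ then $f(x)$ would lie in $\mathrm{Int}(V_1)$, contradicting $f(x) \in \partial V_1$; hence $x \in \partial U_1$, and similarly $x \in \partial U_2$, giving property (2). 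Property (3) follows because a containment $U_1 \subseteq U_2$ together with $f(U_i) = V_i$ would force $V_1 \subseteq V_2$, contradicting that $\mathcal{V}$ is nice. The same dichotomy immediately yields $f^{-1}(S_{\mathcal{V}}) = S_{\mathcal{U}}$: a point of $Z$ belongs to an intersection of two distinct elements of $\mathcal{U}$ iff its image in $Y$ belongs to an intersection of two distinct elements of $\mathcal{V}$.

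For the parity statement, two distinct intersecting $U_1, U_2 \in \mathcal{U}$ cannot be components of the same $f^{-1}(V)$, so they correspond to distinct $V_1 = f(U_1), V_2 = f(U_2) \in \mathcal{V}$ which themselves intersect at $f(U_1 \cap U_2) \neq \emptyset$; hence $T_{\mathcal{V}}(V_1) \neq T_{\mathcal{V}}(V_2)$, i.e.\ $T_{\mathcal{U}}(U_1) \neq T_{\mathcal{U}}(U_2)$ by the defining formula. The main obstacle I expect is the pair of technical inputs invoked in the middle paragraph: the openness of $f$ (needed so that $\mathrm{Int}(f^{-1}(V)) = f^{-1}(\mathrm{Int}(V))$ and interior points map to interior points) and the surjectivity $f(U) = V$ of each component onto its connected target. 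Both are standard consequences of $f$ being finite flat over the normal curve $Y$, but they must be cited cleanly at the right places so that the case analysis goes through without circularity.
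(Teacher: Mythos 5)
Your proof is correct and follows essentially the same strategy as the paper: establish that $f$ is open and that each connected component of $f^{-1}(V)$ maps onto the connected affinoid $V$, then verify the three properties of a nice cover, the identity $f^{-1}(S_{\mathcal V})=S_{\mathcal U}$, and the parity claim by the same case analysis on whether $f(U_1)=f(U_2)$. The one place where your route is slightly more explicit than the paper's is property (2): the paper notes that $U_1\cap U_2$ is a finite set of type~3 points and then simply asserts $U_1\cap U_2=\partial U_1\cap\partial U_2$, whereas you justify the nontrivial inclusion directly by the openness argument (if $x\in\mathrm{Int}(U_1)$ then $f(x)\in\mathrm{Int}(V_1)$, contradicting $f(x)\in\partial V_1$); that spelled-out step is welcome, since the paper leaves that connection implicit. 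Two small cautions: you should record, at least in passing, that a connected component $U$ of $f^{-1}(V)$ satisfies $\partial U\subseteq\partial(f^{-1}(V))$ (because $U$ is open and closed in the closed set $f^{-1}(V)$), which you use tacitly to get $\partial U\subseteq f^{-1}(\partial V)$; and your appeal to ``finite flat, hence open'' is slightly nonstandard here --- the paper gets openness of $f$ from a general statement about finite morphisms of curves (\cite[3.5.12]{Duc}) rather than from flatness, so it would be cleaner to cite that or justify the flatness claim carefully.
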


\begin{proof}
Since $Z$ is compact and $Y$ is Hausdorff, $f$ is a closed morphism. By \cite[3.5.12]{Duc}, $f$ is also open. 

If $V$ is any connected affinoid domain of $Y,$ for any connected component $V_0'$ of $f^{-1}(V)$, $f(V_0')=V.$ To see this, recall that by \cite[Lemma 1.3.7]{ber93}, $f_{|f^{-1}(V)}: f^{-1}(V) \rightarrow V$ is a finite morphism of affinoid spaces, and by \cite[Th\'eor\`eme 3.4]{dex}, as $Y$ is normal, so is $V.$ Thus, $f_{|f^{-1}(V)}$ is open and closed. Seeing as $V_0'$ is a connected component of $f^{-1}(V),$ it is both open and closed in $f^{-1}(V),$ so its image is both open and closed in $V.$ As $V$ is connected, $f(V_0')=V.$

The connected components of $f^{-1}(V)$ for all $V \in \mathcal{V}$ form a finite cover $\mathcal{U}$ of $Z$ consisting of affinoid domains (see Corollary 2.2.7(i) of \cite{Ber90}). As $f$ is open, for any $V \in \mathcal{V},$ $\partial(f^{-1}(V))=f^{-1}(\partial{V}).$ Since a finite morphism preserves the type of point of an analytic curve, $\partial{f^{-1}(V)}$ is an affinoid domain containing only type 3 points in its boundary. Thus, the elements of $\mathcal{U}$ are connected affinoid domains containing only type 3 points in their boundaries.

 Let $U_1, U_2 \in \mathcal{U}$ be such that $U_1 \cap U_2 \neq \emptyset.$ Set $V_i=f(U_i), i=1,2.$ Then, ${V_1, V_2 \in \mathcal{V}},$ and~${V_1 \neq V_2}.$ To see the second part, if $V_1=V_2,$  then $U_1, U_2$ would be connected components of $f^{-1}(V_1),$ thus disjoint, which contradicts the assumption $U_1 \cap U_2 \neq \emptyset.$ Seeing as  $U_1 \cap U_2 \subseteq f^{-1}(V_1 \cap V_2),$  $U_1 \cap U_2$ is a finite set of type 3 points. Hence, $U_1 \cap U_2 = \partial{U_1} \cap \partial{U_2}.$ The third condition of a nice cover is trivially satisfied. Since $f^{-1}(\partial{V})=\partial{f^{-1}(V)}$ for all $V \in \mathcal{V},$ it follows that $f^{-1}(S_{\mathcal{V}})=S_{\mathcal{U}}.$ Finally, $T_{\mathcal{U}}(U_1)=T_{\mathcal{V}}(V_1) \neq T_{\mathcal{V}}(V_2) = T_{\mathcal{U}}(U_2),$ so $T_{\mathcal{U}}$ is a parity function for $\mathcal{U}.$
\end{proof}

\begin{cor}
Let $C$ be a normal projective $k$-analytic curve or a strict $k$-affinoid curve. Any open cover of $C$ has a nice refinement. 
\end{cor}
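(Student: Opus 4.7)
The plan is to reduce the statement to the $\mathbb{P}_k^{1,\mathrm{an}}$ case, handled by Theorem \ref{7}, by means of a finite surjective morphism $f : C \to Y$ onto a compact normal target, and then to transfer the resulting nice refinement back to $C$ via Proposition \ref{11}. We may assume $C$ is connected (and, in the projective normal case, irreducible), since nice refinements glue over connected components.

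First I construct $f$. In the projective case, $C$ is the analytification of a projective algebraic curve over $k$ by \cite[Th\'eor\`eme 3.7.2]{Duc}, and any non-constant rational function yields a finite surjective morphism $C \to Y := \mathbb{P}_k^{1,\mathrm{an}}$. In the strict affinoid case, Noether normalization for strict $k$-affinoid algebras of Krull dimension $1$ supplies a finite injection $k\{T\} \hookrightarrow \mathcal{O}(C)$, inducing a finite surjective morphism $f : C \to Y := D$, where $D \subset \mathbb{P}_k^{1,\mathrm{an}}$ denotes the closed unit disc. In both cases $Y$ is a compact normal $k$-analytic curve and a compact subset of $\mathbb{P}_k^{1,\mathrm{an}}$.

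Given an open cover $\mathscr{W}$ of $C$, for each $y \in Y$ I build a connected affinoid neighborhood $V_y$ of $y$ in $Y$, with only type 3 points in its boundary, such that every connected component of $f^{-1}(V_y)$ lies in a single element of $\mathscr{W}$. Writing $f^{-1}(y)=\{c_1,\dots,c_s\}$ and picking $W_i \in \mathscr{W}$ with $c_i \in W_i$, Theorem \ref{4} together with the Hausdorff property of $C$ produces pairwise disjoint connected affinoid neighborhoods $N_i$ of $c_i$ satisfying $c_i \in \mathrm{Int}(N_i)$ and $N_i \subseteq W_i$. Since $f$ is closed with finite fibers, $V'_y := Y \setminus f(C \setminus \bigsqcup_i \mathrm{Int}(N_i))$ is an open neighborhood of $y$ whose preimage is contained in $\bigsqcup_i N_i$; a second application of Theorem \ref{4} inside $V'_y$ shrinks it to the required $V_y$. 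Compactness of $Y$ extracts a finite subcover $\{V_{y_1},\dots,V_{y_n}\}$.

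Finally, Theorem \ref{7} applied to this finite open cover of the compact set $Y \subseteq \mathbb{P}_k^{1,\mathrm{an}}$ yields a nice refinement $\mathcal{V}$; this $\mathcal{V}$ is also a nice cover of $Y$ viewed intrinsically, since the topological boundary in $Y$ is contained in the topological boundary in $\mathbb{P}_k^{1,\mathrm{an}}$ and hence still consists of type 3 points, while the other conditions of Definition \ref{nice} are intrinsic to the cover itself. Proposition \ref{11} then furnishes a nice cover $\mathcal{U}$ of $C$ whose elements are the connected components of $f^{-1}(V)$ for $V \in \mathcal{V}$. Since each such $V$ lies in some $V_{y_i}$, every $U \in \mathcal{U}$ sits inside a connected component of $f^{-1}(V_{y_i})$ and therefore inside some element of $\mathscr{W}$, so $\mathcal{U}$ is the required nice refinement. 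The principal delicacy is the simultaneous construction of $V_y$ as a nice affinoid neighborhood whose preimage is subordinate to $\mathscr{W}$; this is resolved by first exploiting the closedness and finiteness of $f$ to obtain $V'_y$, and then invoking Theorem \ref{4} within $V'_y$.
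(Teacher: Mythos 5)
Your proof is correct, and it reaches the same destination by the same global strategy as the paper (reduce via a finite surjective morphism $f : C \to Y$ with $Y = \mathbb{P}_k^{1,\mathrm{an}}$ or $Y = \mathbb{D}$, produce a nice cover of $Y$, pull back via Proposition~\ref{11}), but the route to the subordination property is genuinely different. The paper's proof first refines the given open cover of $C$ into a finite cover by affinoid domains with type 3 boundaries, takes $S = \bigcup \partial U$, pushes it forward to $S' = f(S)$, and invokes Lemma~\ref{30} to build a nice cover $\mathcal{D}$ of $Y$ with $S_{\mathcal{D}} = S'$; the refinement property of the pulled-back cover then follows from the ``anchoring'' argument already used in the proof of Proposition~\ref{13} (an element of the pulled-back nice cover cannot contain a point of $S$ in its interior, since $S \subseteq f^{-1}(S_{\mathcal{D}}) = S_{\mathcal{U}}$). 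You instead build, for each $y \in Y$, an affinoid neighborhood $V_y$ whose preimage splits into components subordinate to the original cover — exploiting that $f$ is proper with finite fibers to shrink away from $f(C \setminus \bigsqcup \mathrm{Int}(N_i))$ — and then refine $\{V_{y_i}\}$ downstream by Theorem~\ref{7}. Your version is longer but makes the subordination explicit rather than inherited from Proposition~\ref{13}, and it sidesteps the (small) mismatch in the paper of applying Lemma~\ref{30} with $A = \mathbb{P}_k^{1,\mathrm{an}}$ even though that lemma is stated for affinoid domains.

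Two minor imprecisions worth fixing. First, $\{V_{y_1},\dots,V_{y_n}\}$ is a cover by closed affinoids, not an open cover; you should extract the finite subcover from $\{\mathrm{Int}(V_y)\}_y$ and apply Theorem~\ref{7} to that (or directly apply Proposition~\ref{6} to the affinoid cover $\{V_{y_i}\}$). Second, your remark that conditions (2) and (3) of Definition~\ref{nice} are ``intrinsic to the cover itself'' is not quite accurate: condition (2) reads $U \cap V = \partial U \cap \partial V$ and $\partial$ depends on the ambient space. The conclusion is still correct, but for the right reason: since the elements have only type 3 boundary points in $\mathbb{P}_k^{1,\mathrm{an}}$ and $\partial_{\mathbb{P}^1}(\mathbb{D})$ consists of the type 2 Gauss point, these boundaries lie in $\mathrm{Int}_{\mathbb{P}^1}(Y)$, whence $\partial_Y U$ actually equals $\partial_{\mathbb{P}^1} U$ (not merely is contained in it), and condition (2) transfers unchanged.
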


\begin{proof} By Theorem \ref{4}, we may assume that the open cover only  contains elements with finite boundary consisting of type 3 points. Since $C$ is compact, there is a finite subcover $\mathcal{U}$ of the starting open cover. Set $S=\bigcup_{U \in \mathcal{U}} \partial{U}.$ 
There exists a finite surjective morphism $C \rightarrow \mathbb{P}_k^{1,\mathrm{an}}.$ Set $S':=f(S).$ By Lemma~ \ref{30}, there exists a nice cover~$\mathcal{D}$ of $\mathbb{P}_k^{1,\mathrm{an}},$ such that $S_{\mathcal{D}}=S'.$ We conclude by applying Proposition \ref{11}.

If $C$ is a strict $k$-affinoid curve, by Noether's Normalization Lemma there exists a finite surjective morphism $C \rightarrow \mathbb{D},$ where $\mathbb{D}$ is the closed unit disc in $\mathbb{P}_{k}^{1,\mathrm{an}}.$ We conclude as above. 
\end{proof}

\section{A Local-Global Principle over Berkovich Curves}

Unless mentioned otherwise, we assume that  
$k$ is a complete ultrametric field such that $\sqrt{|k^{\times}|} \neq \mathbb{R}_{>0}.$ Following \cite{HHK}:

\begin{defn}
Let $F$ be a field. A linear algebraic group $G$ over $F$ acts \emph{strongly transitively} on an $F$-variety $X$ if $G$ acts on $X$ and for any field extension $E/F,$ either $X(E)=\emptyset$ or the action of $G(E)$ on $X(E)$ is transitive. 
\end{defn}
We start by showing some patching results over nice covers.
\begin{prop}\label{12}
Let $D$ be $\mathbb{P}_k^{1,\mathrm{an}}$ or a connected affinoid domain of $\mathbb{P}_k^{1,\mathrm{an}}.$ Let $\mathcal{D}$ be a nice cover of $D,$ and $T_{\mathcal{D}}$  a parity function for $\mathcal{D}.$
Let $G/\mathscr{M}(D)$ be a connected rational linear algebraic group. Then, for any $(g_s)_{s \in S_{\mathcal{D}}} \in \prod_{s \in S_{\mathcal{D}}}G(\mathscr{M}(\{s\}))$, there exists ${(g_U)_{U \in \mathcal{D}} \in \prod_{U \in \mathcal{D}} G(\mathscr{M}(U))},$ 
satisfying: for any $s \in S_{\mathcal{D}},$ if $U_0, U_1$ are the elements of $\mathcal{D}$ contain $s$ and $T_{\mathcal{D}}(U_0)=0,$ then $g_s=g_{U_0} \cdot g_{U_1}^{-1}$ in $G(\mathscr{M}(\{s\})).$ 
\end{prop}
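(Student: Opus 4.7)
The plan is to proceed by induction on $|\mathcal{D}|$, applying Corollary~\ref{3} globally to the pair formed by a leaf of the intersection tree and the union of all remaining elements. The base case $|\mathcal{D}|=1$ is immediate: $S_{\mathcal{D}}=\emptyset$, so any choice of $g_U$ works.

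For the inductive step, I first note that the intersection graph of $\mathcal{D}$ is a tree: edges are single type~3 points by Lemma~\ref{5}, the graph is connected (since $D$ is), and it is cycle-free by Lemma~\ref{jepiiiii} together with the unique path-connectedness of $\mathbb{P}_k^{1,\mathrm{an}}$. Reproducing the argument from the proof of Lemma~\ref{14} (via Lemma~\ref{10}), I extract a leaf $U^*\in\mathcal{D}$ with unique neighbor $U_0^*\in\mathcal{D}':=\mathcal{D}\setminus\{U^*\}$ meeting it at a single point $s^*\in S_{\mathcal{D}}$. Since niceness forbids $U^*\subseteq U_0^*$, the set $V^*:=\bigcup_{U\in\mathcal{D}'}U$ is a proper compact analytic domain of $D$, hence a connected affinoid domain by \cite[Th\'eor\`eme~6.1.3]{Duc}; moreover $V^*\cap U^*=\{s^*\}$ is a single type~3 point and $V^*$ has only type~3 points in its boundary. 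Proposition~\ref{1} then places the pair $(U^*,V^*)$ in Setting~1, making Corollary~\ref{3} available for it.

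Applying the inductive hypothesis to $\mathcal{D}'$ (with the restricted parity function and the restricted data $(g_s)_{s\in S_{\mathcal{D}}\setminus\{s^*\}}$) produces $(\tilde g_U)_{U\in\mathcal{D}'}$ satisfying the required equations at every internal edge. Supposing $T_{\mathcal{D}}(U_0^*)=0$ and $T_{\mathcal{D}}(U^*)=1$ (the other case is symmetric), the next step is to apply Corollary~\ref{3} to
\[
h:=g_{s^*}^{-1}\cdot\tilde g_{U_0^*}\big|_{s^*}\ \in\ G(\mathscr{M}(\{s^*\}))
\]
to write $h=\alpha\beta$ with $\alpha\in G(\mathscr{M}(U^*))$ and $\beta\in G(\mathscr{M}(V^*))$, and then to define $g_{U^*}:=\alpha$ and $g_U:=\tilde g_U\cdot\beta^{-1}\big|_U$ for every $U\in\mathcal{D}'$. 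A direct computation using $\alpha\beta=h$ verifies the required relation at $s^*$; at any internal edge $s'\in S_{\mathcal{D}'}$ between $U_i$ and $U_j$, the fact that $\beta$ is a \emph{single} element of $G(\mathscr{M}(V^*))$ ensures that its restrictions from $U_i$ and from $U_j$ induce the same element of $G(\mathscr{M}(\{s'\}))$, so the $\beta^{-1}$ corrections cancel in $g_{U_i}\big|_{s'}\cdot g_{U_j}^{-1}\big|_{s'}$, reducing the equation at $s'$ to the one already supplied by the induction.

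The essential point—and the main obstacle to anticipate—is precisely the global nature of $\beta$: Corollary~\ref{3} must be applied to $(U^*,V^*)$ rather than to the smaller pair $(U^*,U_0^*)$. A naive pairwise application at $(U^*,U_0^*)$ would yield a correction living only in $G(\mathscr{M}(U_0^*))$, whose restrictions to the other edges incident to $U_0^*$ could not in general be cancelled, and the equations at those edges would be broken. The technical heart of the argument is thus the verification that Proposition~\ref{1} genuinely applies to $(U^*,V^*)$, i.e.\ that $V^*$ is a connected affinoid domain meeting $U^*$ at a single type~3 point, which as indicated above follows from the niceness of $\mathcal{D}$ together with \cite[Th\'eor\`eme~6.1.3]{Duc}.
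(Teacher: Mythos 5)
Your proof is correct and follows essentially the same route as the paper: induction on $|\mathcal{D}|$, extraction of a leaf via Lemma~\ref{10} together with Lemmas~\ref{5} and~\ref{jepiiiii}, application of Corollary~\ref{3} to the pair $(U^*, V^*)$ where $V^*=\bigcup\mathcal{D}'$ is again a connected affinoid domain, and a global right-correction of the inductively obtained family by the single factor from $G(\mathscr{M}(V^*))$. Your emphasis on why the correction must live over all of $V^*$ rather than just over $U_0^*$ is exactly the point the paper's construction encodes (there, $b$ resp.\ $c$ belongs to $G(\mathscr{M}(\bigcup_{i<n}U_i))$), and the two parity cases you call symmetric are the paper's two cases with $a\cdot b$ versus $c\cdot d$.
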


\begin{proof}
\begin{sloppypar} 
We will use induction on the cardinality $n$ of a nice cover. If $n=2,$ then this is Corollary \ref{3} (considering Proposition \ref{1} with $\mathcal{O}(D)=\mathcal{O}$). Suppose the result is true for some $n-1.$ If $\mathcal{D}=:\{U_1, U_2, \dots, U_n\},$ since $\bigcup_{i=1}^n U_i$ is connected, from Lemma ~\ref{10}, there exist $n-1$ elements of $\mathcal{U}$ whose union remains connected. For simplicity of notation, suppose them to be the elements of $\mathcal{D}':=\{U_1, U_2, \dots, U_{n-1}\}.$ By Lemma \ref{5}, $\bigcup_{i=1}^{n-1} U_i \cap U_n$ is single type 3 point, so by Lemma \ref{jepiiiii}, $U_n$ intersects exactly one of the elements of $\mathcal{D}'.$ To simplify the notation, suppose it to be $U_{n-1}.$ Set $\{\eta\}=U_{n-1} \cap U_{n},$ so that $S_{\mathcal{D}}=S_{\mathcal{D'}} \cup \{\eta\}.$
\end{sloppypar}
Let $(g_s)_{s \in S_{\mathcal{D}}}$ be any element of $\prod_{s\in S_{\mathcal{D}}} G(\mathscr{M}(\{s\})).$
By the induction hypothesis, for $(g_s)_{s\in S_{\mathcal{D}'}} \in \prod_{s \in S_{\mathcal{D'}}}G(\mathscr{M}(\{s\})),$ there exists $(g_U)_{U \in \mathcal{D}'} \in \prod_{U \in \mathcal{D'}}G(\mathscr{M}(U))$, satisfying the conditions of the statement. 
\begin{itemize}
\item Suppose $T_{\mathcal{D}}(U_n)=0.$ By Corollary \ref{3}, there exist $a \in G(\mathscr{M}(U_n))$ and $b \in G(\mathscr{M}(\bigcup_{i=1}^{n-1} U_i)),$ such that $g_{\eta} \cdot g_{U_{n-1}}=a \cdot b$ in $G(\mathscr{M}(\{\eta\})).$ For any $i \neq n,$ set $g_{U_i}'=g_{U_i} \cdot b^{-1}$ in $G(\mathscr{M}(U_i)).$ Also, set $g_{U_n}'=a$ in $G(\mathscr{M}(U_n)).$
\item Suppose $T_{\mathcal{D}}(U_n)=1.$ By Corollary \ref{3}, there exist $c \in G(\mathscr{M}(\bigcup_{i=1}^{n-1} U_i))$ and $d \in G(\mathscr{M}(U_n)),$ such that $g_{U_{n-1}}^{-1} \cdot g_{\eta} = c \cdot d$ in $G(\mathscr{M}(\{\eta\})).$ For any $i \neq n,$ set $g_{U_i}'=g_{U_i} \cdot c$ in $G(\mathscr{M}(U_i)).$ Also, set $g_{U_n}'=d^{-1}$ in $G(\mathscr{M}(U_n)).$
\end{itemize}
The family $(g_{U_i}')_{i=1}^n \in \prod_{i=1}^n G(\mathscr{M}(U_i))$ satisfies the conditions of the statement for $(g_s)_{s \in S_{\mathcal{D}}}$.
\end{proof}

\begin{prop}\label{13}
\begin{sloppypar}
Let $Y$ be an integral strict $k$-affinoid curve. Set $K=\mathscr{M}(Y).$ Let $G/K$ be a connected rational linear algebraic group. For any open cover $\mathcal{V}$ of $Y,$ there exists a nice refinement $\mathcal{U}$ of $\mathcal{V}$ with a parity function $T_{\mathcal{U}},$ such that for any given ${(g_y)_{y \in S_{\mathcal{U}}} \in \prod_{y \in S_{\mathcal{U}}} G(\mathscr{M}\{y\})}$,  there exists $(g_U)_{U \in \mathcal{U}} \in \prod_{U \in \mathcal{U}} G(\mathscr{M}(U)),$  satisfying: for any $y \in S_{\mathcal{U}}$ there are exactly two elements $U', U''$ of $\mathcal{U}$ containing $y,$ and if $T_{\mathcal{U}}(U')=0,$ then  $g_y=g_{U'} \cdot g_{U''}^{-1}$ in $G(\mathscr{M}\{y\}).$ 
\end{sloppypar}
\end{prop}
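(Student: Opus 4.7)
My plan is to reduce Proposition \ref{13} to Proposition \ref{12} by pulling the problem back along a finite surjective morphism $f : Y \to \mathbb{D}$ and passing to a Weil restriction. First I construct the nice refinement. By Theorem \ref{4} and compactness of $Y$, I refine $\mathcal{V}$ to a finite cover by connected affinoid domains with only type 3 points in their boundaries; let $S$ be the union of these boundaries. Noether's Normalization Lemma produces a finite surjective morphism $f : Y \to \mathbb{D}$ onto the closed unit disc in $\mathbb{P}_k^{1,\mathrm{an}}$. Setting $S' := f(S)$ (a finite set of type 3 points, since $f$ preserves the type of a point), Lemma \ref{30} furnishes a nice cover $\mathcal{D}$ of $\mathbb{D}$ with $S_{\mathcal{D}} = S'$, and Lemma \ref{14} supplies a parity function $T_{\mathcal{D}}$. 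Proposition \ref{11} then produces the nice cover $\mathcal{U}$ of $Y$ consisting of the connected components of $f^{-1}(D)$, $D \in \mathcal{D}$, with $S_{\mathcal{U}} = f^{-1}(S_{\mathcal{D}})$ and induced parity function $T_{\mathcal{U}}(U) := T_{\mathcal{D}}(f(U))$. A direct verification, analogous to the corollary at the end of Section~2, shows that $\mathcal{U}$ is a refinement of $\mathcal{V}$ (after enlarging $S'$ if necessary).

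For the patching, set $F := \mathscr{M}(\mathbb{D})$, so that $K/F$ is a finite field extension induced by $f$. Assuming (for the moment) that $K/F$ is separable, form the Weil restriction $\widetilde{G} := R_{K/F}(G)$, which is a connected rational linear algebraic group over $F$: smoothness and connectedness are preserved by Weil restriction along a finite \'etale extension, and since this operation sends Zariski open immersions to open immersions and $R_{K/F}(\mathbb{A}^n_K) = \mathbb{A}^{n[K:F]}_F$, the rationality of $G$ transfers to $\widetilde{G}$. The finiteness of $f$ together with integrality of the relevant local sections yield
$$\mathscr{M}(D) \otimes_F K \;\cong\; \mathscr{M}(D) \otimes_{\mathcal{O}(D)} \mathcal{O}(f^{-1}(D)) \;\cong\; \prod_{U \subseteq f^{-1}(D)} \mathscr{M}(U),$$
so $\widetilde{G}(\mathscr{M}(D)) = \prod_{U \subseteq f^{-1}(D)} G(\mathscr{M}(U))$ and analogously $\widetilde{G}(\mathscr{M}(\{y'\})) = \prod_{y \in f^{-1}(y')} G(\mathscr{M}(\{y\}))$ for each $y' \in S_{\mathcal{D}}$. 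Given the input $(g_y)_{y \in S_{\mathcal{U}}}$, I assemble $g_{y'} := (g_y)_{y \in f^{-1}(y')}$ and invoke Proposition \ref{12} for $(\mathbb{D}, \mathcal{D}, T_{\mathcal{D}}, \widetilde{G})$; the output $(g_D)_{D \in \mathcal{D}}$ decomposes coordinate-wise into $(g_U)_{U \in \mathcal{U}}$, and the patching relation $g_{y'} = g_{D_0}\cdot g_{D_1}^{-1}$ over $F$ unpacks to the required $g_y = g_{U_0^y} \cdot g_{U_1^y}^{-1}$ at each $y \in f^{-1}(y')$.

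The principal obstacle is the possibility that $K/F$ fails to be separable in positive characteristic with $k$ imperfect, since then $\widetilde{G}$ need not be smooth, connected, or rational. I would address this by factoring $f$ as $Y \to Z \to \mathbb{D}$, where $Z$ is the analytic curve whose function field is the separable closure of $F$ in $K$, as in the proof of Proposition \ref{9}: the morphism $Y \to Z$ is purely inseparable and therefore a homeomorphism, so the nice cover and patching data of $Y$ transfer bijectively to $Z$, while $Z \to \mathbb{D}$ is generically \'etale and the separable argument above applies.
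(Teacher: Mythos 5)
Your approach is essentially identical to the paper's: refine $\mathcal{V}$ using Theorem~\ref{4} and compactness, push forward along a Noether normalization $f : Y \to \mathbb{D}$, build a nice cover of $\mathbb{D}$ via Lemma~\ref{30} with the right vertex set and a parity function from Lemma~\ref{14}, pull back via Proposition~\ref{11}, and reduce the patching over $Y$ to Proposition~\ref{12} over $\mathbb{D}$ via the Weil restriction $\mathcal{R}_{K/\mathscr{M}(\mathbb{D})}(G)$ and the tensor-product identification $\mathscr{M}(\{s\}) \otimes_{\mathscr{M}(\mathbb{D})} K \cong \prod_{y \in f^{-1}(s)} \mathscr{M}(\{y\})$. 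This is exactly the paper's route.

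Two remarks are warranted. First, the ``principal obstacle'' you raise is largely spurious, and the proposed fix does not resolve it. Rationality of $\mathcal{R}_{K/F}(G)$ holds for \emph{any} finite extension $K/F$, not only \'etale ones: restriction of scalars commutes with open immersions and carries $\mathbb{A}^n_K$ to $\mathbb{A}^{n[K:F]}_F$ (this uses only that $K$ is finite free over $F$). An open dense subscheme isomorphic to an open of affine space forces irreducibility and hence connectedness. Smoothness may indeed fail when $K/F$ is inseparable, but nothing in Theorem~\ref{2}, Lemma~\ref{biggy}, or Proposition~\ref{12} uses smoothness --- only the rational parametrization near the identity. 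The paper simply cites the standard references for these facts. Moreover, your workaround of factoring $f$ as $Y \to Z \to \mathbb{D}$ does not actually avoid the issue: $G$ is defined over $K = \mathscr{M}(Y)$, not over $\mathscr{M}(Z)$, so to reduce to the separable extension $\mathscr{M}(Z)/\mathscr{M}(\mathbb{D})$ you would first have to form $\mathcal{R}_{K/\mathscr{M}(Z)}(G)$ along the purely inseparable extension $K/\mathscr{M}(Z)$ --- precisely the situation you were trying to sidestep.

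Second, you omit the verification, which is part of the statement, that every $y \in S_{\mathcal{U}}$ lies in exactly two members of $\mathcal{U}$. This requires an argument: the paper derives it from Lemma~\ref{jepiiiii} applied downstairs in $\mathbb{D}$, using that distinct preimage components of a single $D \in \mathcal{D}$ are disjoint. You should include this step; without it the indexing ``$U',U''$'' in the conclusion is not justified.
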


\begin{proof}
By Theorem \ref{4}, we may assume that the cover $\mathcal{V}$ only contains elements with finite boundary consisting of only type 3 points. Since $Y$ is compact, we may also assume that $\mathcal{V}$ is finite.  

Let $f : Y \rightarrow \mathbb{D}$ be a finite surjective morphism we obtain from Noether's Normalization Lemma, where $\mathbb{D}$ is the closed unit disc in $\mathbb{P}_{k}^{1, \mathrm{an}}.$ Set $S=f(\bigcup_{V \in \mathcal{V}}\partial{V}).$ It is a finite set of type ~3 points. By Lemma ~\ref{30}, there exists a nice cover $\mathcal{D}$ of $\mathbb{D}$ such that $S_{\mathcal{D}}=S.$ Let ~$T_{\mathcal{D}}$ be a parity function for $\mathcal{D}$ (it exists by Lemma \ref{14}). From Proposition ~\ref{11}, the connected components of $f^{-1}(Z'),$ $Z' \in \mathcal{D},$ form a nice cover $\mathcal{U}$ of $Y$ such that $f^{-1}(S_{\mathcal{D}})=S_{\mathcal{U}},$ and $T_{\mathcal{D}}$ induces a parity function $T_{\mathcal{U}}$ for $\mathcal{U}.$ 

Let us show that $\mathcal{U}$ refines $\mathcal{V}.$ Suppose, by contradiction, that $Z \in \mathcal{U}$ is such that there does not exist an element of $\mathcal{V}$ containing it. Then, there must exist $a \in \bigcup_{V \in \mathcal{V}} \partial{V} \subseteq S_{\mathcal{U}}$ such that $a \in \text{Int}(Z).$ Since $a \in S_{\mathcal{U}},$ there exists $U \in \mathcal{U}$ such that $a \in \partial{U}.$ But then, $Z  \cap U \neq \partial{Z} \cap \partial{U},$ which contradicts the fact that $\mathcal{U}$ is a nice cover of $Y.$ Consequently, $\mathcal{U}$ must refine $\mathcal{V}.$

Suppose that for $s \in S_{\mathcal{U}}$ there exist different $U_1, U_2, U_3 \in \mathcal{U}$ containing $s.$ Then, $f(s) \in V_1 \cap V_2 \cap V_3,$ where $V_i:=f(U_i) \in \mathcal{D}$, $i=1,2,3,$ (the fact that $V_i \in \mathcal{D}$ was shown in the beginning of the proof of Proposition \ref{11}). By Lemma \ref{jepiiiii}, this is only possible if at least two of the $V_1, V_2, V_3$ coincide. Suppose, without loss of generality, that $V_1=V_2.$ Then, $U_1, U_2$ are connected components of $f^{-1}(V_1),$ so $U_1 \cap U_2 =\emptyset,$ contradiction. Hence, for any $s \in \mathcal{S}_{\mathcal{U}},$ there exist at most two elements of $\mathcal{U}$ containing $s.$ Considering the definition of $S_{\mathcal{U}},$ there must exist exactly two.

Set $G'=\mathcal{R}_{K/\mathscr{M}(\mathbb{D})}(G)$ - the restriction of scalars from $K$ to $\mathscr{M}(\mathbb{D})$ of $G.$ It is still a connected rational linear algebraic group (see \cite[7.6]{neron} or \cite[Section 1]{Mil}). 

\begin{lm} For any point $s$ of type 3 in $\mathbb{D},$ 
${\mathscr{M}(\{s\}) \otimes_{\mathscr{M}(\mathbb{D})} \mathscr{M}(Y)= \prod_{x \in f^{-1}(s)} \mathscr{M}(\{x\})}.$
\end{lm}
\begin{sloppypar}
\begin{proof} Seeing as $s$ is a type 3 point, the set $f^{-1}(s)$ is finite consisting of only type 3 points. Hence, $\mathcal{O}(\{s\})=\mathscr{M}(\{s\}),$ and $\mathcal{O}(\{x\})=\mathscr{M}(\{x\})$ for all $x\in f^{-1}(s).$ 

Set $A=\mathcal{O}(\mathbb{D}), B=\mathcal{O}(Y),$ and $C=\mathcal{O}(\{s\}).$ Let us denote by $S$ the set of non-zero elements of ~$A.$
We know that $C \otimes_A B= \prod_{x \in f^{-1}(s)} \mathcal{O}(\{x\})=\prod_{x \in f^{-1}(s)} \mathscr{M}(\{x\}).$ Then, localizing on both sides, we obtain: $S^{-1}(C \otimes_A B)=C \otimes_{S^{-1}A} S^{-1}B$ and $S^{-1}\left(\prod_{x \in f^{-1}(s)} \mathscr{M}(\{x\})\right)=\prod_{x \in f^{-1}(s)} \mathscr{M}(\{x\}).$ 
Since $B$ is a finite $A$-module, $S^{-1}B$ is a domain that is a finite dimensional $S^{-1}A$-vector space. Then, for any $b \in B \backslash \{0\},$ the map $S^{-1}B \rightarrow S^{-1}B, \alpha \mapsto b\alpha$ is injective, so surjective. Thus, there exists $b' \in S^{-1}B$ such that ${bb'=1},$ implying $S^{-1}B=\text{Frac}\ B.$ Consequently, ${S^{-1}(C 
\otimes_{A} B)=\mathscr{M}(\{s\}) 
\otimes_{\mathscr{M}(\mathbb{D})} \mathscr{M}
(Y)}.$
\end{proof}
\end{sloppypar}

By the definition of the restriction of scalars, for any $s \in S_{\mathcal{D}},$ one obtains $G'(\mathscr{M}(\{s\}))=G(\mathscr{M}(\{s\}) \otimes_{\mathscr{M}(\mathbb{D})} \mathscr{M}(Y)).$ By the lemma above, $G'(\mathscr{M}(\{s\}))=\prod_{x \in f^{-1}(s)} G(\mathscr{M}(\{x\})).$ 

Consequently, $(g_y)_{y \in S_{\mathcal{U}}} \in \prod_{y \in S_{\mathcal{U}}}G(\mathscr{M}(\{y\}))$ determines uniquely an element $(h_s)_{s \in S_{\mathcal{D}}}$ of $\prod_{s \in S_{\mathcal{D}}} G'(\mathscr{M}(\{s\}))$. By Proposition \ref{12}, there exists $(h_Z)_{Z \in \mathcal{D}} \in \prod_{Z \in \mathcal{D}} G'(\mathscr{M}(Z)),$ such that if for two different $Z_0, Z_1 \in \mathcal{D}$ with $T_{\mathcal{D}}(Z_0)=0,$ $s \in Z_0 \cap Z_1,$ then $h_s=h_{Z_0} \cdot h_{Z_1}^{-1}$ in $G'(\mathscr{M}(\{s\})).$

For any $Z \in \mathcal{D},$ let $Z_1, Z_2, \dots, Z_r$ be the connected components of $f^{-1}(Z).$ The map $\mathscr{M}(Z) \otimes_{\mathscr{M}(\mathbb{D})} \mathscr{M}(Y) \rightarrow \prod_{i=1}^r \mathscr{M}(Z_i)$ induces $G'(\mathscr{M}(Z))=G(\mathscr{M}(Z) \otimes_{\mathscr{M}(\mathbb{D})} \mathscr{M}(Y)) \rightarrow \prod_{i=1}^r G(\mathscr{M}(Z_i)),$ which maps $h_Z$ to an element $(g_{Z_1}, g_{Z_2}, \dots, g_{Z_r})$ of $\prod_{i=1}^r G(\mathscr{M}(Z_i)).$ Thus, for any $U \in \mathcal{U},$ we have an element $g_U \in G(\mathscr{M}(U)).$ It remains to show that given different $U_0, U_1 \in \mathcal{U}$ with $T_{\mathcal{U}}(U_0)=0,$ such that $y \in U_0 \cap U_1$ for some $y\in S_{\mathcal{U}},$ we have $g_y=g_{U_0} \cdot g_{U_1}^{-1}$ in $G(\mathscr{M}(\{y\})).$ This is a consequence of the relation between $T_{\mathcal{D}}$ and $T_{\mathcal{U}}$, and of the commutativity of the following diagram for any $Z \in \mathcal{D}$ and any $s \in Z$ of type ~3:

\begin{center}
\begin{tikzpicture}
  \matrix (m) [matrix of math nodes,row sep=3em,column sep=4em,minimum width=2em]
  {
     \mathscr{M}(Z) & \mathscr{M}_{\mathbb{D}}(\{s\}) \\
     \prod_{i=1}^r \mathscr{M}(Z_i) & \prod_{y \in f^{-1}(s)}\mathscr{M}_{Y}(\{y\}) \\};
  \path[-stealth]
    (m-1-1) edge node [left] {} 
    (m-2-1) edge  node [above] {} 
    (m-1-2)
    (m-2-1.east|-m-2-2) edge node [below] {}
      (m-2-2)
    (m-1-2) edge node [right] {} 
    (m-2-2) 
    (m-2-1);
\end{tikzpicture}
\end{center} 
\end{proof}

\begin{prop}\label{15}
Let $Y$ be a normal irreducible strict $k$-affinoid curve. Set $K=\mathscr{M}(Y).$ Let $X/K$ be a variety, and $G/K$ a connected rational linear algebraic group acting strongly transitively on ~$X.$ The following local-global principles hold:\begin{itemize}
\item $ X(K) \neq \emptyset \iff X(\mathscr{M}_{x}) \neq \emptyset \ \text{for all} \ x \in Y;$
\item for any open cover $\mathcal{P}$ of $Y,$ $ X(K) \neq \emptyset \iff X(\mathscr{M}(U)) \neq \emptyset \ \text{for all} \ U \in \mathcal{P}.$

\end{itemize}
\end{prop}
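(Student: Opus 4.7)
The forward implications in both local-global principles are immediate: an element of $X(K)$ is sent to an element of $X(\mathscr{M}_x)$ (resp.\ $X(\mathscr{M}(U))$) under the natural ring map $K \to \mathscr{M}_x$ (resp.\ $K \to \mathscr{M}(U)$). The two assertions are readily seen to be equivalent: any point of $X(\mathscr{M}_x)$ is already defined on some open neighborhood of $x$ (yielding an open cover by such neighborhoods and a point of $X(\mathscr{M}(U))$ on each), while conversely a point of $X(\mathscr{M}(U))$ induces a point of $X(\mathscr{M}_x)$ for each $x \in U$ via $\mathscr{M}(U) \to \mathscr{M}_x$. It therefore suffices to prove the converse of the second statement.

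Assume $X(\mathscr{M}(U)) \neq \emptyset$ for every $U \in \mathcal{P}$. The strategy is to apply Proposition~\ref{13} with the group $G$ and the cover $\mathcal{P}$ to obtain a nice refinement $\mathcal{U}$ of $\mathcal{P}$ together with a parity function $T_{\mathcal{U}}$. For each $U \in \mathcal{U}$, choose some $V \in \mathcal{P}$ containing $U$ and restrict a fixed point of $X(\mathscr{M}(V))$ along $\mathscr{M}(V) \to \mathscr{M}(U)$ to obtain $x_U \in X(\mathscr{M}(U))$. For each $s \in S_{\mathcal{U}}$, the two elements $U_0, U_1 \in \mathcal{U}$ containing $s$ (labeled so that $T_{\mathcal{U}}(U_0)=0$) furnish two points in $X(\mathscr{M}(\{s\}))$. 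Strong transitivity then produces an element $g_s \in G(\mathscr{M}(\{s\}))$ satisfying $x_{U_0}|_{\{s\}} = g_s \cdot x_{U_1}|_{\{s\}}$.

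Applying Proposition~\ref{13} to the family $(g_s)_{s \in S_{\mathcal{U}}}$, I obtain $(g_U)_{U \in \mathcal{U}} \in \prod_{U \in \mathcal{U}} G(\mathscr{M}(U))$ with $g_s = g_{U_0} \cdot g_{U_1}^{-1}$ in $G(\mathscr{M}(\{s\}))$ for each $s$. Setting $\tilde{x}_U := g_U^{-1} \cdot x_U \in X(\mathscr{M}(U))$, a direct computation
\[
\tilde{x}_{U_1}|_{\{s\}} = g_{U_1}^{-1} \cdot x_{U_1}|_{\{s\}} = g_{U_1}^{-1} g_s^{-1} \cdot x_{U_0}|_{\{s\}} = g_{U_0}^{-1} \cdot x_{U_0}|_{\{s\}} = \tilde{x}_{U_0}|_{\{s\}}
\]
shows that the family $(\tilde{x}_U)_{U \in \mathcal{U}}$ agrees on all overlaps.

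The main obstacle is the final gluing step: building an honest $K$-point $x \in X(K)$ from this compatible family. The plan is to use the sheaf property of $\mathscr{M}$ with respect to the nice cover $\mathcal{U}$, namely the equalizer
\[
\mathscr{M}(Y) \;=\; \ker\!\left(\prod_{U \in \mathcal{U}} \mathscr{M}(U) \rightrightarrows \prod_{s \in S_{\mathcal{U}}} \mathscr{M}(\{s\})\right),
\]
which follows from Tate's acyclicity theorem applied to $\mathcal{O}$ on the nice cover (as in the proof of Proposition~\ref{1}) after inverting non-zero divisors. When $X$ is affine, a closed immersion $X \hookrightarrow \mathbb{A}^n_K$ identifies each $\tilde{x}_U$ with an $n$-tuple in $\mathscr{M}(U)^n$ satisfying the defining equations of $X$; the equalizer property then produces a unique element of $K^n = \mathscr{M}(Y)^n$ whose image in each $\mathscr{M}(U)^n$ still satisfies the equations, so it defines a point of $X(K)$. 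The general case reduces to the affine one by covering $X$ by affine opens and observing that, by separatedness of $X$ and compatibility on overlaps, the compatible local data descend to a single $K$-point. This yields $X(K) \neq \emptyset$ and completes the proof.
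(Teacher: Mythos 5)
Your proposal is correct and follows essentially the same route as the paper: apply Proposition~\ref{13} to produce a nice refinement together with a parity function, use strong transitivity at each point of $S_{\mathcal{U}}$ to get the transition elements $g_s$, apply Proposition~\ref{13} again to factor them as $g_{U_0}g_{U_1}^{-1}$, twist the local points by $g_U^{-1}$, and glue. The only difference is that you spell out the final gluing step (via the equalizer/sheaf property of $\mathscr{M}$ and a reduction to the affine case), whereas the paper treats it as an immediate consequence of compatibility on overlaps; this additional explanation is harmless and the underlying argument is the same.
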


\begin{proof}
Since $Y$ is irreducible and normal, $\mathcal{O}_x$ is a domain for all $x \in Y,$ and $\mathscr{M}_x=\text{Frac} \ \mathcal{O}_x.$

Seeing as $K \hookrightarrow \mathscr{M}_x$ for all $x \in Y,$ the implication $``\Rightarrow"$ is true. 

Suppose $X(\mathscr{M}_x) \neq \emptyset$ for all $x \in Y.$ Then, there exists an open cover $\mathcal{V}$ of $Y$ such that for any $V \in \mathcal{V},$ $X(\mathscr{M}(V)) \neq \emptyset.$ Let $\mathcal{U}$ be a nice refinement of $\mathcal{V}$ given by Proposition \ref{13}, and $T_{\mathcal{U}}$ its associated parity function. Remark that for any $U \in \mathcal{U},$ we have $X(\mathscr{M}(U)) \neq \emptyset.$
\begin{sloppypar}
For $U \in \mathcal{U},$ let $x_U \in X(\mathscr{M}(U)).$ For any $y \in S_{\mathcal{U}},$ there exists exactly one element $U_i \in \mathcal{U},$ with $T_{\mathcal{U}}(U_i)=i,$ $i=0,1,$ containing $y$. From the transitivity of the action of $G,$  there exists ${g_y \in G(\mathscr{M}(\{y\}))},$ such that $x_{U_0}=g_y \cdot x_{U_1}$ in $G(\mathscr{M}(\{y\})).$ This gives us an element $(g_y)_{y \in S_{\mathcal{U}}} \in \prod_{y \in S_{\mathcal{U}}} G(\mathscr{M}(\{y\})).$ By Proposition \ref{13}, there exists ${(g_U)_{U \in \mathcal{U}} \in \prod_{U \in \mathcal{U}} G(\mathscr{M}(U))}$,  satisfying: for any different $U', U'' \in \mathcal{U}$ containing some point $y \in S_{\mathcal{U}},$ such that $T_{\mathcal{U}}(U')=0$ (implying $T_{\mathcal{U}}(U'')=1$), $g_y=g_{U'} \cdot g_{U''}^{-1}$ in $G(\mathscr{M}\{y\}).$ 
\end{sloppypar}
For any $U \in \mathcal{U},$  set $x_{U}'=g_{U}^{-1} \cdot x_U \in X(\mathscr{M}(U)).$  We have construced a meromorphic function over $U$ for any $U \in \mathcal{U}.$ Let us show they are compatible, \textit{i.e.} that they coincide on the intersections of the elements of $\mathcal{U}$. Let $D, E \in \mathcal{U}$ be such that $D \cap E \neq \emptyset.$ Suppose $T_{\mathcal{U}}(D)=0.$ For any $s \in D \cap E,$  $x_E'=g_E^{-1} \cdot x_E= g_D^{-1} (g_D  g_E^{-1}) \cdot x_E=g_D^{-1} g_s \cdot x_E= g_D^{-1} x_D=x_D'$ in $X(\mathscr{M}(\{s\})).$ Consequently, $x_E'=x_D'$ in $X(\mathscr{M}(E \cap D)).$

Compatibility of these meromorphic functions implies they can be glued to give a meromorphic function on the entire $Y.$ Thus, $X(K)=X(\mathscr{M}(Y)) \neq \emptyset.$

The second version of this local-global principle is a direct consequence of the first ~one.

\end{proof}

Let us show the same result (Theorem \ref{42}) for any $k$-affinoid space. Recall that $\Gamma(\cdot)$ denotes the Shilov boundary of an affinoid space.

\begin{lm}\label{0} Let $k$ be a complete ultrametric field.
Let $E$ be a $k$-affinoid space. Let $e$ be any point of $E.$ Then, the following statements are equivalent:
\begin{enumerate}
\item there exists an affinoid neighborhood $N_0$ of $e$ in $E$ such that $e \in \Gamma(N_0);$
\item for any affinoid neighborhood $N$ of $e$ in $E,$ $e \in \Gamma(N);$
\item $e \in \Gamma(E).$
\end{enumerate}
\end{lm}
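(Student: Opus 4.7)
The plan is to translate the three statements about Shilov boundaries into statements about Berkovich boundaries via Lemma~\ref{rita} (which identifies $\Gamma(\cdot)$ with $\partial_B(\cdot)$ for affinoid curves), and then exploit the transitivity of relative boundaries. The implication (2) $\Rightarrow$ (1) is immediate, since $E$ itself is an affinoid neighborhood of $e$ (so an affinoid neighborhood satisfying (1) always exists once (2) holds), and the two nontrivial implications are (1) $\Rightarrow$ (3) and (3) $\Rightarrow$ (2).

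To prove (1) $\Rightarrow$ (3), I would start from an affinoid neighborhood $N_{0}$ of $e$ in $E$ with $e \in \Gamma(N_{0}) = \partial_{B}(N_{0})$ (by Lemma~\ref{rita}). Applying Proposition 3.1.3(ii) of \cite{Ber90} to the composition $N_{0} \hookrightarrow E \to \mathcal{M}(k)$ yields the transitivity identity
$$\partial_{B}(N_{0}) \;=\; \partial_{B}(N_{0}/E) \,\cup\, \bigl(N_{0} \cap \partial_{B}(E)\bigr).$$
Since $N_{0}$ is a topological neighborhood of $e,$ one has $e \in \mathrm{Int}(N_{0})$ in $E,$ so $e \notin \partial N_{0}.$ By \cite[Corollary~2.5.13(ii)]{Ber90}, $\partial_{B}(N_{0}/E) \subseteq \partial N_{0},$ which rules out $e \in \partial_{B}(N_{0}/E).$ Hence $e \in N_{0} \cap \partial_{B}(E) \subseteq \partial_{B}(E) = \Gamma(E),$ which is (3).

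For (3) $\Rightarrow$ (2), I would take an arbitrary affinoid neighborhood $N$ of $e$ and apply the same transitivity identity in the other direction: it gives $N \cap \partial_{B}(E) \subseteq \partial_{B}(N),$ so $e \in \Gamma(E) = \partial_{B}(E)$ forces $e \in \partial_{B}(N) = \Gamma(N)$ by Lemma~\ref{rita}. The only subtle point in the whole argument is setting up the transitivity formula for $\partial_{B}$ in the required generality; once the functorial behavior of $\mathrm{Int}(\cdot/\cdot)$ under the closed immersion coming from an affinoid domain embedding is extracted from \cite{Ber90}, the remainder is bookkeeping of containments combined with Lemma~\ref{rita}. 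I expect no further obstacle: no input beyond what has already been recorded in the paper is needed.
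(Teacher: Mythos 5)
Your proposal relies on Lemma~\ref{rita} to identify $\Gamma(\cdot)$ with $\partial_B(\cdot)$, but Lemma~\ref{rita} is stated (and proved) only for $k$-affinoid \emph{curves}, whereas Lemma~\ref{0} is stated for arbitrary $k$-affinoid spaces $E$. For a general affinoid space the inclusion $\Gamma(X) \subseteq \partial_B(X)$ is strict: already for the closed bidisc $\mathcal{M}(k\{T_1,T_2\})$ the Shilov boundary is the single Gauss point, while the Berkovich boundary (the preimage under reduction of the non-closed points of $\mathbb{A}^2_{\widetilde{k}}$) is much larger. This breaks both nontrivial directions of your argument in the general case: in (1) $\Rightarrow$ (3) the transitivity identity only yields $e \in \partial_B(E)$, which does not give $e \in \Gamma(E)$; and in (3) $\Rightarrow$ (2) you land in $\partial_B(N)$ rather than $\Gamma(N)$.

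The paper avoids this by invoking \cite[Proposition~2.5.20]{Ber90} directly, which compares Shilov boundaries (not Berkovich boundaries) of an affinoid domain and its ambient space: it gives $\Gamma(N_0) \subseteq \partial_B(N_0/E) \cup (\Gamma(E) \cap N_0)$ and $\Gamma(E) \cap N \subseteq \Gamma(N)$, with no need for the equality $\Gamma = \partial_B$. Combined with \cite[Corollary~2.5.13(ii)]{Ber90} to exclude $e \in \partial_B(N_0/E) = \partial N_0$, the paper's argument goes through in full generality. Your approach would prove a correct lemma about affinoid \emph{curves}, which in fact is the only case where Lemma~\ref{0} is applied in the paper (inside Lemma~\ref{40}), so the restriction is not fatal to the downstream arguments; but it does not establish Lemma~\ref{0} as stated. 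Replacing the detour through $\partial_B$ by a direct appeal to \cite[Proposition~2.5.20]{Ber90} closes the gap.
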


\begin{proof}
Suppose there exists an affinoid neighborhood $N_0$ of $e$ in $E,$ such that $e \in \Gamma(N_0).$ By \cite[Proposition 2.5.20]{Ber90}, $\Gamma(N_0) \subseteq \partial_B(N_0/E) \cup (\Gamma(E) \cap N_0).$ Since $\partial_B(N/E)$ is the topological boundary of $N_0$ in $E$ (see \cite[Corollary 2.5.13 (ii)]{Ber90}), we obtain that $e \not \in \partial_B(N_0/E),$ implying $e \in \Gamma(E) \cap N_0 \subseteq \Gamma(E).$

On the other hand, if $e \in \Gamma(E),$ for any affinoid neighborhood $N$ of $e$ in $E,$ since $\Gamma(E) \cap N \subseteq \Gamma(N)$ (see \cite[Proposition 2.5.20]{Ber90}), we obtain $e \in \Gamma(N).$
\end{proof}

\begin{lm}\label{40}
Let $Y$ be an integral $k$-affinoid curve. Let $y \in Y$ be any point of type 3, and $Z$ a connected affinoid neighborhood of $y$ in $Y.$ Then,
\begin{enumerate}
\item
the subspace $Y \backslash \{y\}$ has at most two connected components at the neighborhood of ~$y;$ it is connected at the neighborhood of $y$ if and only if $y \in \Gamma(Y);$

\item if $y \in \Gamma(Y),$ then there exist connected affinoid domains $A, B$ of $Y,$ such that $A$ is a neighborhood of $y$ in $Z,$ $\Gamma(Y) \cap A=\{y\},$ $A \cup B=Y,$ and  $A \cap B$ is a single type ~3 point;
\item if $k$ is non-trivially valued and $y \not \in \Gamma(Y),$ there exists a strict affinoid neighborhood of $y$ in $Y.$
\end{enumerate}
\end{lm}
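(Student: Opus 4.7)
The plan is to combine Lemma~\ref{rita}, which identifies $\Gamma(Y) = \partial_B(Y)$ for a $k$-affinoid curve, with Ducros' structural results on analytic curves at type~3 points, and to handle the three parts in order.

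For (1), I will invoke \cite[4.2.11.2]{Duc}: at most two branches emanate from a type~3 point, with exactly two if and only if the point lies in $\mathrm{Int}_B(Y)$. Since $\mathrm{Int}_B(Y) = Y \setminus \Gamma(Y)$ by Lemma~\ref{rita}, and since the branches at $y$ are in bijection with the connected components of $U \setminus \{y\}$ for sufficiently small connected open neighborhoods $U$ of $y$ (as in \cite[1.3.12]{Duc}), the first claim follows directly: $Y \setminus \{y\}$ is connected at the neighborhood of $y$ if and only if there is a single branch, if and only if $y \in \Gamma(Y)$.

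For (2), assume $y \in \Gamma(Y)$, so by (1) there is a single branch at $y$. First, using Theorem~\ref{4}, I would pick a connected affinoid neighborhood $A_0 \subseteq Z$ of $y$ whose topological boundary in $Y$ consists only of type~3 points, and shrink $A_0$ so that $\Gamma(Y) \cap A_0 = \{y\}$ (possible because $\Gamma(Y)$ is finite). Next, I would apply \cite[Th\'eor\`eme 4.5.4]{Duc} at $y$: the single-branch condition produces a virtual-disc neighborhood of $y$ with a unique type~3 boundary point $z$ in $Y$; intersecting this with $A_0$ yields the desired affinoid $A$ with $\partial A = \{z\}$. Finally, $B := \overline{Y \setminus A}$ is a closed analytic domain of $Y$ whose boundary is contained in $\{z\}$, so by \cite[Th\'eor\`eme 6.1.3]{Duc} it is an affinoid domain; its connectedness together with the equality $A \cap B = \{z\}$ follows from the facts that $Y$ is connected and that $A$ meets $B$ only at $z$.

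For (3), assume $y \notin \Gamma(Y)$ and $k$ is non-trivially valued. By (1) and \cite[Th\'eor\`eme 4.5.4]{Duc}, $y$ has a neighborhood in $Y$ isomorphic to a closed virtual annulus parameterized by some analytic function $T$, with $|T(y)| \notin \sqrt{|k^\times|}$ since $y$ is of type~3. I would then choose $r, s \in |k^\times|$ with $r < |T(y)| < s$ fitting inside the annulus, using the non-triviality of $|k^\times|$; the sub-annulus $\{r \leqslant |T| \leqslant s\}$ is then a strict $k$-affinoid domain containing $y$ in its interior, hence a strict affinoid neighborhood of $y$ in $Y$. The main technical obstacle lies precisely here: arranging for $r, s \in |k^\times|$ to straddle $|T(y)|$ while staying within the given virtual annulus requires some care, particularly in the discretely-valued case where $|k^\times|$ is sparse, and will be handled by exploiting the flexibility in the choice of the annulus together with the fact that $|k^\times|$ contains elements arbitrarily small and arbitrarily large.
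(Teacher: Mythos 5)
Your overall strategy (use $\Gamma(Y)=\partial_B(Y)$ from Lemma~\ref{rita}, then exploit Ducros' structure theory of curves near type~3 points) is the right one and matches the paper in spirit, but there is a genuine gap at the very start: you apply \cite[Th\'eor\`eme 4.5.4]{Duc} (the closed-virtual-annulus/disc neighborhood theorem) directly to $Y$ at $y$ in parts~(2) and~(3), and implicitly rely on the same kind of structure for part~(1). That theorem describes neighborhoods of points lying in the \emph{smooth} locus of a curve, and an arbitrary integral $k$-affinoid curve need not be geometrically reduced, so its non-rigid points need not be smooth. The paper's proof begins precisely by curing this: it base-changes to $k^{1/p^n}$ for suitable $n$ and passes to the reduction so that $Y'=(Y\times k^{1/p^n})_{\mathrm{red}}$ is geometrically reduced, whence the smooth locus is a non-empty Zariski open containing all non-rigid points (in particular $y'=f^{-1}(y)$), and only then applies Th\'eor\`eme~4.5.4. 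Since the map $f:Y'\to Y$ is a finite homeomorphism, it also checks that Shilov boundaries, connected affinoid domains and their boundaries are preserved under $f$, so the statement descends back to $Y$. Without this reduction your invocations of the structure theorem are not justified, and you would also need the transport-of-structure argument for $f$.

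Two smaller points. In~(2), the local model at a type~3 smooth point is a closed virtual \emph{annulus} $A'$ with $\Gamma(A')=\{y,z\}$, not a virtual disc; the fact that $\partial_Y A'=\{z\}$ is a single point is a consequence of $y\in\Gamma(Y)=\partial_B(Y)$ (via Lemma~\ref{0} and the relation between relative and absolute Berkovich boundary), not of the neighborhood being a disc. Also, the paper simply shrinks $A'$ so that $\Gamma(Y)\cap A'=\{y\}$ rather than intersecting with a separate auxiliary affinoid $A_0$; intersecting two affinoid domains as you propose risks producing additional boundary points and does not obviously yield $\partial A=\{z\}$. In~(3), the radii you cut with should be taken in $\sqrt{|k^\times|}$, not in $|k^\times|$, and the relevant fact is that $\sqrt{|k^\times|}$ is \emph{dense} in $\mathbb{R}_{>0}$ when $k$ is non-trivially valued; the weaker observation that $|k^\times|$ contains arbitrarily small and arbitrarily large elements does not by itself let you straddle $|T(y)|$ inside a narrow annulus.
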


\begin{proof}
Let $p$ denote the characteristic exponent of $k.$ Then, by \cite[Th\'eor\`eme 6.10]{dex}, there exists $n$ such that $Y':=(Y \times k^{1/p^n})_{\text{red}}$ is geometrically reduced. Since $k^{1/p^n}/k$ is a  purely inseparable field extension, the map $f:Y' \rightarrow Y$ is a homeomorphism. As $Y'$ is geometrically reduced, the set of its smooth points is a non-empty Zariski-open subset, \textit{i.e.} the complement of a set of rigid points. Consequently, since $y':=f^{-1}(y)$ is non-rigid, it is smooth in $Y'.$ Remark also that by \cite[Proposition 4.2.14]{Duc}, the image (resp. preimage) of a connected affinoid domain is a connected analytic domain, and thus by \mbox{\cite[Th\'eor\`eme 6.1.3]{Duc}}, a connected affinoid domain. Finally, for any affinoid domain $U$ of $Y',$ we have that $\Gamma(U)=f^{-1}(\Gamma(f(U)))$: by Proposition 2.5.8 (iii) and \mbox{Corollary 2.5.13 (i)} of \cite{Ber90}, this is true for finite morphisms, and taking the reduction of an affinoid space does not change its Shilov boundary.  Set $Z':=f^{-1}(Z).$ It suffices to prove the statement for $Y', y', Z'.$

(1) By \cite[Th\'eor\`eme 4.5.4]{Duc}, $y'$ has an affinoid neighborhood $A'$ in $Y'$ (we may assume, seeing as type 3 points are dense, that $\partial{A'}$ consists of type 3 points) that is a closed virtual annulus, implying $\partial_B(A')$ contains exactly two points. Thus, $A'$ has at most two connected components at the neighborhood of $y',$ and it is connected there if and only if $y' \in \Gamma(A').$

Finally, $Y'$ has at most two connected components at the neighborhood of $y'$, and by Lemma \ref{0}, it is connected there if and only if $y' \in \Gamma(Y').$

\begin{sloppypar}
(2) Suppose furthermore that $y' \in \Gamma(Y'),$ implying $y' \in \Gamma(A').$ Set  $\Gamma(A')=\{y',z'\},$ where $z'$ is a type 3 point. Then, $\partial{A'}=\{z'\}$ and by Proposition 4.2.14 and Th\'eor\`eme 6.1.3 of \cite{Duc}, $B':=(Y' \backslash A')\cup \{z'\}$ is an affinoid domain. We have: $A' \cup B'=Y', A' \cap B'=\{z'\}$ (which implies $B'$ is connected). Finally, by shrinking $A'$ if necessary, we can always assume $z' \not \in \Gamma(Y'),$ and since $\Gamma(Y') \cap A' \subseteq \Gamma(A'),$ this implies $\Gamma(Y') \cap A' =\{y'\}.$
\end{sloppypar}
(3) If $y' \not \in \Gamma(Y'),$ then $y' \not \in \Gamma(A'),$ and for the non-trivially valued field $k^{1/p^n},$ the statement follows from the fact that $A'$ is a closed virtual annulus. 
\end{proof}

By the terminology introduced in \cite[Section 1.7]{Duc} and \cite[Th\'eor\`eme 3.5.1]{Duc}, the first part of Lemma \ref{40} shows that  points of type 3 of certain $k$-analytic curves have at most two branches. Furthermore, in view of Lemma \ref{rita} and \cite[Proposition 1.5.5 (ii)]{ber93}, it has one branch if and only if it is in the Berkovich boundary of the curve. 

The following argument will be used often in what is to come.

\begin{lm} \label{41} Let $C$ be a normal irreducible $k$-analytic curve. Set $F=\mathscr{M}(C).$ Let $X/F$ be a variety, and $G/F$ a connected rational linear algebraic group acting strongly transitively on $X.$
\begin{enumerate}
\item  Suppose $X(\mathscr{M}_x) \neq \emptyset$ for all $x \in C.$ Let $Z$ be any affinoid domain of $C.$ Then, $G_Z:=G \times_{F} \mathscr{M}(Z)$ is a connected rational linear algebraic group over $\mathscr{M}(Z)$ acting strongly transitively on the $\mathscr{M}(Z)$-variety $X_Z:=X \times_F \mathscr{M}(Z).$ Furthermore, $X_{Z}(\mathscr{M}_{Z,x}) \neq \emptyset$ for all $x \in Z,$ where $\mathscr{M}_Z$ is the sheaf of meromorphic functions over $Z.$
\item  Let $U_1, U_2$ be connected affinoid domains of $C$ such that $U_1 \cap U_2=\{s\},$ where $s$ is a type 3 point. If $X(\mathscr{M}(U_i)) \neq \emptyset, i=1,2,$ then $X(\mathscr{M}(U_1 \cup U_2)) \neq \emptyset.$
\end{enumerate}
\end{lm}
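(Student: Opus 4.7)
For part (1), base change along the injection $F=\mathscr{M}(C)\hookrightarrow\mathscr{M}(Z)$ (injective because $Z$ is non-empty and $C$ is irreducible) preserves the structure of $G$ as a connected rational linear algebraic group: connectedness and rationality (via an open immersion into affine space) transfer, and being linear algebraic is preserved by construction. Strong transitivity is immediate from the definition: for any field extension $L$ of $\mathscr{M}(Z)$, the composition $F\hookrightarrow\mathscr{M}(Z)\hookrightarrow L$ exhibits $L$ as an extension of $F$ as well, so $G_Z(L)=G(L)$ acts strongly transitively on $X_Z(L)=X(L)$ by hypothesis. For the stalk statement, decompose $Z$ into connected components; each is integral by normality of $C$, so we may assume $Z$ connected and integral. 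For $x\in Z$, restriction of germs yields a ring morphism $\mathcal{O}_{C,x}\to\mathcal{O}_{Z,x}$ between integral domains; a non-zero germ $f\in\mathcal{O}_{C,x}$ has $0$-dimensional zero locus near $x$ in the $1$-dimensional space $C$, so it cannot vanish on the $1$-dimensional germ of $Z$ at $x$. Hence the morphism is injective, and passing to fraction fields produces an embedding $\mathscr{M}_{C,x}\hookrightarrow\mathscr{M}_{Z,x}$ compatible with the maps coming from $F$; a point in $X(\mathscr{M}_{C,x})$ then base changes to one in $X_Z(\mathscr{M}_{Z,x})$.

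For part (2), set $C':=U_1\cup U_2$. By \cite[Th\'eor\`eme 6.1.3]{Duc}, $C'$ is either an affinoid domain of $C$ or equal to $C$ (in which case $C$ itself is compact, hence projective or affinoid); in every case, $C'$ is a normal irreducible compact $k$-analytic curve. Proposition \ref{1} therefore applies to $C'$ with $(U,V,W)=(U_1,U_2,\{s\})$, for a large-enough divisor $D$ if $C'$ is projective, verifying Setting 1. Part (1) applied to $Z=C'$ transfers the hypotheses on $G$ and $X$ to the base changes $G_{C'}$ and $X_{C'}$. Consequently, Corollary \ref{3} furnishes, for every $g\in G(\mathscr{M}(\{s\}))$, a factorization $g=g_1\cdot g_2$ with $g_i\in G(\mathscr{M}(U_i))$.

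Now pick $x_i\in X(\mathscr{M}(U_i))$ and view both in $X(\mathscr{M}(\{s\}))=X(\mathcal{H}(s))$, which is a field since $s$ is of type 3. Strong transitivity provides $g\in G(\mathscr{M}(\{s\}))$ with $x_1=g\cdot x_2$. Writing $g=g_1 g_2$ as above, set $y_1:=g_1^{-1}\cdot x_1\in X(\mathscr{M}(U_1))$ and $y_2:=g_2\cdot x_2\in X(\mathscr{M}(U_2))$; one checks $y_1=g_1^{-1}g_1 g_2 x_2=g_2 x_2=y_2$ in $X(\mathscr{M}(\{s\}))$. The Mayer--Vietoris identification $\mathscr{M}(C')=\mathscr{M}(U_1)\cap\mathscr{M}(U_2)$ inside $\mathscr{M}(\{s\})$, which is part of the admissible exact sequence underlying Proposition \ref{1}, together with the sheaf property for $X$-points, then glues $y_1$ and $y_2$ to a single element of $X(\mathscr{M}(C'))$, as required. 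The main difficulty lies in the stalk analysis of Part (1) at topological boundary points of $Z$ in $C$, where $\mathcal{O}_{Z,x}$ genuinely differs from $\mathcal{O}_{C,x}$; the dimension/integrality argument is what saves the situation. Part (2) then follows essentially formally from the matrix decomposition and Mayer--Vietoris already in place.
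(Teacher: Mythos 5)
Your proof is correct and follows essentially the same route as the paper: base change for part (1), then strong transitivity plus Corollary~\ref{3} plus gluing for part (2). Two minor remarks. First, your dimension argument for the injectivity of $\mathcal{O}_{C,x}\to\mathcal{O}_{Z,x}$ does not literally cover the case where $Z$ is zero-dimensional near $x$ (e.g.\ $Z=\{s\}$ a single type~3 point); the injectivity still holds there because $\mathcal{O}_{C,x}$ is then a field (Remark~\ref{j}), so the case split is worth making explicit. Second, in part (2) the paper applies Proposition~\ref{1} and Corollary~\ref{3} directly with $F=\mathscr{M}(C)$ as the ambient curve, so the preliminary base change to $\mathscr{M}(C')$ you perform is an unnecessary (though harmless) detour; relatedly, the identification $\mathscr{M}(C')=\mathscr{M}(U_1)\cap\mathscr{M}(U_2)$ inside $\mathscr{M}(\{s\})$ is not literally part of the admissible Mayer--Vietoris sequence for $\mathcal{O}(D)$ — it comes from the sheaf property of $\mathscr{M}$ on the G-topology — but the gluing conclusion you draw from it is correct.
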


\begin{proof}
(1) That $G_Z=G \times_F{\mathscr{M}(Z)}$ is still a connected rational linear algebraic group acting strongly transitively on the variety $X_Z=X \times_F \mathscr{M}(Z)$ is immediate. Also, $\mathscr{M}_x \hookrightarrow \mathscr{M}_{Z,x}$ for any ${x \in Z}$. Thus, $X(\mathscr{M}_x)\neq \emptyset$ implies $X(\mathscr{M}_{Z,x})=X_Z(\mathscr{M}_{Z,x})\neq \emptyset$ for any $x \in Z.$ 

(2) Let $x_i \in X(\mathscr{M}(U_i)), i=1,2.$ By the transitivity of the action of $G$, there exists $g \in G(\mathscr{M}(\{s\})),$ such that $x_1=g \cdot x_2$ in $X(\mathscr{M}(\{s\})).$ By Corollary \ref{3}, there exist $g_i \in G(\mathscr{M}(U_i)),$ such that $g=g_1 \cdot g_2$ in $G(\mathscr{M}(\{s\})).$ Thus $g_1^{-1} \cdot x_1=g_2 \cdot x_2$ in $X(\mathscr{M}(\{s\})).$ Set $x_1'=g_1^{-1} \cdot x_1$ and $x_2'=g_2 \cdot x_2.$ They represent meromorphic functions over $U_1$ and $U_2,$ respectively, whose restrictions to $U_1 \cap U_2$ are compatible. Thus, they can be glued to give a meromorphic function $x$ over $\mathscr{M}(U_1 \cup U_2),$ where $x \in X(\mathscr{M}(U_1 \cup U_2)),$ implying ${X(\mathscr{M}(U_1 \cup U_2)) \neq \emptyset.}$
\end{proof}

\begin{thm}\label{42} Suppose $k$ is non-trivially valued.
Let $Y$ be a normal irreducible $k$-affinoid curve. Set $K=\mathscr{M}(Y).$ Let $X/K$ be a variety, and $G/K$ a connected rational linear algebraic group acting strongly transitively on $X.$ The following local-global principles hold:
\begin{itemize}
\item $ X(K) \neq \emptyset \iff X(\mathscr{M}_{x}) \neq \emptyset \ \text{for all} \ x \in Y;$
\item for any open cover $\mathcal{P}$ of $Y,$ $ X(K) \neq \emptyset \iff X(\mathscr{M}(U)) \neq \emptyset \ \text{for all} \ U \in \mathcal{P}.$
\end{itemize}
\end{thm}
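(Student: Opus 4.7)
The direction ``$\Rightarrow$'' is immediate, since $K=\mathscr{M}(Y)$ embeds into $\mathscr{M}_x$ for every $x \in Y$ (using that $Y$ is normal irreducible, so $\mathcal{O}_{Y,x}$ is a domain with $\mathscr{M}_x = \mathrm{Frac}\,\mathcal{O}_{Y,x}$); the second equivalence (for an open cover $\mathcal{P}$) follows from the first just as in Proposition \ref{15}.

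For ``$\Leftarrow$'', the plan is to adapt the proof of Proposition \ref{15} to the setting where $Y$ is not assumed strict. In that earlier proof, one refines any open cover via Theorem \ref{4} so that its boundaries consist only of interior type-3 points, invokes Proposition \ref{13} to build a nice refinement by lifting a nice cover of the strict closed unit disc $\mathbb{D}\subset \mathbb{P}_k^{1,\mathrm{an}}$ along a finite surjective morphism $f:Y\to\mathbb{D}$ obtained from Noether's Normalization Lemma, and finally patches via Proposition \ref{12} using the connected rational structure of $G$ and the strong transitivity of its action on $X$. For a general $k$-affinoid $Y$, Noether's Normalization Lemma instead yields a finite surjective morphism $f:Y\to\mathbb{D}_r$ to a closed disc of some radius $r>0$ in $\mathbb{P}_k^{1,\mathrm{an}}$, possibly with $r\notin\sqrt{|k^\times|}$, in which case $\mathbb{D}_r$ is non-strict and its Shilov boundary point $\eta_{T,r}$ is of type $3$.

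The essential observation is that the nice-cover machinery of Section $2$ applies uniformly to any connected affinoid domain of $\mathbb{P}_k^{1,\mathrm{an}}$, irrespective of strictness: Lemma \ref{30} produces nice covers of $\mathbb{D}_r$ with prescribed interior type-3 intersection points, Proposition \ref{11} lifts such a cover along $f$ to a nice cover of $Y$, and Proposition \ref{12} patches on the lifted cover by a purely algebraic argument. The one technical condition is that the set $S:=f\bigl(\bigcup_{V\in\mathcal{V}}\partial V\bigr)$ which determines the intersection pattern of the nice cover of $\mathbb{D}_r$ lie in $\mathrm{Int}(\mathbb{D}_r)$; after refining $\mathcal{V}$ via Theorem \ref{4} so that every $\partial V$ is contained in $\mathrm{Int}(Y)$ and consists of type-3 points, this follows from the identity $\Gamma(Y)=f^{-1}(\Gamma(\mathbb{D}_r))$, itself a consequence of a finite morphism of affinoids having empty relative Berkovich boundary (combined with Lemma \ref{rita}). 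The main obstacle is exactly this boundary-avoidance check: when $\mathbb{D}_r$ is non-strict, one must confirm $\eta_{T,r}\notin S$, and the identity above does the job, since any preimage of $\eta_{T,r}$ lies in $\Gamma(Y)$ whereas the refined $\partial V$ avoid $\Gamma(Y)$ by construction. Once this is in place, the remainder is a line-by-line adaptation of Propositions \ref{13} and \ref{15}, yielding $X(K)\neq\emptyset$ from local solvability.
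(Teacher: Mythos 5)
Your ``$\Rightarrow$'' direction and your understanding of the strict case are fine, but the ``$\Leftarrow$'' direction contains a genuine gap: you invoke ``Noether's Normalization Lemma'' to produce a finite surjective morphism $f : Y \to \mathbb{D}_r$ onto some closed disc $\mathbb{D}_r \subset \mathbb{P}_k^{1,\mathrm{an}}$, but no such statement holds for non-strict $k$-affinoid curves. To see why, note that for any finite morphism $f : Y \to \mathbb{D}_r$ one has $\Gamma(Y) = \partial_B(Y) = f^{-1}(\partial_B(\mathbb{D}_r)) = f^{-1}(\{\eta_{T,r}\})$ (Lemma \ref{rita} and \cite[Proposition 3.1.3]{Ber90}), and for each $y \in \Gamma(Y)$ the extension $\mathcal{H}(y)/\mathcal{H}(\eta_{T,r})$ is finite, hence has finite ramification index, so $|\mathcal{H}(y)^\times|$ and $|\mathcal{H}(\eta_{T,r})^\times| = |k^\times| \cdot r^{\mathbb{Z}}$ are commensurable. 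Now take $Y$ to be a closed annulus $\{s_1 \leqslant |T| \leqslant s_2\}$ with $s_1 < s_2$ both outside $\sqrt{|k^\times|}$ and $\mathbb{Q}$-linearly independent modulo $\sqrt{|k^\times|}$ (possible, e.g., over $\mathbb{Q}_p$): then $\Gamma(Y) = \{\eta_{T,s_1}, \eta_{T,s_2}\}$, and commensurability of $|k^\times| \cdot s_1^{\mathbb{Z}}$ and $|k^\times| \cdot s_2^{\mathbb{Z}}$ with a common group $|k^\times| \cdot r^{\mathbb{Z}}$ would force $s_1$ and $s_2$ to be $\mathbb{Q}$-dependent modulo $\sqrt{|k^\times|}$, a contradiction. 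So this $Y$ (a perfectly good normal irreducible $k$-affinoid curve) admits no finite morphism to any disc, and the main reduction you rely on breaks.

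The paper avoids this entirely. Instead of re-running the Proposition \ref{13}/\ref{15} machinery over a possibly non-strict disc, Theorem \ref{42} proceeds by induction on the number $n$ of type $3$ points in $\Gamma(Y)$. The base case $n = 0$ is exactly the strict case (using \cite[Corollary 2.1.6]{Ber90}), covered by Proposition \ref{15}. For the inductive step one picks a type $3$ point $u \in \Gamma(Y)$, uses Lemma \ref{40} to split $Y = U_1 \cup U_2$ with $U_1 \cap U_2$ a single type $3$ point $s$ in the interior and $\Gamma(Y) \cap U_1 = \{u\}$, enlarges $U_2$ by a strict affinoid neighborhood $U_s$ of $s$ so that $\Gamma(U_2 \cup U_s)$ has at most $n-1$ type $3$ points, applies the induction hypothesis (via Lemma \ref{41}(1)) to get $X(\mathscr{M}(U_2)) \neq \emptyset$, and finally glues $U_1$ and $U_2$ via the two-piece patching of Lemma \ref{41}(2). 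Your intermediate observations (e.g.\ $\Gamma(Y) = f^{-1}(\Gamma(\mathbb{D}_r))$ when $f$ is finite, and that Lemma \ref{30}, Proposition \ref{11}, Proposition \ref{12} do not depend on strictness) are correct and show you understand the mechanics, but without a finite morphism to a disc the plan cannot get off the ground.
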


\begin{proof}
Seeing as $K \hookrightarrow \mathscr{M}_x$ for any $x \in Y,$ the direction $``\Rightarrow$" is true.

For the other one, let us use induction on the number $n$ of type 3 points in the Shilov boundary of $Y.$
If $n=0,$ then by \cite[Corollary 2.1.6]{Ber90}, $Y$ is a strict $k$-affinoid curve, in which case the statement has already been proven in Proposition \ref{15}. Assume we know the statement for any positive integer not larger than $n-1, n>0.$

Suppose $\Gamma(Y)$ contains $n$ type 3 points. Let $u \in \Gamma(Y)$. Since $X(\mathscr{M}_u)  \neq \emptyset,$ there exists a connected affinoid neighborhood $U_1'$ of $u$ in $Y,$ such that $X(\mathscr{M}(U_1')) \neq \emptyset.$ By Lemma \ref{40}, there exist two connected affinoid domains $U_1, U_2$ of $Y,$ such that $U_1$ is a neighborhood of ~$u$ in $U_1',$ ${\Gamma(Y) \cap U_1=\{u\}},$ $U_1 \cup U_2=Y,$ and $U_1 \cap U_2=\{s\},$ where $s$ is a type 3 point. Since $U_1 \subseteq U_1',$ we obtain $X(\mathscr{M}(U_1')) \subseteq X(\mathscr{M}(U_1)),$ so $X(\mathscr{M}(U_1)) \neq \emptyset.$ Let $U_s$ be a connected strict affinoid neighborhood of $s$ in $Y$ (see Lemma \ref{40}). Set $Z_i=U_i \cup U_s, i=1,2.$ It is an integral affinoid domain. Let us show $\Gamma(Z_2)$ contains at most $n-1$ type 3 points.

For any $y \in U_s$ of type 3, seeing as $\Gamma(U_s)$ doesn't contain any type 3 points, ${y \not \in \Gamma(U_s)}.$ Taking into account $\Gamma(Z_i) \cap U_s \subseteq \Gamma(U_s),$ we obtain $y \not \in \Gamma(Z_i).$ Similarly, for any ${y \in U_i \backslash \Gamma(U_i)},$ we have $y \not \in \Gamma(Z_i).$ Thus, if $z$ is a type 3 point in the Shilov boundary of $Z_i,$ then $z \in \Gamma(U_i).$ For a subset $S$ of $Y,$ let us denote by $S_{3}$ the set of type 3 points contained in $S.$ We have just shown that $\Gamma(Z_i)_3=\Gamma(U_i)_3 \backslash \{s\}, i=1,2.$ At the same time, $\Gamma(Y)_3$ is a disjoint union of $\Gamma(U_i)_3 \backslash \{s\}, i=1,2$.  By construction, $u \in\Gamma(U_1)_3 \backslash \{s\},$ so the cardinality of $\Gamma(Z_2)_3$ is at most $n-1$.  

By the first part of Lemma \ref{41}, $X_{Z_2}(\mathscr{M}_{Z_2,x}) \neq \emptyset$ for any $x \in Z_2.$ In view of the paragraph above and the induction hypothesis, $X(\mathscr{M}(Z_2))=X_{Z_2}(\mathscr{M}(Z_2)) \neq \emptyset.$ Seeing as $\mathscr{M}(Z_2) \subseteq \mathscr{M}(U_2),$ we obtain $X(\mathscr{M}(U_2)) \neq \emptyset.$ Considering we also have $X(\mathscr{M}(U_1))\neq \emptyset,$ we can conclude by applying the second part of Lemma \ref{41}.

The second version of this local-global principle is a direct consequence of the first ~one.
\end{proof}
We are now able to prove the following:
\begin{thm}\label{16} 
\begin{sloppypar}
Let $k$ be a complete valued non-archimedean field such that ${\sqrt{|k^{\times}|} \neq \mathbb{R}_{>0}}.$
Let $C$ be a normal irreducible projective $k$-analytic curve. Set $F=\mathscr{M}(C).$ Let $X/F$ be a variety, and $G/F$ a connected rational linear algebraic group acting strongly transitively on $X.$  The following local-global principles hold:
\end{sloppypar}
\begin{itemize}
\item $ X(F) \neq \emptyset \iff X(\mathscr{M}_{x}) \neq \emptyset \ \text{for all} \ x \in C;$
\item for any open cover $\mathcal{P}$ of $C,$ $ X(F) \neq \emptyset \iff X(\mathscr{M}(U)) \neq \emptyset \ \text{for all} \ U \in \mathcal{P}.$
\end{itemize}
\end{thm}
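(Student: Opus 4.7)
The plan is to reduce to the affinoid case already handled in Theorem \ref{42} by decomposing the projective curve $C$ into two affinoid pieces meeting transversely at a single type 3 point. The forward implications in both bullets are immediate from the natural embeddings $F\hookrightarrow \mathscr{M}_x$ and $F\hookrightarrow \mathscr{M}(U)$, so I focus on the reverse directions. The second bullet reduces to the first: if $X(\mathscr{M}(U))\neq\emptyset$ for every $U\in\mathcal{P}$, then for every $x\in C$ one chooses some $U\in\mathcal{P}$ containing $x$ and uses the restriction $\mathscr{M}(U)\hookrightarrow\mathscr{M}_x$ to deduce $X(\mathscr{M}_x)\neq\emptyset$; applying the first equivalence then yields $X(F)\neq\emptyset$.

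For the first equivalence, assume $X(\mathscr{M}_x)\neq\emptyset$ for every $x\in C$. First I would invoke Proposition \ref{9} (and Duc's Théorème 3.7.2, which identifies projective analytic curves with analytifications of projective algebraic curves) to produce a nice cover $\{W_1,W_2\}$ of $C$ with $W_1\cap W_2=\{s\}$ for a single type 3 point $s$. Each $W_i$ is a connected affinoid domain, and inherits normality from $C$; connectedness together with normality then forces irreducibility, so $W_i$ is a normal irreducible $k$-affinoid curve. Next, by Lemma \ref{41}(1) the hypothesis transfers to the base change: $X_{W_i}(\mathscr{M}_{W_i,x})\neq\emptyset$ for all $x\in W_i$, and $G_{W_i}$ still acts strongly transitively on $X_{W_i}$. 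The affinoid local-global principle Theorem \ref{42} now applies to each $W_i$ and yields $X(\mathscr{M}(W_i))\neq\emptyset$ for $i=1,2$. Finally, Lemma \ref{41}(2) patches these two local sections across the type 3 point $s$ to give a section over $W_1\cup W_2=C$, i.e.\ $X(F)=X(\mathscr{M}(C))\neq\emptyset$.

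The main subtlety I anticipate is that Theorem \ref{42} is stated under the hypothesis that $k$ is non-trivially valued, whereas Theorem \ref{16} allows any $k$ with $\sqrt{|k^\times|}\neq\mathbb{R}_{>0}$, which includes trivially valued fields. In the trivially valued situation the $W_i$ are \emph{a priori} only non-strict affinoid domains and the argument through Lemma \ref{40}(3) used in Theorem \ref{42} does not directly apply; however, the intersection point $s$ being of type 3 still provides a non-trivially valued completed residue field $\mathcal{H}(s)$, so one can either run the induction on the Shilov boundary type 3 points of $W_i$ using the same nice-cover techniques as in the proof of Theorem \ref{42} (replacing the appeal to strict affinoids by the patching primitive of Corollary \ref{3}, which already handles the situation around a type 3 point without requiring $|k^\times|\neq\{1\}$), or reduce to the non-trivially valued case via an innocuous base change. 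Modulo this bookkeeping, the combination of Proposition \ref{9} with Theorem \ref{42} and the gluing provided by Lemma \ref{41} gives the theorem.
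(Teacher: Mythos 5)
Your treatment of the non-trivially valued case matches the paper exactly: Proposition \ref{9} for the two-piece nice cover $\{Z_1,Z_2\}$ meeting in a single type 3 point, Lemma \ref{41}(1) to base-change the hypotheses to each piece, Theorem \ref{42} on each affinoid piece, and Lemma \ref{41}(2) to glue. The reduction of the second bullet to the first is also fine.

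The genuine gap is in your handling of the trivially valued case. You correctly notice that Theorem \ref{42} requires $k$ non-trivially valued, but neither workaround you propose is sound as stated. ``Reducing to the non-trivially valued case via an innocuous base change'' is not innocuous: after extending scalars to a non-trivially valued complete field $K$, the conclusion $X(\mathscr{M}(C_K))\neq\emptyset$ does not descend to $X(F)\neq\emptyset$ without a separate argument (this is precisely the kind of descent needed later in Theorem \ref{ohlala}, and it is nontrivial). Re-running the induction from the proof of Theorem \ref{42} would require a substitute for Lemma \ref{40}(3), which explicitly assumes $k$ non-trivially valued, and you do not supply one. The paper instead dispatches the trivially valued case with a one-line observation that sidesteps patching entirely: over a trivially valued field, a projective $k$-analytic curve $C$ has exactly one type 2 point $x$ (the point where the seminorm on $F$ is trivial), and at that point $\mathscr{M}_x=F$; hence the hypothesis $X(\mathscr{M}_x)\neq\emptyset$ at that single point already gives $X(F)\neq\emptyset.$ You should replace your proposed workarounds with this observation.
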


\begin{proof}
Since $F \hookrightarrow \mathscr{M}_{x}$ for any $x \in C,$ the direction $``\Rightarrow"$ is true.

Suppose $k$ is non-trivially valued. By Proposition \ref{9}, there exists a nice cover $\{Z_1, Z_2\}$ of $C,$ such that $Z_1 \cap Z_2$ is a single type 3 point. Set $\{\eta\}=Z_1 \cap Z_2.$ By the first part of Lemma \ref{41}, $G_{Z_i}$ is a connected rational linear algebraic group acting strongly transitively on the variety $X_{Z_i},$ and $X_{Z_i}(\mathscr{M}_{Z_i,x}) \neq \emptyset$ for any $x \in Z_i, i=1,2.$ Thus, by Theorem \ref{42}, $X(\mathscr{M}(Z_i))= X_{Z_i}(\mathscr{M}(Z_i)) \neq \emptyset.$ We now conclude by the second part of Lemma \ref{41}.

Suppose $k$ is trivially valued. Being a projective analytic curve over a trivially valued field, the curve $C$ has exactly one type 2 point $x.$ In that case, $\mathscr{M}_x=F,$ so the statement is trivially satisfied.

The second version of this local-global principle is a direct consequence of the first ~one.
\end{proof}

The condition on the value group of $k$ can be removed using model-theoretic arguments. We are very grateful to Antoine Ducros for bringing this to our attention.

\begin{thm}\label{ohlala}
Let $k$ be a complete ultrametric field. Let $C$ be an irreducible normal projective $k$-analytic curve. Set $F=\mathscr{M}(C).$ Let $X/F$ be a variety, and $G/F$ a connected rational linear algebraic group acting strongly transitively on $X.$  The following local-global principles hold:
\begin{itemize}
\item $ X(F) \neq \emptyset \iff X(\mathscr{M}_{x}) \neq \emptyset \ \text{for all} \ x \in C;$
\item for any open cover $\mathcal{P}$ of $C,$ $ X(F) \neq \emptyset \iff X(\mathscr{M}(U)) \neq \emptyset \ \text{for all} \ U \in \mathcal{P}.$
\end{itemize}
\end{thm}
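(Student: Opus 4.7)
The direction $X(F)\neq\emptyset \Rightarrow X(\mathscr{M}_x)\neq\emptyset$ for all $x\in C$ is immediate from the embedding $F\hookrightarrow\mathscr{M}_x$. Conversely, when $\sqrt{|k^\times|}\neq\mathbb{R}_{>0}$ the result is precisely Theorem \ref{16}, so the task reduces to removing this hypothesis on the value group of $k$. The plan is to combine a scalar extension with a model-theoretic descent; model theory must enter because no rank-$1$ valued extension $k'/k$ can shrink the value group of $k$, so if $|k^\times|$ is already dense in $\mathbb{R}_{>0}$ there is no escape via a naive analytic base change.

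First I would enlarge $k$ to a complete ultrametric field $k'$ satisfying $\sqrt{|k'^\times|}\neq\mathbb{R}_{>0}$ by means of an elementary extension of $k$ in a suitable theory of valued fields --- for instance a non-principal ultrapower of $k$, or a rank-$2$ lexicographic enlargement followed by an appropriate completion step. The point is to arrange that the base change $C':=C\,\hat{\otimes}_k\,k'$ remains a normal irreducible projective $k'$-analytic curve, with function field $F':=\mathscr{M}(C')$, and that the base-changed objects $X_{F'}$ and $G_{F'}$ still satisfy the hypotheses of the theorem (connected rational linear algebraic group acting strongly transitively). The local hypothesis $X(\mathscr{M}_x)\neq\emptyset$ then propagates to $X_{F'}(\mathscr{M}'_{x'})\neq\emptyset$ for every $x'\in C'$ lying above the corresponding $x\in C$, and Theorem \ref{16} applied over $k'$ yields $X_{F'}(F')\neq\emptyset$.

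The main obstacle, and the heart of the argument, is the descent step $X_{F'}(F')\neq\emptyset \Rightarrow X(F)\neq\emptyset$. The statement ``$X$ has a rational point'' is first-order over $F$, and the connectedness and rationality of $G$, together with its strong transitivity on $X$, should neutralise any cohomological obstruction to descent: strong transitivity identifies $X$ with a $G$-homogeneous space, and rationality of $G$ reduces existence of $F$-points on $X$ to an existence statement on an open subscheme of some $\mathbb{A}_F^n$. Quantifier elimination (or at least model-completeness) in a suitable theory of algebraically closed valued fields then transfers the existence of an $F'$-point to the existence of an $F$-point. This is the delicate model-theoretic input credited to Ducros in the acknowledgements.

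Finally, the second local-global principle, over any open cover $\mathcal{P}$ of $C$, follows from the first: if $X(\mathscr{M}(U))\neq\emptyset$ for every $U\in\mathcal{P}$, then since each $x\in C$ lies in some $U\in\mathcal{P}$ and $\mathscr{M}(U)\hookrightarrow\mathscr{M}_x$, one has $X(\mathscr{M}_x)\neq\emptyset$ for every $x$, whence $X(F)\neq\emptyset$ by the first bullet.
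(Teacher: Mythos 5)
Your high-level diagnosis is right: Theorem \ref{16} handles the case $\sqrt{|k^\times|}\neq\mathbb{R}_{>0}$, model theory is the tool for removing that hypothesis, and, as you correctly observe, a rank-$1$ valued \emph{extension} of $k$ cannot shrink the value group. But your proposed mechanism goes in the wrong direction and would not work. You enlarge $k$ to some $k'$ via an ultrapower or a rank-$2$ lexicographic enlargement, base change up, and then hope to descend $X_{F'}(F')\neq\emptyset$ back to $X(F)\neq\emptyset$. The enlargement step is already stuck: an ultrapower of $k$ has value group (as a totally ordered abelian group) containing $|k^\times|$ but living in a non-archimedean ordered group, so it is not rank $1$, and any rank-$1$ coarsening brings you back to a group containing $|k^\times|$. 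The same objection applies to the lexicographic enlargement. And the final descent $X_{F'}(F')\neq\emptyset \Rightarrow X(F)\neq\emptyset$, which you flag as ``the heart of the argument'', is exactly the step the paper is designed to \emph{avoid}: $F'$ would be an extension of $F$, and rational points do not descend along field extensions (quantifier elimination for ACVF is of no direct use, since $F$ and $F'$ are not algebraically closed, and ``$X$ has an $F$-rational point'' is not a sentence about $k$ alone).

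The paper instead goes \emph{down}: since $C$ is compact, all the defining data ($C$, $X$, $G$, the action, the chosen rational parametrisation of $G$, and finitely many local sections $x_V$ witnessing $X(\mathscr{M}(V))\neq\emptyset$) are determined by countably many elements of $k$. By L\"owenheim--Skolem there is a countable $k_2\subseteq k$ containing those elements with $k_2\preceq k$ an elementary embedding of valued fields. Its completion $k'$ is countable-valued, hence automatically satisfies $\sqrt{|k'^\times|}\neq\mathbb{R}_{>0}$. All the data descend to a curve $C'$ over $k'$ with $C'\times_{k'}k=C$, and $F':=\mathscr{M}(C')\subseteq F$. Strong transitivity of the descended group $G'$ on $X'$ is verified by going up to the ultrapower $K=k_2^I/D$, where one knows the action is transitive, and then using elementarity of $L\subseteq L^I/D$. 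Theorem \ref{16} over $k'$ gives $X'(F')\neq\emptyset$, and since $F'\subseteq F$, this immediately gives $X(F)\neq\emptyset$ with no descent step needed. So the ultrapower enters only to propagate strong transitivity downward, not to make the value group sparse; the sparseness comes for free from countability. That reversal of direction, and the observation that a compact curve and all attendant data are defined over a countable subfield, are the missing ideas in your sketch.
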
 

\begin{proof}
 If $\sqrt{|k^\times|} \neq \mathbb{R}_{>0},$ then the statement was already proven in Theorem~ \ref{16}. Let us show that we can always reduce to this case.

Since $F \hookrightarrow \mathscr{M}_x$ for all $x \in C,$ the direction $``\Rightarrow"$ is clear. Assume $X(\mathscr{M}_x) \neq \emptyset$ for all $x \in C.$ Since $C$ is compact, there exists a finite cover $\mathcal{V}$ of $C$ containing only affinoid domains, such that $\{\text{Int}(V): V \in \mathcal{V}\}$ is also a cover of $C,$ and $X(\mathscr{M}(V)) \neq \emptyset$ for all $V \in \mathcal{V}.$ Let $x_V \in X(\mathscr{M}(V)).$ 

Recall that for any $V,$ $\mathscr{M}(V)$ is the fraction field of an algebra of convergent series over ~$k.$ Hence, $C, X, G,$ the action of $G$ on $X,$ the isomorphism of a Zariski open of $G$ to an open of some $\mathbb{A}_F^n,$ and $x_V, V \in \mathcal{V},$ are all determined by countably many elements of $k.$ Let $S \subseteq k$ denote a countable subset containing all these elements. 

Let $k_0$ be the prime subfield of $k.$ Let $k_1$ be the field extension of $k_0$ generated by $S.$ Remark that $k_1$ is countable. By \cite[Theorem 2.3.7]{mark}, there exists a subfield $k_2$ of $k$ that is a countable extension of $k_1,$ such that $k_2 \subseteq k$ is an elementary embedding in the language of valued fields.

Then, by \cite[Theorem 2.5.36]{mark}, there exists a field extension $K$ of $k,$ such that ${K=k_2^I/D},$ where $I$ is an index set and $D$ is a non-principal ultra-filter on $I.$ Furthermore, by \cite[Exercise 2.5.22]{mark}, it is an elementary extension.

Since $k_2$ is a countable subfield of $k,$ the value group of $k_2$ with respect to the valuation induced by that of $k$ satisfies $\sqrt{|k_2^\times|} \neq \mathbb{R}_{>0}.$ Let $k'$ be the completion of $k_2$ with respect to this valuation. Then, $\sqrt{|k'^\times|} \neq \mathbb{R}_{>0}.$

Since $C$ is defined over $k',$ there exists a compact integral $k'$-analytic curve $C'$, such that $C' \times_{k'} k=C.$ Set $F'=\mathscr{M}(C').$ By construction, there exists an $F'$-variety $X',$ and a connected rational linear algebraic group $G'/F'$ acting on $X',$ such that $X=X' \times_{F'} F,$ $G=G' \times_{F'} F,$ and the action of $G$ induced on $X$ is the one given in the statement.  Let us show that $G'$ acts strongly transitively on $X'.$ Let $L/F'$ be any field extension such that $X'(L) \neq \emptyset.$ Set $L_1=L^I/D.$ This is a field containing $F'$ and $k$ (since $k \subseteq k'^I/D \subseteq L_1$), so it is a field extension of $F.$ Consequently, $G'(L_1)=G(L_1)$ acts transitively on $X'(L_1)=X(L_1),$ and since by \cite[Exercise 2.5.22]{mark}, $L \subseteq L_1$ is an elementary embedding, $G'(L)$ acts transitively on $X'(L).$ 

For any $V \in \mathcal{V},$ let $V'$ denote the image of $V$ with respect to the projection morphism $C \rightarrow C'.$ By construction, $X'(\mathscr{M}(V')) \neq \emptyset.$ Hence, $X'(\mathscr{M}_x) \neq \emptyset$ for all $x \in C',$ implying $X'(F') \neq \emptyset,$ thus in particular $X'(F')=X(F') \subseteq X(F) \neq \emptyset.$

The second part of the statement is a direct consequence of the first one.
\end{proof}

We can apply Theorem \ref{ohlala} to the projective variety $X$  defined by a quadratic form ~$q$ over ~$F.$  In \cite[Theorem 4.2]{HHK}, HHK show that for a regular quadratic form $q$ over $F,$ if $\mathrm{char}(F) \neq 2,$ $SO(q)$ - the special orthogonal group of $q$, acts strongly transitively on $X$ when $\dim{q} \neq 2,$ so in that case we can take $G=SO(q).$ If $\dim{q}=2,$ then $X$ may not be connected and consequently the group $SO(q)$ doesn't necessarily act strongly transitively on $X$ (see \cite[Example 4.4]{HHK} and the proof of \cite[Theorem ~4.2]{HHK}). 

\begin{thm}\label{17} Let $k$ be a complete ultrametric field. 
Let $C$ be a compact irreducible normal $k$-analytic curve.  If $\sqrt{|k^\times|}=\mathbb{R}_{>0}$ (resp. $|k^\times|=\{1\}$) assume $C$ is projective (resp. strict). Set $F=\mathscr{M}(C).$
Suppose $char(F) \neq 2.$ Let $q$ be a quadratic form over $F$ of dimension different from $2$. 
\begin{enumerate}
\item{The quadratic form $q$ is isotropic over $F$ if and only if it is isotropic over $\mathscr{M}_x$ for all $x \in C.$}
\item{Let $\mathcal{U}$ be an open cover of $C.$ Then, $q$ is isotropic over $F$ if and only if it is isotropic over $\mathscr{M}(U)$ for all $U \in \mathcal{U}.$}
\end{enumerate} 
\end{thm}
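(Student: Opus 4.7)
\medskip

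The plan is to reduce this to the local-global principle already proven (Theorem \ref{ohlala} for the projective case, Theorem \ref{42} for the strict affinoid case, and Theorem \ref{42} again for the general non-strict affinoid case under $\sqrt{|k^\times|}\neq\mathbb{R}_{>0}$), by observing that the isotropy of a quadratic form is exactly the existence of a rational point on an associated projective variety on which a suitable group acts strongly transitively.

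First I would note that the forward implication is immediate because $F$ embeds into $\mathscr{M}_x$ for every $x\in C$ (and into $\mathscr{M}(U)$ for every open $U$), so any isotropic vector over $F$ remains one over $\mathscr{M}_x$. For the converse, let $X$ be the projective quadric hypersurface over $F$ cut out by $q=0$. Then for any field extension $E/F$, the form $q$ is isotropic over $E$ if and only if $X(E)\neq\emptyset$. Set $G=SO(q)$, the special orthogonal group of $q$; this is a connected linear algebraic group over $F$, and it is rational as a variety over $F$ (this is a classical fact, cf.\ the discussion recalled from \cite{HHK} immediately before the statement). Under the assumption $\mathrm{char}(F)\neq 2$ and $\dim q\neq 2$, \cite[Theorem 4.2]{HHK} tells us that $G$ acts strongly transitively on $X$ in the sense of the definition recalled at the start of this section.

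Now I would split on the nature of $C$. If $C$ is projective, Theorem \ref{ohlala} applies directly to the pair $(X,G)$ and yields
\[
X(F)\neq\emptyset \iff X(\mathscr{M}_x)\neq\emptyset\ \text{for all}\ x\in C,
\]
which translates back via the dictionary above into the first statement of the theorem; the second statement for an open cover $\mathcal{U}$ of $C$ follows by the same argument using the second bullet of Theorem \ref{ohlala}. If instead $C$ is an affinoid curve (necessarily with $\sqrt{|k^\times|}\neq\mathbb{R}_{>0}$ by the hypothesis, or strict in the trivially valued case), I would invoke Theorem \ref{42} in place of Theorem \ref{ohlala}; the strict affinoid subcase with $k$ trivially valued is covered by Proposition \ref{15}, which was the base case for Theorem \ref{42}. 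In each situation the input is exactly the same: $G=SO(q)$ is a connected rational linear algebraic group acting strongly transitively on $X$, and these are precisely the hypotheses of the local-global principles.

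There is essentially no genuine obstacle here once the heavy machinery of the earlier sections is available; the only thing to be careful about is the exclusion $\dim q\neq 2$. In dimension $2$, the quadric $X$ can fail to be geometrically integral (it may split as two lines defined over a quadratic extension), so $SO(q)$ need not act transitively on $X$ over every extension, and the strong transitivity hypothesis required by Theorem \ref{ohlala} would fail, cf.\ \cite[Example 4.4]{HHK}. Provided $\dim q\neq 2$, however, the translation is clean and the theorem follows immediately from the local-global principles above, so the proof is really a matter of citing \cite[Theorem 4.2]{HHK} for the group-theoretic input and then invoking Theorem \ref{ohlala} or Theorem \ref{42} depending on whether $C$ is projective or affinoid.
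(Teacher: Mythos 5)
Your overall strategy matches the paper's exactly: translate isotropy into the non-emptiness of the projective quadric $X$ defined by $q=0$, take $G=SO(q)$, cite \cite[Theorem 4.2]{HHK} for strong transitivity, and then feed the pair $(X,G)$ into Theorem~\ref{ohlala} (projective case), Theorem~\ref{42} (non-trivially valued affinoid case), or Proposition~\ref{15} (strict affinoid case over a trivially valued field). The case split and the explanation of why $\dim q\neq 2$ is needed are both accurate.

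There is, however, one genuine gap. The statement allows $q$ to be an arbitrary quadratic form of dimension $\neq 2$, not necessarily regular, while \cite[Theorem 4.2]{HHK}, which you invoke for the strong transitivity of $SO(q)$ on $X$, is stated only for \emph{regular} quadratic forms. If $q$ is degenerate, the quadric $X$ and the group $SO(q)$ no longer have the properties you need, and you cannot apply the local-global principle as written. The paper handles this with a one-line reduction at the start of its proof: decompose $q=q_t\perp q_r$ with $q_t$ totally isotropic (the radical part) and $q_r$ regular; if $q_t\neq 0$ then $q$ is trivially isotropic over $F$ (and over every $\mathscr{M}_x$), so both sides of the equivalence hold, and otherwise $q=q_r$ is regular and the argument you wrote goes through. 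Inserting this reduction at the beginning repairs the proof and makes it coincide with the paper's.
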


\begin{proof}
By Witt decomposition, $q=q_t \bot q_r,$ where $q_r$ is regular and $q_t$ is totally isotropic. If $q_t \neq 0,$ then $q$ is isotropic, so we may assume that $q$ is regular. Consequently,  Proposition ~\ref{15}, Theorem \ref{42}, and Theorem \ref{ohlala} are applicable according to the paragraph above the statement.
\end{proof}

Because of the relation of Berkovich points to valuations of the function field of a curve, as a result of Theorem \ref{ohlala} we will obtain a local-global principle with respect to completions.

\begin{defn}
Let $k$ be a complete ultrametric field. Let $F$ be a field extension of $k$. For any valuation $v$ on $F,$ we denote by $R_v$ the valuation ring of $F$ with respect to $v,$ and $m_v$ its maximal ideal. We denote by $F_v$ the completion of $F$ with respect to $v.$ We use the following notations:

\begin{itemize}
\item $V_k(F)$ is the set of all rank 1 valuations $v$ on $F$ that extend the valuation of $k;$
\item  $V_0(F)$ is the set of all non-trivial rank 1 valuations on $F$ that when restricted to $k$ are trivial;
\item for a $k$-subalgebra $R$ of $F,$ $R \neq k,$ $V_R'(F)$ is the set of valuations $v \in V_0(F),$ such that $R \subseteq R_v;$
\item $V(F):=V_k(F) \cup V_0(F);$
\item for a $k$-subalgebra $R$ of $F$, $R \neq k,$ $V_R(F):=V_k(F) \cup V_R'(F).$
\end{itemize}
\end{defn}

Remark that if $k$ is trivially valued, then $V(F)$ and $V_R(F)$ contain the trivial valuation on $F$ for any $k$-subalgebra $R$ of $F$, $R \neq k.$

\begin{rem} \label{j}
Let $C$ be a normal irreducible $k$-analytic curve. Then, for any point $x \in C,$ $\mathcal{O}_x$ is either a field or a discrete valuation ring. If $\mathcal{O}_x$ is a field, then $\mathscr{M}_x=\mathcal{O}_x \hookrightarrow \mathcal{H}(x),$ so we endow $\mathscr{M}_x$ with the valuation induced from $\mathcal{H}(x).$ If $\mathcal{O}_x$ is a discrete valuation ring, then we endow $\mathscr{M}_x$ with the corresponding discrete valuation. 
\end{rem}

\begin{prop}\label{val}
Let $k$ be a non-trivially valued complete ultrametric field. Let $C$ be a normal irreducible $k$-analytic curve. 
\begin{enumerate}
\item Suppose there exists an affine curve $S$ over $k$, such that $S^{\mathrm{an}}=C.$ Let $F$ denote the function field of $S.$ Then, there exists a bijective correspondence $C\longleftrightarrow V_{\mathcal{O}(S)}(F).$
\item If $C$ is projective, set $F=\mathscr{M}(C).$ Then, there exists a bijective correspondence $C \longleftrightarrow V(F).$
\end{enumerate}
In either case, if to $x \in C$ is associated the valuation $v$ of $F,$ then $\widehat{\mathscr{M}_x}=F_v,$ where the completion of $\mathscr{M}_x$ is taken with respect to the valuation introduced in Remark \ref{j}.  
\end{prop}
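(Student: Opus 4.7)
The plan is to construct the correspondence point by point, treating rigid and non-rigid points of $C$ separately, and invert using either the universal property of Berkovich analytification (for valuations extending $|\cdot|_k$) or the valuative criterion of properness / a center-of-valuation argument (for valuations trivial on $k$). To build the map $\Phi\colon C \to V_{\mathcal{O}(S)}(F)$ or $\Phi\colon C \to V(F),$ fix $x \in C$. By Remark \ref{j}, $\mathcal{O}_{C,x}$ is either a DVR or a field. In the DVR case (rigid $x$) it induces a discrete valuation $v_x$ on $\mathscr{M}_x$ which is trivial on $k$ (since $k^\times \subset \mathcal{O}_{C,x}^\times$) and whose restriction to $F$ lies in $V_0(F)$; in the affine case, $\mathcal{O}(S) \subseteq \mathcal{O}_{C,x} \subseteq R_{v_x}$ refines this to $V'_{\mathcal{O}(S)}(F)$. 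In the field case (non-rigid $x$), $\mathscr{M}_x = \mathcal{O}_{C,x} \hookrightarrow \mathcal{H}(x)$ inherits the canonical rank-one absolute value of $\mathcal{H}(x)$, whose restriction to $F$ extends $|\cdot|_k$ and so lies in $V_k(F)$.

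For the inverse I split on the type of valuation. A valuation $v \in V_k(F)$ is a multiplicative norm on $F$ extending $|\cdot|_k$; combined with the generic-point morphism $\mathrm{Spec}\,F \to C^{\mathrm{alg}}$ (where $C^{\mathrm{alg}}$ is the underlying algebraic curve), it provides a complete valued $k$-extension together with a $k$-morphism to $C^{\mathrm{alg}}$, hence a point $x \in C$ by the standard description of analytified points, and one checks $v_x = v$ directly from the construction. A valuation $v \in V'_{\mathcal{O}(S)}(F)$ or $v \in V_0(F)$ admits a center on the algebraic curve: in the affine case the ideal $\mathfrak{m}_v \cap \mathcal{O}(S)$ is a height-one prime in the one-dimensional domain $\mathcal{O}(S)$, hence maximal; in the projective case the valuative criterion of properness for $C^{\mathrm{alg}}$ supplies a closed point. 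Either way $R_v$ contains the local ring $\mathcal{O}_{C^{\mathrm{alg}},x^{\mathrm{alg}}}$ at some closed point $x^{\mathrm{alg}}$, and since that local ring is a DVR with fraction field $F$ and a DVR has no proper overring in its own fraction field, $R_v = \mathcal{O}_{C^{\mathrm{alg}},x^{\mathrm{alg}}}$, giving $v = v_x$ for the corresponding rigid point. Injectivity of $\Phi$ follows immediately: the two regimes are disjoint because $k$ is non-trivially valued (so rigid points yield valuations trivial on $k$ while non-rigid points do not), and within each regime distinct points plainly produce distinct valuations.

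Finally, for the identification $\widehat{\mathscr{M}_x} = F_v$: when $x$ is rigid, $\mathcal{O}_{C,x}$ is the henselization of the DVR $R_v = \mathcal{O}_{C^{\mathrm{alg}},x^{\mathrm{alg}}}$, and henselization and completion of a DVR agree, which gives $\widehat{\mathscr{M}_x} = \mathrm{Frac}\,\widehat{\mathcal{O}_{C,x}} = \mathrm{Frac}\,\widehat{R_v} = F_v$. When $x$ is non-rigid, the assertion reduces to showing that $F$ is dense in $\mathcal{H}(x)$ in the $v$-topology, since $F \subseteq \mathscr{M}_x \subseteq \mathcal{H}(x)$; I would obtain this density by choosing a finite surjective morphism $\pi\colon C \to \mathbb{P}_k^{1,\mathrm{an}}$ and invoking, at the target, that $\mathcal{H}(\pi(x))$ is by construction the completion of $k(T)$ at the multiplicative norm defining $\pi(x)$, then transferring density along the finite field extension $F/k(T)$ (equivalently, along the finite extension $\mathcal{H}(x)/\mathcal{H}(\pi(x))$). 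The main technical obstacle I foresee is this last density step; a secondary subtlety, in the affine case, is the bookkeeping needed to ensure that Galois-conjugate closed points of $C^{\mathrm{alg}}$ correspond to the same Berkovich rigid point, so that the inverse map is well-defined.
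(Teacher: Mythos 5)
Your overall architecture matches the paper's: build $x \mapsto v_x$ by splitting on rigid versus non‑rigid points, invert by splitting on valuations extending $|\cdot|_k$ versus valuations trivial on $k$, and in the latter case reduce to the Dedekind domain $A=\mathcal{O}(S)$ (affine case) or to a standard affine of $\mathbb{P}^n_k$ / the valuative criterion (projective case). That part is fine; your explicit observation that a DVR has no intermediate ring strictly between it and its fraction field is a clean way of making $v=v_x$ precise, where the paper leaves it implicit.

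There is, however, a genuine error in your treatment of $\widehat{\mathscr{M}_x}=F_v$ for a rigid point $x$. You write that $\mathcal{O}_{C,x}$ is the henselization of the DVR $R_v=\mathcal{O}_{C^{\mathrm{alg}},x^{\mathrm{alg}}}$, and that ``henselization and completion of a DVR agree.'' Both halves are false: the henselization of a DVR is in general strictly smaller than its completion (algebraic power series versus all power series), and $\mathcal{O}_{C,x}$ is in general strictly larger than the henselization $R_v^h$ (convergent power series versus algebraic ones), so $R_v^h \subsetneq \mathcal{O}_{C,x} \subsetneq \widehat{R_v}$ with all three distinct in the typical case. What rescues your conclusion is the correct statement that the \emph{completions} agree: $\widehat{\mathcal{O}_{C,x}}=\widehat{R_v}$, which is exactly \cite[Theorem 3.4.1(ii)]{Ber90} and is what the paper cites. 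Your argument should be replaced by this citation (or by the observation that $R_v\subseteq \mathcal{O}_{C,x}\subseteq \widehat{R_v}$, from which $\widehat{\mathcal{O}_{C,x}}=\widehat{R_v}$ follows); without such a correction the rigid‑point half of the last assertion is not proved.

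Two further remarks on places you flag as obstacles. The density of $F$ in $\mathcal{H}(x)$ for $x$ non‑rigid is not a technical step requiring a finite cover of $\mathbb{P}^{1,\mathrm{an}}_k$: for $x$ in the analytification of $\mathrm{Spec}\,A$ with $\ker|\cdot|_x=(0)$, the field $\mathcal{H}(x)$ is \emph{by Berkovich's construction} the completion of $\mathrm{Frac}(A)=F$ at $|\cdot|_x$; nothing needs to be transferred along a finite morphism. And there is no Galois bookkeeping to do: \cite[Theorem 3.4.1(i)]{Ber90} gives a bijection between rigid points of $S^{\mathrm{an}}$ and closed points of $S$ (as a $k$‑scheme), so Galois conjugate $\bar{k}$‑points already collapse to a single closed point on both sides and well‑definedness of the inverse is automatic.
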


\begin{proof}(1) Let $x \in C.$ If $x$ is a non-rigid point, then $\mathcal{O}_x$ is a field (\cite[Example 3.2.10]{famduc}), so $|\cdot|_x$  is a norm on $A:=\mathcal{O}(S)$ extending that of $k.$ Consequently, it extends to $F=\text{Frac} \ A$ and defines a valuation $v_x$ on $F$ extending that of ~$k,$ \textit{i.e.} $v_x \in V_k(F).$ If $x$ is a rigid point, $\mathcal{O}_{C,x}$ is a dvr, and $k^\times \subseteq \mathcal{O}_{C,x}^\times,$ so the embedding $A \hookrightarrow \mathcal{O}_{C,x}$ induces a discrete valuation on $A$ whose restriction to $k$ is trivial, \textit{i.e.} a discrete valuation $v_x$ on $F$ whose restriction to $k$ is trivial. Moreover, $A \subseteq R_{v_x}$ by definition, so $v_x \in V_A'(F).$

Let us look at the function $C \longrightarrow V_{A}(F),$ $x \mapsto v_x.$ It is injective by the paragraph above. It is also surjective: if $v \in V_k(F),$ then it determines a norm on $A$ that extends that of $k,$ so it corresponds to a non-rigid point of $C;$ if $v \in V_A'(F),$ then $A \subseteq R_v$, and $P:=A \cap m_v$ is a prime ideal of $A,$ so it corresponds to a rigid point $x$ of $C$ for which $\mathrm{ker} |\cdot|_x=P$ (see \cite[Theorem 3.4.1(i)]{Ber90}). 

If $x \in C$ is non-rigid, then $\widehat{\mathscr{M}_x}=\mathcal{H}(x)$, which is the completion of $F$ with respect to ~$v_x$. If $x$ is a rigid point of $C,$ and $P$ its corresponding prime ideal in ~$A,$ then by \cite[Theorem 3.4.1(ii)]{Ber90},  $\widehat{\mathcal{O}_{C,x}}=\widehat{A_P}=\widehat{A},$ where $\widehat{A}$ denotes the completion of $A$ with respect to the ideal ~$P.$ Consequently, $\widehat{\mathscr{M}_x}=\text{Frac} \ \widehat{A}=F_{v_x}$.

(2) Suppose $C$ is projective. Let $C^{\mathrm{alg}}$ be the normal irreducible projective $k$-algebraic curve such that its Berkovich analytification is $C$, and $\pi: C \rightarrow C^{\mathrm{alg}}$ the canonical analytification morphism.  Let $x \in C$. Let $S'$ be an affine Zariski open of $C^{\mathrm{alg}}$ containing ~$\pi(x).$ Since $C$ is irreducible, the function field of $S'$ is $F.$ By (1), there exists an injective map: $C \longrightarrow V(F),$ $x \mapsto v_x.$

Let us show it is also surjective. Let $v \in V(F)$ such that $v_{|k}$ is the starting valuation on ~$k$. Then, by taking any affine Zariski open subset $S'$ of $C^{\mathrm{alg}}$ (as in the paragraph above), seeing as its function field is $F,$ we obtain that $v$ corresponds to some non-rigid point of ~$S'^{\mathrm{an}} \subseteq C.$
\begin{sloppypar}
Suppose $v \in V(F)$ is such that $v_{|k}$ is trivial. Let us consider an embedding $C^{\mathrm{alg}} \rightarrow \mathbb{P}^n_k = \text{Proj} \ k[x_0, x_1, \dots, x_n].$ Let ${\{U_i:=\text{Spec} \ k[x_j/x_i]_{j \neq i}/I_i\}_{i=1}^{n}}$ be a cover of $C^{\mathrm{alg}}$ by standard open sets.  Let ~$i_0$ be such that $|x_{i_0}|_v \geqslant |x_{i}|_v$ for all ~$i.$ Since $|x_i/x_{i_0}|_v \leq 1,$ $\mathcal{O}(U_{i_0}) \subseteq R_v,$ so by (1),  $v$ corresponds to a rigid point of $U_{i_0}^{\mathrm{an}} \subseteq C.$   
\end{sloppypar}
That $\widehat{\mathscr{M}_x}=F_{v_x}$ for all $x \in C$ follows from part (1) by taking an affine Zariski open containing the point $x.$
\end{proof}

Let us now show a local-global principle with respect to all such completions of the field ~$F.$

We are very grateful to  the referee for bringing to our attention the following lemma:

\begin{lm}\label{toto} Let $K$ be a complete valued field and $K_0$ a dense Henselian (called \emph{quasicomplete}  in \cite[Definition 2.3.1]{ber93}) subfield.  Let $F$ be a subfield of $K_0$ and $X$ an $F$-variety. Then, if $F$ is perfect or $X$ is smooth, $$X(K_0) \neq \emptyset \iff X(K) \neq \emptyset.$$
\end{lm}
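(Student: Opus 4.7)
The implication $X(K_0)\neq\emptyset\Rightarrow X(K)\neq\emptyset$ is trivial since $K_0\subseteq K$. For the converse, the plan is to combine density of $K_0$ in $K$ with the implicit function theorem for the Henselian valued field $K_0$ (equivalently, Hensel's lemma in several variables) to approximate any $K$-point of $X$ by a $K_0$-point. The smoothness hypothesis (or the perfect-field reduction to it) is exactly what supplies the invertible Jacobian needed to run Hensel's lemma.

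First assume $X$ is smooth over $F$, and let $x\in X(K)$. Replacing $X$ by an affine open neighborhood of the image of $x$ and choosing a suitable presentation, one can write
\[
X=\mathrm{Spec}\bigl(F[t_1,\dots,t_{d+r}]/(f_1,\dots,f_r)\bigr)
\]
in such a way that the $r\times r$ Jacobian $J=\det(\partial f_i/\partial t_{d+j})_{i,j}$ is nonzero at $x$; this is precisely the content of smoothness at $x$. Writing the coordinates of $x$ as $(a,b)\in K^d\times K^r$ and using the density of $K_0$ in $K$, pick approximations $a'\in K_0^d$ and $b''\in K_0^r$ as close as desired to $a$ and $b$. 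For sufficiently close approximations, $|f_i(a',b'')|$ becomes arbitrarily small while $|J(a',b'')|$ stays bounded below. Hensel's lemma for the Henselian field $K_0$, applied to the polynomials $g_i(s):=f_i(a',s)\in K_0[s_1,\dots,s_r]$ with approximate root $b''$, then yields an exact root $b'\in K_0^r$, and $(a',b')\in X(K_0)$ is the desired $K_0$-point.

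Now suppose only that $F$ is perfect. Since $X(K)=X_{\mathrm{red}}(K)$, we may assume $X$ is reduced; then perfectness of $F$ ensures that the smooth locus $X^{\mathrm{sm}}$ is a dense open subscheme. If some $K$-point of $X$ already lies in $X^{\mathrm{sm}}$, the smooth case above finishes the proof. Otherwise, every $K$-point lies in $Z:=X\setminus X^{\mathrm{sm}}$, a closed subscheme of strictly smaller dimension than $X$ (it contains no generic point of $X$). Replace $X$ by $Z_{\mathrm{red}}$ and iterate: by Noetherian induction on dimension the procedure terminates, in the worst case reaching a reduced zero-dimensional $F$-scheme, which is automatically smooth over the perfect field $F$; in every case we land in the smooth situation already handled.

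The main (and essentially only) nontrivial ingredient is the Hensel-approximation step, for which $K_0$ being Henselian is indispensable; this is the role of the hypothesis that $K_0$ is a dense Henselian subfield of $K$. The translation between smoothness at $x$ and the existence of an $r\times r$ invertible Jacobian, and the verification that closeness of $(a',b'')$ to $(a,b)$ in the topology induced by the valuation puts one in the domain of Hensel's lemma, are standard once the étale-coordinate set-up is in place.
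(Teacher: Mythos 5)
Your proposal is correct, but it proceeds differently from the paper in both halves. In the smooth case you choose explicit \'etale coordinates (equations $f_1,\dots,f_r$ in $d+r$ unknowns with invertible $r\times r$ Jacobian at $x$) and then run a quantitative Newton/Hensel iteration over $K_0$ after approximating the $K$-point. The paper packages the same local structure as an \'etale morphism $\varphi:U\to\mathbb{A}^d_F$, uses density to pick a $K_0$-rational target $b'$ over which $U_K$ has a $K$-rational point, and then exploits the structure of \'etale fibres: $\varphi_{K_0}^{-1}(b')=\bigsqcup_i\mathrm{Spec}\,F_i$ with $F_i/K_0$ finite separable, one of which satisfies $F_i\otimes_{K_0}K\cong K$, and Berkovich's Proposition 2.4.1 (the Henselian/quasicomplete property) forces $F_i=K_0$. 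The paper's route buys you freedom from the $\varepsilon$--$\delta$ estimates and from the normalization issues your sketch glosses over: the $g_i(s)=f_i(a',s)$ have coefficients merely in $K_0$, not in the valuation ring $K_0^\circ$, and the standard multivariable Hensel lemma is formulated over $K_0^\circ$, so a preliminary translation/scaling is needed to land in its domain of validity -- routine, but currently implicit. For the reduction from perfect $F$ to the smooth case, you perform a Noetherian induction by repeatedly passing to $(X\setminus X^{\mathrm{sm}})_{\mathrm{red}}$; the paper does it in one step by replacing $X$ with the (reduced) Zariski closure $X'$ of the scheme-theoretic image of the given $K$-point, which is integral, so over perfect $F$ its smooth locus is dense open and contains its generic point, through which the $K$-point factors. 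Both reductions are valid; the paper's is shorter and avoids the induction.
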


\begin{proof}
Since $K_0$ is a subfield of $K,$ the implication ``$\Rightarrow$" is clear. Suppose $X(K) \neq \emptyset.$

Suppose $F$ is perfect. By taking the reduction of $X$ if necessary, we may assume that $X$ is reduced.  Let $a \in X(K).$  Denote by $X'$ the (reduced) Zariski closure of $\{a\}$ in $X.$ Since $F$ is perfect, the smooth locus $X''$ of $X'$ is a dense Zariski open  subset of $X'$ containing ~$a.$ Thus, $X''$ is a smooth $F$-variety such that $X''(K) \neq \emptyset,$ implying it suffices to prove the statement in the case $X$ is smooth. 

Suppose $X$ is smooth. Let $a \in X(K).$
Since $X$ is smooth, there exists a neighborhood $U$ of $a$ in $X,$ such that there exists an \'etale morphism $\varphi: U \rightarrow \mathbb{A}^d_F$ for some $d \in \mathbb{N}.$ Let $\varphi_{K}: U_{K} \rightarrow \mathbb{A}^d_{K}$ be the tensorization by $K$, and let us look at its analytification $\varphi_K^{\mathrm{an}}$. Since $a$ is a rational point, $\varphi_K^{\mathrm{an}}$ induces an isomorphism between a neighborhood $V$ of $x$ in $U_{K}^{\mathrm{an}}$ and an open $V'$ of $\mathbb{A}_{K}^{d,\mathrm{an}}.$ Since $K_0$ is dense in $K,$ there exists $b$ in $V',$ such that $b \in \mathbb{A}^d(K)=K^d$ has coordinates over $K_0.$ Let $c$ be the only pre-image of $b$ in $V.$ Then, $c$ is a $K$-rational point over $b.$
\begin{center}
\begin{tikzpicture}
  \matrix (m) [matrix of math nodes,row sep=3em,column sep=4em,minimum width=2em]
  {
     U_K & U_{K_0} \\
     \mathbb{A}_{K}^d & \mathbb{A}_{K_0}^d \\};
  \path[-stealth]
    (m-1-1) edge node [left] {$\varphi_{K}$} 
    (m-2-1) edge  node [above] {} 
    (m-1-2)
    (m-2-1.east|-m-2-2) edge node [below] {$g$}
      (m-2-2)
    (m-1-2) edge node [right]{$\varphi_{K_0}$} 
    (m-2-2) 
    (m-2-1);
\end{tikzpicture}
\end{center} 
Set $b'=g(b) \in \mathbb{A}^d_{K_0}.$ By commutativity of the diagram, since $b$ has coordinates over $K_0$, $b'$ is a closed point of $\mathbb{A}^d_{K_0}$ which is in the image of $\varphi_{K_0}.$  

Since $\varphi$ is \'etale, $\varphi_{K_0}^{-1}(b')$ is a disjoint union $\bigsqcup_i \mathrm{Spec} \ F_i,$ where $F_i$ are separable finite field extensions of $\kappa(b')=K_0.$ At the same time, $\varphi_{K}^{-1}(b)=\bigsqcup_i F_i \otimes_{K_0} K.$ Set $\widehat{F_i}=F_i \otimes_{K_0} K.$

We know that $\varphi_{K}^{-1}(b)(K) \neq \emptyset.$ Then, there exists $i,$ such that $(\mathrm{Spec}  \ \widehat{F}_i) (K) \neq \emptyset,$ so $\widehat{F}_i=K.$ By Proposition 2.4.1 of \cite{ber93}, this implies that $F_i=K_0,$ and so $\varphi_{K_0}^{-1}(b')(K_0) \neq \emptyset,$ implying $X(K_0) \neq \emptyset.$  
\end{proof}

\begin{cor} \label{e fundit}
Let $k$ be a complete ultrametric field. Let $C$ be a normal irreducible $k$-analytic curve. Set $F=\mathscr{M}(C).$ Let $X$ be an $F$-variety. Then, if $F$ is perfect or $X$ is smooth:
$$X(\mathscr{M}_x) \neq \emptyset \iff X(\widehat{\mathscr{M}_x}) \neq \emptyset$$   
for all $x \in C,$ where the completion $\widehat{\mathscr{M}_x}$ of $\mathscr{M}_x$ is taken with respect to the valuations introduced in Remark ~\ref{j}.
\end{cor}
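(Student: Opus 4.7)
The plan is to reduce the corollary directly to Lemma \ref{toto}. The direction ``$\Rightarrow$'' is immediate from the inclusion $\mathscr{M}_x \subseteq \widehat{\mathscr{M}_x}$, so the real content is ``$\Leftarrow$''. I would apply Lemma \ref{toto} with $K_0 = \mathscr{M}_x$ and $K = \widehat{\mathscr{M}_x}$, taking as subfield of definition the function field $F = \mathscr{M}(C)$, which embeds in $\mathscr{M}_x$ via restriction of meromorphic sections. The hypothesis ``$F$ perfect or $X$ smooth'' in the corollary is exactly the hypothesis of Lemma \ref{toto}, so the whole argument reduces to verifying that $\mathscr{M}_x$ is a dense Henselian (\emph{i.e.}, quasicomplete in the sense of \cite[Definition 2.3.1]{ber93}) subfield of $\widehat{\mathscr{M}_x}$, where the valuation on $\mathscr{M}_x$ is the one fixed in Remark \ref{j}.

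Density of $\mathscr{M}_x$ in $\widehat{\mathscr{M}_x}$ is built into the construction. For the Henselian property I would split into the two cases of Remark \ref{j}. If $\mathcal{O}_{C,x}$ is a field (the non-rigid case), then $\mathscr{M}_x = \mathcal{O}_{C,x}$ is endowed with the valuation pulled back from $\mathcal{H}(x) = \widehat{\mathscr{M}_x}$; the Henselian property of $\mathcal{O}_{C,x}$ as a subfield of $\mathcal{H}(x)$ is a standard feature of Berkovich local rings on good analytic spaces and is exactly the content of the discussion of quasicomplete subfields in \cite[\S 2.3]{ber93}. If $\mathcal{O}_{C,x}$ is a discrete valuation ring (the rigid case), then $\mathscr{M}_x = \mathrm{Frac}\,\mathcal{O}_{C,x}$ with its $m_x$-adic valuation, and $\widehat{\mathscr{M}_x} = \mathrm{Frac}\,\widehat{\mathcal{O}_{C,x}}$; here $\mathcal{O}_{C,x}$ is the Berkovich stalk at a closed point of a curve and is known to be Henselian, so the fraction field is Henselian as a valued field.

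With both local verifications in place, a single invocation of Lemma \ref{toto} gives the equivalence $X(\mathscr{M}_x) \neq \emptyset \iff X(\widehat{\mathscr{M}_x}) \neq \emptyset$. The only potential obstacle is citing the Henselian property of Berkovich local rings uniformly at rigid and non-rigid points of a normal curve, but this is classical and already implicit in the quasicompleteness formalism of \cite{ber93}; no further work is needed.
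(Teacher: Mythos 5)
Your proposal is correct and takes essentially the same route as the paper: reduce to Lemma \ref{toto} by verifying that $\mathscr{M}_x$ is a dense Henselian subfield of $\widehat{\mathscr{M}_x}$, splitting on whether $\mathcal{O}_{C,x}$ is a field or a discrete valuation ring. The only difference is that the paper pins down your appeals to ``standard features'' with precise citations — \cite[Theorem 2.3.3]{ber93} for the non-rigid case and \cite[Theorem 2.1.5, Proposition 2.4.3]{ber93} for the rigid case — which you would want to supply in a final write-up.
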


\begin{proof}
If $\mathcal{O}_x$ is a field, then $\mathscr{M}_x$ is Henselian by \cite[Theorem 2.3.3]{ber93}. If $\mathcal{O}_x$ is  not a field, then it is a discrete valuation ring that is Henselian (see \cite[Theorem 2.1.5]{ber93}), so $\mathscr{M}_x$ is Henselian by \mbox{\cite[Proposition 2.4.3]{ber93}}. We conclude by Lemma \ref{toto}.
\end{proof}

Recall once again that an irreducible compact analytic curve is either projective or affinoid (see Th\'eor\`eme 6.1.3 of \cite{Duc}).

\begin{cor}\label{uhlala}
Let $k$ be a complete ultrametric valued field. Let $C$ be a compact irreducible normal $k$-analytic curve.  Set $F=\mathscr{M}(C).$ Let $X/F$ be a variety, and $G/F$ a connected rational linear algebraic group acting strongly transitively on $X.$  The following local-global principles hold if $F$ is perfect or $X$ is smooth:
\begin{enumerate}
\item if $C$ is affinoid and $\sqrt{|k^\times|} \neq \mathbb{R}_{>0},$
$$X(F) \neq \emptyset \iff X(F_v) \neq \emptyset \ \text{for all} \  v \in V_{\mathcal{O}(C)}(F);$$
\item if $C$ is projective,
$$X(F) \neq \emptyset \iff X(F_v) \neq \emptyset \ \text{for all} \  v \in V(F).$$
\end{enumerate}
\end{cor}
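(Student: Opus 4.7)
The plan is to reduce both statements to the corresponding local-global principles over the Berkovich stalks $\mathscr{M}_x$, which are Theorem \ref{ohlala} in the projective case and Theorem \ref{42} in the affinoid case. The forward implication is immediate from $F \hookrightarrow F_v$ for every $v$ in the relevant set, so I focus on the reverse direction throughout.

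Suppose $X(F_v) \neq \emptyset$ for every $v$ in the appropriate valuation set. In the projective case, Proposition \ref{val}(2) provides a bijection $C \longleftrightarrow V(F)$, $x \mapsto v_x$, under which $\widehat{\mathscr{M}_x} = F_{v_x}$. In the affinoid case the hypothesis $\sqrt{|k^\times|} \neq \mathbb{R}_{>0}$ already forces $k$ to be non-trivially valued, and the normal compact $k$-affinoid curve $C$ is the analytification of an affine $k$-algebraic curve $S$ with function field $F = \mathscr{M}(C)$; Proposition \ref{val}(1) then gives the analogous bijection $C \longleftrightarrow V_{\mathcal{O}(C)}(F)$, again with $\widehat{\mathscr{M}_x} = F_{v_x}$. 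Either way, the hypothesis translates into $X(\widehat{\mathscr{M}_x}) \neq \emptyset$ for every point $x$ of $C$.

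Since each stalk $\mathscr{M}_x$ is Henselian and the assumption ``$F$ is perfect or $X$ is smooth'' is in force, Corollary \ref{e fundit} upgrades this to $X(\mathscr{M}_x) \neq \emptyset$ for every $x \in C$. Feeding this directly into Theorem \ref{ohlala} (respectively Theorem \ref{42}) yields $X(F) \neq \emptyset$. In the trivially valued projective subcase, the curve $C$ has a unique type~$2$ point whose stalk is already $F$, as noted at the end of the proof of Theorem \ref{16}, so no extra work is required there.

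The main obstacle I anticipate is the bookkeeping needed to match the valuation sets $V(F)$ and $V_{\mathcal{O}(C)}(F)$ with the set of points of $C$ so that the completions $F_v$ are identified with the stalks $\widehat{\mathscr{M}_x}$; this is precisely what Proposition \ref{val} does, and is the genuine technical input. Beyond that, the argument is essentially a formal chain Proposition \ref{val} $+$ Corollary \ref{e fundit} $+$ Theorem \ref{ohlala} or Theorem \ref{42}, and no new ingredient is required.
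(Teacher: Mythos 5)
Your overall chain of reasoning (Proposition \ref{val} $\Rightarrow$ Corollary \ref{e fundit} $\Rightarrow$ Theorem \ref{ohlala} or Theorem \ref{42}) is precisely how the paper argues, and it is the right skeleton. However, the affinoid case in your write-up contains two concrete errors that leave a genuine gap.

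First, the claim that ``$\sqrt{|k^\times|} \neq \mathbb{R}_{>0}$ already forces $k$ to be non-trivially valued'' is false: if $k$ is trivially valued then $|k^\times|=\{1\}$ and $\sqrt{|k^\times|}=\{1\}\neq\mathbb{R}_{>0}$, so trivially valued fields satisfy the hypothesis. This matters because both Theorem \ref{42} and Proposition \ref{val} explicitly assume $k$ non-trivially valued, so your argument breaks down in the trivially valued affinoid subcase. The paper disposes of the trivially valued case (for affinoid and projective alike) in one line at the start: the trivial valuation $v_0$ on $F$ lies in $V_{\mathcal{O}(C)}(F)$ (resp.\ $V(F)$) since it extends the trivial valuation of $k$, and $F_{v_0}=F$, so the reverse implication is immediate. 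Your detour through the unique type~$2$ point works for the projective case but is unnecessary and does not cover the affinoid case; you should instead adopt the paper's uniform one-line argument.

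Second, the assertion that ``the normal compact $k$-affinoid curve $C$ is the analytification of an affine $k$-algebraic curve $S$'' is false. Analytifications of non-proper varieties are non-compact, whereas affinoid spaces are compact, so an affinoid curve is never of the form $S^{\mathrm{an}}$ for affine algebraic $S$. Consequently Proposition \ref{val}(1) as stated does not apply verbatim to $C$ affinoid. What actually makes the argument work is that the proof of Proposition \ref{val}(1) transfers with $\mathcal{O}(S)$ replaced by the affinoid algebra $\mathcal{O}(C)$: a non-rigid point $x\in C$ gives a norm on $\mathcal{O}(C)$ extending that of $k$ (hence a valuation in $V_k(F)$), a rigid point gives a discrete valuation trivial on $k$ with $\mathcal{O}(C)\subseteq R_{v_x}$ (hence a valuation in $V'_{\mathcal{O}(C)}(F)$), and in both cases $\widehat{\mathscr{M}_x}=F_{v_x}$. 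You need at least a sentence acknowledging this adaptation rather than misapplying the proposition's hypothesis.
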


\begin{proof}
If $k$ is trivially valued, then the trivial valuation $v_0$ of $F$ is in $V_{\mathcal{O}(C)}(F)$ (resp. $V(F)$), and since $F_{v_0}=F,$ the statement is clear in this case. 

Otherwise, it is a consequence Proposition \ref{15}, Theorem \ref{42}, and Theorem \ref{ohlala} in view of Proposition \ref{val} and Corollary ~\ref{e fundit}.
\end{proof}

\begin{cor}\label{quad}
Let $k$ be a complete non-archimedean valued field. 
Let $C$ be a compact irreducible normal $k$-analytic curve. Set ${F=\mathscr{M}(C)}.$ Suppose $char(F) \neq 2.$ Let $q$ be a quadratic form over $F$ of dimension different from $2$. The following local-global principles hold:
\begin{enumerate}
\item If $C$ is affinoid and $\sqrt{|k^\times|} \neq \mathbb{R}_{>0}$, $q$ is isotropic over $F$ if and only if it is isotropic over all completions $F_v, v\in V_{\mathcal{O}(C)}(F),$ of $F.$
\item If $C$ is projective, $q$ is isotropic over $F$ if and only if it is isotropic over all completions $F_v, v\in V(F),$ of $F.$
\end{enumerate}
\end{cor}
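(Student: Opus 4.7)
The plan is to deduce this corollary directly from Corollary \ref{uhlala} in exactly the same way that Theorem \ref{17} was deduced from Theorem \ref{ohlala}, by taking the projective quadric defined by $q$ as the variety $X$ and $SO(q)$ as the strongly transitive group.

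First I would reduce to the case of a regular form. By Witt decomposition write $q = q_t \perp q_r$, with $q_t$ totally isotropic and $q_r$ regular; if $q_t \neq 0$, then $q$ is already isotropic over $F$ and over every $F_v$, so there is nothing to prove. Hence I may assume $q$ is regular of dimension $\neq 2$. Let $X$ be the projective $F$-variety cut out by $q = 0$, so that for any field extension $L/F$, $X(L) \neq \emptyset$ if and only if $q$ is isotropic over $L$. In characteristic different from $2$, regularity of $q$ forces this quadric $X$ to be smooth over $F$ — this is the crucial point that lets us sidestep any perfectness hypothesis on $F$ in the application of Corollary \ref{uhlala}. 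Set $G = SO(q)$; as recalled in the paragraph just above Theorem \ref{17} (and proved by HHK in \cite[Theorem 4.2]{HHK}), under the assumptions $\mathrm{char}(F) \neq 2$ and $\dim q \neq 2$, $G$ is a connected rational linear algebraic group over $F$ that acts strongly transitively on $X$.

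At this point all the hypotheses of Corollary \ref{uhlala} are satisfied: $X$ is smooth, and $G$ is a connected rational linear algebraic group acting strongly transitively on $X$. Applying Corollary \ref{uhlala}(1) in case $C$ is affinoid with $\sqrt{|k^\times|} \neq \mathbb{R}_{>0}$ yields $X(F) \neq \emptyset \iff X(F_v) \neq \emptyset$ for every $v \in V_{\mathcal{O}(C)}(F)$, and applying Corollary \ref{uhlala}(2) in case $C$ is projective yields the same equivalence over $V(F)$. Translating $X(\cdot) \neq \emptyset$ back into isotropy of $q$ delivers both parts of the statement.

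There is no genuine obstacle: the hard work has already been invested in Corollary \ref{uhlala} and in the algebraic facts about $SO(q)$ acting on its quadric. The one point worth flagging carefully is that smoothness of $X$ (and not perfectness of $F$) is what is being invoked when applying Corollary \ref{uhlala}, since $F = \mathscr{M}(C)$ need not be perfect in positive characteristic; fortunately the quadric of a regular form in characteristic $\neq 2$ is automatically smooth, so the hypotheses align perfectly.
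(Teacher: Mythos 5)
Your proposal is correct and follows essentially the same route as the paper: reduce via Witt decomposition to the regular case, observe that the quadric of a regular form in characteristic $\neq 2$ is smooth (so that the hypothesis ``$F$ perfect or $X$ smooth'' in Corollary \ref{uhlala} is satisfied without needing perfectness of $F$), invoke HHK's result that $SO(q)$ is connected, rational, and acts strongly transitively when $\dim q \neq 2$, and apply Corollary \ref{uhlala}. The only cosmetic difference is that the paper's proof first disposes of the case where $k$ is trivially valued by noting that the trivial valuation on $F$ then lies in $V(F)$ (resp. $V_{\mathcal{O}(C)}(F)$) with $F_{v_0}=F$; since the proof of Corollary \ref{uhlala} already treats this case, your omission of it is harmless.
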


\begin{proof}
If $k$ is trivially valued, then the trivial valuation $v_0$ of $F$ is in $V_{\mathcal{O}(C)}(F)$ (resp. $V(F)$), and since $F_{v_0}=F,$ the statement is clear in this case. 

Otherwise, by Witt decomposition, ${q=q_t \bot q_r},$ where $q_r$ is regular and $q_t$ is totally isotropic. If $q_t \neq 0,$ then $q$ is isotropic. Otherwise, $q$ is regular, so smooth, and we conclude by Corollary \ref{uhlala}.
\end{proof}
\begin{rem}
Recall that for any finitely generated field extension $F/k$ of transcendence degree 1, there exists a unique normal projective $k$-algebraic curve $C^{\mathrm{alg}}$ with function field ~$F.$ Let $C$ be the analytification of $C^{\mathrm{alg}}.$ Then, $\mathscr{M}(C)=F$ (see \cite[Proposition 3.6.2]{Ber90}), so the local-global principles above are applicable to any such field $F.$
\end{rem}

By Corollary 3.8 of \cite{HHK}, if $G_1$ and $G_2$ are linear algebraic groups such that $G_1 \times G_2$ is a connected rational linear algebraic group, then all the results proven in this section remain true for $G_1$ and $G_2.$ 

\section{Comparison of Overfields}

The purpose of this section is to draw a comparison between one of the local-global principles we proved (Theorem \ref{ohlala}) and the one proven in HHK (\cite[Theorem 3.7]{HHK}). More precisely, we will interpret what the overfields appearing in \cite{HHK} represent in the Berkovich setting, and show that \cite[Theorem 3.7]{HHK} can be obtained as a consequence of Theorem \ref{ohlala}. When working over a ``fine" enough model, we show that the converse is also true.

Throughout this section, for a non-archimedean valued field $E,$ we will denote by $E^{\circ}$ the ring of integers of $E,$ $E^{\circ \circ}$ the maximal ideal of $E^{\circ},$ and by $\widetilde{E}$ the residue field of $E.$

Until the end of this section, we assume $k$ to be a complete discretely valued field.
\subsection{Analytic generic fiber and the reduction map} We will be using the notion of generic fibre in the sense of Berkovich. To see the construction in more detail and under less constrictive conditions, we refer the reader to \cite[Section 1]{ber} and \cite[Section 1]{berber}. 

 Let $\mathscr{X}=\text{Spec} \ A$ be a flat finite type scheme over $k^{\circ}.$ Then, the formal completion $\widehat{\mathscr{X}}$ of $\mathscr{X}$ along its special fiber is $\text{Spf}(\widehat{A}),$ where $\widehat{A}$ is a topologically finitely presented ring over $k^{\circ}$ (\textit{i.e.} isomorphic to some $k^{\circ}\{T_1, \dots, T_n\}/I,$ where $I$ is a finitely generated ideal). Remark that $\widehat{A} \otimes_{k^{\circ}} k$ is a strict $k$-affinoid algebra.

The \textit{analytic generic fiber} of $\widehat{\mathscr{X}},$ denoted by $\widehat{\mathscr{X}}_{\eta},$ is defined to be $\mathcal{M}(\widehat{A} \otimes_{k^{\circ}} k),$ where $\mathcal{M}(\cdot)$ denotes the \textit{Berkovich spectrum}. There exists a \textit{reduction map} $\pi: \widehat{\mathscr{X}}_{\eta} \rightarrow \widehat{\mathscr{X}}_s,$ where $\widehat{\mathscr{X}}_s$ is the special fiber of $\widehat{\mathscr{X}},$ which is anti-continuous, meaning the pre-image of a closed subset is open. We remark that $\widehat{\mathscr{X}}_s=\mathscr{X}_s,$ where $\mathscr{X}_s$ is the special fiber of $\mathscr{X}.$ Let us describe ~$\pi$ more explicitly. 

There are embeddings $A \hookrightarrow \widehat{A} \hookrightarrow (\widehat{A} \otimes_{k^{\circ}} k)^{\circ},$ where $(\widehat{A} \otimes_{k^{\circ}} k)^{\circ}$ is the set of all elements ~$f$ of $\widehat{A} \otimes_{k^{\circ}} k$ for which $|f(x)| \leqslant 1$ for all $x \in \mathcal{M}(\widehat{A} \otimes_{k^{\circ}} k).$ Let $x \in \mathcal{M}(\widehat{A} \otimes_{k^{\circ}} k).$ This point then determines a bounded morphism $A \rightarrow \mathcal{H}(x)^{\circ},$ which induces an application ${\varphi_x : A \otimes_{k^{\circ}} \widetilde{k} \rightarrow \widetilde{\mathcal{H}(x)}}.$ The reduction map $\pi$ sends $x$ to $\ker{\varphi_x}.$

The following commutative diagram, where $\phi: \text{Spec}(\widetilde{\widehat{A} \otimes_{k^{\circ}} k}) \rightarrow  \text{Spec}(A \otimes_{k^{\circ}} \widetilde{k})$ is the canonical map, gives the relation between this reduction map and the one from \cite[Section ~2.4]{Ber90}. The morphism $\phi$ is finite and dominant (see \cite[6.1.2 and 6.4.3]{bo} and \mbox{\cite[pg. 17]{thu})}.

$$
\begin{tikzcd}
\mathcal{M}(\widehat{A} \otimes_{k^{\circ}} k) \arrow{r}{r}  \arrow{rd}{\pi} 
  & \text{Spec}(\widetilde{\widehat{A} \otimes_{k^{\circ}} k}) \arrow{d}{\phi} \\
    & \text{Spec}(A \otimes_{k^{\circ}} \widetilde{k})
\end{tikzcd}
$$

The construction above has nice glueing properties. Let $\mathscr{X}$ be a finite type scheme over ~$k^{\circ},$ and $\widehat{\mathscr{X}}$ its formal completion along the special fiber. Then, the \textit{analytic generic fiber} $\widehat{\mathscr{X}}_\eta$ of $\widehat{\mathscr{X}}$ is the $k$-analytic space we obtain by glueing the analytic generic fibers of an open affine cover of the formal scheme $\widehat{\mathscr{X}}.$ In general, $\widehat{\mathscr{X}}_\eta$ is a compact analytic domain of the Berkovich analytification $\mathscr{X}^{\mathrm{an}}$ of $\mathscr{X}.$ If $\mathscr{X}$ is proper, then $\mathscr{X}^{\mathrm{an}}=\widehat{\mathscr{X}}_{\eta}$ (see \cite[2.2.2]{muni}). Similarly, there exists an anti-continuous \textit{reduction map} $\pi: \widehat{\mathscr{X}}_{\eta} \rightarrow \mathscr{X}_s,$ where $\mathscr{X}_s$ is the special fiber of $\mathscr{X}.$

A property we will need is the following:

\begin{prop}\label{philly}
With the same notation as above, suppose $A$ is a normal domain. Then, $\widehat{A}=(\widehat{A} \otimes_{k^{\circ}} k)^{\circ},$ and
the finite morphism $\phi$ from the diagram above is a bijection. 
\end{prop}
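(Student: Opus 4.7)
The plan is to establish the two assertions in sequence, with the first (the identification $\widehat{A} = B^{\circ}$, where I set $B := \widehat{A} \otimes_{k^{\circ}} k$) being the technical heart, and the second (bijectivity of $\phi$) following formally from it.

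\emph{The identification $\widehat{A} = B^{\circ}$.} The inclusion $\widehat{A} \subseteq B^{\circ}$ is immediate: presenting $\widehat{A}$ as a quotient of $k^{\circ}\{T_1,\dots,T_n\}$ by a finitely generated ideal exhibits each element of $\widehat{A}$ as a convergent series in topologically nilpotent generators with coefficients in $k^{\circ}$, so its image in $B$ is power-bounded, with sup-norm at most one. For the reverse inclusion, I would proceed in three steps:
\begin{itemize}
\item Since $k$ is complete discretely valued, $k^{\circ}$ is an excellent ring, hence so is the finite type $k^{\circ}$-algebra $A$; normality of $A$ therefore implies normality of its completion $\widehat{A}$ along the ideal generated by a uniformizer $\pi$ of $k$, by the standard fact that completion along an ideal preserves normality for excellent rings.
\item Since $\widehat{A}$ is normal it is reduced, hence $B = \widehat{A}[1/\pi]$ is a reduced strictly $k$-affinoid algebra; by the classical theorem of Bosch--G\"untzer--Remmert, $B^{\circ}$ coincides with the integral closure of $k^{\circ}$ (equivalently of $\widehat{A}$) in $B$.
\item Decomposing the Noetherian normal ring $\widehat{A}$ as a finite product $\prod \widehat{A}_i$ of normal domains, one has a corresponding splitting $B = \prod \widehat{A}_i[1/\pi]$, and each factor $\widehat{A}_i[1/\pi]$ sits inside $\mathrm{Frac}\,\widehat{A}_i$. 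Normality of each $\widehat{A}_i$ then forces any element of $B$ integral over $\widehat{A}$ to lie in $\widehat{A}$, giving $B^{\circ} \subseteq \widehat{A}$.
\end{itemize}

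\emph{Bijectivity of $\phi$.} With $B^{\circ} = \widehat{A}$ in hand, one has $\widetilde{B} = B^{\circ}/B^{\circ\circ} = \widehat{A}/B^{\circ\circ}$; since $\pi \in B^{\circ\circ}$, the quotient map $\widehat{A} \twoheadrightarrow \widetilde{B}$ factors through $\widehat{A}/\pi\widehat{A} = A \otimes_{k^{\circ}} \widetilde{k}$, so $\phi$ is a closed immersion. To show it is a set-theoretic bijection, I would identify the kernel $J$ of the induced surjection $A \otimes_{k^{\circ}} \widetilde{k} \twoheadrightarrow \widetilde{B}$ with the nilradical. On the one hand, if $f \in \widehat{A}$ lifts $\bar{f} \in J$ then $|f|_{\sup} < 1$, so $|f^n|_{\sup} \leqslant |\pi|$ for $n$ large; this places $f^n/\pi$ in $B^{\circ} = \widehat{A}$, hence $f^n \in \pi\widehat{A}$ and $\bar{f}$ is nilpotent. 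Conversely, any nilpotent $\bar{f}$ admits a lift $f$ with $f^n \in \pi\widehat{A}$ for some $n$, whence $|f|_{\sup} < 1$, so $f \in B^{\circ\circ}$ and $\bar{f} \in J$. Thus $J$ equals the nilradical, and $\phi$ is a bijection on underlying sets.

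The main obstacle I expect is the normality of $\widehat{A}$, which is the input that allows the integral closure argument to conclude. This rests on the excellence of complete discrete valuation rings and on the standard but non-formal fact that completion along an ideal preserves normality for excellent rings (Matsumura, or EGA IV); all other steps are either immediate or direct applications of the classical description of the unit ball in a reduced strictly affinoid algebra, together with elementary sup-norm manipulations.
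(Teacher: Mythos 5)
Your proof is correct and follows essentially the same route as the paper's: normality of $\widehat{A}$ via excellence of $A$ (EGA IV 7.8.3.1), identification of $B^{\circ}$ with the integral closure of $\widehat{A}$ in $B$ via the Bosch--G\"untzer--Remmert theorem, and normality to conclude $\widehat{A}=B^{\circ}$; bijectivity of $\phi$ then amounts to showing that the kernel of $A\otimes_{k^{\circ}}\widetilde{k}\twoheadrightarrow\widetilde{B}$ is the nilradical, which is the same computation as the paper's verification that $\sqrt{t\widehat{A}}=B^{\circ\circ}$. One slip to correct: $B^{\circ}$ is \emph{not} the integral closure of $k^{\circ}$ in $B$ (already $T\in k\{T\}^{\circ}$ is not integral over $k^{\circ}$); what the BGR theorem gives, and what your argument actually uses, is that $B^{\circ}$ is the integral closure of $\widehat{A}$ (the image of $k^{\circ}\{T_1,\dots,T_n\}$) in $B$.
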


\begin{proof}
\begin{sloppypar}
Let us denote by $t$ a uniformizer of $k^{\circ},$ and by $I$ the ideal $t\widehat{A}.$ Recall that $\widehat{A}$ is the completion of $A$ with respect to the ideal $tA$ (and is isomorphic to some $k^{\circ}\{T_1, T_2, \dots, T_n\}/P$; remark that then $\widehat{A} \otimes_{k^{\circ}} k$ is isomorphic to the $k$-affinoid algebra $k\{T_1, T_2, \dots, T_n\}/P$). 
\end{sloppypar}
Set $B=(\widehat{A} \otimes_{k^{\circ}} k)^{\circ}$ and $J=(\widehat{A} \otimes_{k^{\circ}} k)^{\circ \circ}$ - the elements $f$ of $\widehat{A} \otimes_{k^{\circ}} k$ such that $|f|_x <1$ for all $x \in \mathcal{M}(\widehat{A} \otimes_{k^{\circ}} k)$ (\textit{i.e.} $\rho(f)<1$, where the $\rho$ is the spectral norm on $\widehat{A} \otimes_{k^{\circ}} k$).

Remark that for any maximal ideal $m$ of $A,$ $t \in m$ (\textit{i.e.} the closed points of $\text{Spec} \ A$ are in the special fiber). This means that $tA$ is contained in the Jacobson radical of $A.$ Considering this and the fact that $A$ is excellent and normal, by \cite[7.8.3.1]{groth}, $\widehat{A}$ is also normal. At the same time, by \cite[6.1.2, 6.3.4]{bo}, $B$ is the integral closure of $\widehat{A}$ in $\widehat{A} \otimes_{k^o} k.$ Since $\text{Frac} \ \widehat{A}=\text{Frac} \ B,$ we obtain $\widehat{A}=B.$

Let us look at the canonical map $A/t= \widehat{A}/I \rightarrow B/J$ inducing $\phi.$ Let $|\cdot|$ be the norm on the affinoid algebra $\widehat{A} \otimes_{k^{\circ}} k.$

Remark that $\sqrt{I}=J$: let $x \in J,$ so that $\rho(x)=\lim_{n \rightarrow \infty} |x^n|^{1/n}<1,$ implying ${|x^n| \rightarrow 0},$ ${n \rightarrow +\infty.}$ Thus, for large enough $n,$ $x^n \in I,$ so $J \subseteq \sqrt{I}.$ The other containment is clear seeing as $\rho(\cdot) \leqslant |\cdot|$. This means that any prime ideal of $\widehat{A}$ contains $I$ if and only if it contains $J,$ and thus that $\phi$ is a bijection.  
\end{proof}
\subsection{The setup of HHK's \cite{HHK}} Let us start by recalling HHK's framework (see \cite[Notation 3.3]{HHK}):

\begin{nota}Let $T=k^{\circ}$ be a complete discrete valuation ring with uniformizer $t,$ fraction field ~$k,$ and residue field $\widetilde{k}.$ Let $\mathscr{C}$ be a flat normal irreducible projective $T$-curve with function field ~$F.$ Let us denote by $\mathscr{C}_s$ the special fiber of $\mathscr{C}.$ 

For any point $P \in \mathscr{C}_s,$ set $R_P=\mathcal{O}_{\mathscr{C},P}.$ Since $T$ is complete discretely valued, $R_P$ is an excellent ring. Let us denote by ~$\widehat{R_P}$ the completion of $R_P$ with respect to its maximal ideal. Since $R_P$ is normal and excellent, $\widehat{R_P}$ is also a domain. Set $F_P=\mathrm{Frac} \ \widehat{R_P}.$ 

Let $U$ be a proper subset of one of the irreducible components of $\mathscr{C}_s.$ Set ${R_U=\bigcap_{P \in U} \mathcal{O}_{\mathscr{C}, P}}.$ Let us denote by $\widehat{R_U}$ the $t$-adic completion of $R_U.$ By \cite[Notation 3.3]{HHK}, for any $Q \in U,$ $\widehat{R_U} \subseteq \widehat{R_Q}.$ Thus, $\widehat{R_U}$ is an integral domain. Set $F_U=\mathrm{Frac} \ \widehat{R_U}.$
\end{nota}

Let $\mathscr{P}$ be a finite set of closed points of $\mathscr{C}_s$ containing all points at which distinct irreducible components of $\mathscr{C}_s$ meet. Let $\mathscr{U}$ be the set of all irreducible components of $\mathscr{C}_s \backslash \mathscr{P}$ (which here are also its connected componenets). 

The following is the local-global principle proven by HHK in \cite{HHK} and \cite{HHK1}:

\begin{thm}[{\cite[Theorem 3.7]{HHK}}, {\cite[Theorem 9.1]{HHK1}}]  \label{HHK}
Let $G$ be a connected rational linear algebraic group over $F$ that acts strongly transitively on an $F$-variety $X.$ The following statements are equivalent:
\begin{enumerate}
\item $X(F) \neq \emptyset;$
\item $X(F_P) \neq \emptyset$ for all $P \in \mathscr{P}$ and $X(F_U) \neq \emptyset$ for all $U \in \mathscr{U};$
\item $X(F_Q) \neq \emptyset$ for all $Q \in \mathscr{C}_s.$
\end{enumerate}
\end{thm}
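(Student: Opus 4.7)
The plan is to deduce Theorem~\ref{HHK} from Theorem~\ref{ohlala} by identifying HHK's overfields $F_P$ and $F_U$ with meromorphic function fields on certain analytic subsets of the Berkovich analytification. Set $C:=\mathscr{C}_\eta^{\mathrm{an}}$; since $\mathscr{C}$ is proper, $C$ coincides with the analytic generic fiber $\widehat{\mathscr{C}}_\eta$ and carries an anti-continuous reduction map $\pi:C\to\mathscr{C}_s$. The curve $C$ is a normal irreducible projective $k$-analytic curve with $\mathscr{M}(C)=F$, so Theorem~\ref{ohlala} is directly available.

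The core step is to reinterpret the HHK overfields via $\pi$. For a closed point $P\in\mathscr{P}$, the Berkovich-open set $\pi^{-1}(P)$ is the analytic generic fiber of the affine formal scheme $\mathrm{Spf}(\widehat{R_P})$, hence an integral $k$-affinoid with $\mathcal{O}(\pi^{-1}(P))=\widehat{R_P}\otimes_{k^\circ}k$: normality of $\mathscr{C}$ together with Proposition~\ref{philly} ensures $\widehat{R_P}$ is a normal domain, so $\widehat{R_P}\otimes_{k^\circ}k$ is also a domain, and Lemma~\ref{1.2} gives $\mathscr{M}(\pi^{-1}(P))=\mathrm{Frac}(\widehat{R_P})=F_P$. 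Restriction to stalks (which is injective since $C$ is normal so each $\mathcal{O}_{C,x}$ is a domain by Remark~\ref{j}) yields an embedding $F_P\hookrightarrow\mathscr{M}_x$ for every $x\in\pi^{-1}(P)$. For $U\in\mathscr{U}$ one similarly identifies $\pi^{-1}(U)$ with the analytic generic fiber of $\mathrm{Spf}(\widehat{R_U})$, obtaining $\mathscr{M}(\pi^{-1}(U))=F_U$ and $F_U\hookrightarrow\mathscr{M}_x$ for each $x\in\pi^{-1}(U)$; the inclusion $\widehat{R_U}\subseteq\widehat{R_Q}$ for $Q\in U$ (from HHK's setup) then gives $F_U\hookrightarrow F_Q$ on fraction fields.

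With these embeddings, the implications $(1)\Rightarrow(3)\Rightarrow(2)$ are immediate from $F\hookrightarrow F_Q$. For $(2)\Rightarrow(1)$, take any $x\in C$: its reduction $\pi(x)$ is either a closed point $P$ (in which case either $P\in\mathscr{P}$ gives $F_P\hookrightarrow\mathscr{M}_x$ directly, or $P$ lies in some $U\in\mathscr{U}$ whence $F_U\hookrightarrow F_P\hookrightarrow\mathscr{M}_x$) or the generic point $\eta_V$ of an irreducible component (so $x$ is the unique type~$2$ preimage $x_V$ of $\eta_V$, which belongs to $\pi^{-1}(U)$ for the unique $U\in\mathscr{U}$ containing $\eta_V$, and $F_U\hookrightarrow\mathscr{M}_{x_V}$). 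In every case the hypothesis of $(2)$ yields $X(\mathscr{M}_x)\neq\emptyset$, and Theorem~\ref{ohlala} concludes $X(F)\neq\emptyset$. The implication $(2)\Rightarrow(3)$ follows from the same combinatorial dichotomy applied to each $Q\in\mathscr{C}_s$.

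The main obstacle is the identification $\mathscr{M}(\pi^{-1}(U))=F_U$ when $U$ is an open subset of an irreducible component, since $\widehat{R_U}$ is defined via a $t$-adic rather than maximal-ideal completion and $\pi^{-1}(U)$ is not the preimage of a single closed point. Establishing this demands a careful Raynaud-style description of $\pi^{-1}(U)$ as the analytic generic fiber of $\mathrm{Spf}(\widehat{R_U})$ and a second invocation of Proposition~\ref{philly} to identify power-bounded elements with the natural integer ring. Once this identification is secured, what remains is routine bookkeeping about which element of $\mathscr{P}\cup\mathscr{U}$ contains the reduction of a given Berkovich point.
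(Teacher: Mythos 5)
Your overall strategy — reduce Theorem~\ref{HHK} to Theorem~\ref{ohlala} by relating HHK's overfields to Berkovich stalks via the reduction map — is the right one, and your treatment of closed points in $\mathscr{P}$ (via $\widehat{R_P}=\mathcal{O}^{\circ}(\pi^{-1}(P))$) matches the paper's Proposition~\ref{c}. However, there are two genuine problems.

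First, a logical error: you claim that $(3)\Rightarrow(2)$ is ``immediate from $F\hookrightarrow F_Q$.'' The embeddings available are $F\hookrightarrow F_U\hookrightarrow F_Q$ for $Q\in U$, which yield $(1)\Rightarrow(2)\Rightarrow(3)$, not $(3)\Rightarrow(2)$: knowing $X(F_Q)\neq\emptyset$ over the \emph{larger} fields $F_Q$ says nothing immediately about the smaller field $F_U$. You do establish $(2)\Rightarrow(1)$ and $(1)\Rightarrow(3)$, so with the correct easy implication $(2)\Rightarrow(3)$ your chain cannot close without separately proving $(3)\Rightarrow(1)$ or $(3)\Rightarrow(2)$, and neither is supplied. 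The paper avoids this by proving exactly $(3)\Rightarrow(1)$ (Proposition~\ref{i}), completing the cycle $(1)\Rightarrow(2)\Rightarrow(3)\Rightarrow(1)$.

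Second, the piece you flag as ``the main obstacle'' — identifying $\mathscr{M}(\pi^{-1}(U))$ with $F_U$ — is not incidental bookkeeping; it is the linchpin of your treatment of the generic-point case (and of the non-$\mathscr{P}$ closed points) and you do not prove it. The paper's route sidesteps $F_U$ altogether: proving $(3)\Rightarrow(1)$ only requires relating the fields $F_Q$ for closed or generic $Q$ to the stalks. The subtlety the paper then faces, and which your sketch glosses over, is that for a generic point $\mu$ of a component of $\mathscr{C}_s$ one has $F_\mu=\mathcal{H}(x_\mu)$, which is \emph{larger} than $\mathscr{M}_{C,x_\mu}$, so $X(F_\mu)\neq\emptyset$ does not trivially give $X(\mathscr{M}_{C,x_\mu})\neq\emptyset$. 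The paper's Proposition~\ref{b} bridges this gap with Artin's approximation theorem and the Henselianity of $\mathcal{O}_{C,x_\mu}^{\circ}$ (plus \cite[Prop.\ 5.8]{HHK1}). Your approach would, in principle, avoid Artin approximation by embedding the smaller field $F_U$ into the stalk, but that trades one substantive step for another (a Bosch-type identification of $\widehat{R_U}$ with power-bounded analytic functions on $\pi^{-1}(U)$) which you do not carry out. Until that identification is actually established, and the implication chain is corrected, the proof is incomplete.
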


The implication $(1) \Rightarrow (2)$ is immediate seeing as $F$ is embedded into $F_P$ and $F_U$ for all $P \in \mathscr{P}$ and $U \in \mathscr{U}.$ Considering for any $U \in \mathscr{U}$ and any $Q \in U,$ $F_U \subseteq F_Q,$ we obtain that $(2) \Rightarrow (3).$ 

We now proceed to show that the remaining implication $(3) \Rightarrow (1)$ is a consequence of Theorem ~\ref{ohlala}. To do this, a comparison will be drawn between the fields $F_Q, Q \in \mathscr{C}_s,$ and the ones appearing in Theorem \ref{ohlala}.

\subsection{The comparison} Let us denote by $C$ the Berkovich analytification of the generic fiber of $\mathscr{C}.$ It is a normal irreducible projective $k$-analytic curve. By \cite[Proposition 3.6.2]{Ber90}, $\mathscr{M}(C)=F,$ where $\mathscr{M}$ is the sheaf of meromorphic functions on $C.$ Since $\mathscr{C}$ is projective, $C=\widehat{\mathscr{C}}_\eta.$ Let $\pi : C \rightarrow \mathscr{C}_s$ be the reduction map.

Let $\mu$ be the generic point of one of the irreducible components of $\mathscr{C}_s.$ Then, $\mathcal{O}_{\mathscr{C}, \mu}$ is a discrete valuation ring with fraction field $F,$ whose valuation extends that of $k.$ As the residue field of $\mathcal{O}_{\mathscr{C}, \mu}$ is of transcendence degree $1$ over $\widetilde{k},$ $\mu$ determines a unique type 2 point $x_{\mu}$ on the Berkovich curve $C.$

\begin{lm} \label{a}
Let $\mu$ be the generic point of one of the irreducible components of $\mathscr{C}_s.$ Then, $\pi^{-1}(\mu)=\{x_{\mu}\}.$
\end{lm}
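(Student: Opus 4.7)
The plan is to localize on an affine open around $\mu$, transport the question across the finite morphism $\phi$ of Proposition \ref{philly}, and then invoke the classical correspondence between generic points of the reduction of a strict affinoid and its Shilov boundary.

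First I would choose an affine open neighborhood $\mathrm{Spec}(A)$ of $\mu$ in $\mathscr{C}$. Since $\mathscr{C}$ is normal, $A$ is a normal domain, so Proposition \ref{philly} applies to the strict $k$-affinoid algebra $\mathcal{A}:=\widehat{A}\otimes_{k^{\circ}}k$: its ring of integers is $\widehat{A}$, and the canonical map $\phi:\mathrm{Spec}(\widetilde{\mathcal{A}})\to \mathrm{Spec}(A/tA)$ is a bijection. Setting $\mathscr{V}:=\mathcal{M}(\mathcal{A})=\widehat{\mathrm{Spec}(A)}_{\eta}$, the compatibility of the analytic generic fibre construction with affine covers gives $\pi^{-1}(\mathrm{Spec}(A/tA))=\mathscr{V}$, and on $\mathscr{V}$ the reduction map factors as $\mathscr{V}\xrightarrow{\,r\,}\mathrm{Spec}(\widetilde{\mathcal{A}})\xrightarrow{\,\phi\,}\mathrm{Spec}(A/tA)$. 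In particular $\pi^{-1}(\mu)\subseteq\mathscr{V}$.

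Next, since $\phi$ is a finite bijective morphism of Noetherian schemes, it is a universal homeomorphism, so the unique preimage $\mu':=\phi^{-1}(\mu)$ is the generic point of an irreducible component of $\mathrm{Spec}(\widetilde{\mathcal{A}})$. Invoking the classical fact that the preimage under $r$ of a generic point of the reduction of a strict affinoid consists of a single point --- the associated Shilov boundary point (see \cite[Proposition 2.4.4]{Ber90}) --- we obtain $\pi^{-1}(\mu)=r^{-1}(\mu')=\{y\}$ for a unique $y\in\mathscr{V}$.

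To close the argument, I would check directly that $\pi(x_\mu)=\mu$, which by the uniqueness just established forces $y=x_\mu$. By construction $x_\mu$ is the type $2$ point attached to the DVR $\mathcal{O}_{\mathscr{C},\mu}$, and its completed residue field $\mathcal{H}(x_\mu)$ has residue field $\kappa(\mu)$; the evaluation morphism $A\to\mathcal{H}(x_\mu)^{\circ}$ factors as $A\to\mathcal{O}_{\mathscr{C},\mu}\hookrightarrow\mathcal{H}(x_\mu)^{\circ}$, and reducing modulo the maximal ideals on both sides produces $\varphi_{x_\mu}:A/tA\to\kappa(\mu)$ whose kernel is exactly the image of $\mu$ in $\mathrm{Spec}(A/tA)$. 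Hence $\pi(x_\mu)=\mu$ and $\pi^{-1}(\mu)=\{x_\mu\}$. The main point to watch out for is the appeal to Proposition \ref{philly}: without the normality of $A$ the map $\phi$ would merely be finite and dominant, so $\phi^{-1}(\mu)$ could consist of several generic points and the count via the Shilov boundary would no longer give uniqueness.
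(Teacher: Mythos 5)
Your proof is correct, but it takes a genuinely different route from the paper's. The paper argues directly: for any $x\in\pi^{-1}(\mu)$ and any $f\in A$, the condition $\pi(x)=\mu$ says exactly that $|f|_x<1$ iff $f(\mu)=0$; since $\mathcal{O}_{\mathscr{C},\mu}$ is a DVR (here is where normality enters) and $|\cdot|_x$ extends $|\cdot|_k$, this forces the valuation ring of $|\cdot|_x$ on $F$ to be $\mathcal{O}_{\mathscr{C},\mu}$ and hence $|\cdot|_x=|\cdot|_{x_\mu}$, so that $x=x_\mu$. You instead route through Proposition \ref{philly} to know that $\phi$ is a bijection, transport $\mu$ to a generic point $\mu'$ of $\mathrm{Spec}(\widetilde{\mathcal{A}})$, invoke the classical statement \cite[Proposition 2.4.4]{Ber90} that $r^{-1}(\mu')$ is a single (Shilov) point, and then verify $\pi(x_\mu)=\mu$ to identify that point as $x_\mu$. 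Both approaches use normality, but for different reasons: the paper needs $\mathcal{O}_{\mathscr{C},\mu}$ to be a DVR, whereas you need $\phi$ to be bijective. Your route is more structural and offloads uniqueness to Berkovich's Shilov-boundary description, at the cost of invoking both Proposition \ref{philly} and the finite-bijectivity argument; the paper's argument is shorter, self-contained, and does not rely on Proposition \ref{philly} at all. One small overstatement: a finite bijective morphism of schemes is a homeomorphism (finite morphisms are closed), but it need not be a \emph{universal} homeomorphism, since that additionally requires radicial residue field extensions; fortunately a plain homeomorphism is all you need to conclude that $\phi^{-1}(\mu)$ is again a generic point.
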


\begin{proof}
Let $U=\text{Spec} \ A$ be an open affine neighborhood of $\mu$ in $\mathscr{C}.$ Since $\mathscr{C}$ is irreducible, we obtain that $\mathrm{Frac} \ A=F.$ By \cite[pg. 541]{ber}, $\pi^{-1}(U_s)=\widehat{U_{\eta}}$, and the restriction of $\pi$ on $\widehat{U}_{\eta}$ is the reduction map $\widehat{U}_{\eta} \rightarrow U_s.$ Explicitly, we have $\pi: \mathcal{M}(\widehat{A} \otimes_{k^{\circ}} k) \rightarrow \text{Spec}(A \otimes_{k^{\circ}} \widetilde{k}),$ where $x \in \mathcal{M}(\widehat{A} \otimes_{k^{\circ}} k)$ is sent to the kernel of the map $A \otimes_{k^{\circ}} \widetilde{k}=A/k^{\circ \circ}A \rightarrow \widetilde{\mathcal{H}(x)}.$

By construction, for any  $x \in \pi^{-1}(\mu)$ and any $f \in A,$  $f(\mu)=0$ if and only if $|f|_x<1,$ and $f(\mu)\neq 0$ if and only if ~${|f|_x=1}.$  As a consequence, $|f|_{x_\mu}<1$ if and only if $|f|_x<1,$ and $|f|_{x_\mu}=1$ if and only if $|f|_x=1.$ This implies that $x$ and $x_{\mu}$ define the same norm on $A$ (and hence on $F$), so $x_{\mu}=x$ in $C,$ and $\pi^{-1}(\mu)=\{x_{\mu}\}.$
\end{proof}

\begin{prop} \label{b} Let $\mu$ be the generic point of one of the irreducible components of $\mathscr{C}_s.$ Set $\{x_{\mu}\}:=\pi^{-1}(\mu).$ Then, $F_{\mu}=\mathcal{H}(x_{\mu}).$ Let $X$ be an  $F$-variety. If ${X(F_{\mu}) \neq \emptyset},$ then $X(\mathscr{M}_{C,x_{\mu}}) \neq \emptyset.$ 
\end{prop}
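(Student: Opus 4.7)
The statement splits into two essentially independent assertions: the identification $F_\mu = \mathcal{H}(x_\mu)$, and the implication $X(F_\mu) \neq \emptyset \Rightarrow X(\mathscr{M}_{C,x_\mu}) \neq \emptyset$. I would treat them in that order.

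For the identification of fields, the plan is to realize both sides as the completion of $F$ with respect to one and the same discrete valuation. Since $\mathscr{C}$ is normal and $\mu$ is a codimension-one point, $R_\mu = \mathcal{O}_{\mathscr{C},\mu}$ is a discrete valuation ring; let $v_\mu$ denote the associated valuation on $F$. Because $t \in \mathfrak{m}_\mu$, the restriction $v_\mu|_k$ is (up to equivalence) the valuation of $k$. The $\mathfrak{m}_\mu$-adic completion of $R_\mu$ coincides with its $v_\mu$-adic completion, so $F_\mu = \mathrm{Frac}(\widehat{R_\mu})$ is the $v_\mu$-completion of $F$. On the Berkovich side, the analysis of the reduction map carried out in the proof of Lemma~\ref{a} on an affine neighbourhood $\mathrm{Spec}(A)$ of $\mu$ in $\mathscr{C}$ already shows that, for $f \in A$, $|f|_{x_\mu} < 1$, $=1$ or $>1$ according to whether $v_\mu(f) > 0$, $=0$ or $< 0$. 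Combined with the fact that $|\cdot|_{x_\mu}$ and $v_\mu$ define absolute values on $F$ restricting to the one on $k$, this forces them to induce the same absolute value on $F$; taking completions gives $\mathcal{H}(x_\mu) = F_\mu$.

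For the descent of rational points, the plan is to apply Lemma~\ref{toto}. Since $x_\mu$ is of type $2$ and $C$ is normal, Remark~\ref{j} gives $\mathscr{M}_{C,x_\mu} = \mathcal{O}_{C,x_\mu}$, which is a subfield of $\mathcal{H}(x_\mu) = F_\mu$ containing the dense subfield $F$, hence itself dense in $F_\mu$. Moreover, by \cite[Theorem~2.3.3]{ber93}, this local ring is Henselian with respect to the induced valuation. Taking $F$ as the base field, Lemma~\ref{toto} then produces the desired $\mathscr{M}_{C,x_\mu}$-point from any given $F_\mu$-point, under the standing hypothesis (carried over from Lemma~\ref{toto}) that $F$ be perfect or $X$ be smooth, which is the setting in which this comparison will be combined with Theorem~\ref{ohlala}.

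The main point requiring care is in the first paragraph: pinning down that the multiplicative seminorm at the Berkovich point $x_\mu$ corresponds on the function field exactly to the discrete valuation $v_\mu$ arising from the local ring $R_\mu$. Once this identification is in place, the normality of $\mathscr{C}$ giving the DVR structure of $R_\mu$, the Henselianity of analytic local rings at non-rigid points, and Lemma~\ref{toto} assemble the proof without further difficulty.
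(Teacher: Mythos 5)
Your treatment of the identification $F_\mu = \mathcal{H}(x_\mu)$ is essentially the paper's: both realize the two fields as the completion of $F$ with respect to the valuation carried by $x_\mu$, using the description of the reduction map from Lemma~\ref{a} (the paper packages this as Proposition~\ref{val}, with the type-2 property of $x_\mu$ giving $\mathcal{O}_{C,x_\mu}=\mathscr{M}_{C,x_\mu}$).

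However, there is a genuine gap in the second half. You invoke Lemma~\ref{toto}, which requires $F$ to be perfect or $X$ to be smooth, and you acknowledge that restriction, suggesting it suffices for the intended application. But the statement of Proposition~\ref{b} imposes no such hypothesis, and this generality is actually needed: Proposition~\ref{i} uses Proposition~\ref{b} (together with Proposition~\ref{c}) to derive the implication $(3)\Rightarrow(1)$ in Theorem~\ref{HHK}, which is HHK's local--global principle for an arbitrary $F$-variety $X$ acted on strongly transitively by a connected rational linear algebraic group $G$ --- with no smoothness assumption on $X$ and no perfectness assumption on $F$. So a proof valid only in the perfect/smooth case does not actually deliver what is claimed. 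The paper treats the smooth/perfect case exactly as you do (via Corollary~\ref{e fundit}, which is Lemma~\ref{toto}), but then gives a separate unconditional argument: starting from $X(F_\mu)\neq\emptyset$, one first obtains $X(\widehat{R_\mu})\neq\emptyset$ as in the proof of \cite[Proposition 5.8]{HHK1}; then, since $R_\mu$ is excellent, Artin's approximation theorem descends this to a point in $X(R_\mu^h)$ where $R_\mu^h$ is the Henselization; finally, the Henselianity of $\mathcal{O}_{C,x_\mu}^\circ$ gives the chain of inclusions $R_\mu\subseteq R_\mu^h\subseteq \mathcal{O}_{C,x_\mu}^\circ\subseteq\mathscr{M}_{C,x_\mu}$, yielding $X(\mathscr{M}_{C,x_\mu})\neq\emptyset$. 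You would need to supply something along these lines to cover the general case.
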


\begin{proof}
Remark that $F_{\mu}={\text{Frac} \ \widehat{\mathcal{O}_{\mathscr{C}, \mu}}}$ is the completion of $F$ with respect to the valuation~$x_{\mu}.$ Seeing as $x_{\mu}$ is of type 2, $\mathcal{O}_{C,x_{\mu}}=\mathscr{M}_{C, x_{\mu}}$, and by Proposition \ref{val}, ${F_{\mu}=\widehat{\mathscr{M}_{C, x_{\mu}}}}$ ${=\mathcal{H}(x_{\mu})}$.

If $X$ is smooth or $F$ is perfect, we can conclude by Corollary \ref{e fundit}.

Otherwise, the restriction morphism of the sheaf of meromorphic functions gives us $\mathrm{Frac} \ \mathcal{O}_{\mathscr{C},\mu}=F=\mathscr{M}(C)   \hookrightarrow \mathcal{O}_{C, x_{\mu}},$ so there exist embeddings $\mathcal{O}_{\mathscr{C}, \mu} \subseteq \mathcal{O}_{C,x_{\mu}} \subseteq \mathcal{H}(x_\mu).$ Seeing as all elements of $\mathcal{O}_{\mathscr{C}, \mu}$ have norm at most $1,$ $R_{\mu}=\mathcal{O}_{\mathscr{C}, \mu} \subseteq \mathcal{O}_{C,x_{\mu}}^{\circ}$ - the valuation ring of $\mathcal{O}_{C, x_{\mu}}.$

By the proof of \cite[Proposition 5.8]{HHK1}, $X(F_{\mu}) \neq \emptyset$ implies $X(\widehat{R_{\mu}})\neq \emptyset.$
The ring ${R_{\mu}=\mathcal{O}_{\mathscr{C}, \mu}}$ is excellent, so by Artin's Approximation Theorem (\cite[Theorem 1.10]{art}), $X(R_{\mu}^h) \neq \emptyset,$ where $R_{\mu}^h$ denotes the henselization of the local ring $R_{\mu}.$ Seeing as $\mathcal{O}_{C, x_{\mu}}^{o}$ is Henselian (\mbox{\cite[Thm. 2.3.3, Prop. 2.4.3]{ber93}}), $R_{\mu} \subseteq R_{\mu}^h \subseteq \mathcal{O}_{C,x_{\mu}}^{\circ} \subseteq \mathscr{M}_{C,x_{\mu}}.$ Consequently, ${X(\mathscr{M}_{C,x_{\mu}}) \neq \emptyset}.$
\end{proof}

We recall that the reduction map is anti-continuous. 

\begin{prop}\label{c}
Let $P$ be a closed point of $\mathscr{C}_s.$ Then, ${\widehat{R_P} = \mathcal{O}^{\circ}_C(\pi^{-1}(P))},$ where $\mathcal{O}^{\circ}$ is the sheaf of analytic functions $f$ such that $|f|_{sup} \leqslant 1.$
Consequently, if $X(F_P) \neq \emptyset,$ then $X(\mathscr{M}(\pi^{-1}(P))) \neq \emptyset.$
\end{prop}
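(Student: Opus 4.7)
The plan is to proceed in two steps: first, establish the ring identification $\widehat{R_P}=\mathcal{O}^{\circ}_C(\pi^{-1}(P))$; second, use this identification together with a spreading-out argument in the spirit of the proof of Proposition~\ref{b} to produce an $\mathscr{M}(\pi^{-1}(P))$-point of $X$ from an $F_P$-point.

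For the identification, I would work locally. Choosing an affine open neighborhood $U=\text{Spec}\, A$ of $P$ in $\mathscr{C}$, one has $\pi^{-1}(U_s)=\widehat{U}_\eta=\mathcal{M}(\widehat{A}\otimes_{k^{\circ}}k)$, with $\pi$ restricting to the reduction map recalled in the preceding subsection. Let $\mathfrak{p}_P\subset A$ be the maximal ideal corresponding to $P$ and write $\mathfrak{p}_P=(t,f_1,\dots,f_r)$ (finite because $A$ is Noetherian). The explicit description of $\pi$, together with maximality of $\mathfrak{p}_P/(t)$ in $A/(t)$, gives
$$\pi^{-1}(P)=\{x\in\widehat{U}_\eta : |f_i(x)|<1 \text{ for all } i\}=\bigcup_{n\geq 1}W_n,$$
where $W_n:=\{x\in\widehat{U}_\eta : |f_i(x)|^n\leq|t| \text{ for all } i\}$ is a rational affinoid subdomain of $\widehat{U}_\eta$. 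Adapting the argument of Proposition~\ref{philly} (which uses normality of $A$), I would identify $\mathcal{O}^{\circ}(W_n)$ with the $t$-adic completion of $A[f_1^n/t,\dots,f_r^n/t]$. Passing to the inverse limit in $n$ should then identify $\varprojlim_n\mathcal{O}^{\circ}(W_n)$ with the $\mathfrak{p}_P$-adic completion $\widehat{R_P}$. Combined with $\mathcal{O}^{\circ}_C(\pi^{-1}(P))=\varprojlim_n\mathcal{O}^{\circ}_C(W_n)$, this yields the claimed equality.

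For the consequence, suppose $X(F_P)\neq\emptyset$ and spread $X$ out to a flat finite-type model $\mathcal{X}$ over $R_P$ (shrinking $\mathscr{C}$ to an affine open around $P$ if needed). Applying \cite[Proposition~5.8]{HHK1} to the complete Noetherian local ring $\widehat{R_P}$ with fraction field $F_P$ yields $\mathcal{X}(\widehat{R_P})\neq\emptyset$. The identification $\widehat{R_P}=\mathcal{O}^{\circ}(\pi^{-1}(P))$ then furnishes compatible inclusions
$$\widehat{R_P}\hookrightarrow\mathcal{O}(\pi^{-1}(P))\hookrightarrow\mathscr{M}(\pi^{-1}(P)),$$
compatible with $F=\mathscr{M}(C)\hookrightarrow\mathscr{M}(\pi^{-1}(P))$, so the $\widehat{R_P}$-point of $\mathcal{X}$ induces an $\mathscr{M}(\pi^{-1}(P))$-point of the generic fiber $X$.

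The main obstacle will be the ring identification: one must verify that the analytic generic fiber of the formal scheme $\text{Spf}\, \widehat{R_P}$ really coincides with $\pi^{-1}(P)$, and that taking the inverse limit of integral sections over $\{W_n\}$ recovers exactly $\widehat{R_P}$ without spurious extra elements. Normality of $A$ is essential here, just as in Proposition~\ref{philly}; without it, the equality $\widehat{A}=(\widehat{A}\otimes_{k^{\circ}}k)^{\circ}$, and its local analogue at $P$, can fail.
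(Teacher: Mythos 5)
Your strategy for the ring identification differs from the paper's: you propose to re-derive it by hand, writing $\pi^{-1}(P)=\bigcup_n W_n$ as an increasing union of rational domains and passing to the inverse limit of $\mathcal{O}^{\circ}(W_n)$, whereas the paper first reduces $\widehat{R_P}$ to the $\widehat{m_P}$-adic completion of $B=(\widehat{A}\otimes_{k^{\circ}}k)^{\circ}$ (via Proposition~\ref{philly} and the observation $k^{\circ\circ}A\subseteq m_P$) and then invokes Bosch's theorem on formal fibres (\cite[Theorem 5.8]{bo1}, \cite[Theorem 3.1]{flo}) to identify this completion with $\mathcal{O}^{\circ}(\pi^{-1}(P))$. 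Your outline is in effect an attempt to reprove Bosch's theorem, and it has a concrete gap: the claim that $\mathcal{O}^{\circ}(W_n)$ is the $t$-adic completion of $A[f_1^n/t,\dots,f_r^n/t]$ does not follow by ``adapting Proposition~\ref{philly},'' because normality of $A$ does not make the blow-up–type ring $A[f_1^n/t,\dots,f_r^n/t]$ normal; the Proposition~\ref{philly} argument only identifies $(\widehat{B}\otimes k)^{\circ}$ with the integral closure of $\widehat{B}$, and here the integral closure genuinely intervenes. You would need to control that normalization and show it washes out in the limit, which is precisely the content of the theorem the paper cites. Citing it directly is the more efficient route.

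For the consequence you over-engineer. Once $\widehat{R_P}=\mathcal{O}^{\circ}_C(\pi^{-1}(P))$ is established, one has $\widehat{R_P}\subseteq\mathcal{O}(\pi^{-1}(P))\subseteq\mathscr{M}(\pi^{-1}(P))$, and since nonzero elements of $\widehat{R_P}$ become invertible in $\mathscr{M}(\pi^{-1}(P))$, the inclusion extends to a field embedding $F_P=\mathrm{Frac}\,\widehat{R_P}\hookrightarrow\mathscr{M}(\pi^{-1}(P))$; the implication $X(F_P)\neq\emptyset\Rightarrow X(\mathscr{M}(\pi^{-1}(P)))\neq\emptyset$ is then immediate by functoriality. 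No spreading-out, no model $\mathcal{X}$, and no appeal to \cite[Proposition~5.8]{HHK1} are needed. That machinery is what the paper uses for Proposition~\ref{b}, where one only has $R_\mu\subseteq\mathcal{O}_{C,x_\mu}^{\circ}$ rather than an embedding of $F_\mu$ into $\mathscr{M}_{C,x_\mu}$ and Artin approximation is genuinely necessary; here, by contrast, the full field inclusion is available and the argument collapses to one line.
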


\begin{proof}
Let $V=\text{Spec} \ A$ be an open integral affine neighborhood of $P$ in $\mathscr{C}.$ As $\mathscr{C}$ is normal, so is $A.$ Note that $P \in V_s,$ where $V_s$ is the special fiber of $V.$ 

Let $\pi$ denote the specialization map correspoding to $\mathscr{C}$. By \textit{cf.} \cite[pg. 541]{ber}, ${\pi^{-1}(V) = \widehat{V_{\eta}}}$ - the analytic generic fiber of $V$, and the restriction of $\pi$ to $\widehat{V}_{\eta}$ is the specialization map $\widehat{V}_{\eta} \rightarrow V_s$ of $V$.  Thus, $\pi^{-1}(P) \subseteq \widehat{V}_\eta.$
Let us come back to the commutative diagram constructed above: 
$$
\begin{tikzcd}
\widehat{V}_\eta=\mathcal{M}(\widehat{A} \otimes_{k^{\circ}} k) \arrow{r}{r}  \arrow{rd}{\pi} 
  & \text{Spec}(\widetilde{\widehat{A} \otimes_{k^{\circ}} k}) \arrow{d}{\phi} \\
    & \text{Spec}(A \otimes_{k^{\circ}} \widetilde{k})=V_s
\end{tikzcd}
$$

Set $B=(\widehat{A} \otimes_{k^\circ} k)^{\circ}.$
By Proposition \ref{philly}, $\phi$ is a bijection, and $B=\widehat{A}.$ Let $m_P$ be the maximal ideal of $A$
corresponding to the point $P$ on the special fiber, and $\widehat{m_P}$ the corresponding ideal in $\widehat{A},$ \textit{i.e.} the  completion of $m_P$ along the special fiber. Then, $\phi^{-1}(P)$ is a closed point of $\text{Spec}(\widetilde{\widehat{A} \otimes_{k^{\circ}} k})$ corresponding to the maximal ideal $\widehat{m_P}$ of $B=\widehat{A}.$

Since $k^{\circ \circ}A \subseteq m_P,$ $\widehat{A}^{m_P}=\widehat{\widehat{A}}^{\widehat{m_P}}=\widehat{B}^{\widehat{m_P}},$ where the notation $\widehat{R}^S$ is used for the completion of a ring $R$ with respect to the topology induced by an ideal $S.$

As $V$ is reduced, so is its analytification $V^{\mathrm{an}}$ (\cite[Th\'eor\`eme 3.4]{dex}). Since $\widehat{V}_{\eta}$ is an analytic domain of $V^{\mathrm{an}}$, it is reduced (see \cite[Th\'eor\`eme 3.4]{dex}). By a theorem of Bosch (see \cite[Theorem 3.1]{flo},  \cite[Theorem 5.8]{bo1}), 
$$\widehat{B}^{\widehat{m_P}}=\mathcal{O}^{\circ}_{\widehat{V_{\eta}}}(r^{-1}(\phi^{-1}(P)))=\mathcal{O}^{\circ}_{\widehat{V_{\eta}}}(\pi^{-1}(P)).$$
As $P$ is a closed point of $\mathscr{C}_s$ (resp. $V_s$), $\pi^{-1}(P)$ is an open subset of $C$ (resp. $\widehat{V_{\eta}}$), implying $\mathcal{O}^{\circ}_{\widehat{V_{\eta}}}(\pi^{-1}(P))=\mathcal{O}^{\circ}_{C}(\pi^{-1}(P)).$

As a consequence,
$$\widehat{R_{P}}=\widehat{\mathcal{O}_{\mathscr{C},P}}=\widehat{A}^{m_P}= \widehat{B}^{\widehat{m_P}}=\mathcal{O}_C^{\circ} (\pi^{-1}(P)).$$
This implies that $F_P=\text{Frac} \ \mathcal{O}^{\circ}(\pi^{-1}(P)) \subseteq \mathscr{M}(\pi^{-1}(P)).$ The last part of the statement is now immediate. 
\end{proof}

We are now able to state and prove the following argument, thus concluding the proof that HHK's local-global principle (Theorem \ref{HHK}) can be obtained as a consequence of Theorem ~\ref{ohlala}.
\begin{prop} \label{i} 
Using the same notation as in Theorem \ref{HHK}, $(3) \Rightarrow (1).$
\end{prop}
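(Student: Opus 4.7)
The plan is to deduce Proposition \ref{i} by showing that hypothesis (3) forces the local hypothesis of Theorem \ref{ohlala} to hold on $C$, after which we apply that theorem to conclude $X(F)\neq \emptyset$.

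Concretely, I would proceed as follows. Let $x \in C$ be an arbitrary point and set $Q=\pi(x)$. Since $\mathscr{C}_s$ is one-dimensional, $Q$ is either a closed point or the generic point of an irreducible component, so exactly two cases arise. In the first case, $Q$ is the generic point $\mu$ of some irreducible component of $\mathscr{C}_s$; Lemma \ref{a} then forces $x=x_\mu$, and Proposition \ref{b} converts the given hypothesis $X(F_\mu)\neq \emptyset$ into $X(\mathscr{M}_{C,x})\neq\emptyset$. In the second case, $Q$ is a closed point $P$, and $x$ lies in the open set $\pi^{-1}(P)$; Proposition \ref{c} yields $X(\mathscr{M}(\pi^{-1}(P)))\neq \emptyset$ from $X(F_P)\neq\emptyset$, and the restriction map $\mathscr{M}(\pi^{-1}(P))\hookrightarrow \mathscr{M}_{C,x}$ then gives $X(\mathscr{M}_{C,x})\neq \emptyset$.

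Having verified $X(\mathscr{M}_{C,x})\neq \emptyset$ for every $x\in C$, I now invoke Theorem \ref{ohlala}: the curve $C$ is a normal irreducible projective $k$-analytic curve with $\mathscr{M}(C)=F$, and by the hypotheses of Theorem \ref{HHK} the group $G$ is a connected rational linear algebraic group over $F$ acting strongly transitively on the $F$-variety $X$. The theorem therefore yields $X(F)\neq\emptyset$, which is precisely assertion (1).

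The main conceptual step is the case distinction on $\pi(x)$, which matches the two flavours of local ring appearing in \cite{HHK} (at generic points of components versus at closed points) to the two kinds of points on the Berkovich curve (the distinguished type $2$ point $x_\mu$ versus points lying over closed points). The only substantive work has already been done in Lemma \ref{a}, Proposition \ref{b}, and Proposition \ref{c}; the residual difficulty is purely one of bookkeeping, namely checking that no further points of $\mathscr{C}_s$ are missed and that the inclusions $F_Q \subseteq \mathscr{M}_{C,x}$ (through the henselisation argument in the proof of Proposition \ref{b}, respectively through $\mathcal{O}^\circ(\pi^{-1}(P))\subseteq \mathscr{M}_{C,x}$ in the proof of Proposition \ref{c}) are compatible with the ambient identification $F=\mathscr{M}(C)$ so that the same variety $X$ is being considered throughout.
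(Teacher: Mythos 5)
Your argument is correct and follows exactly the same two-case reduction as the paper's proof: distinguish whether $\pi(x)$ is a generic point of an irreducible component of $\mathscr{C}_s$ (handled by Lemma \ref{a} and Proposition \ref{b}) or a closed point (handled by Proposition \ref{c} and the openness of $\pi^{-1}(P)$), then invoke Theorem \ref{ohlala}. There is nothing to add; the paper's proof is word-for-word the same strategy.
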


\begin{proof}
Let $x$ be any point of $C.$ Recall $\pi$ denotes the reduction map $C \rightarrow \mathscr{C}_s$.
\begin{enumerate}
\item If $\pi(x)=\mu \in \mathscr{C}_s$ is the generic point of one of the irreducible components of $\mathscr{C}_s,$ then by Proposition \ref{b}, $X(F_{\mu}) \neq \emptyset$ implies $X(\mathscr{M}_{C,x}) \neq \emptyset.$
\item If $\pi(x)=P \in \mathscr{C}_s$ is a closed point, by Proposition \ref{c}, $ F_P \subseteq \mathscr{M}(\pi^{-1}(P)).$ Since $x \in \pi^{-1}(P)$ and $\pi^{-1}(P)$ is open, we obtain $\mathscr{M}(\pi^{-1}(P)) \subseteq \mathscr{M}_{\pi^{-1}(P),x}=\mathscr{M}_{C,x}.$ Hence, $X(F_P)\neq \emptyset$ implies $X(\mathscr{M}_{C,x})\neq \emptyset.$
\end{enumerate}
Finally, seeing as $X(\mathscr{M}_x) \neq \emptyset$ for all $x \in C,$ by Theorem \ref{ohlala}, $X(F) \neq \emptyset.$
\end{proof}

Lastly, using Ducros' work on semi-stable reduction in the analytic setting (see \cite{Duc}, in particular Chapter 6), we can say something in the other direction as well:

\begin{prop}
Let $F$ be a finitely generated field extension of $k$ of transcendence degree ~$1.$ Let $C$ be the normal irreducible projective Berkovich $k$-analytic curve for which $F=\mathscr{M}(C).$ Let $X/F$ be a variety. Then, there exists a flat normal irreducible projective model $\mathscr{C}'$ over $T=k^{\circ}$ of $F,$ such that 
$$  X(\mathscr{M}_x) \neq \emptyset \ \text{for all} \ x \in C  \ \Rightarrow \ X(F_P) \neq \emptyset \ \text{for all} \ P \in \mathscr{C}'_s,$$
where $F_P=\widehat{\mathcal{O}_{\mathscr{C}',P}},$ and $\mathscr{C}'_s$ is the special fiber of $\mathscr{C}'.$  
\end{prop}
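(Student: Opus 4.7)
The plan is to construct, via Ducros' semi-stable reduction of analytic curves (\cite[Chapter 6]{Duc}), a flat normal irreducible projective model $\mathscr{C}'$ of $F$ over $T=k^{\circ}$ whose special fiber is compatible with a chosen analytic triangulation of $C$, and then to verify the implication $X(\mathscr{M}_x)\neq\emptyset \Rightarrow X(F_P)\neq\emptyset$ pointwise on $\mathscr{C}'_s$. Concretely, $\mathscr{C}'$ is chosen so that the reduction map $\pi\colon C\to\mathscr{C}'_s$ is well-behaved: by Lemma~\ref{a}, the generic points of the irreducible components of $\mathscr{C}'_s$ correspond bijectively to a finite set of type~2 points $\{x_\mu\}$ of $C$, while every closed point of $\mathscr{C}'_s$ has preimage under $\pi$ equal to an open disc or annulus, matching the structure predicted by the triangulation.

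For generic points $\mu$ of the irreducible components I would conclude directly. By Proposition~\ref{b}, $F_\mu=\mathcal{H}(x_\mu)$ is the completion of $\mathscr{M}_{x_\mu}$, so the inclusion $\mathscr{M}_{x_\mu}\hookrightarrow F_\mu$ gives $X(\mathscr{M}_{x_\mu})\subseteq X(F_\mu)$; the hypothesis then immediately yields $X(F_\mu)\neq\emptyset$.

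For closed points $P$ of $\mathscr{C}'_s$ I would argue as follows. By Proposition~\ref{c}, $\widehat{R_P}=\mathcal{O}_C^{\circ}(\pi^{-1}(P))$, which embeds $F_P$ into $\mathscr{M}(\pi^{-1}(P))$ and, via restriction-and-completion at any chosen $x\in\pi^{-1}(P)$, further into $\widehat{\mathscr{M}_x}$. Since $\mathcal{O}_{C,x}$ is excellent and Henselian by \cite[Thm.~2.3.3, Prop.~2.4.3]{ber93}, an Artin-approximation argument in the spirit of \cite[Theorem~1.10]{art} (used in the proof of Proposition~\ref{b}) permits one to transfer the hypothesis $X(\mathscr{M}_x)\neq\emptyset$ into an $F_P$-point of $X$, provided $\mathscr{C}'$ is chosen fine enough that the two-dimensional completion $\widehat{R_P}$ and the one-dimensional local ring $\widehat{\mathcal{O}_{C,x}}$ are compatibly related through the reduction map.

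The main obstacle is this third step: unlike in the generic-point case, the natural inclusion $F_P\hookrightarrow\widehat{\mathscr{M}_x}$ runs in the opposite direction from what one needs for a bare inclusion-chasing argument, so one must leverage the Henselianity of $\widehat{R_P}$ and Artin-type approximation to effect the descent from analytic data at $x\in\pi^{-1}(P)$ down to formal data over $P$. The role of Ducros' semi-stable reduction is precisely to ensure that the model $\mathscr{C}'$ is sufficiently refined for this compatibility to hold at every closed point of $\mathscr{C}'_s$.
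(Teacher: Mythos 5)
There is a genuine gap in your handling of the closed points of $\mathscr{C}'_s$, and it sits exactly at the step you yourself flag as the main obstacle. The embeddings you write down, $F_P \hookrightarrow \mathscr{M}(\pi^{-1}(P)) \hookrightarrow \widehat{\mathscr{M}_x}$, are correct but run in the wrong direction, and neither Henselianity of $\widehat{R_P}$ nor Artin approximation repairs this. Artin approximation transfers solutions from the completion of an excellent local ring to its Henselization; it does not let you descend a $\widehat{\mathscr{M}_x}$-point of $X$ to an $F_P$-point along the embedding $F_P \hookrightarrow \widehat{\mathscr{M}_x}$, because $F_P$ is in general neither dense in $\widehat{\mathscr{M}_x}$ nor a Henselization inside it. The phrase ``provided $\mathscr{C}'$ is chosen fine enough that $\widehat{R_P}$ and $\widehat{\mathcal{O}_{C,x}}$ are compatibly related'' is not made precise, and there is no such compatibility that reverses an inclusion of fields.

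The step you are missing is that the model must be built from a cover that already witnesses the local solvability, and that this cover engulfs the formal fibers. Concretely: choose a finite cover $\mathcal{U}$ of $C$ by connected strict affinoid domains with $\bigcup_{U\in\mathcal{U}}\mathrm{Int}(U)=C$ and $X(\mathscr{M}(U))\neq\emptyset$ for every $U$ (possible since $X(\mathscr{M}_x)\neq\emptyset$ for all $x$ and $C$ is compact and strict). Let $S=\bigcup_U\partial U$, a finite set of type~2 points; verify $S$ is a vertex set in Ducros' sense, and take $\mathscr{C}'$ to be the associated flat normal projective model, with $\pi^{-1}(\{\text{generic points}\})=S$. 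Then for a closed point $P$, the formal fiber $\pi^{-1}(P)$ is a connected open subset of $C\setminus S$, and since $C\setminus S$ is partitioned by each $U$ into $\mathrm{Int}(U)\setminus S$ and $(C\setminus U)\setminus S$, every connected component of $C\setminus S$ lies in the interior of some $U$. Hence $\pi^{-1}(P)\subseteq U$ for some $U\in\mathcal{U}$. By Proposition~\ref{c} (and $\mathscr{M}(U)=\mathrm{Frac}\,\mathcal{O}^{\circ}(U)$ for affinoid $U$), restriction gives the inclusion $\mathscr{M}(U)=\mathrm{Frac}\,\mathcal{O}^{\circ}(U)\hookrightarrow\mathrm{Frac}\,\mathcal{O}^{\circ}(\pi^{-1}(P))=F_P$, which is the embedding in the correct direction; $X(\mathscr{M}(U))\neq\emptyset$ then yields $X(F_P)\neq\emptyset$ with no approximation argument at all. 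Your treatment of the generic points is correct and matches the paper.
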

Consequently, a local-global principle with respect to the overfields $F_P, P \in \mathscr{C}'_s,$ implies a local-global principle with respect to the $\mathscr{M}_x, x \in C.$

\begin{proof} Suppose $X(\mathscr{M}_x) \neq \emptyset$ for all $x \in C.$ As $C$ is projective, it is strict, so by \cite[Proposition 2.2.3(iii)]{Ber90}, the strict affinoid domains of $C$ form a basis of neighborhoods of the topology of $C.$ Taking also into account that $C$ is compact, there exists a finite cover $\mathcal{U}$ of $C$ such that: 
\begin{enumerate}
\item for any $U \in \mathcal{U},$ $U$ is a connected strict affinoid domain in $C;$
\item $\bigcup_{U \in \mathcal{U}} \text{Int}(U)=C;$
\item for any $U \in \mathcal{U}, X(\mathscr{M}(U)) \neq \emptyset.$
\end{enumerate} 
Let $S$ be the set of all boundary points of the elements of $\mathcal{U}.$ By construction, $S$ is a finite set of type 2 points.

Let us show that $S$ is a \textit{vertex set}  of $C$ using \cite[Th\'eor\`eme 6.3.15]{Duc} (see \cite[6.3.17]{Duc} for the definition of a vertex set, which is called \textit{ensemble sommital} there).
Since $C$ is projective (implying boundaryless) and irreducible, conditions $\alpha), \beta)$ and $\gamma)$ of  \mbox{\cite[Th\'eor\`eme 6.3.15 ii)]{Duc}} are satisfied. Finally, condition $\delta)$ is a consequence of the fact that $S$ contains only type~2 points (see \cite[Commentaire 6.3.16]{Duc}).   

 By \cite[6.3.23]{Duc}, this implies the existence of an irreducible projective model $\mathscr{C}'$ of $F$ over $T$ with special fiber $\mathscr{C}'_s,$ and specialization map ${\pi: C \rightarrow \mathscr{C}'_s},$ such that $\pi$ induces a bijection between $S$ and the generic points of the irreducible components of $\mathscr{C}'_s.$ Furthermore, by \cite[6.3.9.1]{Duc}, since $k$ is discretely valued and $C$ reduced, $\mathscr{C}'$ is locally topologically finitely presented. Finally, by \cite[6.3.10]{Duc}, since $C$ is normal, the model $\mathscr{C}'$ is flat and normal. 

By Proposition \ref{c}, for any closed point $P \in \mathscr{C}'_s,$ $\widehat{\mathcal{O}_{\mathscr{C}',P}}=\mathcal{O}^o(\pi^{-1}(P)),$ where $\mathcal{O}^o$ is the sheaf of holomorphic functions $f,$ such that $|f|_{sup} \leqslant 1.$ In particular, remark that if $V$ is an affinoid domain of $C$, since all holomorphic functions are bounded on $V,$ we have $\mathcal{O}^o(V) \subseteq \mathcal{O}(V).$ This implies $\mathrm{Frac} \ \mathcal{O}^o(V) \subseteq \mathscr{M}(V).$ Let $\frac{f}{g} \in \mathscr{M}(V),$ with $f, g \in \mathcal{O}(V).$ Let $\alpha \in k$ be such that $|\alpha f|_{sup}, |\alpha g|_{sup} \leqslant 1$ (it suffices to choose $\alpha$ so that $|f|_{sup}, |g|_{sup} \leqslant |\alpha^{-1}|,$ which is possible seeing as $k$ is non-trivially valued). Then, $\frac{f}{g}=\frac{\alpha f}{\alpha g} \in \mathrm{Frac} \ \mathcal{O}^o(V),$ implying $\mathscr{M}(V)=\mathrm{Frac} \ \mathcal{O}^o(V).$ By construction, there exists $U \in \mathcal{U},$ such that $\pi^{-1}(P) \subseteq U.$ In particular, $\mathscr{M}({U}) =\mathrm{Frac} \ \mathcal{O}^o(U) \subseteq \text{Frac}(\mathcal{O}^o(\pi^{-1}(P)))=F_P,$ so $X(F_P) \neq \emptyset.$ 

If $P$ is a generic point of $\mathscr{C}'_s,$ then $\pi^{-1}(P)$ is a single type 2 point $x_P$ and $\mathscr{M}_{x_P} \subseteq \mathcal{H}(x_P) =F_P$ (Proposition \ref{b}). Thus, $X(F_P) \neq \emptyset$. 

Since $\pi$ is surjective (\cite[Lemma 4.11]{flo}), this implies that $X(F_P) \neq \emptyset$ for all $P \in \mathscr{C}'_s.$

\end{proof}

\section{The Local Part for Quadratic Forms}

In view of the local-global principle we  proved for quadratic forms (Theorem \ref{17}), we now want to find sufficient conditions under which there is local isotropy. To do this, we will need to put further restrictions on the base field. Throughout this section, we will suppose the dimension of $\sqrt{|k^{\times}|}$ as a $\mathbb{Q}$-vector space (\textit{i.e.} the rational rank of $|k^\times|$) is $n \in \mathbb{Z}.$ In the special case that $|k^{\times}|$ is a free $\mathbb{Z}$-module (\textit{e.g.} if $k$ is a discretely valued field), the sufficient conditions for local isotropy can be refined. The class of such fields is quite broad, especially when it comes to arithmetic questions: if we work over a complete ultrametric base field $k$ satisfying this condition, then for any $k$-analytic space and any of its points $x,$ the field $\mathcal{H}(x)$ also satisfies it.

For any valued field $E,$ we denote by  $E^{\circ}$ its ring of integers, by $E^{\circ \circ}$ the corresponding maximal ideal, and by $\widetilde{E}$ its residue field. 

For the following two propositions, the case of characteristic $2$ can be treated uniformly with the general one. Afterwards, we will restrict to residual characteristic different from ~$2.$ 

\begin{prop}\label{19}
Let $l$ be a valued field. Suppose $|l^\times|$ is a free $\mathbb{Z}$-module of finite rank $n$. Let $L$ be a valued field extension of $l.$ Let $q$ be a non-zero diagonal quadratic form over $L.$ Suppose for any non-zero coefficient $a$ of $q,$ $|a| \in {|l^{\times}|}.$
 There exists a family $Q$ of at most $2^n$ quadratic forms with coefficients in $(L^{\circ})^{\times},$ such that $q$ is $L$-isometric to $\bot_{\sigma \in Q} C_{\sigma} \cdot \sigma,$ where $C_{\sigma} \in L^{\times}$ for any $\sigma \in Q.$
\end{prop}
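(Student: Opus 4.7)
The plan is to exploit the hypothesis that $|l^{\times}|$ is a free $\mathbb{Z}$-module of rank $n$, which forces the quotient $|l^{\times}|/(|l^{\times}|)^{2}$ to be isomorphic to $(\mathbb{Z}/2\mathbb{Z})^{n}$ and hence to have cardinality exactly $2^{n}$. Each non-zero coefficient $a$ of $q$ satisfies $|a| \in |l^{\times}|$ by hypothesis, so it defines a class $[|a|]$ in $|l^{\times}|/(|l^{\times}|)^{2}$; this class will be the label that tells me which block of the decomposition the coefficient belongs to. The whole proof is essentially a bookkeeping exercise once this labeling is in place.

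First I would fix, once and for all, a set of representatives: for each class $\alpha \in |l^{\times}|/(|l^{\times}|)^{2}$ whose preimage meets $\{|a| : a \text{ a coefficient of } q\}$, choose $c_{\alpha} \in l^{\times} \subseteq L^{\times}$ with $[|c_{\alpha}|]=\alpha$. This produces at most $2^{n}$ scalars $c_{\alpha}$, which will play the role of the $C_{\sigma}$'s. Then, for every non-zero coefficient $a$ of $q$ lying in class $\alpha$, the ratio $|a/c_{\alpha}|$ is a square in $|l^{\times}|$; using freeness again, I can write $|a/c_{\alpha}|=|d|^{2}$ for some $d \in l^{\times}$. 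Setting $u := a/(c_{\alpha}d^{2})$ gives $|u|=1$, hence $u \in (L^{\circ})^{\times}$, and $a = c_{\alpha}\, u\, d^{2}$. The change of variable $y = dx$ then yields the one-dimensional isometry $\langle a \rangle \cong \langle c_{\alpha} u \rangle$ over $L$.

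Next I would group the non-zero coefficients of $q$ according to their class $\alpha$ and perform the substitution above coordinate by coordinate, in parallel. Writing the coefficients in class $\alpha$ as $a_{\alpha,1},\dots,a_{\alpha,r_{\alpha}}$, the previous step yields
\[
\langle a_{\alpha,1},\dots,a_{\alpha,r_{\alpha}}\rangle \;\cong\; \langle c_{\alpha}u_{\alpha,1},\dots,c_{\alpha}u_{\alpha,r_{\alpha}}\rangle \;=\; c_{\alpha}\cdot \sigma_{\alpha},
\]
where $\sigma_{\alpha} := \langle u_{\alpha,1},\dots,u_{\alpha,r_{\alpha}}\rangle$ is a quadratic form whose coefficients all lie in $(L^{\circ})^{\times}$. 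Taking the orthogonal sum over the (at most $2^{n}$) classes $\alpha$ that actually occur gives $q \cong \perp_{\alpha} c_{\alpha}\cdot \sigma_{\alpha}$, which is exactly the decomposition asked for with $Q := \{\sigma_{\alpha}\}$ and $C_{\sigma_{\alpha}} := c_{\alpha}$.

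There is no real obstacle here; the only point that needs care is the use of the freeness hypothesis on $|l^{\times}|$. Without it, one only controls the rational rank of $|l^{\times}|$, and $|l^{\times}|/(|l^{\times}|)^{2}$ could a priori be infinite (or at least not obviously bounded by $2^{n}$), so the partition into at most $2^{n}$ classes would break down. Freeness of rank $n$ is precisely what is needed to guarantee that the index $[|l^{\times}|:(|l^{\times}|)^{2}]=2^{n}$, which is the content of the bound in the statement. I expect the companion statement in the general rational-rank case (the next proposition in the paper) to use a slightly coarser argument that replaces squares by $|l^{\times}|$-multiples modulo $\sqrt{|l^{\times}|}^{2}$ or a similar workaround, producing a bound of the form $2^{n+1}$ instead.
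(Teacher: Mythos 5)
Your proof is correct and follows essentially the same route as the paper: partition the coefficients according to their class in $|l^\times|/(|l^\times|)^2$ and reduce each coefficient modulo $(L^\times)^2$ to a coset representative times a unit of $L^\circ$. The paper works with the explicit representatives $\prod_i \pi_i^{\delta_i}$ coming from a fixed $\mathbb{Z}$-basis of $|l^\times|$, whereas you use abstract coset representatives $c_\alpha$, but the content is identical.
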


\begin{proof}
Let us fix $\pi_1, \pi_2, \dots, \pi_n \in l^{\times},$ such that their norms form a basis of the $\mathbb{Z}$-module $|l^{\times}|.$ Set $\mathcal{A}= \lbrace{\prod_{i=1}^n \pi_i^{\delta_i} | \delta_i \in \{0,1\}\rbrace}.$ For any coefficient $a$ of $q,$ let $p_1, p_2, \dots, p_n \in \mathbb{Z}$ be such that $|a|=\prod_{i=1}^n |\pi_i|^{p_i}.$ Then, there exist 
$v_a \in (L^{o})^\times$ and $s_a \in \mathcal{A},$ 
  such that $a \equiv v_a s_a \ \textrm{mod} \ (L^{\times})^2.$ Consequently, for any $A \in \mathcal{A},$ there exists a diagonal quadratic form $\sigma_A$ with coefficients in $(L^{\circ})^{\times},$ such that $q$ is $L$-isometric to $\bot_{A \in \mathcal{A}} A \cdot \sigma_A.$
\end{proof}

The following is the analogue of Proposition \ref{19} in a more general case. 

\begin{prop}\label{18} Let $l$ be a valued field, such that $\dim_{\mathbb{Q}} \sqrt{|l^\times|}$ equals an integer $n.$ 
Let $L$ be a valued field extension of $l.$  Let $q$ be a non-zero diagonal quadratic form over $L.$ Suppose for any non-zero coefficient $a$ of $q,$ $|a| \in \sqrt{|l^\times|}.$ Then, there exists a family $Q$ of at most $2^{n+1}$ quadratic forms with coefficients in $(L^{\circ})^\times,$ such that~ $q$ is $L$-isometric to $\bot_{\sigma \in Q} C_{\sigma} \cdot \sigma,$ where $C_{\sigma} \in L^\times$ for any $\sigma \in Q.$ 
\end{prop}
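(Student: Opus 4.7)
The plan is to adapt the argument of Proposition \ref{19} to the case where $|l^\times|$ has rational rank $n$ but is not necessarily $\mathbb{Z}$-free. First I would choose $\pi_1,\dots,\pi_n\in l^\times$ whose absolute values form a $\mathbb{Q}$-basis of $\sqrt{|l^\times|}$, which is possible since $|l^\times|$ has rational rank $n$ inside this $\mathbb{Q}$-vector space of dimension $n$. The obstruction compared to Proposition \ref{19} is that a coefficient $a$ now has a unique decomposition $|a|=\prod|\pi_i|^{q_i}$ with rational exponents $q_i$, so the direct ``mod $2$'' reduction of these exponents used for the free case is no longer available.

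My approach is to count equivalence classes abstractly rather than to construct explicit reductions. Two coefficients $a,a'$ of $q$ satisfy $a/a'\in (L^\circ)^\times (L^\times)^2$ if and only if $|a|/|a'|\in (|L^\times|)^2$, so the classes correspond bijectively to elements of the image of $M:=\sqrt{|l^\times|}\cap |L^\times|$ in the $\mathbb{F}_2$-vector space $|L^\times|/(|L^\times|)^2$. This image is a quotient of $M/M^2$ (where $M^2:=\{m^2:m\in M\}$), so the statement reduces to the bound $|M/M^2|\leqslant 2^{n+1}$. Now $M$ is a torsion-free abelian group of rational rank $n$ (it contains $|l^\times|$, of rank $n$, and sits inside $\sqrt{|l^\times|}$ of rank $n$). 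Writing $M$ additively via the logarithm, so that $M\hookrightarrow \sqrt{|l^\times|}\cong \mathbb{Q}^n$ with $\mathbb{Z}^n\subseteq M\subseteq \mathbb{Q}^n$, the bound amounts to $|M/2M|\leqslant 2^{n+1}$. I would deduce it via the isomorphism $M/2M\cong (\mathbb{Q}^n/M)[2]$ (a standard consequence of the $\mathrm{Tor}$ long exact sequence for $0\to M\to \mathbb{Q}^n\to \mathbb{Q}^n/M\to 0$) together with a structural analysis of $\mathbb{Q}^n/M$ as a $2$-primary torsion module: e.g.\ via Pontryagin duality identifying subgroups of $(\mathbb{Q}_2/\mathbb{Z}_2)^n$ with quotients of $\mathbb{Z}_2^n$, one sees that the $2$-torsion subgroup has cardinality at most $2^n$.

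Once the number of classes is controlled, the decomposition of $q$ is routine and mirrors Proposition \ref{19}: for each class pick a representative $C\in L^\times$ from among the coefficients of $q$, and for every other coefficient $a$ in the same class write $a=C\, v_a\, u_a^2$ with $v_a\in (L^\circ)^\times$ and $u_a\in L^\times$; the change of variables $x\mapsto u_a x$ shows that the sub-form on this class is $L$-isometric to $C\cdot\sigma$ where $\sigma$ is the orthogonal sum of the $\langle v_a\rangle$ and hence has coefficients in $(L^\circ)^\times$. Grouping class by class gives the $L$-isometry $q\cong \bot_{\sigma\in Q} C_\sigma\cdot \sigma$ with $|Q|\leqslant 2^{n+1}$. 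The hard step will be the bound on $|M/2M|$: it requires genuinely new structural input on torsion-free abelian groups between $\mathbb{Z}^n$ and $\mathbb{Q}^n$ that was absent in the free setting of Proposition \ref{19}, and the extra factor of $2$ in the bound $2^{n+1}$ (as opposed to the optimal $2^n$) leaves some margin for a less refined form of this structural argument.
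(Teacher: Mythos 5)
Your approach is correct and takes a genuinely different route from the paper's. The paper's proof splits $q = q_1 \perp q_2$ according to whether the ``order'' of a coefficient's absolute value (the denominator of its exponent vector over the fixed $\pi_i$) is odd or even, then handles each part by explicit manipulation of rational exponents: Lemma~\ref{70} reduces the odd-order part to $2^n$ cosets, while Lemmas~\ref{71} and~\ref{72} run an induction that eliminates one parameter $\pi_i$ at a time and contributes another $2^n$, giving the bound $2^{n+1}$. You instead note that two coefficients $a,a'$ may be grouped together precisely when $|a|/|a'|\in(|L^\times|)^2$, so the number of required forms $\sigma$ is controlled by a purely group-theoretic quantity, namely the image of $M=\sqrt{|l^\times|}\cap|L^\times|$ in $|L^\times|/(|L^\times|)^2$, a quotient of $M/M^2$; the identification $M/2M\cong(\mathbb{Q}^n/M)[2]$ from the $\mathrm{Tor}$ sequence of $0\to M\to\mathbb{Q}^n\to\mathbb{Q}^n/M\to 0$, together with the fact that the $2$-primary part of $\mathbb{Q}^n/M$ is a quotient of $\mathbb{Z}(2^\infty)^n$ and hence has $2$-torsion of dimension at most $n$, gives $|M/2M|\leqslant 2^n$. (Note that in the duality step you need quotients of $(\mathbb{Q}_2/\mathbb{Z}_2)^n$, not subgroups; these correspond to closed $\mathbb{Z}_2$-submodules of $\mathbb{Z}_2^n$, which are free of rank $\leqslant n$, so the claimed bound does follow.) Your route is more conceptual, avoids the case analysis and the auxiliary notions of ``order'' and ``base'' entirely, and in fact delivers the sharper constant $2^n\leqslant 2^{n+1}$; what the paper's constructive approach buys is elementarity (no Pontryagin duality or structure theory of divisible $p$-groups) and explicit representatives $C_\sigma$ built directly from the chosen $\pi_i$.
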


\begin{proof} 

To ease the notation, let us start by introducing the following:

\begin{nota}
Let $M$ be a multiplicative $\mathbb{Z}$-module, such that the divisible closure $\sqrt{M}$ of $M$ as a group is a finite dimensional $\mathbb{Q}$-vector space. Set $n=\dim_{\mathbb{Q}}\sqrt{M}.$ Set $M^2=\{m^2: m \in M\}.$

There exist $t_1, t_2, \dots, t_n \in M,$ such that for any $t \in M,$ there exist unique ${p_1, p_2, \dots, p_n \in \mathbb{Q}},$ for which $t=\prod_{i=1}^n t_i^{p_i}.$ Let us fix such elements $t_1, t_2, \dots, t_n.$
\end{nota}

In the particular situation that is of interest to us, $M=|l^{\times}|,$ and there exist ${\pi_1, \pi_2, \dots, \pi_n \in l},$ with $|\pi_i|=t_i,$ such that for any $\epsilon \in \sqrt{|l^{\times}|}$, there exist unique $p_1, p_2, \dots, p_n \in \mathbb{Q},$ for which $\epsilon=\prod_{i=1}^n |\pi_i|^{p_i}.$ Let us fix such elements $\pi_1, \pi_2, \cdots, \pi_n.$

\begin{defn}
Let $\epsilon \in M.$ Suppose $\epsilon= \prod_{i=1}^n t_i^{\frac{s_i}{r_i}},$ for $\frac{s_i}{r_i} \in \mathbb{Q}$ with $s_i, r_i$ coprime, $i=1,2,\dots, n.$
\begin{enumerate}
\item Let $r$ be the least common multiple of $r_i, i=1,2,\dots, n.$ We will say $r$ is \textit{the order of} $\epsilon.$
\item Let $\frac{s_i}{r_i}=\frac{s_i'}{r},$ $i=1,2,\dots,n.$ If there exists $i_0,$ such that $s_{i_0}'=1,$  then $t_{i_0}$ will be said to be \textit{a base of} $\epsilon.$
\end{enumerate} 
\end{defn}

Let $\epsilon \in M,$ and suppose $\epsilon=\prod_{i=1}^n t_i^{p_i},$ for $p_i \in \mathbb{Q},$ ${i=1,2, \dots, n.}$ Let $\alpha$ be the order of $\epsilon.$ 

\begin{lm}\label{70}
If $\alpha$ is odd, then for any $i=1,2,\dots, n,$ there exist $ \delta_i \in \{0,1\},$ such that $\epsilon \equiv \prod_{i=1}^n t_i^{\delta_i} \ \textrm{mod} \  M^2.$
\end{lm}

\begin{proof}
Remark that since $\alpha$ is odd, $\epsilon \equiv \epsilon^{\alpha} \ \textrm{mod} \  M^2,$ and $\epsilon^{\alpha}=\prod_{i=1}^n t_i^{s_i},$ with $s_i \in \mathbb{Z}$ for all ~$i.$ Let $s_i=2s_i'+\delta_i,$ where $s_i' \in \mathbb{Z}$ and $\delta_i \in \{0,1\}.$ Then, $\epsilon \equiv \prod_{i=1}^n t_i^{\delta_i} \ \textrm{mod} \  M^2.$ 
\end{proof}

\begin{lm}\label{71}
If $\alpha$ is even, then there exist $m \in M,$ $x_i, y \in \mathbb{Z},$  $i=1,2,\dots,n,$ with $y>0,$ satisfying:
\begin{enumerate} 
\item{$\epsilon \equiv  m \ \textrm{mod} \  M^2$;} 
\item{
 $m=\prod_{i=1}^n t_i^{x_i/2^y};$}
\item{there exists $i_0 \in \{1,2, \dots, n\},$ such that $x_{i_0}=1$.}
\end{enumerate}
\end{lm}
Remark that $t_{i_0}$ is a base of $m$ and its order is $2^y.$
\begin{proof}
Let $\alpha=2^y \cdot z,$ with $z$ odd and $y>0$. Then, $\epsilon \equiv \epsilon^{z} \ \textrm{mod} \  M^2,$ and $(\epsilon^z)^{2^y}=\prod_{i=1}^n t_i^{e_i},$ with $e_i \in \mathbb{Z}, i=1,2, \dots, n$. Furthermore, there exists $i_0 \in \{1,2,\dots, n\},$ such that $e_{i_0}$ is odd. 

Seeing as $(2^y, e_{i_0})=1,$ there exist  $A,B \in \mathbb{Z},$ with $A$ odd, such that $Ae_{i_0}+2^y B=1.$ Then, $\epsilon^z \equiv \epsilon^z \cdot (\epsilon^z)^{A-1} \ \textrm{mod} \  M^2,$ and $\epsilon^{zA}=t_{i_0}^{1/2^y-B} \cdot$  $\prod_{i\neq i_0} t_i^{Ae_i/2^y}.$ Hence, there exists $m'_B \in M,$ such that $\epsilon^{zA} \equiv m'_B \ \textrm{mod} \  M^2$, and
\begin{itemize}
\item{$m'_B=t_{i_0}^{1/2^y}\prod_{i\neq i_0} t_i^{Ae_i/2^y}$ if $B$ is even;}
\item{$m_B'=t_{i_0}^{1/2^y+1}\prod_{i\neq i_0} t_i^{Ae_i/2^y}$ if $B$ is odd.}
\end{itemize}
If $B$ is odd, $m_B'':=m_B' \cdot 
{m'_B}^{2^y} t_{i_0}^{-2-2^y} \equiv m_B' \ \textrm{mod} \  M^2,$ and $m_B''=t_{i_0}^{1/2^y} \prod_{i\neq i_0} t_i^{\frac{Ae_i}{2^y}(2^y+1)}.$  
 
Consequently, in either case, there exist $ m\in M$ and $x_i \in \mathbb{Z},$ for $i=1,2,\dots n,$ with $x_{i_0}=1,$  such that  $\epsilon \equiv m \ \textrm{mod} \  M^2,$ and $m=\prod_{i=1}^n t_i^{x_i/2^y}.$
\end{proof}

For $\varepsilon \in L,$ such that $|\varepsilon| \in \sqrt{|l^\times|},$ we will say that the order of $|\varepsilon|$ is \textit{the order of} $\varepsilon.$ If $|\pi_{i_0}|$ is a base of $|\varepsilon|,$ we will say $\pi_{i_0}$ is \textit{a base of} $\varepsilon.$ By applying the last two lemmas to the valued field $L,$ we obtain:

\begin{cor}\label{grande} Let $\varepsilon \in L^{\times}.$ Suppose $|\varepsilon|=\prod_{i=1}^n |\pi_i|^{p_i}$ for $p_i \in \mathbb{Q}, i=1,2,\dots, n.$ 
\begin{enumerate}
\item If the order of $|\varepsilon|$ is odd, then for any $i=1,2,\dots, n,$ there exists $\delta_i \in \{0,1\},$ such that $\varepsilon \equiv \prod_{i=1}^n \pi_i^{\delta_i} \ \textrm{mod} \  (L^{\times})^2 (L^o)^{\times}.$
\item If the order of $|\varepsilon|$ is even, then there exist $\varepsilon' \in L^{\times},$ $x_i, y \in \mathbb{Z},$  $i=1,2,\dots,n,$ with $y>0,$ satisfying:
\begin{enumerate} 
\item{$\varepsilon \equiv \varepsilon' \ \textrm{mod} \  (L^{\times})^2 (L^o)^{\times}$;} 
\item{
 $|\varepsilon'|=\prod_{i=1}^n|\pi_i|^{x_i/2^y};$}
\item{there exists $i_0 \in \{1,2, \dots, n\},$ such that $x_{i_0}=1$.}
\end{enumerate}
\end{enumerate}
\end{cor}

We immediately obtain as a by-product of the proof:

\begin{cor}\label{75}
Let $\varepsilon \in L^{\times},$ such that $|\varepsilon| \in \sqrt{|l^\times|}.$ Suppose the order of $|\varepsilon|$ is $2^\nu,$ so that there exist $\nu_i \in \mathbb{Z}, i=1,2,\dots, n,$ such that $|\varepsilon|=\prod_{i=1}^n |\pi_i|^{\nu_i/2^\nu}.$ If $\nu_{i'}$ is odd for some $i'$, then there exists $\varepsilon' \in L^\times,$ such that $\varepsilon \equiv \varepsilon' \ \textrm{mod} \  (L^{\times})^2 (L^{\circ})^{\times},$ and $|\pi_{i'}|$ is a base of $|\varepsilon'|.$
\end{cor}

Let $q_1$ (resp. $q_2$) be the part of $q$ whose coefficients have odd (resp. even) order. We remark that $q_1, q_2$ are diagonal quadratic forms over $L,$ and that $q=q_1 \bot q_2.$

\textit{Decomposition of $q_1$:} Set $\mathcal{A}=\left\lbrace \prod_{i=1}^n 
\pi_i^{\delta_i} | \delta_i \in \{0,1\}  
\right\rbrace.$
Let $e$ be any coefficient of $q_1.$ By Corollary \ref{grande} (1), there exist $u_e \in (L^{\circ})^{\times}$ and $A_e \in \mathcal{A},$ such that $e \equiv u_e \cdot A_e \ \textrm{mod} \ (L^{\times})^2.$  Consequently, for any $A \in \mathcal{A},$ there exists a diagonal quadratic form $\sigma_A$ with coefficients in $(L^{\circ})^\times,$ such that $q_1$ is $L$-isometric to $\bot_{A \in \mathcal{A}}A \cdot \sigma_A.$

\textit{Decomposition of $q_2$:} We first need an auxiliary result, which requires the following:
\begin{defn}
Let $\varepsilon \in L^{\times}$ be such that there exist $p_i \in \mathbb{Q}$, $i=1,2,\dots, n,$ for which $|\varepsilon|=\prod_{i=1}^n |\pi_i|^{p_i}.$ Let $I \subseteq \{0,1 \dots, n\},$  such that $\{i : p_i \neq 0\} \subseteq I.$ We will say that $\varepsilon$ is \textit{given in $|I|$ parameters}, where $|I|$ is the cardinality of $I,$ or that $\varepsilon$ is \textit{given in parameters over $I.$} 
\end{defn}
Notice that $a \in L$ is given in $0$ parameters if and only if $a \in (L^{\circ})^{\times}.$

\begin{lm}\label{72}
Let $\tau$ be a diagonal quadratic form over $L$ with coefficients of order either $1$ or an even number.  Let $I \subseteq \{1,2,\dots, n\},$ with $1 \leqslant |I|=m \leqslant n,$ such that the coefficients of $\tau$ are given in parameters over $I$. Then, there exist:
\begin{itemize}
\item $J \subseteq I,$ with $|J|=m-1,$
\item $x_1, x_2 \in L^{\times},$
\item diagonal quadratic forms $\tau_1, \tau_2$ over $L$ with coefficients of order either $1$ or an even number and in parameters over $J,$
\end{itemize}
such that $\tau$ is $L$-isometric to $x_1 \tau_1 \bot x_2  \tau_2.$  
\end{lm}

\begin{proof}
Roughly, the idea is to find some $i_0$ and a partition $A_j, j=1,2,$ of the set of coefficients, for which there exist $x_j \in L^\times,$ satisfying: if $a \in A_j,$ there exists $B_a \in L^\times,$ such that, modulo squares, $a=x_j \cdot B_a,$ and $|B_a|=\prod_{i \neq i_0}|\pi_i|^{p_{i,a}}, p_{i,a} \in \mathbb{Q}.$ In what follows, we find suitable representatives of the coefficients modulo squares, from which we can read the factorization $x_j \cdot B_a.$

Without loss of generality, let us assume that $I=\{1,2,\dots, m\}.$ Suppose the coefficients of $\tau$ are all of order $1.$ If they are given in zero parameters, the statement is clear. Otherwise, suppose that there is a coefficient given over a set of parameters containing $t_1.$ 

Let $d$ be any coefficient of the quadratic form. There exist $s_{i} \in \mathbb{Z}, i=1,2,\dots, n,$ such that $|d|=\prod_{i=1}^n |\pi_i|^{s_{i}}.$ 
 As a consequence, there exist $d' \in L^{\times}$ and $s_i' \in \mathbb{Z}, i=2,\dots, n,$ for which $d \equiv d' \ \textrm{mod} \ (L^\times)^2(L^{\circ})^{\times},$ and either $|d'|=\prod_{i=2}^n |\pi_i|^{s_i'}$ or $|d'|=|\pi_1| \cdot \prod_{i=2}^n |\pi_i|^{s_i'}.$ Hence, there exist diagonal quadratic forms $\tau_1, \tau_2,$ whose coefficients are all of order $1,$ in parameters over $\{2,3,\dots, m\},$ such that $\tau$ is $L$-isometric to $\pi_1 \tau_1 \bot \tau_2.$

Suppose there exists at least one coefficient of $\tau$ of even order. Let $\tau'$ be the quadratic form obtained from $\tau$ by:
\begin{enumerate}
\item{leaving the coefficients of order $1$ intact;}
\item{applying Corollary \ref{grande} (2) to the coefficients of even order to substitute them by elements of $L^{\times}$ that satisfy properties 2 and 3 of the lemma.}
\end{enumerate}
We remark that due to the proof of Corollary \ref{grande} (2) (\textit{i.e.} Lemma \ref{71}), the set of parameters over which the coefficients of $\tau'$ are given doesn't change. The quadratic form $\tau'$ is $L$-isometric to $\tau.$ Let us fix $a',$ one of the coefficients of $\tau'$ with largest order. Suppose the order of $a'$ is $2^{\alpha'}.$ Without loss of generality, we may assume that $\pi_1$ is a base of $a'.$ For $i=2,\dots, m,$ let $\alpha_i \in \mathbb{Z}$ be such that $|a'|=|\pi_1|^{1/2^{\alpha'}} \cdot \prod_{i=2}^m |\pi_i|^{\alpha_i/2^{\alpha'}}.$

Let $c$ be any other coefficient of $\tau'.$  Let $\pi_{i_0}$ be a base of $c,$ and $2^\gamma, \gamma \geqslant 0,$ its order. For  $i=1,2,\dots,m,$ let $\gamma_i \in \mathbb{Z}$ be such that $|c|=\prod_{i=1}^m |\pi_i|^{\gamma_i/2^{\gamma}}.$
\begin{sloppypar}
\begin{itemize}
\item{Suppose  $\alpha'>\gamma.$ Set $c'=c\cdot {a'}^{(2^\gamma-\gamma_1)\cdot 2^{\alpha'-\gamma}}$ Then,  $c' \equiv c \ \textrm{mod} \  (L^{\times})^2 (L^{\circ})^{\times},$ and $|c'|=|\pi_1| \cdot \prod_{i=2}^m |\pi_i|^{\frac{\gamma_i+\alpha_i(2^{\gamma}-\gamma_1)}{2^\gamma}}.$}
\item{Suppose $\alpha'=\gamma$ and $\gamma_1$ is odd. By Corollary \ref{75}, there exist $\alpha_i' \in \mathbb{Z},i=2,3,\dots, n,$ and $c'' \in L^\times$ of order $2^{\alpha'},$ having $\pi_1$ as a base, such that $c'' \equiv c \ \textrm{mod} \ (L^{\times})^2 (L^{\circ})^{\times}$ and $|c''|=|\pi_1|^{1/2^{\alpha'}}\cdot \prod_{i=2}^m |\pi_i|^{\alpha_i'/2^{\alpha'}}.$}
\item{Suppose $\alpha'=\gamma$ and $\gamma_1$ is even. Let $\gamma_1'/2^{\delta}$ be the reduced form of $\gamma_1/2^{\gamma},$ meaning $\gamma_1'$ is odd. Set ${c'''=c\cdot {a'}^{(2^\delta-\gamma_1')\cdot 2^{\alpha'-\delta}}}.$ Then, ${c''' \equiv c \ \textrm{mod} \  (L^{\times})^2(L^{\circ})^\times},$ and ${|c'''|=|\pi_1| \cdot \prod_{i=2}^m |\pi_i|^{\frac{\gamma_i+\alpha_i(2^{\gamma}-\gamma_1)}{2^\gamma}}}.$} 
\end{itemize}
To summarize, there exist $\bar{c} \in L^{\times}$ and $\epsilon_2, \cdots, \epsilon_m \in \mathbb{Z},$ such that $c \equiv \bar{c}$ $\ \textrm{mod} \ (L^{\times})^2 (L^{\circ})^{\times},$ and either $|\bar{c}|=$ $|\pi_1|^{1/2^{\alpha}} \cdot \prod_{i=2}^m |\pi_i|^{\epsilon_i/2^{\alpha'}}=$ $|a'| \cdot \prod_{i=2}^m |\pi_i|^{\frac{\epsilon_i-\alpha_i}{2^{\alpha'}}}$ or $|\bar{c}|=|\pi_1| \cdot \prod_{i=2}^m |\pi_i|^{\epsilon_i/2^{\alpha'}}.$
\end{sloppypar}
Therefore, there exist diagonal quadratic forms $\tau_1, \tau_2$ over $L,$ such that $\tau \cong \pi_1 \tau_1 \bot a' \tau_2,$ and for any coefficient $h$ of $\tau_1$ or $\tau_2,$ the order of $h$ is either $1$ or an even integer. Furthermore, $h$ is with parameters over $\{2,3, \dots, m\}.$  
\end{proof}

Using induction, an immediate consequence of Lemma \ref{72} is that there exists a family $T$ of $2^n$ quadratic forms with coefficients in $(L^{\circ})^\times,$ such that $\tau$ is $L$-isometric to $\bot_{\sigma \in T} B_{\sigma} \cdot \sigma,$ where $B_{\sigma} \in L^\times$ for any $\sigma \in T.$

Finally, by combining the decomposition results of $q_1$ and $q_2$, we obtain the statement of Proposition \ref{18}.
\end{proof}

The following framework corresponds to Berkovich curves:

\begin{set} Let $k$ be a complete ultrametric field.
Let $k \subseteq R$ be a Henselian valuation ring with maximal ideal $m_R,$ and fraction field $F_R=\mathrm{Frac} \ R.$ Set $L'=R/m_R$, and suppose it is endowed with a valuation making it a Henselian (called \textit{quasicomplete} in ~\cite{ber93}) valued field extension of $k.$ Let $L/L'$ be an immediate Henselian extension. Set $t=\mathrm{rank}_{\mathbb{Q}}(|L^\times| / |k^{\times}| \otimes_{\mathbb{Z}} \mathbb{Q})=\mathrm{rank}_{\mathbb{Q}}(|L'^\times| / |k^{\times}| \otimes_{\mathbb{Z}} \mathbb{Q})$ and ${s=\deg  \mathrm{tr}_{\widetilde{k}}{\widetilde{L}}=\deg  \mathrm{tr}_{\widetilde{k}}{\widetilde{L'}}}.$ 
Suppose $s+t \leqslant 1.$
\end{set}

The motivation behind this setup is:

\begin{ex}
Let $C$ be any $k$-analytic curve, and $x \in C$ any point. The hypotheses of the setting above are satisfied for $R=\mathcal{O}_x,$ $F_R=\mathscr{M}_x,$ $L'=\kappa(x),$ and $L=\mathcal{H}(x).$
\end{ex}

For any quadratic form $\sigma$ with coefficients in $R,$ let us denote by $\sigma_L$ (resp. $\sigma_{L'}$) its image over $L$ (resp. $L'$). 

We recall: 
\begin{defn}
Let $K$ be a field. 
\begin{enumerate}
\begin{sloppypar}
\item {[Kaplansky] The $u$-\textit{invariant} of $K,$ denoted by $u(K),$ is the maximal dimension of anisotropic quadratic forms over $K.$ We say that $u(K)=\infty$ if there exist anisotropic quadratic forms over $K$ of arbitrarily large dimension.}
\item {[HHK] The strong $u$-invariant of $K,$ denoted by $u_s(K),$ is the smallest real number~$m$, such that:
\begin{itemize}
\item $u(E) \leqslant m$ for all finite field extensions $E/K;$ 
\item $\frac{1}{2}u(E) \leqslant m$ for all finitely generated field extensions $E/K$ of transcendence degree 1.  
\end{itemize}
We say that $u_s(K)=\infty$ if there exist such field extensions $E$ of arbitrarily large~\mbox{$u$-invariant.}}
\end{sloppypar}
\end{enumerate}
\end{defn}

\begin{nota}
From now on, let $k$ be a complete ultrametric field, such that $\dim_{\mathbb{Q}} \sqrt{|k^\times|}$ equals an integer $n.$ Also, suppose $\text{char} \ \widetilde{k} \neq 2.$
\end{nota}

\begin{prop}\label{50}
Let $L/k$ be a valued field extension, such that $\mathrm{rank}_{\mathbb{Q}}(|L^\times| / |k^{\times}|$ $ {\otimes_{\mathbb{Z}} \mathbb{Q})=0}$ and $\deg \mathrm{tr}_{\widetilde{k}}{\widetilde{L}}=0.$ Let $\tau$ be a quadratic form over $L,$ with $\dim{\tau} > 2^{n+1}u_s(\widetilde{k}).$
\begin{enumerate}
\item Suppose $L$ is Henselian. Then, $\tau$ is isotropic.
\item Under the same hypotheses as in Setting 2, let $q$ be a diagonal quadratic form over~ $R,$ such that $q_L=\tau.$ Then, $q$ is isotropic over $F_R.$
\end{enumerate}
\end{prop}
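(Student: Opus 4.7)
The plan is to combine the decomposition of Proposition~\ref{18} with two applications of Hensel's lemma, reducing the problem to anisotropy bounds over finite extensions of $\widetilde{k}$.

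For part (1), I apply Proposition~\ref{18} with $l = k$ to the form $\tau$ over $L$. The coefficient hypothesis is satisfied because $\mathrm{rank}_{\mathbb{Q}}(|L^\times|/|k^\times|\otimes_{\mathbb{Z}}\mathbb{Q}) = 0$ forces $|L^\times| \subseteq \sqrt{|L^\times|} = \sqrt{|k^\times|}$. This produces a decomposition $\tau \cong \bot_{\sigma \in Q} C_\sigma \cdot \sigma$ with $|Q| \leq 2^{n+1}$ and each $\sigma$ having coefficients in $(L^\circ)^\times$. Since $\dim \tau > 2^{n+1} u_s(\widetilde{k})$, the pigeonhole principle provides some $\sigma_0 \in Q$ with $\dim \sigma_0 > u_s(\widetilde{k})$. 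Next, I establish $u(\widetilde{L}) \leq u_s(\widetilde{k})$: because $\deg\mathrm{tr}_{\widetilde{k}}\widetilde{L} = 0$, the extension $\widetilde{L}/\widetilde{k}$ is algebraic, so any anisotropic diagonal form over $\widetilde{L}$ has its (finitely many) coefficients lying in a finite subextension $K/\widetilde{k}$, where it remains anisotropic; hence $\dim \leq u(K) \leq u_s(\widetilde{k})$. Since $\sigma_0$ has unit coefficients and $\mathrm{char}(\widetilde{L}) \neq 2$, the reduction $\widetilde{\sigma_0}$ is a non-degenerate diagonal form over $\widetilde{L}$ of the same dimension, hence isotropic. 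Hensel's lemma for the Henselian field $L$ (the zero is non-singular because the relevant partial derivatives are of the form $2 a_i x_i$ in residue characteristic $\neq 2$) then lifts this isotropy to $\sigma_0$, and thus to $\tau$, over $L$.

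For part (2), the key observation is that $\dim q_L = \dim \tau = \dim q$ forces every coefficient of $q$ to lie outside $m_R$, so the reduction $\bar q$ of $q$ modulo $m_R$ is a non-degenerate diagonal form of the same dimension over $L'$. The valued field $L'$ satisfies exactly the hypotheses of part (1): it is Henselian with $\mathrm{rank}_{\mathbb{Q}}(|L'^\times|/|k^\times|\otimes\mathbb{Q}) = t = 0$ and $\deg\mathrm{tr}_{\widetilde{k}}\widetilde{L'} = s = 0$. Applying part (1) to $\bar q$ over $L'$ (whose dimension still exceeds $2^{n+1}u_s(\widetilde{k})$) yields that $\bar q$ is isotropic over $L'$. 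Finally, Hensel's lemma for the Henselian valuation ring $R$, with non-singularity of the zero again ensured by $\mathrm{char}(L') \neq 2$ and unit coefficients, lifts this to a non-trivial zero of $q$ in $R^m \subseteq F_R^m$.

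The only non-routine step is the bound $u(\widetilde{L}) \leq u_s(\widetilde{k})$ for a possibly infinite algebraic extension $\widetilde{L}/\widetilde{k}$; everything else is a direct assembly of Proposition~\ref{18} and two applications of Hensel's lemma. Note that part~(2) invokes part~(1) only through its applicability to $L'$ rather than to $L$, thereby bypassing the more delicate question of transferring isotropy through the immediate extension $L/L'$ and making the appearance of $L$ in the setting essentially auxiliary for this particular statement.
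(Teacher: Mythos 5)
Your part (1) is essentially the paper's proof: decompose via Proposition~\ref{18} (with $l=k$, valid since $|L^\times|\subseteq\sqrt{|k^\times|}$), pigeonhole, reduce the resulting unit-coefficient block to $\widetilde L$, pass to the finite subextension $E/\widetilde k$ generated by its coefficients to invoke $u(E)\le u_s(\widetilde k)$, and lift by Henselianity. All correct.

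For part (2) you take a slightly different route: rather than re-using $\widetilde{\tau'}$ from the proof of part (1) and lifting in two stages (from $\widetilde{L'}$ to $L'$, then from $L'$ to $F_R$) as the paper does, you apply part (1) directly to the reduction $\bar q$ over $L'$ (valid, since in Setting~2 $L'$ itself satisfies the hypotheses on $L$) and lift once by Henselianity of $R$. That is a clean repackaging, and your remark that this makes $L$ essentially auxiliary to part (2) is a fair observation. However, the ``key observation'' you use to launch it is wrong as stated: with the convention that $\dim$ counts variables (the one the paper uses, \emph{cf.} the treatment of Witt decomposition and degenerate forms after Theorem~\ref{17}), the equalities $\dim q_L=\dim\tau=\dim q$ hold trivially for any $q$ with $q_L=\tau$ and say nothing about whether a coefficient $a_i\in R$ lies in $m_R$. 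A coefficient $a_i\in m_R\setminus\{0\}$ would give $\bar q$ a zero coefficient while leaving all three dimensions equal, and then the trivial isotropic vector of $\bar q$ is a singular zero that Hensel's lemma cannot lift. What you actually need is the implicit assumption that $\tau$, and hence $\bar q$, has no zero coefficients; the paper secures this in applications by routing through Lemma~\ref{55}, which produces $q$ with $q_L$ having coefficients in $(L^\circ)^\times$, but Proposition~\ref{50} itself does not state it. You should either add that as a hypothesis (or note that the degenerate case needs a separate, and in general more delicate, reduction) rather than claim it follows from a dimension count.
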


\begin{proof} Since $\text{char}(L) \neq 2,$ we may assume that $\tau$ is a diagonal quadratic form.
Seeing as $\dim_{\mathbb{Q}} \sqrt{|L^{\times}|}=n,$ 
by Proposition \ref{18} there exists a set $Q$ of at most $2^{n+1}$ quadratic forms with coefficients in $(L^{\circ})^\times,$ such that $\tau$ is $L$-isometric to $\bot_{\sigma \in Q} C_\sigma \cdot \sigma,$ with $C_\sigma \in L^\times$ for every $\sigma \in Q.$

Since $\dim{\tau}>2^{n+1}u_s(\widetilde{k}),$ there exists $\tau' \in Q,$ such that $\dim{\tau'} > u_s(\widetilde{k}).$ Let $\widetilde{\tau'}$ be the image of $\tau'$ over $\widetilde{L}.$ Seeing as the coefficients of $\tau'$ are all in $(L^{\circ})^{\times},$ $\dim{\widetilde{\tau'}=\dim{\tau'}>u_s(\widetilde{k}}).$ Since $\deg  \mathrm{tr}_{\widetilde{k}} \widetilde{L}=0,$ the extension $\widetilde{L}/\widetilde{k}$ is algebraic. Let $E$ be the finite field extension of $\widetilde{k}$ generated by the coefficients of $\widetilde{\tau'}.$ Then, $u(E)\leqslant u_s(\widetilde{k})< \dim{\widetilde{\tau'}},$ implying $\widetilde{\tau'}$ is isotropic over $E,$ and hence over $\widetilde{L}.$ Since $L$ is Henselian, $\tau'$ is isotropic over $L,$ and thus so is $\tau.$

For the second part, if $\tau=q_L$ for some diagonal $R$-quadratic form $q,$ seeing as $\widetilde{\tau'}$ is isotropic over $\widetilde{L}=\widetilde{L'},$ the image of $q$ in $\widetilde{L'}$ is so as well. From Henselianity of $L',$ we obtain that the image of $q$ in $L'$ is isotropic there. Finally, from Henselianity of $R,$ the quadratic form $q$ is isotropic over $F_R$. 
\end{proof}

The bound $2^{n+1} u_s(\widetilde{k})$ in Proposition \ref{50} will remain the same regardless of whether we demand $|k^{\times}|$ to be a free $\mathbb{Z}$-module or not. The reason behind this is that in any case, the hypotheses of said proposition tell us only that $\dim_{\mathbb{Q}}\sqrt{|L^\times|}=n,$ but not necessarily that   $|L^\times|$ is a free $\mathbb{Z}$-module.

\begin{prop}
Let $L/k$ be a valued field extension, such that $\mathrm{rank}_{\mathbb{Q}}(|L^\times| / |k^{\times}|$ $ {\otimes_{\mathbb{Z}} \mathbb{Q})=0}$ and 
$\deg \mathrm{tr}_{\widetilde{k}}{\widetilde{L}}=1.$ Let $\tau$ be a quadratic form over $L,$ with $\dim{\tau} > 2^{n+2}u_s(\widetilde{k}).$
\begin{enumerate}
\item Suppose $L$ is Henselian. Then, $\tau$ is isotropic.
\item Under the same hypotheses as in Setting 2, let $q$ be a diagonal quadratic form over ~$R,$ such that $q_L=\tau.$ Then, $q$ is isotropic over $F_R.$
\end{enumerate}
If $|L^{\times}|$ is a free $\mathbb{Z}$-modules of dimension $n$, the statement is true for $\dim{\tau}>2^{n+1}u_s(\widetilde{k}).$

\end{prop}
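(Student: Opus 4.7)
My plan is to follow the strategy of Proposition \ref{50} essentially verbatim, the one substantive change being that the piece singled out by the pigeonhole argument must now have dimension exceeding $2u_s(\widetilde{k})$ rather than $u_s(\widetilde{k})$, in order to accommodate the fact that the coefficients of its reduction may generate a subfield of $\widetilde{L}$ of transcendence degree one over $\widetilde{k}$.

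Concretely, since $\mathrm{char}(L) \neq 2$ I will assume $\tau$ is diagonal. The hypothesis $\mathrm{rank}_{\mathbb{Q}}(|L^\times|/|k^\times| \otimes_{\mathbb{Z}} \mathbb{Q}) = 0$ forces every coefficient of $\tau$ to have norm in $\sqrt{|k^\times|}$, so Proposition \ref{18} (applied with $l = k$) yields a family $Q$ with $|Q| \leqslant 2^{n+1}$ and a decomposition $\tau \cong \bot_{\sigma \in Q} C_\sigma \cdot \sigma$ in which each $\sigma$ has coefficients in $(L^\circ)^\times$ and $C_\sigma \in L^\times$. In the refined case where $|L^\times|$ itself is a free $\mathbb{Z}$-module of rank $n$, I would instead invoke Proposition \ref{19} with $l = L$, which gives $|Q| \leqslant 2^n$; this is precisely what saves a factor of two in the final bound.

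Since $\dim \tau > 2^{n+2} u_s(\widetilde{k})$ (respectively $> 2^{n+1} u_s(\widetilde{k})$ in the refined case), pigeonholing over $Q$ produces some $\sigma_0 \in Q$ with $\dim \sigma_0 > 2 u_s(\widetilde{k})$. Reducing to $\widetilde{L}$, the finitely many coefficients of $\widetilde{\sigma_0}$ generate a subfield $E \subseteq \widetilde{L}$ that is finitely generated over $\widetilde{k}$ of transcendence degree at most one, so by the definition of the strong $u$-invariant $u(E) \leqslant 2 u_s(\widetilde{k}) < \dim \widetilde{\sigma_0}$, giving isotropy of $\widetilde{\sigma_0}$ over $E$ and hence over $\widetilde{L}$.

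For part (1), Henselianity of $L$, together with the fact that $\sigma_0$ has coefficients in $(L^\circ)^\times$, lifts the isotropy from $\widetilde{L}$ to $L$, whence $\tau$ is isotropic over $L$. For part (2), I repeat the last step of the proof of Proposition \ref{50}: using $\widetilde{L} = \widetilde{L'}$ (immediacy of $L/L'$) the reduction of $q$ over $\widetilde{L'}$ inherits the isotropy, Henselianity of $L'$ lifts it to $L'$, and Henselianity of $R$ then lifts it to $F_R$. I anticipate no substantive obstacle: the proof structure is entirely dictated by the $u_s$-invariant bookkeeping, and the only care required is that the pigeonhole step accommodate the jump from $u_s(\widetilde{k})$ to $2 u_s(\widetilde{k})$ which the transcendence-one residue field demands.
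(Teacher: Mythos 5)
Your proposal is correct and follows the paper's own proof essentially verbatim: diagonalize, apply Proposition \ref{18} (resp.\ \ref{19}) to decompose $\tau$ into at most $2^{n+1}$ (resp.\ $2^n$) pieces with $(L^\circ)^\times$-coefficients, pigeonhole to find a piece of dimension $> 2u_s(\widetilde{k})$, reduce to $\widetilde{L}$, invoke the second clause of the strong $u$-invariant bound, lift by Henselianity of $L$ (part 1) or via $\widetilde{L}=\widetilde{L'}$, $L'$, $R$ (part 2). One small point in your favor: you pass to the subfield $E$ of $\widetilde L$ generated by the coefficients of $\widetilde{\sigma_0}$, which is automatically finitely generated over $\widetilde k$ of transcendence degree $\leqslant 1$, so $u(E)\leqslant 2u_s(\widetilde k)$ without any extra hypothesis; the paper here invokes $u(\widetilde L)\leqslant 2u_s(\widetilde k)$ directly, which presupposes that $\widetilde L/\widetilde k$ is finitely generated (true in the Berkovich application but not visibly part of the proposition's hypotheses), whereas in the analogous Proposition \ref{50} the paper itself uses the subfield device you chose. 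Your version is thus marginally more robust while being the same argument.
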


\begin{proof} Since $\text{char}(L) \neq 2,$ we may assume that $\tau$ is a diagonal quadratic form.
Again, let $\bot_{\sigma \in Q} C_\sigma \sigma$ be the  $L$-quadratic form isometric to $\tau$ obtained from Proposition \ref{18} (resp. Proposition \ref{19}), where $Q$ has cardinality at most $2^{n+1}$ (resp. $2^n$). Then, there exists $\tau' \in Q,$ such that $\dim{\tau'}> 2u_s(\widetilde{k}).$ 
Let $\widetilde{\tau'}$ be the image of $\tau'$ over $\widetilde{L}.$ Since the coefficients of $\tau'$ are all in $(L^{o})^{\times},$ $\dim{\widetilde{\tau'}}=\dim{\tau'}>2u_s(\widetilde{k}).$

As the extension $\widetilde{L}/\widetilde{k}$ is finitely generated of transcendence degree $1,$ one obtains ${u(\widetilde{L}) \leqslant 2u_s(\widetilde{k})<\dim{\tau'}}.$ This implies that $\tau'$ is isotropic over $\widetilde{L}.$ Since $L$ is Henselian, the quadratic form $\tau'$ is isotropic over $L,$ and thus so is $\tau.$

For the second part, if $\tau=q_L$ for some diagonal quadratic form $q$ over $R,$ we conclude by using the same argument as in Proposition \ref{50}, seeing as $\widetilde{\tau'}$ is isotropic over $\widetilde{L'}.$
\end{proof}

\begin{prop}
Let $L/k$ be a valued field extension, such that $\mathrm{rank}_{\mathbb{Q}}(|L^\times| / |k^{\times}|$ $ {\otimes_{\mathbb{Z}} \mathbb{Q})=1}$ and $\deg \mathrm{tr}_{\widetilde{k}}{\widetilde{L}}=0.$ Let $\tau$ be a  quadratic form over $L,$ with $\dim{\tau} > 2^{n+2}u_s(\widetilde{k}).$
\begin{enumerate}
\item Suppose $L$ is Henselian. Then, $\tau$ is isotropic.
\item Under the same hypotheses as in Setting 2, let $q$ be a diagonal quadratic form over~ $R,$ such that $q_L=\tau.$ Then, $q$ is isotropic over $F_R.$
\end{enumerate}
If $|k^{\times}|$ is a free $\mathbb{Z}$-module, the statement is true for $\dim{\tau}>2^{n+1}u_s(\widetilde{k}).$
\end{prop}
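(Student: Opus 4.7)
The plan is to imitate closely the proofs of Proposition \ref{50} and the preceding proposition, adapting them to the present case where $t=1, s=0$. The essential combinatorial input is that $\dim_{\mathbb{Q}}\sqrt{|L^\times|} = n+1$: this follows from the short exact sequence
$$0 \to |k^\times| \otimes_{\mathbb{Z}} \mathbb{Q} \to |L^\times| \otimes_{\mathbb{Z}} \mathbb{Q} \to (|L^\times|/|k^\times|) \otimes_{\mathbb{Z}} \mathbb{Q} \to 0,$$
whose outer terms have $\mathbb{Q}$-dimensions $n$ and $1$ respectively.

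For part (1) I may assume $\tau$ is diagonal, and I apply Proposition \ref{18} with $l = L$ to decompose $\tau$ as $\bot_{\sigma \in Q} C_\sigma \cdot \sigma$, where $|Q| \leq 2^{n+2}$ and each $\sigma$ has coefficients in $(L^\circ)^\times$. Since $\dim \tau > 2^{n+2} u_s(\widetilde{k})$, some $\sigma^* \in Q$ satisfies $\dim \sigma^* > u_s(\widetilde{k})$, and its residue form $\widetilde{\sigma^*}$ over $\widetilde{L}$ has the same dimension. Because $\widetilde{L}/\widetilde{k}$ is algebraic by hypothesis, the finite extension $E/\widetilde{k}$ generated by the coefficients of $\widetilde{\sigma^*}$ satisfies $u(E) \leq u_s(\widetilde{k}) < \dim \widetilde{\sigma^*}$, so $\widetilde{\sigma^*}$ is isotropic over $E$ and hence over $\widetilde{L}$. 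Henselianity of $L$ then lifts this to isotropy of $\sigma^*$, and therefore of $\tau$, over $L$. Part (2) follows exactly as in Proposition \ref{50}: in Setting 2 with $q$ a diagonal form over $R$ satisfying $q_L = \tau$, the isotropy of $\widetilde{\sigma^*}$ over $\widetilde{L'} = \widetilde{L}$ lifts to $L'$ by Henselianity of $L'$, and then to $F_R$ by Henselianity of $R$.

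For the refined bound $2^{n+1}u_s(\widetilde{k})$ when $|k^\times|$ is free of $\mathbb{Z}$-rank $n$, one replaces Proposition \ref{18} by Proposition \ref{19}, obtaining $|Q| \leq 2^{n+1}$ and hence the sharper threshold. The main obstacle is that Proposition \ref{19} demands a valued subfield $l \subseteq L$ with $|l^\times|$ free of rank $n+1$ together with all coefficients of the form satisfying $|a| \in |l^\times|$. One achieves this by fixing $\pi \in L^\times$ with $|\pi| \notin \sqrt{|k^\times|}$, so that the subgroup of $|L^\times|$ generated by $|k^\times|$ and $|\pi|$ is free of rank $n+1$, and then adjusting each coefficient of $\tau$ modulo $(L^\times)^2(L^\circ)^\times$ to land in this subgroup, along the lines of the elementary manipulations of Lemmas \ref{70} and \ref{71}. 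The rest of Steps 2--4 above then go through verbatim with $2^{n+2}$ replaced by $2^{n+1}$.
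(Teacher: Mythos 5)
Your argument for the general bound $2^{n+2}u_s(\widetilde{k})$ is correct and takes a slightly different, cleaner route than the paper. You observe directly that $\dim_{\mathbb{Q}}\sqrt{|L^\times|}=n+1$ and apply Proposition \ref{18} once with $l=L$. The paper instead first picks $T\in L^\times$ with $|T|=\rho\notin\sqrt{|k^\times|}$, writes $\tau\cong q_1\bot Tq_2$ with the coefficients of $q_1,q_2$ having norms in $\sqrt{|k^\times|}$, and applies Proposition \ref{18} with $l=k$ to each of $q_1,q_2$, getting $2^{n+1}$ apiece. Both yield $2^{n+2}$; the paper's detour pays off because the same decomposition also handles the refined free case.

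For the refined bound $2^{n+1}u_s(\widetilde{k})$, however, your sketch has a genuine gap. You propose to fix an arbitrary $\pi\in L^\times$ with $|\pi|\notin\sqrt{|k^\times|}$ and then adjust each coefficient of $\tau$ modulo $(L^\times)^2(L^\circ)^\times$ so its norm lands in the free subgroup $H=|k^\times|\cdot|\pi|^{\mathbb{Z}}$, citing Lemmas \ref{70}--\ref{71}. Those lemmas produce convenient exponent representatives \emph{inside the given group}; they do not push a norm into a proper subgroup, and the step can in fact fail. For example, take $|k^\times|=2^{\mathbb{Z}}$ (so $n=1$) and $|L^\times|=\{2^x3^y : x,y\in\tfrac12\mathbb{Z},\ x-y\in\mathbb{Z}\}$, a free rank-$2$ group with $\mathrm{rank}_{\mathbb{Q}}(|L^\times|/|k^\times|\otimes\mathbb{Q})=1$. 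Choosing $|\pi|=3\notin\sqrt{|k^\times|}$ gives $H\cdot|L^\times|^2=2^{\mathbb{Z}}3^{\mathbb{Z}}$, which misses $2^{1/2}3^{1/2}\in|L^\times|$, so no adjustment is possible for a coefficient of that norm. The paper circumvents this by asserting that $|L^\times|$ is generated by $|k^\times|$ together with a single $\rho$ (a claim that holds in the Berkovich application, where $L=\mathcal{H}(x)$ for $x$ a type~3 point); under that assumption the coefficients of $q_1,q_2$ after the splitting have norms in $|k^\times|$ itself, and Proposition \ref{19} applies with $l=k$. If you grant the same structural claim, you could alternatively note that $|L^\times|$ is then free of rank $n+1$ and apply Proposition \ref{19} directly with $l=L$, which rescues your approach without any coefficient adjustment at all.
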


\begin{proof} Since $\text{char}(L) \neq 2,$ we may assume that $\tau$ is a diagonal quadratic form.
Since $\mathrm{rank}_{\mathbb{Q}}(|L^\times| / |k^{\times}| \otimes_{\mathbb{Z}} \mathbb{Q})=1,$ there exists $\rho \in \mathbb{R}_{>0} \backslash \sqrt{|k^{\times}|},$ such that the group $|L^{\times}|$ is generated by $|k^{\times}|$ and $\rho.$ Let $T$ be an element of $L$ with $|T|=\rho.$ Then, for any $a \in L^{\times},$ there exist $m \in \mathbb{Z},$ $p_i \in \mathbb{Q}$ (resp. $p_i \in \mathbb{Z}$), $i=1,2,\dots, n,$  such that $|a|=|T|^m \cdot \prod_{i=1}^n|\pi_i|^{p_i}.$ Consequently, there exist diagonal quadratic forms $q_1, q_2$ over $L,$ for which $\tau$ is isometric to $q_1 \bot Tq_2,$ where the coefficients of $q_1, q_2$ have norms in $|k^{\times}|.$ 

By applying Proposition \ref{18} (resp. Proposition \ref{19}) to $q_1$ and $q_2$, we obtain a family $S$ of at most $2^{n+2}$ (resp. $2^{n+1}$) diagonal quadratic forms with coefficients in $(L^{\circ})^\times,$ such that $\tau$ is isometric to $\bot_{\sigma \in S} C_{\sigma} \cdot \sigma,$ where $C_{\sigma} \in L^\times$ for every $\sigma \in S.$ Thus, there exists $\tau' \in S,$ such that $\dim{\tau'} > u_s(\widetilde{k}).$ Let $\widetilde{\tau'}$ be the image of $\tau'$ in $\widetilde{L}.$ Seeing as the coefficients of $\tau'$ are all 
in $(L^{\circ})^{\times},$ $\dim{\widetilde{\tau'}}=
\dim{\tau'}>u_s(\widetilde{k}).$ 

The extension $\widetilde{L}/
\widetilde{k}$ is finite algebraic, so 
$u(\widetilde{L})\leqslant u_s(\widetilde{k})< 
\dim{\widetilde{\tau'}},$ implying $
\widetilde{\tau'}$ is isotropic over $
\widetilde{L}.$ Since $L$ is Henselian, $\tau'
$ is isotropic over $L,$ and thus so is $\tau.$ 

For the second part, if $\tau=q_L$ for some $q,$ as $\widetilde{\tau'}$ is isotropic over $\widetilde{L'},$ we conclude as in Proposition \ref{50}.
\end{proof}

Keeping the same notation, the three propositions above can be summarized into:

\begin{thm}\label{52}
\begin{sloppypar}
Let $L/k$ be a valued field extension. Suppose that the inequality ${\mathrm{rank}_{\mathbb{Q}}(|L^\times| / |k^{\times}| \otimes_{\mathbb{Z}} \mathbb{Q})+\deg \mathrm{tr}_{\widetilde{k}}{\widetilde{L}} \leqslant 1}$ holds. Let $\tau$ be a quadratic form over~ $L,$ with $\dim{\tau} > 2^{n+2}u_s(\widetilde{k}).$
\end{sloppypar}
\begin{enumerate}
\item Suppose $L$ is Henselian. Then, $\tau$ is isotropic.
\item Under the same hypotheses as in Setting 2, let $q$ be a diagonal quadratic form over~ $R,$ such that $q_L=\tau.$ Then, $q$ is isotropic over $F_R.$
\end{enumerate}
If $|k^{\times}|$ is a free $\mathbb{Z}$-module, and $|L^\times|$ is a free $\mathbb{Z}$-module with $\mathrm{rank}_{\mathbb{Z}} |L^\times|=n$ if ${\deg \mathrm{tr}_{\widetilde{k}}{\widetilde{L}}=1}$ and
$\mathrm{rank}_{\mathbb{Q}}(|L^\times| / |k^{\times}| \otimes_{\mathbb{Z}} \mathbb{Q})=0,$  then the statement is true for ${\dim{\tau}>2^{n+1}u_s(\widetilde{k})}.$
\end{thm}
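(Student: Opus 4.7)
The plan is to reduce Theorem \ref{52} to a case analysis based on the pair of nonnegative integers
\[
 s := \deg\mathrm{tr}_{\widetilde{k}}\widetilde{L}, \qquad t := \mathrm{rank}_{\mathbb{Q}}\bigl(|L^\times|/|k^\times|\otimes_{\mathbb{Z}}\mathbb{Q}\bigr),
\]
since the hypothesis $s+t\leqslant 1$ forces $(s,t)\in\{(0,0),(1,0),(0,1)\}$. Each of these cases is covered exactly by one of the three propositions preceding the theorem, so the job is essentially to verify that the uniform bound $\dim\tau>2^{n+2}u_s(\widetilde{k})$ is at least as strong as the bound demanded by each individual case, and then to invoke the corresponding proposition.

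First, I would note that we may assume $\tau$ is a diagonal quadratic form, since $\mathrm{char}(L)\neq 2$ (as $\mathrm{char}(\widetilde{k})\neq 2$ and the relevant characteristic considerations carry over to $L$). Next I would split into the three cases: if $(s,t)=(0,0)$, Proposition \ref{50} applies and actually gives the stronger conclusion under the weaker hypothesis $\dim\tau>2^{n+1}u_s(\widetilde{k})$, which is \emph{a fortiori} satisfied; if $(s,t)=(1,0)$, the proposition for ``residual transcendence degree one'' applies with threshold $2^{n+2}u_s(\widetilde{k})$, matching our hypothesis exactly; if $(s,t)=(0,1)$, the third proposition (the one that extracts a uniformizer $T$ for a new direction in the value group and splits $\tau\cong q_1\bot Tq_2$) applies with the same threshold $2^{n+2}u_s(\widetilde{k})$.

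In each case both (1) and (2) of the statement are provided directly by the corresponding proposition: part (1) is the Henselian isotropy statement, and part (2) is the lift from isotropy of a residual form to isotropy of the original $R$-form $q$ over $F_R$, using the hypotheses of Setting 2 (the immediacy of $L/L'$, Henselianity of $L'$ and of $R$). Since $\widetilde{L}=\widetilde{L'}$ by immediacy, the residual isotropy passes from $\widetilde{L}$ to $\widetilde{L'}$, then to $L'$ by Henselianity, and finally to $F_R$ by Henselianity of $R$; all of this is already assembled inside the three propositions.

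Finally, for the refined bound $\dim\tau>2^{n+1}u_s(\widetilde{k})$ under the additional hypothesis that $|k^\times|$ is a free $\mathbb{Z}$-module (and, when $s=1$, $t=0$, that $|L^\times|$ is free of rank $n$), the same three-case split applies, invoking the improved form of each proposition: case $(0,0)$ is unchanged; case $(0,1)$ uses the variant that appeals to Proposition \ref{19} instead of Proposition \ref{18} (this is exactly where the ``free $\mathbb{Z}$-module'' hypothesis on $|k^\times|$ cuts the count of diagonal blocks from $2^{n+2}$ to $2^{n+1}$); case $(1,0)$ uses the freeness of $|L^\times|$ to invoke Proposition \ref{19} directly and halve the block count. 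There is no real obstacle here; the only thing to check is that the case-by-case thresholds are at most the uniform one, which is immediate. The conceptual content of the theorem is already in the three propositions, and this statement is simply their common formulation.
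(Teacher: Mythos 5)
Your proof is correct and takes the same approach as the paper: the theorem is presented in the text as an explicit summary of the three preceding propositions (the paper's own ``proof'' is the sentence ``the three propositions above can be summarized into''), and your case split on $(s,t)\in\{(0,0),(1,0),(0,1)\}$ matches those propositions precisely, including the observation that case $(0,0)$ already has threshold $2^{n+1}$ and the identification of which freeness hypothesis (on $|k^\times|$ or on $|L^\times|$) is consumed in cases $(0,1)$ and $(1,0)$ respectively to replace Proposition~\ref{18} by Proposition~\ref{19} in their proofs.
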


A result we will be using often in what follows:

\begin{lm}\label{24} Suppose $|k^\times|$ is a free $\mathbb{Z}$-module of dimension $n$. Let $k'/k$ be a valued field extension, such that $|{k'}^\times|$ is finitely generated over $|k^\times|,$ and $|{k'}^\times|/|k^\times|$ is a torsion group. Then, $|{k'}^\times|$ is also a free $\mathbb{Z}$-module of dimension $n.$ 

Suppose $k'/k$ is a finite field extension.
Let $\tau$ be a diagonal quadratic form over ~$k'$ with $\dim{\tau}>2^nu_s(\widetilde{k}).$ Then, $q$ is $k'$-isotropic.
\end{lm}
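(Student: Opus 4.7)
The plan is to handle the two assertions in turn, with the second being essentially an application of Proposition \ref{19} once the first is known.

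For the structural claim, I would argue purely algebraically. The group $|k'^{\times}|$ is finitely generated over $|k^{\times}|$ by hypothesis, and $|k^{\times}|$ is itself finitely generated (free of rank $n$), so $|k'^{\times}|$ is a finitely generated abelian group. As a subgroup of $\mathbb{R}_{>0}$ it is torsion-free, so by the structure theorem it is free of some finite rank $m$. Since $|k'^{\times}|/|k^{\times}|$ is torsion, tensoring the inclusion $|k^{\times}| \hookrightarrow |k'^{\times}|$ with $\mathbb{Q}$ yields an isomorphism of $\mathbb{Q}$-vector spaces, forcing $m = \dim_{\mathbb{Q}}(|k^{\times}| \otimes_{\mathbb{Z}} \mathbb{Q}) = n$.

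For the isotropy statement, I first reduce to a setting where Proposition \ref{19} applies to $k'$. Since $k$ is complete (standing assumption of the section) and $k'/k$ is finite, $k'$ inherits a unique extension of the absolute value of $k$, is complete with respect to it, and therefore Henselian. The fundamental inequality bounds $[|k'^{\times}|:|k^{\times}|] \cdot [\widetilde{k'}:\widetilde{k}]$ by $[k':k]$, so $|k'^{\times}|/|k^{\times}|$ is finite (hence torsion and finitely generated) and $\widetilde{k'}/\widetilde{k}$ is a finite algebraic extension. The first part of the lemma then gives that $|k'^{\times}|$ is a free $\mathbb{Z}$-module of rank $n$, so the hypotheses of Proposition \ref{19} are satisfied with $l = L = k'$.

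Applying that proposition, $\tau$ is $k'$-isometric to $\bot_{\sigma \in Q} C_\sigma \cdot \sigma$ with $|Q| \leqslant 2^n$ and each $\sigma$ having coefficients in $(k'^{\circ})^{\times}$. Since $\dim \tau > 2^n u_s(\widetilde{k})$, pigeonhole yields some $\sigma_0 \in Q$ with $\dim \sigma_0 > u_s(\widetilde{k})$. Its reduction $\widetilde{\sigma_0}$ is a non-degenerate (as $\mathrm{char}(\widetilde{k}) \neq 2$) diagonal quadratic form over $\widetilde{k'}$ of dimension $\dim \sigma_0$. Because $\widetilde{k'}/\widetilde{k}$ is a finite field extension, the definition of $u_s$ gives $u(\widetilde{k'}) \leqslant u_s(\widetilde{k}) < \dim \widetilde{\sigma_0}$, so $\widetilde{\sigma_0}$ is isotropic over $\widetilde{k'}$. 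Henselianity of $k'$, together with the residue characteristic being $\neq 2$, then lifts this to isotropy of $\sigma_0$ over $k'$, and hence of $\tau$.

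There is no genuine obstacle: the main point that requires care is simply verifying that the hypotheses of the first assertion are in force for the finite extension $k'/k$ (via the fundamental inequality and completeness), and that Henselianity of $k'$ is available for the final lift — both of which follow automatically from the standing assumption that $k$ is complete.
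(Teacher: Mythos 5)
Your proof is correct and follows essentially the same route as the paper's: deduce freeness and rank $n$ of $|k'^{\times}|$ from rank additivity and the structure theorem for finitely generated abelian groups, apply Proposition \ref{19} with $l = L = k'$, pigeonhole to find a subform $\sigma_0$ with unit coefficients and $\dim \sigma_0 > u_s(\widetilde{k})$, use finiteness of $\widetilde{k'}/\widetilde{k}$ to get isotropy of $\widetilde{\sigma_0}$, and lift by Henselianity of $k'$. The only addition you make is spelling out, via completeness of $k$ and the fundamental inequality, that the first paragraph's hypotheses on the value group are automatic when $k'/k$ is finite — a useful remark, but it doesn't change the argument.
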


\begin{proof}
Seeing as $|{k'}^{\times}|/|k^{\times}|$ is a torsion group, its rank as a $\mathbb{Z}$ module is $0.$ Considering $\mathrm{rank}_{\mathbb{Z}} |{k'}^{\times}|= \mathrm{rank}_{\mathbb{Z}}|{k'}^{\times}|/|k^{\times}| + \mathrm{rank}_{\mathbb{Z}}|k^{\times}|,$ we obtain $\mathrm{rank}_{\mathbb{Z}}|{k'}^{\times}|=n.$ Furthermore, being a finitely generated torsion-free module over $\mathbb{Z},$ it is free. 

Let $\bot_{\sigma \in Q} C_{\sigma} \cdot \sigma$ be the quadratic form $k'$-isometric to $\tau$ obtained by applying Proposition ~\ref{19}. There exists $\sigma_0 \in Q$ with coefficients in $({k'}^{\circ})^{\times},$ such that $\dim{\widetilde{\sigma_0}}=\dim{\sigma_0}>u_s(\widetilde{k})$, where $\widetilde{\sigma_0}$ is the image of $\sigma_0$ over $\widetilde{k'}.$ Suppose $k'/k$ is a finite field extension. Seeing as then $\widetilde{k'}/\widetilde{k}$ is also finite, $\widetilde{\sigma_0}$ is $\widetilde{k'}$-isotropic. From Henselianity of $k',$ we obtain that $\sigma_0$ is $k'$-isotropic, thus so is $\tau.$
\end{proof}

The following shows that if $|k^\times|$ is a free finitely generated $\mathbb{Z}$-module of dimension $n,$ the last conditions of Theorem \ref{52} are satisfied in the Berkovich setting. 

\begin{cor}\label{56} 
\begin{sloppypar}
Suppose $|k^\times|$ is a free $\mathbb{Z}$-module with $\mathrm{rank}_{\mathbb{Z}}|k^\times|=n.$ Let $C$ be a $k$-analytic curve. 
If $x \in C$ is a type 2 point, then $|\mathcal{H}(x)^\times|$ is a free $\mathbb{Z}$-module and ${\mathrm{rank}_{\mathbb{Z}}(|
\mathcal{H}(x)^\times|)=n}.$
\end{sloppypar}
\end{cor}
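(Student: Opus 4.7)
The plan is to reduce the statement to Lemma \ref{24}, applied to the valued field extension $\mathcal{H}(x)/k$. To do so, I must verify the two hypotheses required by that lemma: namely that $|\mathcal{H}(x)^\times|$ is finitely generated over $|k^\times|$, and that $|\mathcal{H}(x)^\times|/|k^\times|$ is a torsion group. Once both are established, Lemma \ref{24} gives at once that $|\mathcal{H}(x)^\times|$ is a free $\mathbb{Z}$-module whose rank equals $\mathrm{rank}_{\mathbb{Z}}|k^\times| = n$.

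First I would recall the standard classification of points on a $k$-analytic curve via the Abhyankar invariants
\[
s(x) = \deg\mathrm{tr}_{\widetilde{k}}\bigl(\widetilde{\mathcal{H}(x)}\bigr), \qquad t(x) = \dim_{\mathbb{Q}}\bigl(|\mathcal{H}(x)^\times|/|k^\times| \otimes_{\mathbb{Z}} \mathbb{Q}\bigr),
\]
which satisfy Abhyankar's inequality $s(x) + t(x) \leqslant \dim C = 1$. A point $x$ is of type $2$ exactly when this inequality is an equality with $s(x)=1$ and $t(x)=0$. The vanishing $t(x)=0$ is precisely the statement that $|\mathcal{H}(x)^\times|/|k^\times|$ has rational rank zero, that is, it is a torsion group; this takes care of the second hypothesis of Lemma \ref{24}.

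The harder step is to upgrade ``torsion'' to ``finitely generated'', which corresponds to the fact that the Abhyankar inequality is an equality. This is a classical Abhyankar-type statement for valued fields: when $s(x)+t(x)$ attains the transcendence degree of $\mathcal{H}(x)/k$, the residue field extension $\widetilde{\mathcal{H}(x)}/\widetilde{k}$ is finitely generated of transcendence degree $s(x)$, and the quotient $|\mathcal{H}(x)^\times|/|k^\times|$ is a finitely generated abelian group (hence, being torsion by the previous step, it is finite). Equivalently, one may phrase this via Temkin's graded reduction (already used in the excerpt, see the proof of Lemma \ref{rita}): the graded residue field $\widetilde{\mathcal{H}(x)}_\bullet$ is a finite extension of $\widetilde{k}_\bullet(T)$ for some $T$ transcendental of degree lying in $|k^\times|$, and a finite extension of graded fields has support that is a finite-index supergroup of the original support. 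In either formulation one obtains that $|\mathcal{H}(x)^\times|$ is finitely generated over $|k^\times|$.

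With both hypotheses in hand, Lemma \ref{24} applied to $k' = \mathcal{H}(x)$ concludes the proof. The main obstacle is the second step: invoking the Abhyankar-equality finiteness. If the paper is content to cite standard results of Ducros or Temkin on type $2$ points this is immediate; otherwise a more hands-on argument can be given by choosing a finite morphism from a neighborhood of $x$ to $\mathbb{P}^{1,\mathrm{an}}_k$ and using that the preimage of the image type $2$ point has finite value group extension governed by the degree of the morphism.
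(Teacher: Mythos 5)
Your proposal is correct and takes essentially the same route as the paper: the paper's proof likewise reduces to Lemma \ref{24} by noting that $x$ being an Abhyankar point gives finite generation of $|\mathcal{H}(x)^\times|$ over $|k^\times|$, while being of type $2$ gives that the quotient is torsion. The only difference is that you spell out the finiteness step in more detail (via graded reduction or a finite morphism to $\mathbb{P}^{1,\mathrm{an}}_k$), whereas the paper simply cites the Abhyankar-point property.
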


\begin{proof}
Since $x$ is an Abhyankar point, $|\mathcal{H}(x)^\times|$ is finitely generated over $|k^\times|,$ and since it is of type 2, $|\mathcal{H}(x)^\times|/|k^\times|$ is a torsion group, so this follows from Lemma \ref{24}.
\end{proof}

Another result we will be needing in what is to come:  

\begin{lm}\label{55} Under the same hypotheses as in Setting 2, suppose $R$ is a discrete valuation ring. Let $q$ be a diagonal quadratic form over $F_R.$
Then, there exist diagonal $F_R$-quadratic forms $q_1, q_2$  with coefficients in $R,$ and $a \in F_R^\times,$ such that:
\begin{itemize}
\item  $q$ is isometric to $q_1 \bot a q_2;$
\item  $q_{i,L}$ has coefficients in $(L^{\circ})^\times, i=1,2;$
\item there exists $i_0 \in \{1,2\},$ such that $\dim{q_{i_0,L}} \geqslant \frac{1}{2}\dim{q}.$
\end{itemize}
In particular, if either of $q_1, q_2$ is isotropic over $F_R,$ then so is $q.$ 
\end{lm}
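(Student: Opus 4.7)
The plan is to decompose $q$ by exploiting the discreteness of the valuation on $R$. Let $\pi \in R$ be a uniformizer. Every element $c \in F_R^\times$ can be written uniquely as $c = \pi^n u$ with $n = v_R(c) \in \mathbb{Z}$ and $u \in R^\times$; reducing $n$ modulo $2$ (absorbing the even part into a square), we obtain $c \equiv \pi^{\epsilon(c)} u' \pmod{(F_R^\times)^2}$ with $\epsilon(c) \in \{0,1\}$ and $u' \in R^\times$ (well-defined modulo $(R^\times)^2$). Writing $q = \langle c_1,\dots,c_m\rangle$ and grouping the indices according to the parity $\epsilon(c_i)$ yields the isometry
\[ q \cong \bigl\langle u_i' : \epsilon(c_i)=0 \bigr\rangle \perp \pi \bigl\langle u_i' : \epsilon(c_i)=1 \bigr\rangle, \]
so one takes $a = \pi$, $q_1 = \langle u_i' : \epsilon(c_i)=0\rangle$, $q_2 = \langle u_i' : \epsilon(c_i)=1 \rangle$, which both have coefficients in $R^\times \subseteq R$.

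To ensure that $q_{i,L}$ has coefficients in $(L^\circ)^\times$, I would further modify each $u_i' \in R^\times$ by a square in $(R^\times)^2$ so that its image $\overline{u_i'} \in L'^\times \subseteq L^\times$ satisfies $|\overline{u_i'}|_L = 1$. The surjection $R^\times \twoheadrightarrow L'^\times$ lets us realize any adjustment available at the level of $L'$; the fact that $L/L'$ is immediate gives $|L^\times| = |L'^\times|$, and the Setting~2 hypothesis $s+t \leq 1$ controls the structure of $|L^\times|/|L^\times|^2$ so that the required square representative can be found (in the spirit of the normalization arguments in Propositions \ref{19} and \ref{18}). Since $\dim q_1 + \dim q_2 = \dim q$, the pigeonhole principle immediately produces $i_0 \in \{1,2\}$ with $\dim q_{i_0} \geq \tfrac12 \dim q$; because the coefficients of $q_{i_0}$ are $R$-units with images in $(L^\circ)^\times$, none of them vanish when reduced, giving $\dim q_{i_0,L} = \dim q_{i_0} \geq \tfrac12 \dim q$.

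The final assertion is immediate from $q \cong q_1 \perp a q_2$: an isotropic vector for $q_1$ is one for $q$, and an isotropic vector for $q_2$ yields one for $aq_2$ since $a \in F_R^\times$ is a nonzero scalar, hence one for $q$.

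I expect the main obstacle to be the normalization step in the second paragraph, namely showing that one can in fact pick square-class representatives in $R^\times$ whose images land in $(L^\circ)^\times$. This is the place where the DVR hypothesis on $R$, the surjectivity of the residue map, and the global restriction on $|L^\times|$ imposed by Setting~2 must all be combined; the first paragraph (parity split) and the last (isotropy transfer) are essentially formal.
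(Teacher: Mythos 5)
Your parity split via a uniformizer, the pigeonhole step, and the final isotropy transfer all match the paper's proof; the paper phrases the first step as each coefficient $b$ being $\equiv 1$ or $\equiv\pi$ modulo $(F_R^\times)^2(F_R^\circ)^\times$, takes $a=\pi$, and lets the unit parts populate $q_1$ and $q_2$. Your second paragraph, however, has a genuine gap. The modifications you allow are by squares from $(R^\times)^2$, and their residues under the surjection $R^\times\twoheadrightarrow L'^\times$ form exactly $(L'^\times)^2$; hence $|\overline{u_i'}|$ can only be moved within its coset modulo $|L'^\times|^2$. Since $|L'^\times|$ need not be $2$-divisible — for instance $L'=\kappa(x)$ for a rigid point $x$ over a discretely valued $k$ has $|L'^\times|\cong\mathbb{Z}$ — the normalization to absolute value $1$ can fail, and the Setting~2 bound $s+t\leqslant 1$ provides no remedy.

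That said, the paper's own proof of Lemma~\ref{55} does not establish the second bullet either: it only records that the reductions lie in $L^\times$, which suffices for $\dim q_{i_0,L}=\dim q_{i_0}\geqslant\frac{1}{2}\dim q$. The unique place the lemma is invoked (the rigid-point case in the proof of Theorem~\ref{21}) uses only the decomposition $q\cong q_1\perp a q_2$, the dimension bound, and the isotropy transfer — never the assertion about $(L^\circ)^\times$. The likeliest intended reading is $L^\times$ in place of $(L^\circ)^\times$, in which case your first and third paragraphs already suffice and the normalization you flagged as the main obstacle is simply not needed.
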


\begin{proof}
Let $\pi$ be a uniformizer of $R.$ For any coefficient $b$ of $q,$ either $b \equiv 1 \ \textrm{mod} \ (F_R^\times)^2 (F_R^{\circ})^{\times}$ or $b \equiv \pi \ \textrm{mod} \ (F_R^\times)^2 (F_R^{\circ})^{\times}.$ Hence, there exist diagonal $F_R$-quadratic forms $q_1, q_2$ with coefficients in $(F_R^{\circ})^\times=R^\times,$ such that $q$ is $F_R$-isometric to $q'=q_1 \bot \pi q_2.$ Then, ${\dim{q}=\dim{q'}},$ and there exists $i_0,$ such that $\dim{q_{i_0}} \geqslant \frac{1}{2}\dim{q}.$ Since the coefficients of $q_1, q_2$ are in $R^{\times},$ their images over $L$ are of same dimension, so $\dim{q_{i_0, L}} \geqslant \frac{1}{2}\dim{q}.$ Finally, the last sentence of the statement is obvious. 
\end{proof}

The following theorem gives the motivation behind the hypotheses we put upon $R, L'$ and $L.$
\begin{thm}\label{21}
Suppose $char(\widetilde{k})\neq 2.$ Let $C$ be a normal irreducible $k$-analytic curve. Set ${F=\mathscr{M}(C)}.$ Let $q$ be a quadratic form over $F$ of dimension $d,$ with $d>2^{n+2}u_s(\widetilde{k}).$ Then, for any $x \in C,$ the quadratic form $q$ is isotropic over $\mathscr{M}_x$ for all $x \in C.$ 

If $|k^{\times}|$ is a free $\mathbb{Z}$-module, the statement is true for $d>2^{n+1}u_s(\widetilde{k}).$
\end{thm}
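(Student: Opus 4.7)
The plan is to fix an arbitrary $x \in C$ and reduce the isotropy of $q$ over $\mathscr{M}_x$ to an application of Theorem \ref{52} at the valued-field extension attached to $x$. Since $C$ has dimension $1$ over $k$, Abhyankar's inequality at $x$ gives $\mathrm{rank}_{\mathbb{Q}}(|\mathcal{H}(x)^{\times}|/|k^{\times}|\otimes_{\mathbb{Z}}\mathbb{Q}) + \deg\mathrm{tr}_{\widetilde{k}}\widetilde{\mathcal{H}(x)} \le 1$, so the hypothesis $s+t\le 1$ of Setting 2 is satisfied automatically, with the data identified by the example after Setting 2 ($R=\mathcal{O}_x$, $F_R=\mathscr{M}_x$, $L'=\kappa(x)$, $L=\mathcal{H}(x)$).

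By Remark \ref{j}, I would then split into two cases. If $\mathcal{O}_x$ is a field (i.e.\ $x$ is non-rigid), then $\mathscr{M}_x=\mathcal{O}_x$ is itself a Henselian valued subfield of $\mathcal{H}(x)$ by Corollary \ref{e fundit}; setting $L:=\mathscr{M}_x$ and $\tau:=q$, part (1) of Theorem \ref{52} yields the isotropy as soon as $d>2^{n+2}u_s(\widetilde{k})$. In the free-module case one uses the sharper bound $d>2^{n+1}u_s(\widetilde{k})$, checking the extra freeness condition when needed: at type $2$ points Corollary \ref{56} gives $|\mathcal{H}(x)^{\times}|$ free of rank $n$, and this coincides with $|\mathscr{M}_x^{\times}|$ because $\mathcal{H}(x)$ is the immediate completion of $\mathscr{M}_x$.

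If instead $\mathcal{O}_x$ is a DVR ($x$ is rigid), then $\mathscr{M}_x$ is not itself a valued field of the kind needed by part (1), so I would prepare $q$ for part (2) of Theorem \ref{52} via Lemma \ref{55}. After diagonalizing $q$, this lemma supplies a decomposition $q\cong q_1\bot aq_2$ with $q_1,q_2$ diagonal over $R=\mathcal{O}_x$ and with $\dim q_{i_0,L}\ge d/2$ for some $i_0\in\{1,2\}$. Here $L=L'=\kappa(x)$ is a finite extension of $k$, so $s=t=0$, and the relevant bound in Theorem \ref{52}(2) is $2^{n+1}u_s(\widetilde{k})$ (respectively $2^nu_s(\widetilde{k})$ in the free case, where Lemma \ref{24} certifies $|\kappa(x)^{\times}|$ free of rank $n$ so that Proposition \ref{19} applies in place of Proposition \ref{18} inside the proof of Proposition \ref{50}). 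The hypothesis on $d$ is calibrated exactly so that $d/2$ clears these thresholds; Theorem \ref{52}(2) then forces $q_{i_0}$ to be isotropic over $F_R=\mathscr{M}_x$, and Lemma \ref{55} promotes this to isotropy of $q$ itself.

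The main nuisance in writing this out is the bookkeeping of the two exponents: the factor of two lost by the splitting $q=q_1\bot aq_2$ at a rigid point is exactly what opens the gap between the exponent $n+1$ (needed at rigid points through part (2)) and the exponent $n+2$ (sufficient at non-rigid points through part (1)), accounting for the uniform threshold $2^{n+2}u_s(\widetilde{k})$ in the statement and the analogous shift from $n$ to $n+1$ in the free-module refinement.
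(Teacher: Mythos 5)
Your plan matches the paper's proof closely: diagonalize $q$, split according to whether $\mathcal{O}_x$ is a field (non-rigid $x$) or a Henselian DVR (rigid $x$), and at rigid points invoke Lemma \ref{55} to produce a diagonal form over $R=\mathcal{O}_x$ whose image over $L=\kappa(x)$ has unit coefficients and dimension at least $d/2$. Your direct appeal to Theorem \ref{52}(1) with $L=\mathscr{M}_x$ at non-rigid points is a mild repackaging of the paper's appeal to part (2) with $L=\mathcal{H}(x)$ and works for the same reason, since in that case $R=F_R=L'=\mathscr{M}_x$ and $\mathcal{H}(x)$ is an immediate Henselian extension.

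One citation must be repaired before the rigid case actually closes. You write that ``the relevant bound in Theorem \ref{52}(2) is $2^{n+1}u_s(\widetilde{k})$'', but Theorem \ref{52} as stated gives only the uniform threshold $2^{n+2}u_s(\widetilde{k})$ in the general case (and $2^{n+1}u_s(\widetilde{k})$ only under the free-module hypothesis on $|k^{\times}|$ and $|L^{\times}|$); with $\dim\tau\ge d/2>2^{n+1}u_s(\widetilde{k})$ you cannot meet that hypothesis. The sharper $s=t=0$ bound you need is precisely Proposition \ref{50}(2) (with Lemma \ref{24} replacing Proposition \ref{18} inside when $|k^{\times}|$ is free, as your parenthetical already suggests), so that is the result that must be cited there. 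Similarly, the Henselianity of $\mathscr{M}_x$ at a non-rigid point is an input to, not a consequence of, Corollary \ref{e fundit}: the source is \cite[Theorem 2.3.3]{ber93}. Neither of these is a conceptual gap --- you evidently understand the mechanism --- but as written, the literal invocation of Theorem \ref{52}(2) at a rigid point does not satisfy that theorem's hypothesis.
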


\begin{proof}
Seeing as $\mathrm{char}(\widetilde{k})\neq 2,$ neither of the overfields of $k$ has characteristic $2.$ In particular, $\mathrm{char}(F) \neq 2,$ so there exists a diagonal quadratic form $q'$ over $F$ isometric to $q.$ By replacing $q$ with $q'$ if necessary, we may directly assume that $q$ is a diagonal quadratic form.

Recall that $\mathcal{O}_x$ and $\kappa(x)$ are Henselian \cite[Sections 2.1 and 2.3]{ber93}. Furthermore, $\mathcal{H}(x)$ is the completion of $\kappa(x),$ so it is a Henselian immediate extension. We know that for any $x \in C,$ the field $\mathcal{H}(x)$ is either a finite extension of $k$ or a completion of $F$ with respect to some valuation extending that of $k.$ Abhyankar's inequality tells us that ${\mathrm{rank}_{\mathbb{Q}}(|\mathcal{H}(x)^\times|/|k^\times|\otimes_{\mathbb{Z}} \mathbb{Q})+\deg \mathrm{tr}_{\widetilde{k}} \widetilde{\mathcal{H}(x)} \leqslant 1}.$ We will apply part 2 of Theorem \ref{52} by taking $R=\mathcal{O}_x,$ $F_R=\mathscr{M}_x,$ $L'=\kappa(x),$ and $L=\mathcal{H}(x).$

If $\mathcal{H}(x)/k$ is finite, \textit{i.e.} if $x$ is a rigid point, then $\mathcal{H}(x)=\kappa(x)=\mathcal{O}_x/m_x.$ Being a normal Noetherian local ring with Krull dimension one, $\mathcal{O}_x$ is a discrete valuation ring. By Lemma \ref{55}, there exists a diagonal $\mathscr{M}_x$-quadratic form $\tau$ with coefficients in $\mathcal{O}_x,$ such that $\dim{\tau_L} \geqslant \frac{1}{2}\dim{q}>2^{n+1}u_s(\widetilde{k})$ (resp. $\dim{\tau_L} \geqslant \frac{1}{2}\dim{q}>2^{n}u_s(\widetilde{k})$) and the isotropy of $\tau$ implies that of ~$q.$ Seeing as $\mathrm{rank}_{\mathbb{Q}}(|\mathcal{H}(x)^\times|/|k^{\times}| \otimes_{\mathbb{Z}} \mathbb{Q})=\deg \mathrm{tr}_{\widetilde{k}} \widetilde{\mathcal{H}(x)}=0,$ we can apply Proposition \ref{50} (resp. Lemma \ref{24}) to $\tau$.  

Otherwise, $\mathcal{O}_x=\kappa(x)$ is a field, and $\mathcal{H}(x)$ is its completion. In the general case, we conclude by a direct application of Theorem \ref{52}. In particular, if $|k^\times|$ is a free $\mathbb{Z}$-module, then this is an application of Theorem \ref{52} in view of Corollary \ref{56}.
\end{proof}

We also obtain:

\begin{cor}\label{59}
Suppose $char(\widetilde{k}) \neq 2.$ Let $C$ be a normal irreducible $k$-analytic curve. Let $x$ be any point of $C.$ Let $q$ be a quadratic form over $\mathcal{H}(x),$ such that $\dim{q}>2^{n+2}u_s(\widetilde{k}).$ Then, $q$ is isotropic. 

If $|k^\times|$ is a free $\mathbb{Z}$-module, then the statement is true for $\dim{q}>2^{n+1}u_s(\widetilde{k}).$
\end{cor}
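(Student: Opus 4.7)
The plan is to apply Theorem~\ref{52}(1) directly with $L = \mathcal{H}(x)$. Two conditions need verifying. First, $\mathcal{H}(x)$ must be Henselian: this is immediate since $\mathcal{H}(x)$ is by construction a complete non-archimedean valued field, hence Henselian. Second, the rational rank plus residue transcendence degree of $\mathcal{H}(x)$ over $k$ must be at most $1$; this follows from Abhyankar's inequality applied to $\mathcal{H}(x)/k$, together with the fact that the transcendence degree of $\mathcal{H}(x)$ over $k$ is at most~$1$ (since $C$ is one-dimensional). Because $\mathrm{char}(\widetilde{k}) \neq 2$ forces $\mathrm{char}(\mathcal{H}(x)) \neq 2$, we may diagonalize $q$ and then invoke Theorem~\ref{52}(1) to conclude that $q$ is isotropic whenever $\dim q > 2^{n+2}u_s(\widetilde{k})$, giving the first assertion.

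For the improved bound $2^{n+1}u_s(\widetilde{k})$ under the hypothesis that $|k^\times|$ is a free $\mathbb{Z}$-module of rank $n$, I would split by the Abhyankar type of the point $x$. The concluding clause of Theorem~\ref{52} requires the extra freeness of $|L^\times|$ of rank $n$ exactly in the case $\deg\mathrm{tr}_{\widetilde{k}}\widetilde{L} = 1$ and $\mathrm{rank}_{\mathbb{Q}}(|L^\times|/|k^\times|\otimes_{\mathbb{Z}}\mathbb{Q}) = 0$, which is precisely the situation where $x$ is a point of type~$2$ on $C$. In that situation, Corollary~\ref{56} supplies the required freeness of $|\mathcal{H}(x)^\times|$. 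For the remaining point types (type~$1$, type~$3$, type~$4$), the freeness of $|k^\times|$ alone suffices to trigger the improved bound in Theorem~\ref{52}, so the refined conclusion follows in every case.

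In short, the corollary is essentially a direct specialization of Theorem~\ref{52}(1) to the Henselian field $\mathcal{H}(x)$, with Corollary~\ref{56} handling the only nontrivial hypothesis needed for the sharper bound at type~$2$ points. There is no serious obstacle to expect: the hard technical content (the decomposition results of Propositions~\ref{19} and~\ref{18}, and the case-by-case analysis that feeds into Theorem~\ref{52}) has already been carried out, and what remains is simply to observe that $\mathcal{H}(x)$ fits the Henselian framework of Theorem~\ref{52}(1).
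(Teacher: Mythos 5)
Your proposal is correct and follows exactly the route the paper takes: the corollary is an application of Theorem~\ref{52}(1) with $L=\mathcal{H}(x)$, invoking Corollary~\ref{56} to supply the freeness of $|\mathcal{H}(x)^\times|$ in the type~2 case needed for the sharper bound. The case-by-case check over point types that you spell out for the refined bound is exactly what the paper's one-line proof leaves implicit.
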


\begin{proof}
This is a direct consequence of part (1) of Theorem \ref{52} (in view of Corollary ~\ref{56} for the special case).
\end{proof}

\section{Applications}

We will now apply the results obtained in the previous section to the (strong) $u$-invariant. Let $k$ be a complete ultrametric field.
 
\begin{thm}\label{25}
Suppose $char(\widetilde{k}) \neq 2.$ Let $F$ be a finitely generated field extension of $k$ of transcendence degree $1.$ Let $q$ be a quadratic form over $F$ of dimension $d.$
\begin{enumerate}
\item If $\dim_{\mathbb{Q}}\sqrt{|k^{\times}|}=:n, n \in \mathbb{N},$ and $d> 2^{n+2} u_s(\widetilde{k}),$ then $q$ is isotropic.
\item If $|k^{\times}|$ is a free $\mathbb{Z}$-module with $\mathrm{rank}_{\mathbb{Z}}|k^{\times}|=:n, n \in \mathbb{N},$ and $d>2^{n+1}u_s(\widetilde{k}),$ then $q$ is isotropic. 
\end{enumerate}
\end{thm}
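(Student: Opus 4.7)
The plan is to derive both parts of Theorem \ref{25} as a direct combination of the local-global principle for quadratic forms proven earlier (Theorem \ref{17}) with the local isotropy statement of Theorem \ref{21}. Both ingredients were developed precisely so that this combination is essentially a corollary.

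First I would set up the Berkovich curve supporting the argument. Since $F/k$ is finitely generated of transcendence degree $1$, there exists a unique normal irreducible projective $k$-algebraic curve $C^{\mathrm{alg}}$ with function field $F$. Let $C$ be its Berkovich analytification; it is a normal irreducible projective $k$-analytic curve, and by \cite[Proposition~3.6.2]{Ber90} one has $\mathscr{M}(C)=F$. The hypothesis $\mathrm{char}(\widetilde{k})\neq 2$ forces $\mathrm{char}(F)\neq 2$, so $q$ may be diagonalized over $F$.

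Next I would invoke Theorem \ref{17}: provided $\mathrm{char}(F)\neq 2$ and $\dim q\neq 2$, the form $q$ is isotropic over $F$ if and only if it is isotropic over $\mathscr{M}_x$ for every $x\in C$. The assumption $\mathrm{char}(\widetilde{k})\neq 2$ entails $u_s(\widetilde{k})\geqslant 1$ (a $1$-dimensional form over a field of characteristic not $2$ is anisotropic), so the bound $d>2^{n+2}u_s(\widetilde{k})$ (resp.\ $d>2^{n+1}u_s(\widetilde{k})$ in case~(2)) gives $d\geqslant 5$; in particular $d\neq 2$, and Theorem \ref{17} applies. Local isotropy at every $x\in C$ is then provided by Theorem \ref{21}: in case (1) the bound $d>2^{n+2}u_s(\widetilde{k})$ is exactly the one supplied by the general statement, and in case (2), where $|k^{\times}|$ is free of rank $n$, the refined bound $d>2^{n+1}u_s(\widetilde{k})$ is exactly the one supplied by the improved version of Theorem \ref{21} in the discretely valued-like regime. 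Combining the two steps yields the isotropy of $q$ over $F$.

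No real obstacle remains at this stage; the difficulty has been front-loaded into the two ingredients. The patching-based Theorem \ref{ohlala} underlies the local-global step, while the diagonal decomposition of Propositions \ref{18} and \ref{19} together with Henselian reduction to residue forms underlies the local isotropy step. All that remains to verify in the proof itself is the applicability of both to the curve $C$, which follows from its normality, irreducibility, projectivity, and the identification $\mathscr{M}(C)=F$, and the observation about $u_s(\widetilde{k})\geqslant 1$ needed to exclude $\dim q=2$.
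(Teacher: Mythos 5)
Your proof is correct and follows essentially the same route as the paper: pass to the normal projective Berkovich curve $C$ with $\mathscr{M}(C)=F$, apply the local-global principle of Theorem \ref{17}, and use Theorem \ref{21} for local isotropy. You also explicitly verify that the dimension bound forces $\dim q \neq 2$ (needed to invoke Theorem \ref{17}), a hypothesis check the paper's proof leaves implicit; the small slip that $d>2^{n+1}u_s(\widetilde{k})$ only guarantees $d\geqslant 3$ rather than $d\geqslant 5$ in case (2) is harmless since $d\neq 2$ is all that is required.
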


\begin{proof}
There exists a connected normal projective $k$-analytic curve $C$, such that ${F=\mathscr{M}(C)}.$ By Theorem \ref{17}, the quadratic form $q$ is isotropic over $F$ if and only if it is isotropic over ~$\mathscr{M}_x$ for all $x \in C.$ The statement now follows in view of Theorem \ref{21}.
\end{proof}

\begin{cor}\label{31} Suppose $char(\widetilde{k}) \neq 2.$ 
\begin{enumerate}
\item If $\dim_{\mathbb{Q}}\sqrt{|k^{\times}|}=:n, n \in \mathbb{N},$ then $u_s(k) \leqslant 2^{n+1}u_s(\widetilde{k}).$
\item If $|k^{\times}|$ is a free $\mathbb{Z}$-module with $\mathrm{rank}_{\mathbb{Z}}|k^{\times}|=:n, n \in \mathbb{N}$, then $u_s(k) \leqslant 2^{n}u_s(\widetilde{k}).$
\end{enumerate}
\end{cor}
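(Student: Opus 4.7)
My plan is to unpack the definition of the strong $u$-invariant and handle its two defining conditions separately, applying the previous section's results to each. Recall that $u_s(k)$ is the supremum of $u(E)$ over finite extensions $E/k$ and of $\tfrac{1}{2}u(E)$ over finitely generated extensions $E/k$ of transcendence degree $1$. So both bounds in the corollary will follow from uniform upper bounds on $u(E)$ in each of these two cases.

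The transcendence degree $1$ case is essentially immediate from Theorem~\ref{25}. Indeed, Theorem~\ref{25}(1) gives $u(E) \leqslant 2^{n+2}u_s(\widetilde{k})$ in case (1) and Theorem~\ref{25}(2) gives $u(E) \leqslant 2^{n+1}u_s(\widetilde{k})$ in case (2), so dividing by $2$ yields $\tfrac{1}{2}u(E) \leqslant 2^{n+1}u_s(\widetilde{k})$ and $\tfrac{1}{2}u(E) \leqslant 2^{n}u_s(\widetilde{k})$ respectively, which are exactly the bounds claimed.

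The finite extension case is where one invokes the purely local results from the previous section. For a finite extension $E/k$, the field $E$ is again complete and non-archimedean, the residue extension $\widetilde{E}/\widetilde{k}$ is finite (hence $\deg\mathrm{tr}_{\widetilde{k}}\widetilde{E}=0$), and $|E^{\times}|/|k^{\times}|$ is torsion (hence $\mathrm{rank}_{\mathbb{Q}}(|E^{\times}|/|k^{\times}|\otimes_{\mathbb{Z}}\mathbb{Q})=0$). In case (1), the assumption $\dim_{\mathbb{Q}}\sqrt{|k^{\times}|}=n$ is preserved (since the quotient is torsion), and one applies Proposition~\ref{50}(1) with $L=E$ (which is Henselian, being complete) to conclude that every quadratic form over $E$ of dimension $>2^{n+1}u_s(\widetilde{k})$ is isotropic. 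In case (2), Lemma~\ref{24} directly yields that $|E^{\times}|$ remains a free $\mathbb{Z}$-module of rank $n$ and that every diagonal quadratic form of dimension $>2^{n}u_s(\widetilde{k})$ over $E$ is isotropic; since $\mathrm{char}(E)\neq 2$, diagonalisation is available, so this bounds $u(E)$ by $2^{n}u_s(\widetilde{k})$.

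Combining the two cases gives $u_s(k)\leqslant 2^{n+1}u_s(\widetilde{k})$ in setting~(1) and $u_s(k)\leqslant 2^{n}u_s(\widetilde{k})$ in setting~(2). There is no genuine obstacle here: the corollary is essentially a bookkeeping consequence of Theorem~\ref{25} (which supplies the transcendence degree $1$ bound via the local-global principle of Section~3 and the local isotropy results of Section~5) together with Proposition~\ref{50} and Lemma~\ref{24} (which supply the finite extension bound). The only small point to check is that in each case the bound coming from finite extensions is no worse than the bound coming from transcendence degree $1$ extensions divided by two, so the maximum of the two is achieved by the latter; this is immediate from the explicit exponents.
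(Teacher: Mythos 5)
Your proof is correct and matches the paper's argument exactly: the paper likewise splits into the finite-extension case (handled via Proposition~\ref{50}(1), resp.\ Lemma~\ref{24}) and the transcendence-degree-one case (handled via Theorem~\ref{25}), and combines the resulting bounds per the definition of the strong $u$-invariant.
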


\begin{proof}
Let $l/k$ be a finite field extension.  Let $q$ be an $l$-quadratic form of dimension $d>2^{n+1}u_s(\widetilde{k})$ (resp. $d>2^nu_s(\widetilde{k})$). Since $\mathrm{char}(\widetilde{k}) \neq 2,$ we may assume $q$ to be diagonal. In view of part 1 of Proposition \ref{50} (resp. Lemma \ref{24}), $q$ is $l$-isotropic, so  ${u(l) \leqslant 2^{n+1}u_s(\widetilde{k})}$ (resp. $u(l) \leqslant 2^n u_s(\widetilde{k})$).
In combination with Theorem \ref{25}, this completes the proof of the statement.
\end{proof}

\begin{cor}
Suppose $char(\widetilde{k}) \neq 2.$ Let $C$ be a normal irreducible $k$-analytic curve. Let $x$ be any point of $C.$
\begin{enumerate}
\item If $\dim_{\mathbb{Q}}\sqrt{|k^{\times}|}=:n, n \in \mathbb{N},$ then $u(\mathcal{H}(x))\leqslant 2^{n+2} u_s(\widetilde{k}).$
\item If $|k^{\times}|$ is a free $\mathbb{Z}$-module with $\mathrm{rank}_{\mathbb{Z}}|k^{\times}|=:n, n \in \mathbb{N}$, then $u(\mathcal{H}(x))\leqslant 2^{n+1} u_s(\widetilde{k}).$
\end{enumerate}
\end{cor}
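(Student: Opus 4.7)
The plan is to obtain both bounds as a direct consequence of Corollary~\ref{59} via the very definition of the $u$-invariant. Indeed, $u(\mathcal{H}(x))$ is by definition the supremum of the dimensions of anisotropic quadratic forms over $\mathcal{H}(x)$, so any uniform dimension bound above which every quadratic form becomes isotropic is itself an upper bound on $u(\mathcal{H}(x))$.

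Concretely, I would argue as follows. First, I would note that since $\mathrm{char}(\widetilde{k}) \neq 2$ and $\mathcal{H}(x)$ is a valued field extension of $k$ whose residue characteristic is that of $\widetilde{k}$, we have $\mathrm{char}(\mathcal{H}(x)) \neq 2$, so every quadratic form over $\mathcal{H}(x)$ is isometric to a diagonal one. Then I would apply Corollary~\ref{59}: in case (1), any quadratic form $q$ over $\mathcal{H}(x)$ with $\dim q > 2^{n+2} u_s(\widetilde{k})$ is isotropic; in case (2), under the stronger assumption that $|k^{\times}|$ is a free $\mathbb{Z}$-module of rank $n$, the same conclusion holds already under $\dim q > 2^{n+1} u_s(\widetilde{k})$.

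Consequently, every anisotropic quadratic form over $\mathcal{H}(x)$ has dimension at most $2^{n+2} u_s(\widetilde{k})$ in case (1), respectively at most $2^{n+1} u_s(\widetilde{k})$ in case (2). By the definition of the $u$-invariant, this gives the two asserted inequalities $u(\mathcal{H}(x)) \leqslant 2^{n+2} u_s(\widetilde{k})$ and $u(\mathcal{H}(x)) \leqslant 2^{n+1} u_s(\widetilde{k})$ respectively.

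There is no substantive obstacle at this stage: all the work has already been done in Section~5, where the local isotropy criterion for $\mathcal{H}(x)$ (Corollary~\ref{59}) was established by reducing to the Henselian case of Theorem~\ref{52} and using the fact, recorded in Corollary~\ref{56}, that the value group condition on $k$ propagates to $\mathcal{H}(x)$ at type $2$ points (and trivially at points of other types via Abhyankar's inequality). The present corollary is merely the translation of that isotropy statement into the language of the $u$-invariant.
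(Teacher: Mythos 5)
Your proof is correct and is precisely the approach the paper takes: its own proof of this corollary is the one-line ``See Corollary~\ref{59},'' and what you have written is simply the explicit unfolding of that reference, passing from the isotropy criterion to the $u$-invariant via its definition. Nothing is missing and nothing differs in substance.
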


\begin{proof}
See Corollary \ref{59}.
\end{proof}

In particular, when $k$ is discretely valued we obtain the upcoming corollary. It is the most important result on quadratic forms of HHK in \cite{HHK}, and from it we obtain that $u(\mathbb{Q}_p(T))=8$ when $p \neq 2,$ originally shown in \cite{parsur}.

\begin{cor}\label{32}
Let $k$ be a complete discretely valued field, such that $char(\widetilde{k})\neq 2.$ Then, $u_s(k)=2u_s(\widetilde{k}).$
\end{cor}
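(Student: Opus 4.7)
The plan is to establish the two inequalities separately. For the upper bound $u_s(k) \leqslant 2u_s(\widetilde{k})$, I would simply specialize Corollary~\ref{31}(2) to the case $n=1$: since $k$ is complete discretely valued, $|k^{\times}|\cong \mathbb{Z}$ is a free $\mathbb{Z}$-module of rank one, so the corollary applies verbatim and delivers the bound. This direction is the one that uses the full patching and local-isotropy machinery of the paper.

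For the reverse inequality $u_s(k) \geqslant 2u_s(\widetilde{k})$, the plan is to produce, for each extension of $\widetilde{k}$ witnessing (a value close to) $u_s(\widetilde{k})$, an extension of $k$ of the matching type whose $u$-invariant is at least twice as large. In the finite case, given a finite extension $E/\widetilde{k}$, one takes the unramified lift $L/k$; this is again a complete discretely valued field with residue field $E$ and residue characteristic different from $2$, so Springer's theorem on quadratic forms over complete discretely valued fields yields $u(L)=2u(E)$, whence $u_s(k)\geqslant u(L)=2u(E)$. In the transcendence-degree-one case, given a finitely generated extension $E/\widetilde{k}$ of transcendence degree one, the required lift is furnished by \cite[Lemma 4.9]{HHK}: it produces a finitely generated extension $L/k$ of transcendence degree one, equipped with a discrete valuation whose residue field is $E$, whose structure allows a Springer-type argument to conclude $u(L)\geqslant 2u(E)$, hence $\tfrac{1}{2}u(L)\geqslant u(E)$.

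The only delicate ingredient is this last lemma handling the function-field case; as the excerpt points out, its proof is elementary and entirely independent of the patching developed in the present paper, so it can be cited directly. Taking the supremum over all admissible $E/\widetilde{k}$ on both sides then gives $u_s(k)\geqslant 2u_s(\widetilde{k})$, and combining with the upper bound yields the claimed equality $u_s(k)=2u_s(\widetilde{k})$.
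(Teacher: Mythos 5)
Your proposal is correct and takes essentially the same route as the paper: the upper bound is obtained by specializing Corollary~\ref{31}(2) to the case $n=1$, and the lower bound is outsourced to \cite[Lemma 4.9]{HHK}, which is independent of the patching machinery. The extra detail you supply about the mechanism behind that lemma (lifting a residue extension to an extension of $k$ with $e=1$ and applying Springer's theorem, respectively a transcendence-degree-one lift) is a correct unpacking of that reference, not a different argument.
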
 

\begin{proof}
The inequality $u_s(k) \leqslant 2 u_s(\widetilde{k})$ is a special case of Corollary 6.2. For the other direction, a proof that is independent of the patching method and relies on the theory of quadratic forms is given in \cite[Lemma 4.9]{HHK}.
\end{proof}

\begin{bibdiv}
\begin{biblist} 
\bib{art}{article}{
   author={Artin, M.},
   title={Algebraic approximation of structures over complete local rings},
   journal={Inst. Hautes \'Etudes Sci. Publ. Math.},
   number={36},
   date={1969},
   pages={23--58},}

\bib{Ber90}{book}{
   author={Berkovich, V. G.},
   title={Spectral theory and analytic geometry over non-Archimedean fields},
   series={Mathematical Surveys and Monographs},
   volume={33},
   publisher={American Mathematical Society, Providence, RI},
   date={1990}}
   
\bib{ber93}{article}{
   author={Berkovich, V. G.},
   title={\'Etale cohomology for non-Archimedean analytic spaces},
   journal={Inst. Hautes \'Etudes Sci. Publ. Math.},
   number={78},
   date={1993},
   pages={5--161 (1994)}}

\bib{ber}{article}{
   author={Berkovich, V. G.},
   title={Vanishing cycles for formal schemes},
   journal={Invent. Math.},
   volume={115},
   date={1994},
   number={3},
   pages={539--571},}
   
  \bib{berber}{article}{
   author={Berkovich, V. G.},
   title={Vanishing cycles for formal schemes. II},
   journal={Invent. Math.},
   volume={125},
   date={1996},
   number={2},
   pages={367--390},}

\bib{bo1}{article}{
   author={Bosch, S.},
   title={Eine bemerkenswerte Eigenschaft der formellen Fasern affinoider
   R\"aume},
   language={German},
   journal={Math. Ann.},
   volume={229},
   date={1977},
   number={1},
   pages={25--45},}

\bib{bo}{book}{
   author={Bosch, S.},
   author={G\"untzer, U.},
   author={Remmert, R.},
   title={Non-Archimedean analysis},
   series={Grundlehren der Mathematischen Wissenschaften [Fundamental
   Principles of Mathematical Sciences]},
   volume={261},
   note={A systematic approach to rigid analytic geometry},
   publisher={Springer-Verlag, Berlin},
   date={1984}}

\bib{neron}{book}{
   author={Bosch, S.},
   author={L\"{u}tkebohmert, W.},
   author={Raynaud, M.},
   title={N\'{e}ron models},
   series={Ergebnisse der Mathematik und ihrer Grenzgebiete (3) [Results in
   Mathematics and Related Areas (3)]},
   volume={21},
   publisher={Springer-Verlag, Berlin},
   date={1990},
   pages={x+325},
   isbn={3-540-50587-3},
   doi={10.1007/978-3-642-51438-8},
} 

   \bib{bou}{book}{
   author={Bourbaki, N.},
   title={\'El\'ements de Math\'ematique. Espaces vectoriels topologiques. Chapitres 1 \`a 5},
   publisher={Hermann, Paris},
   date={1953}}
 
\bib{ctps}{article}{
   author={Colliot-Th\'el\`ene, J.-L.},
   author={Parimala, R.},
   author={Suresh, V.},
   title={Patching and local-global principles for homogeneous spaces over
   function fields of $p$-adic curves},
   journal={Comment. Math. Helv.},
   volume={87},
   date={2012},
   number={4},
   pages={1011--1033},}

 \bib{Duc}{webpage}{   
   author={Ducros, A.},
   title={La structure des courbes analytiques},
   myurl={https://webusers.imj-prg.fr/~antoine.ducros/trirss.pdf},}

\bib{famduc}{article}{
   author={Ducros, Antoine},
   title={Families of Berkovich spaces},
   language={English, with English and French summaries},
   journal={Ast\'{e}risque},
   date={2018},
   number={400},
   pages={vii+262},
   issn={0303-1179},
   isbn={978-2-85629-885-5},}

\bib{dex}{article}{
   author={Ducros, A.},
   title={Les espaces de Berkovich sont excellents},
   language={French, with English and French summaries},
   journal={Ann. Inst. Fourier (Grenoble)},
   volume={59},
   date={2009},
   number={4},
   pages={1443--1552},}

\bib{groth}{article}{
   author={Grothendieck, A.},
   title={\'{E}l\'{e}ments de g\'{e}om\'{e}trie alg\'{e}brique. IV. \'{E}tude locale des sch\'{e}mas et
   des morphismes de sch\'{e}mas. II},
   language={French},
   journal={Inst. Hautes \'{E}tudes Sci. Publ. Math.},
   number={24},
   date={1965},
   pages={231},} 
  
  \bib{har2}{article}{
   author={Harbater, D.},
   title={Galois coverings of the arithmetic line},
   conference={
      title={Number theory},
      address={New York},
      date={1984--1985},
   },
   book={
      series={Lecture Notes in Math.},
      volume={1240},
      publisher={Springer, Berlin},
   },
   date={1987},
   pages={165--195},}
  
\bib{har}{article}{
   author={Harbater, D.},
   title={Galois covers of an arithmetic surface},
   journal={Amer. J. Math.},
   volume={110},
   date={1988},
   number={5},
   pages={849--885}, }

   \bib{H}{article}{
   author={Harbater, D.},
   title={Patching and Galois theory},
   conference={
      title={Galois groups and fundamental groups},
   },
   book={
      series={Math. Sci. Res. Inst. Publ.},
      volume={41},
      publisher={Cambridge Univ. Press, Cambridge},
   },
   date={2003},
   pages={313--424}
}
		
\bib{HH}{article}{
   author={Harbater, D.},
   author={Hartmann, J.},
   title={Patching over fields},
   journal={Israel J. Math.},
   volume={176},
   date={2010},
   pages={61--107}}

\bib{HHK}{article}{
   author={Harbater, D.},
   author={Hartmann, J.},
   author={Krashen, D.},
   title={Applications of patching to quadratic forms and central simple
   algebras},
   journal={Invent. Math.},
   volume={178},
   date={2009},
   number={2},
   pages={231--263},}

\bib{HHK1}{article}{
   author={Harbater, D.},
   author={Hartmann, J.},
   author={Krashen, D.},
   title={Local-global principles for torsors over arithmetic curves},
   journal={Amer. J. Math.},
   volume={137},
   date={2015},
   number={6},
   pages={1559--1612},}

\bib{leep}{article}{
   author={Leep, D. B.},
   title={The $u$-invariant of $p$-adic function fields},
   journal={J. Reine Angew. Math.},
   volume={679},
   date={2013},
   pages={65--73},}

\bib{liu}{book}{
   author={Liu, Q.},
   title={Algebraic geometry and arithmetic curves},
   series={Oxford Graduate Texts in Mathematics},
   volume={6},
   note={Translated from the French by Reinie Ern\'e;
   Oxford Science Publications},
   publisher={Oxford University Press, Oxford},
   date={2002},}

\bib{mark}{book}{
   author={Marker, D.},
   title={Model theory},
   series={Graduate Texts in Mathematics},
   volume={217},
   note={An introduction},
   publisher={Springer-Verlag, New York},
   date={2002},}

\bib{flo}{article}{
   author={Martin, F.},
   title={Analytic functions on tubes of nonarchimedean analytic spaces},
   note={With an appendix by Christian Kappen and Martin},
   journal={Algebra Number Theory},
   volume={11},
   date={2017},
   number={3},
   pages={657--683},}

\bib{Mil}{article}{
   author={Milne, J. S.},
   title={On the arithmetic of abelian varieties},
   journal={Invent. Math.},
   volume={17},
   date={1972},
   pages={177--190},}

\bib{muni}{article}{
   author={Musta\c t\u a, M.},
   author={Nicaise, J.},
   title={Weight functions on non-Archimedean analytic spaces and the
   Kontsevich-Soibelman skeleton},
   journal={Algebr. Geom.},
   volume={2},
   date={2015},
   number={3},
   pages={365--404},}

\bib{parsur}{article}{
   author={Parimala, R.},
   author={Suresh, V.},
   title={The $u$-invariant of the function fields of $p$-adic curves},
   journal={Ann. of Math. (2)},
   volume={172},
   date={2010},
   number={2},
   pages={1391--1405},}

\bib{poi1}{article}{
   author={Poineau, J.},
   title={Raccord sur les espaces de Berkovich},
   language={French, with English and French summaries},
   journal={Algebra Number Theory},
   volume={4},
   date={2010},
   number={3},
   pages={297--334},}

\bib{tem1}{article}{
   author={Temkin, M.},
   title={On local properties of non-Archimedean analytic spaces. II},
   journal={Israel J. Math.},
   volume={140},
   date={2004},
   pages={1--27},}

\bib{tem}{article}{
   author={Temkin, M.},
   title={A new proof of the Gerritzen-Grauert theorem},
   journal={Math. Ann.},
   volume={333},
   date={2005},
   number={2},
   pages={261--269},}

\bib{thu}{thesis}{
author={Thuillier, A.},
title={Th\'eorie du potentiel sur les courbes en g\'eom\'etrie analytique non archim\'edienne. Applications \`a la th\'eorie d'Arakelov. Math\'ematiques},
Journal={Universit\'e Rennes 1},
date={2005}}

\bib{rita}{webpage}{
author={V\'azquez, R. R.},
title={Non-Archimedean Normal Families},
myurl={https://arxiv.org/pdf/1607.05976.pdf},}

\end{biblist}
\end{bibdiv} 

\bigskip

{\footnotesize%
 \textsc{Vler\"e Mehmeti}, Laboratoire de math\'ematiques Nicolas Oresme, Universit\'e de Caen Normandie, 14032 ~Caen Cedex, France \par
  \textit{E-mail address}: \texttt{vlere.mehmeti@unicaen.fr} 
}

\end{document}